\documentclass{amsart}
\usepackage{amsfonts, amsmath, amssymb, graphicx, picins, epic,eepic, hyphenat, epsf}

\input xy
\xyoption{all}

\theoremstyle{plain}
\newtheorem{theorem}{Theorem}

\newtheorem{corollary}{Corollary}[section]

\newtheorem{lemma}{Lemma}[section]
\newtheorem{proposition}{Proposition}[section]
\numberwithin{equation}{section}

\theoremstyle{definition}
\newtheorem{definition}{Definition}[section]
\newtheorem{example}{Example}
\newtheorem{remark}{Remark}[section]

\newcommand{\brak}[1]{\langle #1\rangle}

\DeclareMathOperator{\Hom}{Hom}

\DeclareMathOperator{\id}{Id}

\DeclareMathOperator{\rk}{rk}

%
\newskip\stdskip                      
\stdskip=6.6pt plus3.3pt minus3.3pt    
\setlength{\textheight}{7.5in}          
\setlength{\textwidth}{5.2in}         
\flushbottom                           
\setlength{\parindent}{0pt}            
\setlength{\parskip}{\stdskip}
\setlength{\medskipamount}{\stdskip}
\setlength{\mathsurround}{0.8pt}     
\setlength{\labelsep}{0.75em} 


\begin{document}

\title{An $sl(2)$ tangle homology and seamed cobordisms}
\author{Carmen Caprau}

\email{ccaprau@csufresno.edu}

\begin{abstract}
We construct a bigraded cohomology theory which depends on one parameter $a$ and whose graded Euler characteristic is the quantum $sl(2)$ link invariant. We follow Bar-Natan's approach to tangles on one side, and  Khovanov's $sl(3)$ theory for foams on the other side. Our theory is properly functorial under tangle cobordisms, and a version of the Khovanov $sl(2)$ invariant (or Lee's variant of it)  corresponds to $a = 0$ (or $a = 1$). In particular, our construction naturally resolves the sign ambiguity in functoriality of Khovanov's $sl(2)$ homology theory.
\end{abstract}
\maketitle

\textbf{AMS Classification:} 57M27, 57M25; 18G60

\bigskip

\textbf{Keywords:} canopoly, categorification, cobordisms, Euler characteristic, Jones polynomial, functoriality, Khovanov homology, movie moves, planar algebra, webs and foams

\section{\textbf{Introduction}}

Khovanov~\cite{Kh1} introduced a bigraded cohomology theory for oriented links, now known as Khovanov homology, which takes values in bigraded $\mathbb{Z}$-modules and is a categorification of the unnormalized Jones polynomial. It is based on a $(1+1)$-dimensional TQFT associated to a Frobenius algebra and is related to the Lie algebra $sl(2)$. Since its introduction in 1999, Khovanov homology has proved to be a powerful invariant for classical links. For example, Bar-Natan~\cite{BN1} found that it is a stronger invariant than the Jones polynomial. Moreover, Rasmussen~\cite{Ras} gave a combinatorial proof of the Milnor conjecture by using a variant of Khovanov's theory introduced by Lee~\cite{L}. In~\cite{Kh2}, Khovanov classified all possible Frobenius systems of rank two (for a definition of a Frobenius system see~\cite{Kad} and~\cite{Kh2}) which give rise to link homologies, and  this one corresponds to $\mathbb{Z}[X]/(X^2)$.

Bar-Natan~\cite{BN1} extended this theory to tangles by using a setup with cobordisms modulo relations, which in particular leads to an improvement in computational efficiency. He shows how Khovanov's construction can be used to define a functor from the category of links, with link cobordisms modulo ambient isotopy as morphisms, to the homotopy category of complexes over (1+1)-cobordisms modulo a finite set of relations.

Bar-Natan~\cite{BN1}, Jacobsson~\cite{J} and Khovanov~\cite{Kh4} independently found that Khovanov homology is functorial for link cobordisms, in the sense that given a link cobordism $S \in \mathbb{R}^3 \times [0,1]$ between links $L_1$ and $L_2$, there is an induced map between their Khovanov homologies $Kh(L_1)$ and $Kh(L_2)$, well-defined up to an overall minus sign, under ambient isotopy of $S$ relative to $\partial{S}$.

In~\cite{Kh3}, Khovanov showed how to construct a link homology theory whose graded Euler characteristic is the quantum $sl(3)$ link invariant. Instead of $(1+1)$-dimensional cobordisms, he uses webs and foams modulo a finite set of relations. Mackaay and Vaz~\cite{MV} defined the universal $sl(3)$ link homology, which depends on $3$ parameters, following Khovanov's approach with foams. Their theory arises from a Frobenius algebra structure defined on $\mathbb{Z}[X,a,b,c]/(X^3-aX^2-bX-c)$.
 
In this paper we construct a bigraded $sl(2)$ cohomology theory for oriented tangles (thus, in particular, it applies to oriented knots and links) over $\mathbb{Z}[i][a]$, where $a$ is a formal variable and $i$ is the primitive fourth root of unity; that is, $\mathbb{Z}[i][a] = \mathbb{Z}[i,a]$ is the polynomial ring in the variable $a$ over the Gaussian integers $\mathbb{Z}[i]$. We remark that our theory  corresponds to the Frobenius system given by $\mathbb{Z}[i][X,a]/(X^2-a)$. The construction starts from the oriented state model for the Jones polynomial~\cite{Ka}, or similarly, from an approach to the quantum $sl(2)$ link invariant via a calculus of planar bivalent graphs, called \textit{webs}. 

Webs are evaluated recursively by the rules defined in figure~\ref{fig:web skein relations}. Each transformation either reduces the number of vertices or removes an oriented loop or the simplest closed web with bivalent vertices of figure~\ref{fig:circle2sv}. In particular, a closed web evaluates to a polynomial in $q^{\pm1}$ with positive integer coefficients. 

To compute the quantum $sl(2)$ invariant of an oriented link $L$, $P_2(L)$, choose its plane projection $D$ and resolve each crossing in two possible ways, as in figure~\ref{fig:resolutions}. A diagram $D$ with $n$ crossings admits $2^n$ resolutions, and each of the resulting resolution is a collection of disjoint closed webs (piecewise oriented closed curves) that evaluates to $(q+q^{-1})^k$, where $k$ is the number of connected components of that resolution. The invariant of $L$ is the sum of these webs' evaluations weighted by powers of $q$ and $-1$, as explained in figure~\ref{fig:decomposition of crossings}. After normalization and a simple change of variable we obtain the Jones polynomial of the link $L$. The principal constant in our construction is $q+q^{-1}$, the expression associated to each closed web, independent of the (even) number of vertices that it contains. 

We work in a more general case, by considering tangle diagrams, and the construction follows closely the approach of Khovanov homology theory for tangles introduced in~\cite{BN1}. Given all resolutions of a tangle $T$, we form in a specific way an $n$-dimensional cube of all resolutions of $T$, and from this one a formal complex $[T]$ with objects column vectors of webs and differentials matrices of \textit{foams}, where a foam is an abstract cobordism between two webs. By considering foams modulo a finite set of local relations, $[T]$ is invariant under Reidemeister moves. 

We pass from the topological picture to an algebraic one, by applying a specific functor $\mathcal{F}$ from the category of foams modulo local relations to the category of $\mathbb{Z}[i][a]$-modules and module homomorphisms, that extends to the category of formal complexes over matrices of foams modulo local relations. The above categories are graded and the functor $\mathcal{F}$ is degree-preserving. In this approach, $q+q^{-1}$ becomes a certain free $\mathbb{Z}[i][a]$-module $\mathcal{A}$ of rank two with generators in degrees $1$ and $-1$. This is the object (`homology') we associate to any closed web and oriented loop. The `homology' of the empty web is the ground ring $\mathbb{Z}[i][a]$.

$\mathcal{F}([T])$ is now an ordinary complex and applying $\mathcal{F}$ to all homotopies we have that $\mathcal{F}([T])$ is an invariant of the tangle diagram $T$, up to homotopy. Thus the isomorphism class of the homology $H(\mathcal{F}([T]))$ is a bigraded invariant of $T$. Moreover, if the tangle is a link $L$, one can easily conclude from our construction that the graded Euler characteristic of $\mathcal{F}([L])$ is the quantum $sl(2)$ polynomial of $L$. In other words
\[P_2(L) = \sum_{i,j \in \mathbb{Z}}(-1)^i q^j \rk (\mathcal{H}^{i,j}(L)),\]
where $H(\mathcal{F}([L])) = \bigoplus_{i,j \in \mathbb{Z}} \mathcal{H}^{i,j}(L)$.

As homology is a functor, we expect our theory to be functorial under link cobordisms. Indeed, given a $4$-dimensional cobordism $C$ between links $L_1$ and $L_2$, we construct a well-defined map $[L_1] \rightarrow [L_2]$ between the associated formal complexes. In showing that the associated map is well defined there are two parts. First we show that it is well-defined up to multiplication by a unit, following Bar-Natan's method in~\cite{BN1}, and then we check each movie move, using in some cases the idea of looking at a ``homotopically isolated object'', borrowed from~\cite{MW}, to show that actually no unit appears, besides $1$.

Note that if $L_1$ and $L_2$ are both the empty links, then a cobordism between them is a $2$-knot in $\mathbb{R}^4$ (see~\cite{CS}), and the map $[L_1] \rightarrow [L_2]$ becomes a group homomorphism $\mathbb{Z}[i][a] \rightarrow \mathbb{Z}[i][a]$, hence an element of the ground ring. Therefore, this yields an invariant of $2$-knots.

Adding the  relation $a=0$, we obtain a cohomology theory that is isomorphic to a version of Khovanov's $sl(2)$ (co)homology theory. Specifically, for each link $L$ there is an isomorphism
$$\mathcal{H}^{i,j}(L) \cong Kh^{i,-j}(L^{!})\otimes_{\mathbb{Z}}\mathbb{Z}[i],$$
where $L^{!}$ is the mirror image of $L$, and $Kh$ is the homology theory defined in~\cite{Kh1}. As our theory is properly functorial under link cobordisms, it naturally resolves the sign indeterminancy in functoriality of Khovanov's invariant.  

We should point out that both the construction and result for $a=0$ are very close to those in~\cite{MW}, and that the two pieces of work were done independently. However,  as we mentioned above, we remark that we borrowed from~\cite{MW} the excellent idea of working with ``homotopically isolated objects'' when checking the functoriality property of our invariant, which makes all the calculations much easier.  

We remark that one can generalize the construction presented in this paper by working with a Frobenius algebra structure defined on $\mathbb{Z}[i][X,h,a]/(X^2 -hX -a)$, where  $\text{deg}(h) = 2$, and $\text{deg}(a) = 4$. The corresponding (co)homology theory is properly functorial under link cobordisms and is the geometric description of the homology theory that corresponds to the universal rank two Frobenius system defined in~\cite{Kh2} (after being tensored with $\mathbb{Z}[i]$). Adding the relation $h = 0$, the present construction is recovered. We will introduce this generalization in a subsequent paper. 

\subsection*{Outline of the paper}

In sections~\ref{ssec:webs} and~\ref{ssec:foams} we define the category \textit{Foams} whose objects are web diagrams and whose morphisms are foams. We also introduce the quantum $sl(2)$ invariant via the \textit{web space}. Webs have \textit{singular points} (bivalent vertices) with neighborhoods homeomorphic to the letter V, and foams have \textit{singular arcs} and \textit{singular circles} near which, locally, the foam is $V \times [0,1]$ or $V \times S^1$. One may regard such foams as \textit{cobordisms with seams} or \textit{singular cobordisms}.

In section~\ref{ssec:TQFT} we define a Frobenius algebra $\mathcal{A}$ over the ring $\mathbb{Z}[i][a]$ and use $\mathcal{A}$ to construct a 2-dimensional topological quantum field theory (TQFT), a functor $\mathsf{F}$ from the category of dotted, seamed 2-dimensional cobordisms between oriented 1-dimensional manifolds to the category of graded $\mathbb{Z}[i][a]$-modules. A dot stands for multiplication by $X$ endomorphism of $\mathcal{A}$, where $X$ is its generator in degree $1$, and a singular circle stands for multiplication by $\pm i$ endomorphism of $\mathcal{A}$ (the plus/minus sign depends here on the orientation of the singular circle). 

In section~\ref{relations l} we introduce a set of local relations $\ell$ in the category \textit{Foams}, and show they are consistent and uniquely determine the evaluation of every closed foam (a dotted, seamed cobordism from the empty web to itself). Then we consider the quotient category $\textit{Foams}_{/\ell}$ and show that $\mathsf{F}$ descends to a functor $\textit{Foams}_{/\ell}(\emptyset) \rightarrow \mathbb{Z}[i][a]$-Mod; the empty set here means that the considered webs are closed webs, thus have empty boundary. By defining a grading of a foam, we make both categories \textit{Foams} and $\textit{Foams}_{/\ell}$ graded. We also prove a set of relations in $\textit{Foams}_{/\ell}$ that will play a significant role in our construction. As a consequence, useful isomorphisms in $\textit{Foams}_{/\ell}$ are obtained.

In section~\ref{chain complex} we associate a formal complex $[T]$ to a plane tangle (knot or link) diagram $T$. As an intermediate step, we associate a commutative hypercube of resolutions of $T$, so that to each vertex of the hypercube we associate a particular resolution, and to each oriented edge a \textit{singular saddle} between the corresponding resolutions, so that all square facets are commutative squares. When regarded as a complex with objects column vectors of webs and differentials matrices of foams modulo local relations $\ell$, $[T]$ is invariant under Reidemeister moves, up to homotopy. This is done in section~\ref{invariance}. In section~\ref{sec:planar algebras} we discuss the good behavior of our invariant under tangle compositions, and recall from~\cite{BN1} the concept of a \text{canopoly}; then we show how to use it to prove, in section~\ref{sec:functoriality}, the functoriality of our cohomology theory under tangle cobordisms.

We pass from the topological category to the algebraic one, $\mathbb{Z}[i][a]$-Mod, in section~\ref{sec:webhom}, to get into something more computable, and show that the `homology' $\mathcal{F}(\Gamma)$ of a closed web $\Gamma$, is a free graded abelian group of graded rank $\brak{\Gamma}$ (the evaluations of $\Gamma$) that satisfies skein relations categorifying those of $\Gamma$, the latter depicted in figure~\ref{fig:web skein relations}. As a consequence, we obtain that this theory categorifies the quantum $sl(2)$ polynomial, hence the Jones polynomial multiplied by $q+q^{-1}$.

Section~\ref{sec:relationship with Kh} discusses how one can recover from our theory, by adding relations $a=0$ or $a=1$, the original Khovanov homology theory or Lee's modification of it, after the ground ring $\mathbb{Z}$ appearing in these theories is extended to $\mathbb{Z}[i]$. 

A local algorithm that gives a potentially quick way to compute the homology groups $\mathcal{H}^{i,j}(L)$ associated to a certain link diagram $L$ is given in section~\ref{sec:fast computations}, along with a few examples.

\section{\textbf {Webs and foams}}\label{sec:webs and foams}

\subsection{\textbf{Webs}}\label{ssec:webs}

Let $B$ be a finite set of points on a circle, such as the boundary $\partial{T}$ of a tangle.
A \textit{web} with boundary $B$ is a  1-dimensional manifold $\Gamma$,  properly embedded in $\mathcal{D}^2$ and decomposed into oriented arcs. Near each vertex the arcs are either oriented `in' or `out', as in figure~ \ref{fig:in-out}, and each vertex has 2 arcs attached to it. We also allow webs without vertices.
\begin{figure}[ht]
$\xymatrix@R=2mm{
\raisebox{-5pt}{\includegraphics[height=.3in]{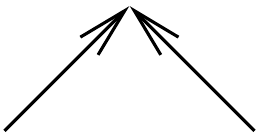}} &
\raisebox{-5pt}{\includegraphics[height=.3in]{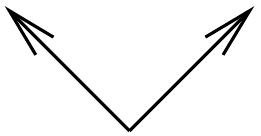}}
}$
\caption{ `In' and `out' orientation near a vertex}\label{fig:in-out}
\end{figure}

We will call the vertices of a web \textit{singular points}, as the orientation of arcs disagree there. Each singular point has a neighborhood homeomorphic to the letter V, and there is an ordering of the arcs corresponding to it, in the sense that the arc that goes in or goes out from the right of the vertex  that is a `sink' or a `source', respectively, is called \textit{the preferred arc} of that singular point. Notice that this definition corresponds to the case when the arcs are oriented south-north, as in figure~\ref{fig:in-out}; otherwise, the word ``right'' above should be replaced by ``left''. Two adjacent singular points are called \textit{of the same type} if the arc they share is either the preferred one or not, for both of them; otherwise, they are called \textit{of different type}.
\begin{figure}[ht]
\raisebox{-8pt}{\includegraphics[height=0.28in]{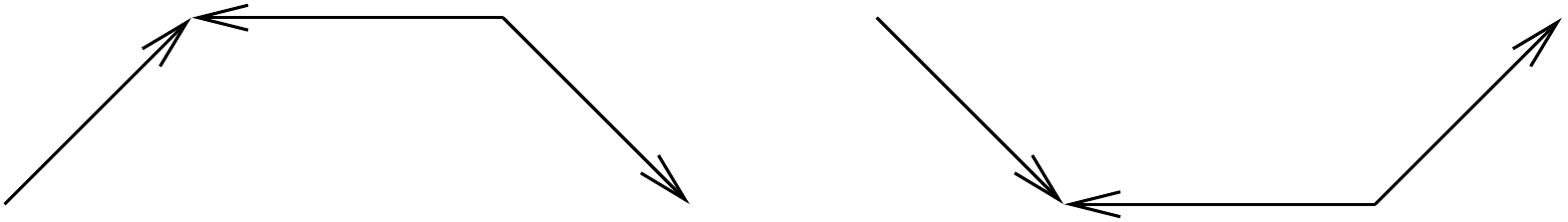}}
\caption{Singular points of the same type}\label{fig:sing-pts-same-type}
\end{figure}

In the first drawing of figure~\ref{fig:sing-pts-same-type}, the common arc is the preferred one for both singular points, while in the second drawing, the preferred arcs for both vertices are those that are not shared. Figure~\ref{fig:sing-pts-different-type} shows examples of singular points of different type.
\begin{figure}[ht]
\raisebox{-8pt}{\includegraphics[height=0.3in]{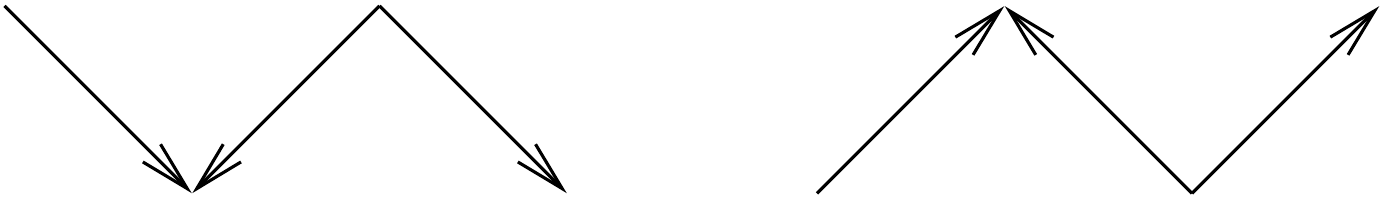}}
\caption{Singular points of different type}\label{fig:sing-pts-different-type}
\end{figure}

A web with no boundary points is called a \textit{closed web}. We remark that such a web is a bivalent oriented graph in $ \mathbb{R}^2$ with an even number of vertices, or no vertices at all (case in which the closed web is an oriented loop). 
 
Given a closed web $\Gamma$, there is a unique way to assign to it a Laurent polynomial $\brak{\Gamma} \in \mathbb {Z} [q, q^{-1}]$, so that it  satisfies the skein relations explained in figure~\ref{fig:web skein relations}, where $[2] = q + q^{-1}$.  Notice that this is the oriented state model for the Jones polynomial, with $t^{1/2} = -q$ (see~\cite{Ka}, section 6). 

\begin{figure}[ht]
$\xymatrix@R=2mm{
\brak{\raisebox{-5pt}{\includegraphics[height=0.2in]{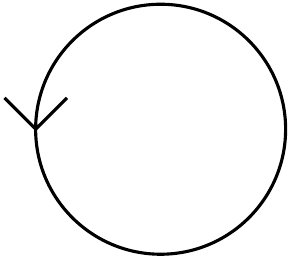}} \bigcup \Gamma} = [2] \brak{\Gamma} =
\brak{\raisebox{-5pt}{\includegraphics[height=0.2in]{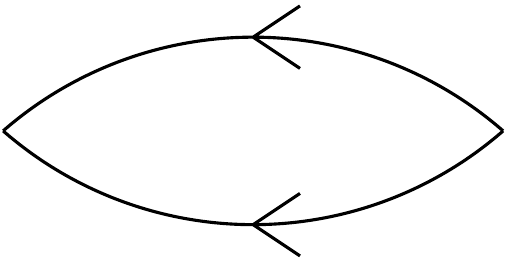}} \bigcup \Gamma} \\
\brak{\raisebox{-5pt}{\includegraphics[height=0.12in]{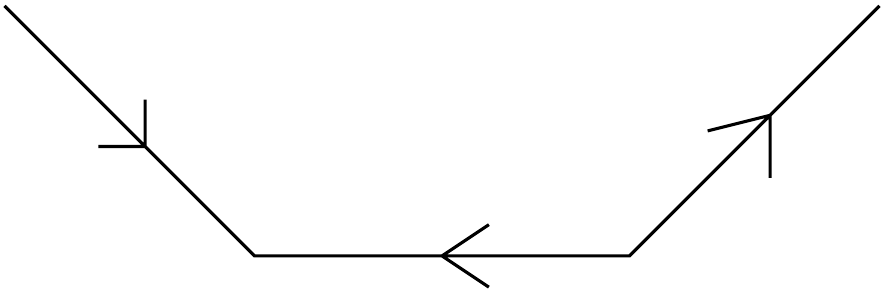}}} = 
\brak{\raisebox{-5pt}{\includegraphics[height=0.12in]{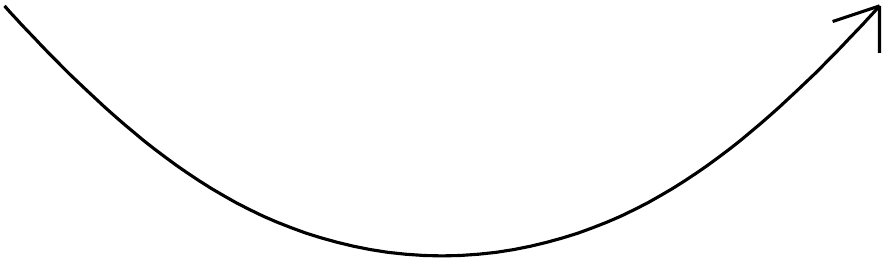}}}, \quad
\brak{\raisebox{-5pt}{\includegraphics[height=0.12in]{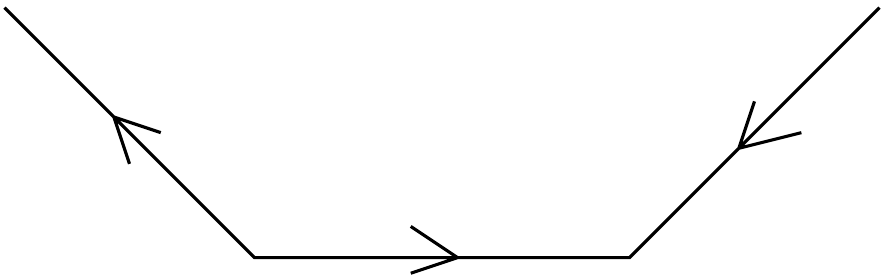}}} = 
\brak{\raisebox{-5pt}{\includegraphics[height=0.12in]{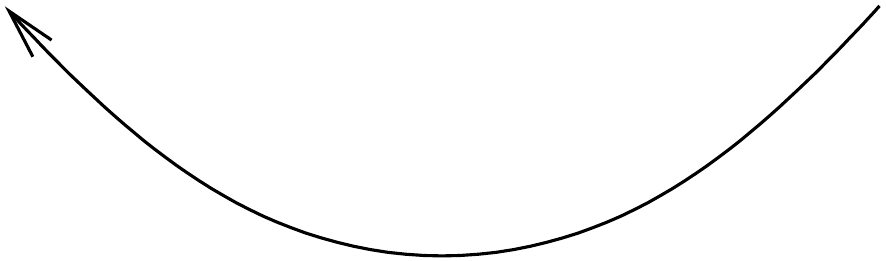}}}
}$
\caption{Web skein relations}
\label{fig:web skein relations}
\end{figure}

These rules say that
\begin{enumerate}
\item If $\Gamma$ is the disjoint union of $\Gamma_1$ and $\Gamma_2$, then
\[\brak{\Gamma} = \brak{\Gamma_1\cup\Gamma_2} = \brak{\Gamma_1}\brak{\Gamma_2}\]
\item \[\brak{\raisebox{-5pt}{\includegraphics[height=0.2in]{unknot.pdf}}} = q + q^{-1}=[2] =\brak{\raisebox{-5pt}{\includegraphics[height=0.2in]{circle2sv.pdf}}}\]
\item If $\Gamma_1$ is obtained by contracting an oriented edge of $\Gamma$ and erasing the common vertex, then\[\brak{\Gamma} =  \brak{\Gamma_1}\]
\end{enumerate}
We remark that if  $\Gamma$ is a disjoint union of $k$ closed webs, then $\brak{ \Gamma} = [2]^{k} = (q + q^{-1})^k$, regardless of the number of vertices (there are always an even number of vertices corresponding to each closed web). These rules are consistent and they determine a polynomial invariant $\brak{\Gamma}$ associated to a  closed web $\Gamma$. 

Let $L$ be a link in $S^3$. We fix a generic planar diagram $D$ of $L$ and resolve each crossing in two ways, as in figure~\ref{fig:resolutions}. We will usually call the resolution in the right of the figure the oriented resolution, while the one in the left the piecewise oriented resolution. A diagram $\Gamma$ obtained by resolving all crossings of $D$ is a disjoint union of closed webs. 

\begin{figure}[ht]
$$\xymatrix@C=20mm@R=2mm{
  &  \includegraphics[height=0.4in]{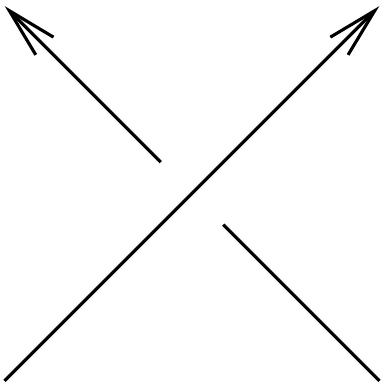} \ar[ld]_0\ar[rd]^1& \\
\includegraphics[height=0.4in]{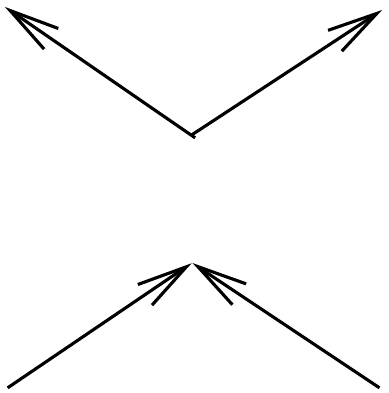} & & 
\includegraphics[height=0.4in]{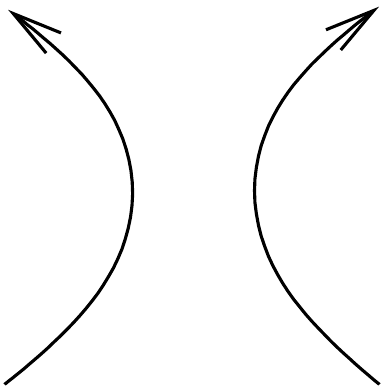} \\
  & \includegraphics[height=0.4in]{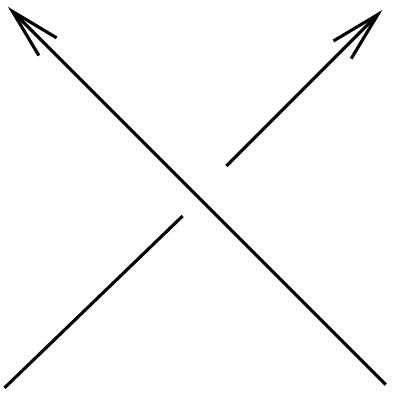} \ar[ul]^1 \ar[ur]_0&
}$$
\caption{Resolutions}
\label{fig:resolutions}
\end{figure}

We define $\brak{D}$, the bracket of $D$, as the linear combination of the brackets of all $2^n$ resolutions $\Gamma$ of $D$, where $n$ is the number of crossings of $D$:
\[
\brak{D} = \sum_{resolutions \ \Gamma} \pm q^{\alpha(\Gamma)}\brak{\Gamma}
\]
with $\alpha(\Gamma)$ determined by the rules in figure~\ref{fig:decomposition of crossings}. Independence from the choice of $D$ follows from the equations in figure~\ref{fig:web skein relations}. They imply $\brak{D_1} = \brak{D_2}$, whenever $D_1$ and $D_2$ are related by a Reidemeister move. Thus $ \brak{L}:= \brak{D}$ is an invariant of the oriented link $L$.

\begin{figure}[ht]
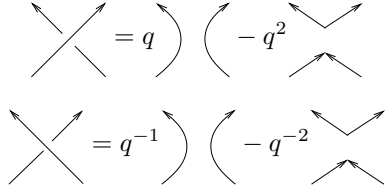

$\xymatrix@R=2mm{
\raisebox{-13pt}{\includegraphics[height=0.4in]{poscrossing.pdf}} = q\raisebox{-13pt} {\includegraphics[height=0.4in]{orienres.pdf}} - q^2\raisebox{-13pt} {\includegraphics[height=0.4in]{creation-ann.pdf}}\\
\raisebox{-13pt}{\includegraphics[height=0.4in]{negcrossing.pdf}} = q^{-1}\raisebox{-13pt} {\includegraphics[height=0.4in]{orienres.pdf}} - q^{-2} \raisebox{-13pt} {\includegraphics[height=0.4in]{creation-ann.pdf}}
}$ 
\caption{Decomposition of crossings}
\label{fig:decomposition of crossings}
\end{figure}
This link invariant is associated with the quantum group $U_q(sl(2))$ and its fundamental representations. Excluding rightmost terms from the equations in figure~\ref{fig:decomposition of crossings} yields the quantum $sl(2)$ polynomial, which equals the Jones polynomial multiplied by $q+q^{-1}$, and is determined by the skein relation in figure~\ref{fig:quantum skein formula}. Its categorification is sketched in~\cite{Kh1}.

\begin{figure}[ht]
$\xymatrix@R=2mm{
q^2 \raisebox{-13pt}{\includegraphics[height=.4in]{negcrossing.pdf}} - q^{-2} 
 \raisebox{-13pt}{\includegraphics[height=.4in]{poscrossing.pdf}} = (q - q^{-1}) \raisebox{-13pt}{\includegraphics[height=.4in]{orienres.pdf}}
 }$
\caption{Quantum $sl(2)$ skein formula}
\label{fig:quantum skein formula}
\end{figure}


\subsection{\textbf{Foams}}\label{ssec:foams}

 Let $\Gamma_0$ and $\Gamma_1$ be two webs with boundary points $B$. A \textit{foam with boundary $B$} is an abstract cobordism  from $\Gamma_0$ to $\Gamma_1$, regarded up to boundary-preserving isotopies, which is a  piecewise oriented  2-dimensional manifold $S$ with boundary $ \partial{S } = -\Gamma_1 \cup \Gamma_0 \cup B \times [0,1] $ and corners $B \times \{0\} \cup B \times \{1\}$, where the manifold $-\Gamma _1$ is $\Gamma_1$ with the opposite orientation. 

All cobordisms are bounded within a cylinder, and the part of their boundary on the sides of the cylinder is the union of vertical straight lines. By convention, we read foams from bottom to top. If $\Gamma_0$ and $\Gamma_1$ are closed webs, a cobordism from $\Gamma_0$ to $\Gamma_1$ is embedded in $\mathbb{R}^2 \times [0,1]$ and its boundary lies entirely in $\mathbb{R}^2 \times \{0,1\}$. The composition of morphisms is given by placing one cobordisms atop the other. In other words, given another morphism (abstract cobordism or foam) $S'$ from $\Gamma_1$ to $\Gamma_2$ in the above sense, we define the composite by
 \[
 S \circ S' = S \bigcup \limits_{\Gamma_1} S' 
 \]
as the quotient space of disjoint union, in which we have glued the two 2-manifolds along the common manifold $ \Gamma_1$ in their boundaries. 

As webs have $\textit{singular points}$ with neighbourhoods homeomorphic to the letter $V$, foams have \textit{singular arcs} near which, locally, the foam is $V \times [0,1]$. The orientation of the singular arcs is, by convention, as in figure~\ref{fig:saddle}, which shows examples of foams.

\begin{figure}[ht]
\includegraphics[height=.7in]{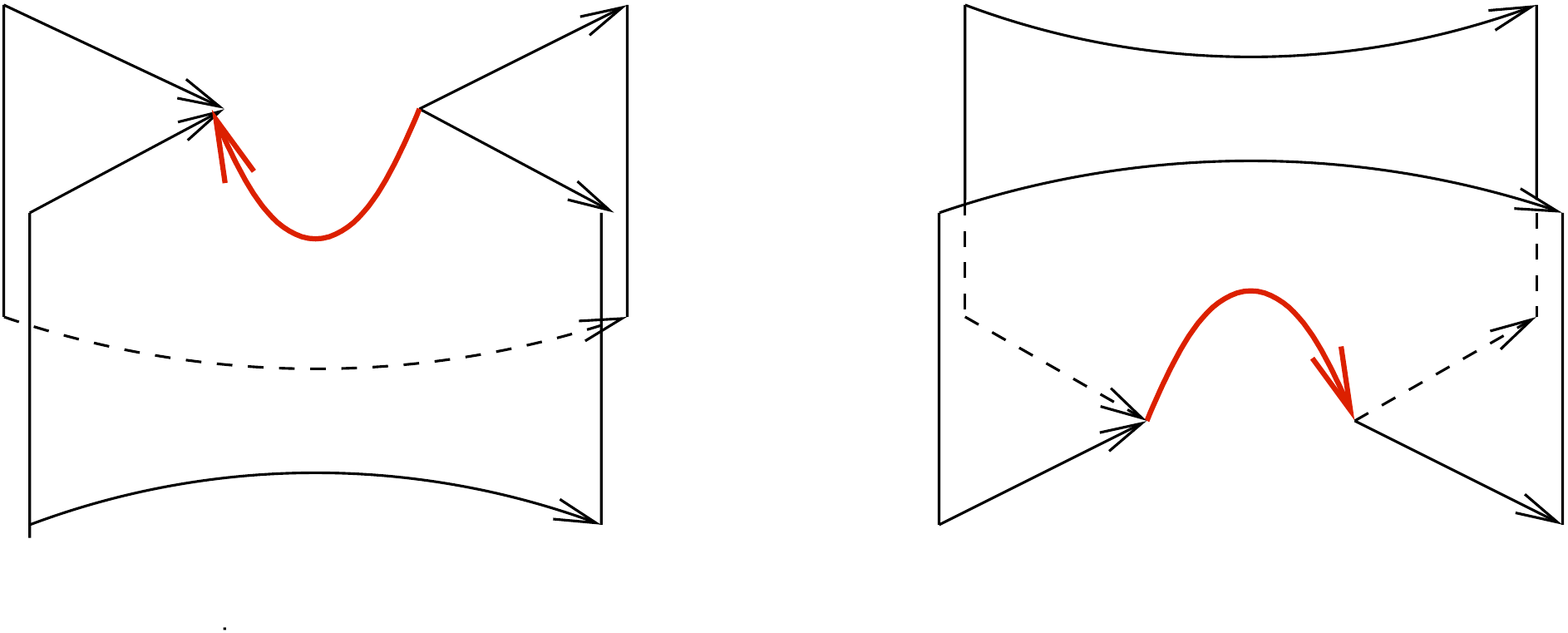}
\caption{ Examples of basic foams}
\label{fig:saddle}
\end{figure}

A cobordism from the empty web to itself gives rise to a foam with empty boundary (notice that $B= \emptyset$, as well), and we call it a closed foam. A closed foam is a 2-dimensional CW-complex $S$ which is a quotient of an orientable smooth compact surface $S'$ with connected components $S^1,S^2,...,S^n$. The boundary components of $S'$ are decomposed into a disjoint union of pairs, and the two circles $C^i, C^j$ in each pair are identified via a diffeomorphism. The closed foam $S$ is the quotient of $S'$ by these identifications. A pair $(C^i, C^j)$ becomes a circle $C$ in the quotient $S$, called $\textit{singular circle}$ (we remark that a singular circle is a closed singular arc), and has neighborhoods diffeomorphic to $V \times S^1$. We call the images of surfaces $S^i$ in the quotient $S = S'/\sim$ the $\textit{facets}$ of $S$. The image of the interior $S^i \backslash \partial(S^i)$ of $S^i$ in $S'/ \sim$ is an open connected orientable surface.

If $P \cong \mathbb{R}^2$ is an oriented plane in $\mathbb{R}^3$, we say that $P$ intersects a closed foam  $\textit{generically}$ if $P\cap S$ is a web in $P$. Orientations of the facets of $S$ induce orientations on edges of $P\cap S$.
A foam with boundary will be the intersection of a closed foam $S$ and $P \times [0,1] \subset \mathbb{R}^3$, such that $P \times \{0\}$ and $P \times \{1\}$ intersect $S$ generically. By a foam we will mean a foam with boundary. We say that two foams are isomorphic if they differ by an isotopy during which the boundary is fixed.

Foams can have dots that are allowed to move freely along the facet they belong to, but can't cross singular arcs. All facets of $S$ are oriented in such a way that the two annuli near each singular circle are compatibly oriented, and orientations of annuli induce orientations on singular circles. 

There is another important piece of information that foams (closed or not) come with. Notice that there are two facets meeting at each singular arc $A$ or singular circle $C$, respectively, and there is always a preferred facet for $A$ or $C$. Moreover, each singular arc $A$ can connect only singular points of the same type, and each preferred facet contains in its boundary the preferred arcs of the singular points that connects. Finally, if the preferred facet of $A$ or $C$ is at its left (where the concept of `left' and `right' is given by the orientation of $A$ or $C$), then we will usually represent the singular arc or singular circle, in a particular projection, by a continous red curve. Otherwise, it will be represented by a dotted red curve. We remark that this notion of preferred side coincides with an ordering of the facets meeting at $A$ or $C$, and that is a local property. In figure~\ref {fig:ordering of facets} we have two examples of foams with boundary and singular arcs. We labeled with $1$ the preferred facets for the given singular arcs. 
\begin{figure}[ht]
\raisebox{-8pt}{\includegraphics[height=.7in]{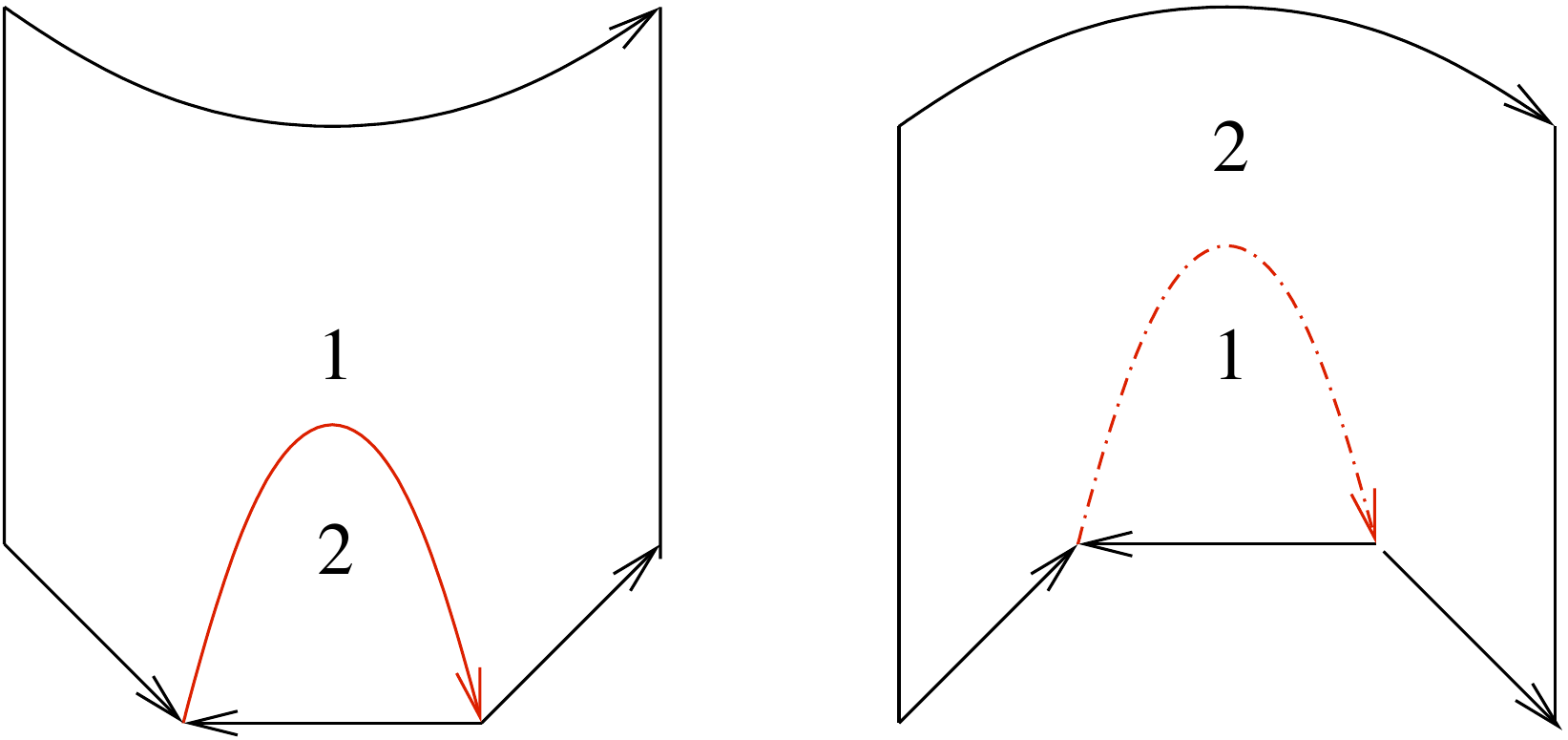}}
\caption{Ordering of facets near a singular arc}\label{fig:ordering of facets}
\end{figure}

Examples of closed foams:
\begin{enumerate}
\item If $S'$ has empty boundary, then $S = S'$ is a closed surface decorated by dots (there may be none).
\item  Let $U'$ be the disjoint union of two discs. Then $U$ has one singular circle, and is diffeomorphic to the 2-sphere. We call this closed foam the $\textit{ufo}$-foam.
\end{enumerate}
\begin{figure}[ht]
\raisebox{-8pt}{\includegraphics[height=.5in]{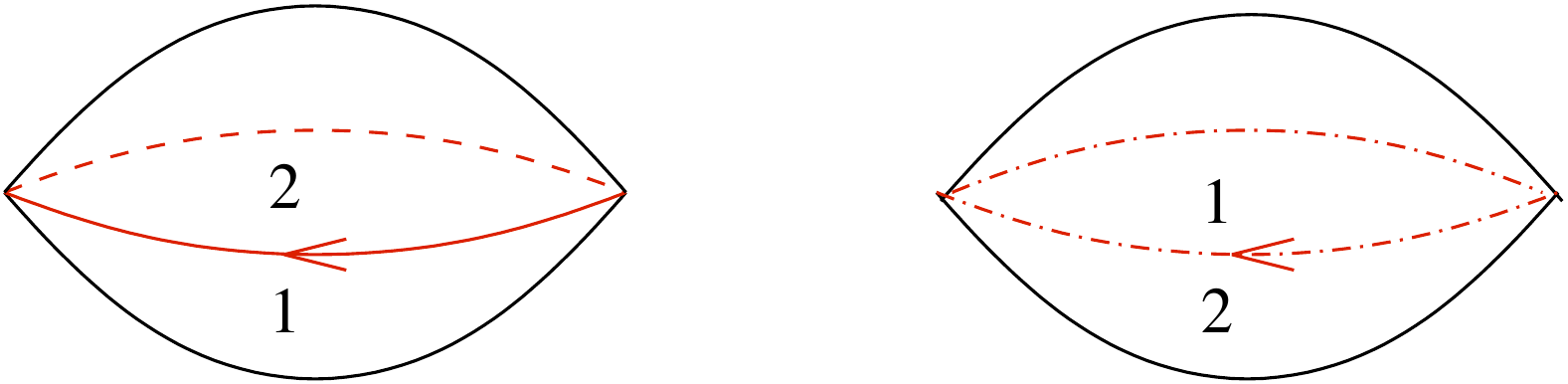}}
\caption{ufo-foams and the ordering of their facets}\label{fig:ordering of facets of ufo}
\end{figure}
In figure~\ref{fig:ordering of facets of ufo} we have two ufo-foams with opposite ordering of their facets. We remark that in what follows, we will always consider ufo-foams for which the lower hemisphere is the preferred facet (see the drawing in the left of figure~\ref{fig:ordering of facets of ufo}).

Let $B$ be a finite set of points on a circle. To $B$ we assign the category $\textit{Foams}(B)$, whose objects are webs with boundary $B$ and whose morphisms are foams between such webs. If $B = \emptyset$, any non-empty web is a closed web, and the corresponding category is denoted by $\textit{Foams}(\emptyset)$. We will use notation $\textit{Foams}$ as a generic reference either to $\textit{Foams}(\emptyset)$ or to $\textit{Foams}(B)$, for some finite set $B$.


\section{\textbf{ A (1 + 1)-dimensional TQFT with dots}} \label{ssec:TQFT} 

Let $a$ be a formal variable and $i$ the primitive fourth root of unity. Consider $\mathbb{Z}[i][a]$, the ring of polynomials in variable $a$ and Gaussian integer coefficients. We define a grading on $\mathbb{Z}[i][a]$ by letting deg ($1) = 0 = \text{deg}(i)$, deg($a) = 4$.
Consider the $\mathbb{Z}[i][a]$-module of rank $2$ with generators $1$ and $X$, $\mathcal{A} = \mathbb{Z}[i][a, X]/(X^2 - a)$ and with inclusion map $\iota: \mathbb{Z}[i][a] \rightarrow \mathcal{A}, \iota(1) = 1$.

$\mathcal{A}$ is commutative Frobenius with the trace map
 \[
 \epsilon : \mathcal{A} \rightarrow \mathbb{Z}[i][a], \quad \epsilon(1) = 0, \quad \epsilon(X) = 1,
 \]
and multiplication map 
$ m : \mathcal{A} \otimes \mathcal{A} \rightarrow \mathcal{A}$ \quad $$ \begin{array}{rr}
 m(1 \otimes X) = m(X \otimes 1) = X,\\ 
 m(X \otimes X) = a1.
 \end{array}$$

The comultiplication is the map $\Delta : \mathcal{A} \rightarrow \mathcal{A} \otimes \mathcal{A}$ dual to the multiplication via the trace map, and is defined by the rules:
  \[
  \begin{array}{rr}
 \Delta(1) = 1 \otimes X + X \otimes 1,\\ 
 \Delta(X) = X \otimes X + a 1 \otimes 1.
 \end{array}
 \]
 We make $\mathcal{A}$ graded by deg($1) = -1$ and deg($X) = 1$. The unit and trace maps have degree $-1$, while multiplication and comultiplication have degree $1$.

The commutative Frobenius algebra $\mathcal{A}$ gives rise to a $2$-dimensional TQFT, denoted here by $\mathsf{F}$, from the category of oriented $(1+1)$-dimensional cobordisms (whose objects are oriented simple closed curves in the plane and whose morphisms are cobordisms between such one manifolds)  to the category of graded $\mathbb{Z}[i][a]$-modules, that assigns $\mathbb{Z}[i][a]$ to the empty 1-manifold, and $\mathcal{A}^{\otimes k}$ to the disjoint union of oriented $k$ circles. 

On the generating morphisms of the category of oriented $(1+1)$-cobordisms, the functor is defined by: $\mathsf{F}(\raisebox{-3pt}{\includegraphics[height=0.15in]{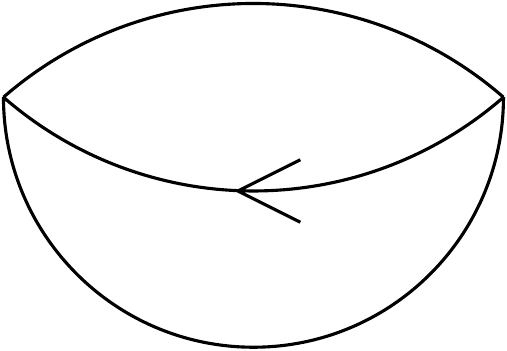}}) = \iota,\quad  \mathsf{F}(\raisebox{-3pt}{\includegraphics[height=0.15in]{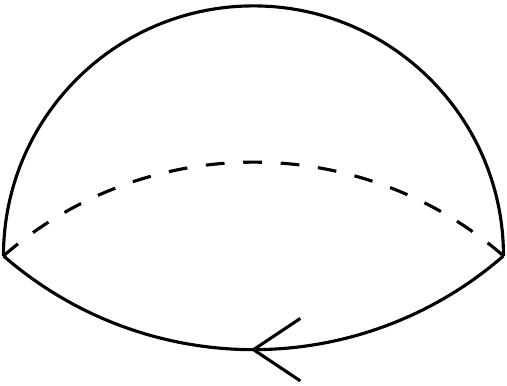}}) = \epsilon,\quad \mathsf{F}(\raisebox{-3pt}{\includegraphics[height=0.16in]{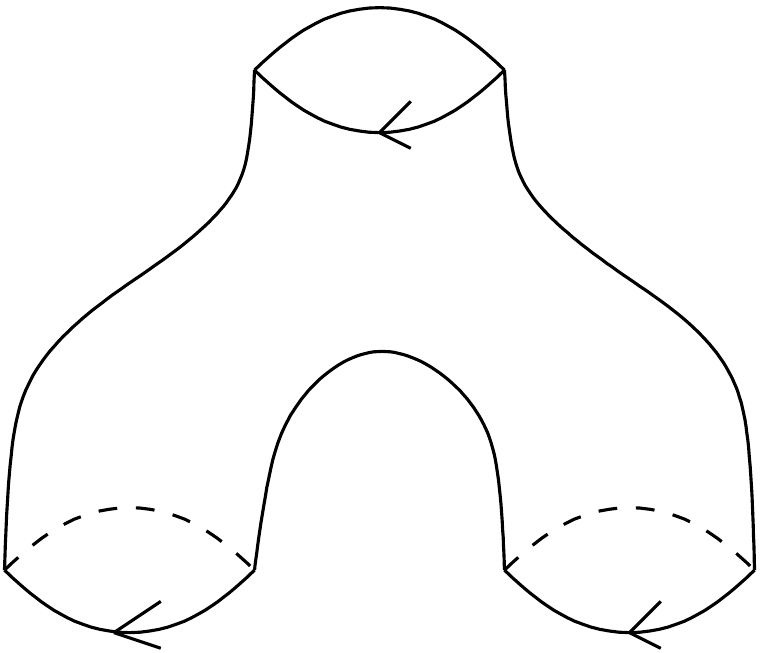}}) = m$, \quad $\mathsf{F}(\raisebox{-3pt}{\includegraphics[height=0.16in]{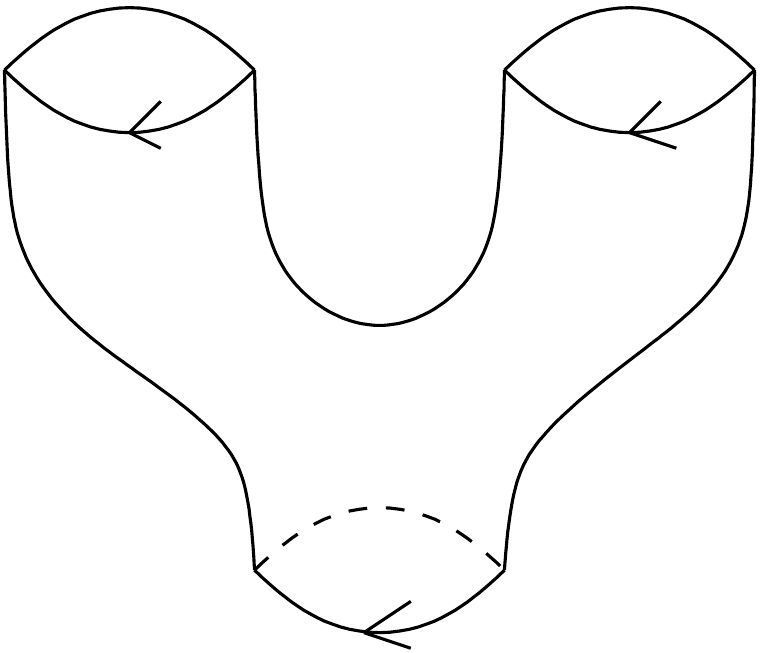}}) = \Delta.$ For more details see~\cite{Kh1}.

It is well known that $\mathsf{F}$ is well defined (it respects the relations between the set of generators of the category of oriented $(1+1)$-cobordisms, or the relations defining a Frobenius algebra).

A dot on a surface denotes multiplication by $X$. For example, the functor $\mathsf{F}$ applied to the `cup' with a dot produces the map $\mathbb{Z}[i][a] \rightarrow \mathcal{A}$ which takes $1$ to $X$. Moreover, the annulus $S^1 \times [0,1]$ is the identity cobordism from a circle to itself, and $\mathsf{F}$ associates to it the identity map $Id: \mathcal{A} \rightarrow \mathcal{A}$. To the same annulus with a dot, $\mathsf{F}$ associates the map which takes $1$ to $X$ and $X$ to $ a\cdot 1$ (see figure ~\ref{fig:meaning of dots}). A twice dotted surface is the multiplication by $X^2 = a\cdot 1$ endomorphism of $\mathcal{A}$. Therefore, $\mathsf{F}$( twice dotted surface) = $a \cdot \mathsf{F}$(surface with no dots). Dots can move freely on a connected component of an oriented surface. 

\begin{figure}[ht]
\includegraphics[height=.8in]{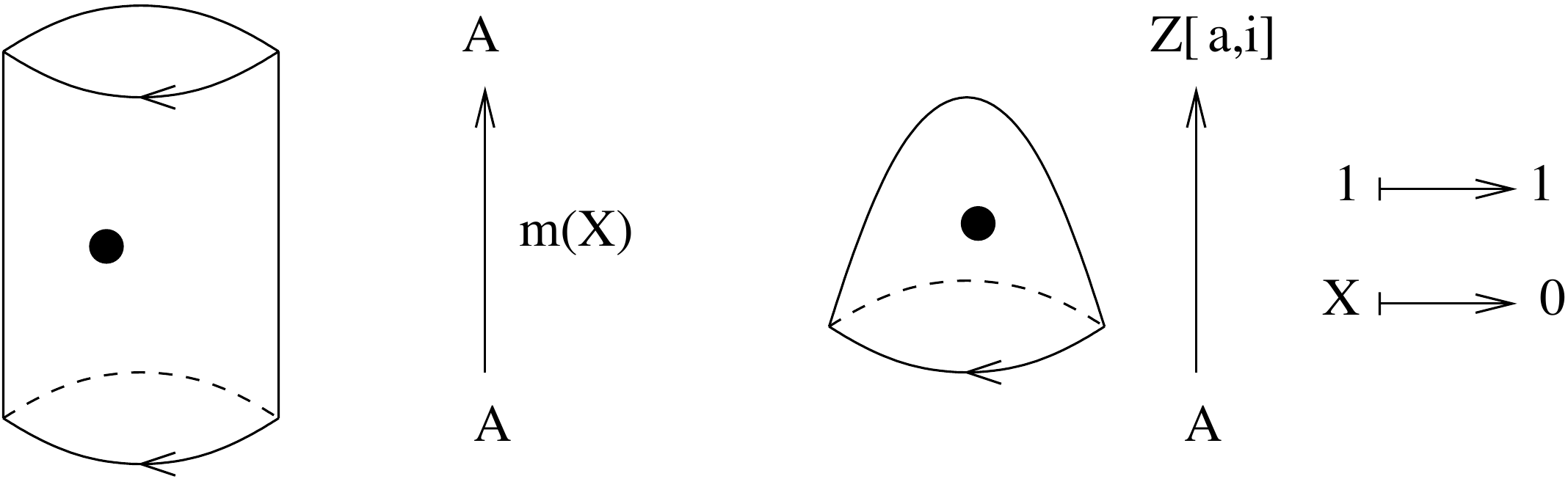}
\caption{The meaning of dots}
\label{fig:meaning of dots}
\end{figure}

A seamed cylinder (a cylinder with a singular circle) may be regarded as the endomorphism defined in figure ~\ref{fig:meaning of singular arcs}.
\begin{figure}[ht]
\includegraphics[height =0.8in]{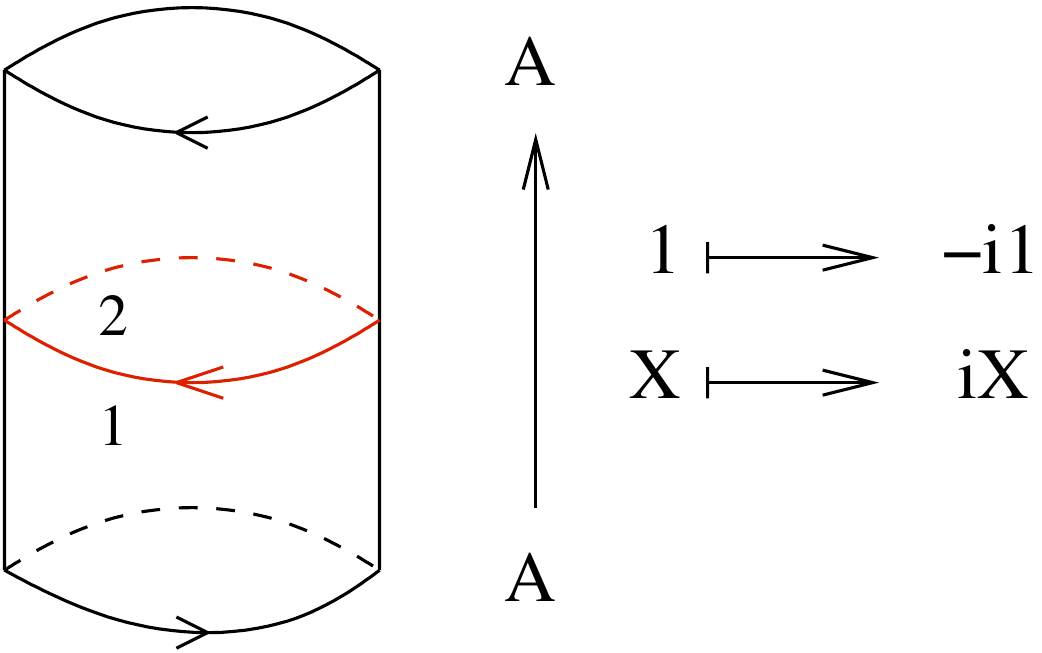} \qquad \includegraphics[height=.8in]{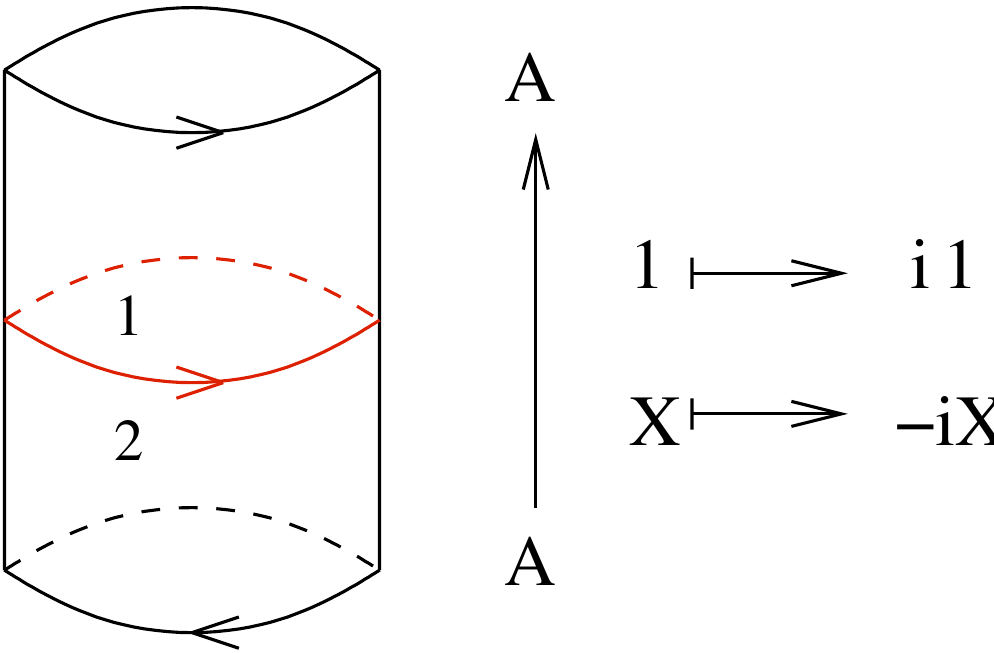}

\caption{The meaning of singular arcs}
\label{fig:meaning of singular arcs}
\end{figure}

In particular we have: $$\raisebox{-8pt}{\includegraphics[width=0.35in,height =0.35in]{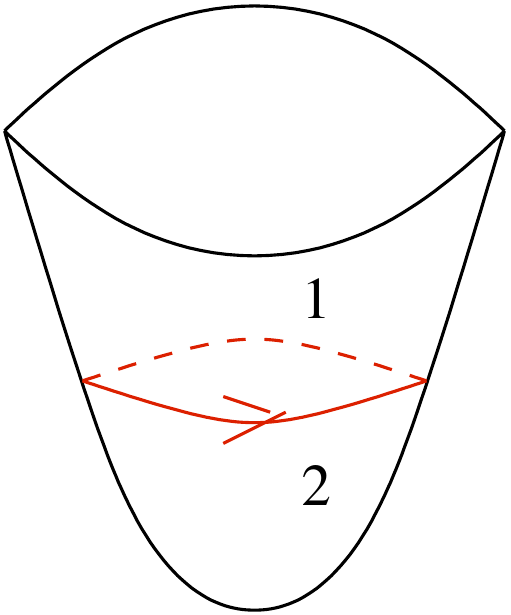}}:\mathbb{Z}[i][a] \longrightarrow \mathcal{A}, 1\rightarrow i1 \quad \text{and}\quad \raisebox{-8pt}{\includegraphics[width=0.35in,height =0.35in]{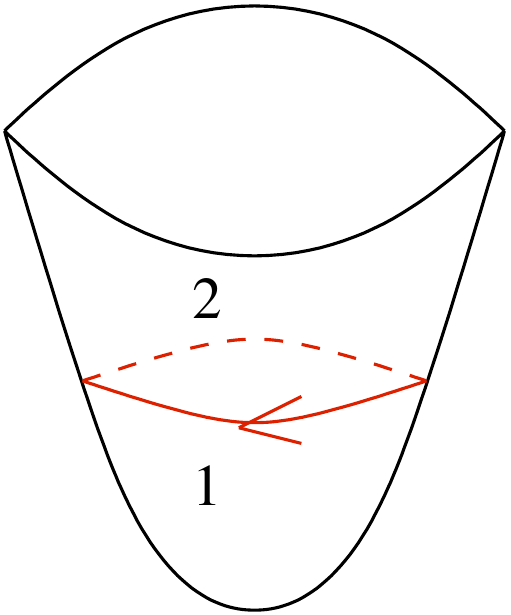}}:\mathbb{Z}[i][a] \longrightarrow \mathcal{A},1\rightarrow -i1.$$
\noindent Moreover,
$$\raisebox{-8pt}{\includegraphics[width=0.35in, height =0.35in]{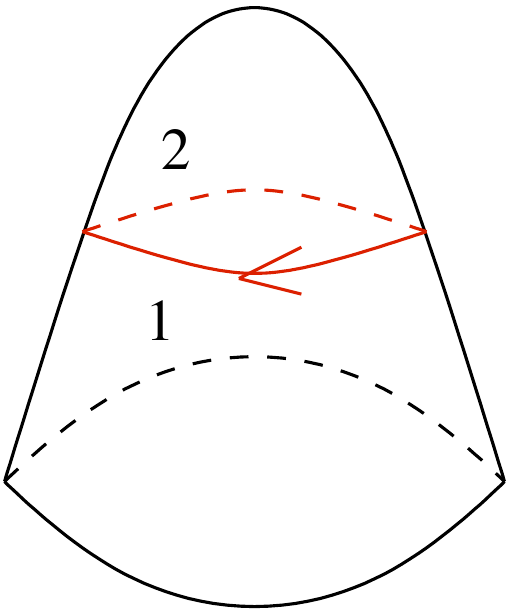}}:\mathcal{A}\longrightarrow \mathbb{Z}[i][a], \text{with}\, 1\rightarrow 0, X\rightarrow i$$
$$\raisebox{-8pt}{\includegraphics[width=0.35in, height =0.35in]{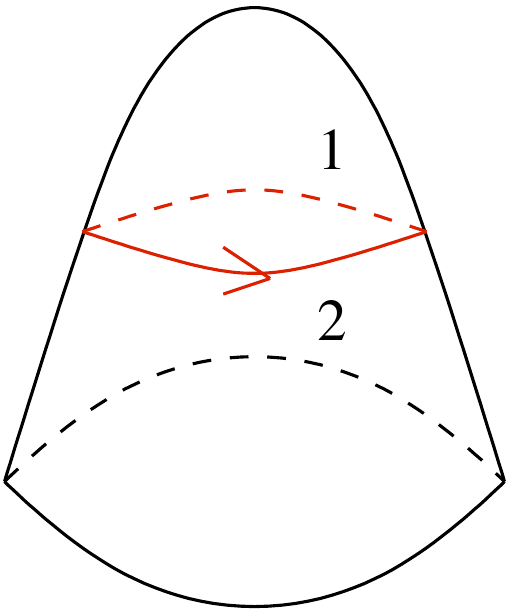}}:\mathcal{A}\longrightarrow \mathbb{Z}[i][a], \text{with} \, 1\rightarrow 0, X\rightarrow -i.$$

\begin{remark} $\mathsf{F}$ extends to a functor from the category of  dotted, seamed 2-dimensional cobordisms  to the category of graded $\mathbb{Z}[i][a]$-modules.
\end{remark}
The homomorphism $\mathsf{F}(S)$ associated with a cobordism $S$ with $d$ dots has degree given by the formula deg($S) =  -\chi(S) + 2d$, where $\chi$ is the Euler characteristic of $S$. Therefore, the multiplication by $X$ increases the degree by $2$. One can easily show that $\mathsf{F}$ is degree-preserving.

\section{\textbf{Local relations}}\label{relations l}

We mod out the morphisms of the category \textit{Foams} by the local relations $\ell$ = (2D, SF, S, UFO) below.
$$\xymatrix@R=2mm
{
\raisebox{-8pt}{
\includegraphics[height=0.3in]{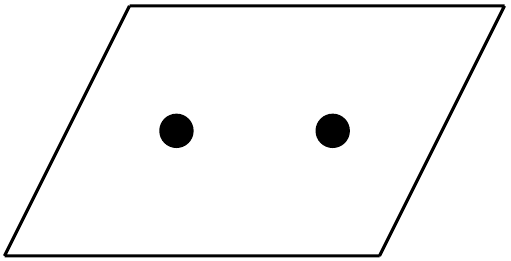}}=
a\raisebox{-8pt}{
\includegraphics[height=0.3in]{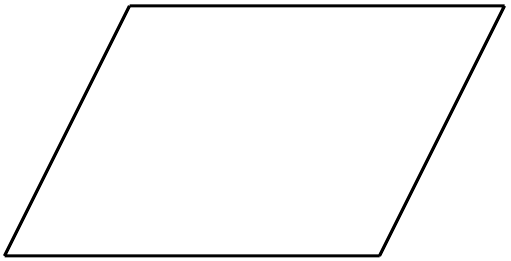}}
&\text{(2D)} \\
\raisebox{-22pt}{
\includegraphics[height=0.7in,width=0.4in]{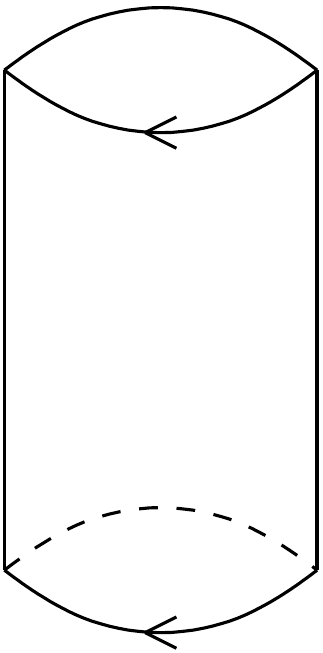}}=
\raisebox{-22pt}{
\includegraphics[height=0.7in,width=0.4in]{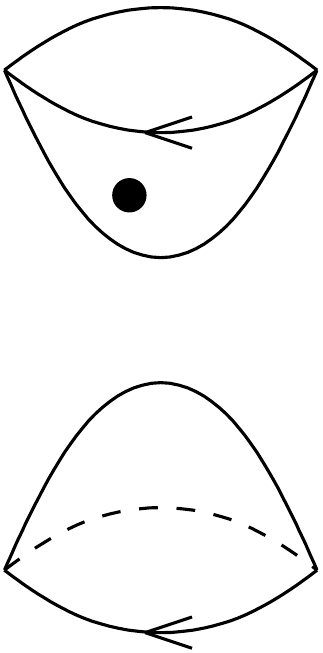}}+
\raisebox{-22pt}{
\includegraphics[height=0.7in,width=0.4in]{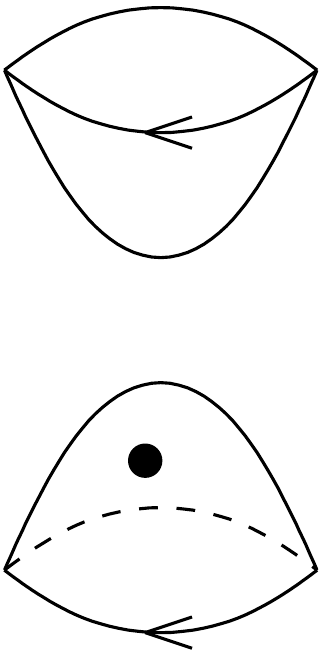}}
&\text{(SF)} \\
\raisebox{-10pt}{\includegraphics[width=0.4in]{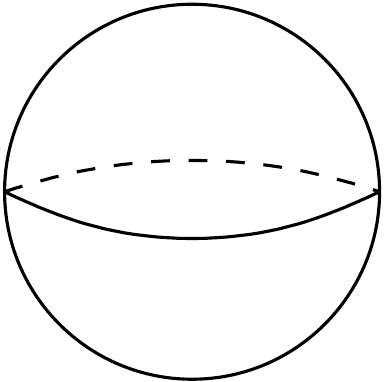}}=0,\quad
\raisebox{-10pt}{\includegraphics[width=0.4in]{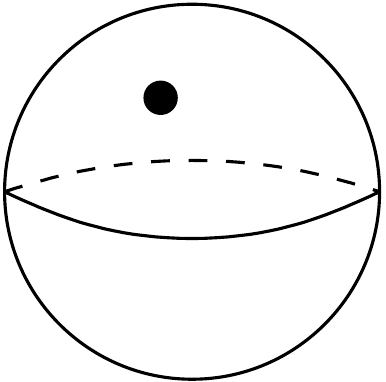}}=1
&\text{(S)} \\
\raisebox{-10pt}{\includegraphics[width=0.5in]{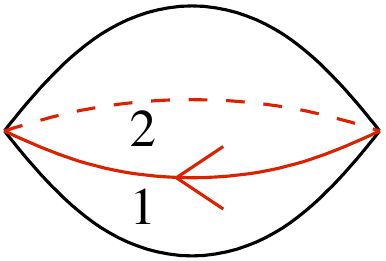}}=0=
\raisebox{-10pt}{\includegraphics[width=0.5in]{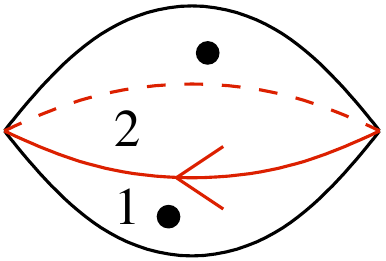}},\quad
\raisebox{-10pt}{\includegraphics[width=0.5in]{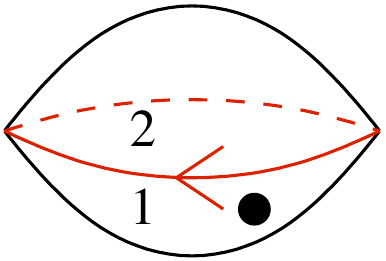}}=i= 
-  \raisebox{-10pt}{\includegraphics[width=0.5in]{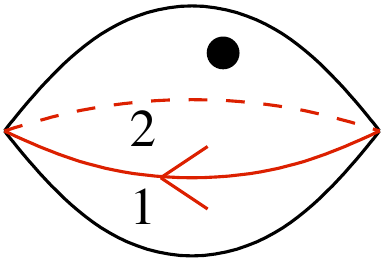}}
&\text{(UFO)} 
}$$

The local relations (S) and (UFO) say that whenever a cobordism contains a connected component which is a closed sphere without dots or a ufo-foam without dots or with a dot on each facet, it is set equal to $0$. Moreover, whenever a cobordism contains a connected component which is a closed sphere or a ufo-foam with one dot, that component may be dropped and replaced by a numerical factor of $1$ or $\pm i$, respectively.

Notice that the ordering of the facets of a foam in the (UFO) relations is the lower hemisphere followed by the upper hemisphere; that is, the lower hemisphere is the preferred facet for the singular circle. Reversing the order of the facets reverses the sign of the evaluation of the ufo-foam. In particular, a ufo-foam decorated with a dot on the preferred facet associated to the singular circle evaluates to $i$.

\begin{definition}
We denote by $\textit{Foams}_{/\ell}$ the quotient of the category $\textit{Foams}$ by the local relations $l$.
\end{definition}

When there are two or more dots on a facet of a foam we can use the (2D) relation to reduce it to the case when there is at most one dot.
The surgery formula (SF) implies the genus reduction formula in figure~\ref{fig:genusreduction}. 
\begin{figure}[ht]
$\xymatrix@R=2mm
{
\raisebox{-5pt}{
\includegraphics[height=0.6in,width=0.5in]{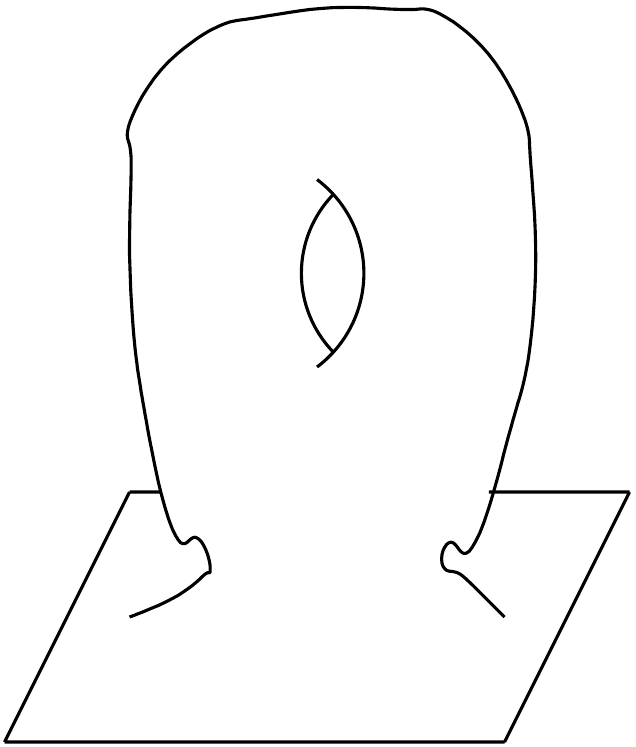}}=2
\raisebox{-8pt}{
\includegraphics[height=0.3in]{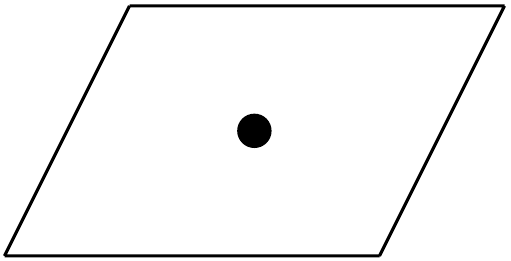}} 
}$
\caption{Genus reduction}\label{fig:genusreduction}
\end{figure}
In particular we have that a torus with no dots evaluates to 2, and a genus three, closed, connected and oriented surface evaluates to $8a$.
\begin{figure}[ht]
$\xymatrix@R=2mm
{
\raisebox{-8pt}{
\includegraphics[width=0.6in]{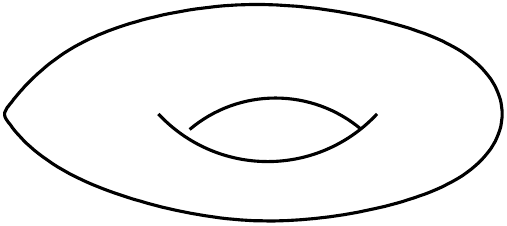}}=2,\quad
\raisebox{-8pt}{
\includegraphics[width=0.6in]{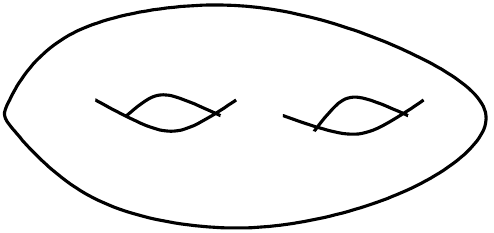}}=0, \quad
\raisebox{-8pt}{
\includegraphics[width=0.6in]{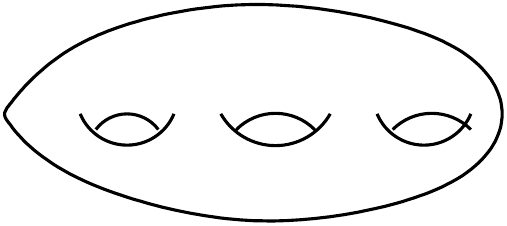}}=8a
}$
\end{figure}

A closed foam $S$ can be viewed as a morphism from the empty web to itself. By the relations $\ell$, we assign to $S$ an element of $\mathbb{Z}[i][a]$ called the \textit{evaluation} of $S$ and denote it by 
$\mathcal{F}(S)$. We view $\mathcal{F}$ as a functor from the category $\textit{Foams}_{/\ell}(\emptyset)$ to the category of $\mathbb{Z}[i][a]$-modules. Relation (SF) says that given a circle inside a facet of $S$, we can do a surgery on this circle, and if we call the resulting closed foams $S_1$ and $S_2$, we have:
\[
\mathcal{F}(S) = \mathcal{F}(S_1) + \mathcal{F}(S_2).
\]
The singular circles of a closed foam are disjoint from the circles on which we apply surgeries.

\begin{lemma}
The functor $\mathcal{F}$, when restricted to closed webs, becomes the same as the functor $\mathsf{F}$ of subsection~\ref{ssec:TQFT}.
\end{lemma}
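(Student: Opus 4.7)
The plan is to compare $\mathcal{F}$ and $\mathsf{F}$ on the subcategory of dotted, seamed $(1{+}1)$-cobordisms whose source and target are disjoint unions of oriented circles, i.e., closed webs with no singular points. On objects both functors send a disjoint union of $k$ circles to $\mathcal{A}^{\otimes k}$ and the empty web to $\mathbb{Z}[i][a]$, so agreement on objects is immediate. Since both functors are monoidal with respect to disjoint union and respect composition, it suffices to check agreement on a generating set of morphisms: the classical cup, cap, pair of pants and upside-down pants, together with a dot and the two orientations of the seamed cylinder.

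For each generator I would compute $\mathcal{F}$ of the generator by pre- and post-composing with (dotted or undotted) cups and caps to produce a closed foam, and read off each matrix coefficient of the corresponding map from its evaluation under $\mathcal{F}$. On the four classical generators every matrix coefficient reduces to a dotted sphere or a dotted torus, and relations (S), (SF) and (2D) yield the values $0$, $1$, $2$, $0$ in the relevant cases, reproducing exactly $\iota$, $\epsilon$, $m$, $\Delta$ as listed in Section~\ref{ssec:TQFT}. The dot endomorphism is sent by $\mathcal{F}$ to multiplication by $X$, with the defining relation $X^2 = a$ of $\mathcal{A}$ encoded precisely by (2D). For each of the two seamed cylinders, capping off the boundary circles with cups and caps (in all four dot-configurations) produces closed ufo-foams carrying between zero and two dots, and (UFO) evaluates these to $0$, $\pm i$, $\mp i$, and $0$, matching the explicit assignments $1 \mapsto \pm i$, $X \mapsto 0$ recorded in Section~\ref{ssec:TQFT}; the sign is pinned down by the preferred-facet convention built into (UFO).

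The main obstacle I expect is the comultiplication check. The coefficient of $1 \otimes 1$ in $\Delta(X) = X \otimes X + a(1\otimes 1)$ must come out to $a$, and extracting it from $\mathcal{F}$ requires capping an upside-down pants carrying a dot at its bottom with two dotted caps on top, collapsing the resulting dots on a single closed sphere, and reducing the three-dot sphere to $a$ via (2D) followed by (S). Once this bookkeeping, together with the parallel sign check for the two seamed cylinders, is in place, all remaining generator comparisons are routine applications of the local relations $\ell$.
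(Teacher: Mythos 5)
Your plan goes in the opposite logical direction from the paper. You treat $\mathcal{F}$ as a functor on the whole cobordism category, recover its action on each generator (cup, cap, pants, upside-down pants, dot, seamed cylinder) by capping off and reading the matrix coefficients from the resulting closed-foam evaluations, and compare with the formulas in Section~\ref{ssec:TQFT}. The paper instead checks the single statement that the already-well-defined TQFT $\mathsf{F}$ \emph{satisfies} each local relation (S), (SF), (UFO) (having already noted (2D)) and therefore descends to $\textit{Foams}_{/\ell}(\emptyset)$. The arithmetic is essentially the same, but the paper's direction is logically cleaner: checking that $\mathsf{F}$ obeys $\ell$ establishes in one stroke both that the $\ell$-evaluation $\mathcal{F}$ is consistent and that it agrees with $\mathsf{F}$. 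Your route implicitly assumes that $\mathcal{F}$ (the $\ell$-evaluation) is already a well-defined functor on all closed foams, but that consistency is only proved in the proposition immediately after this lemma — and its proof in turn invokes this lemma — so as stated your argument has a circularity that the paper's direction avoids.

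Two factual slips are also worth fixing. First, the four classical generators capped off with (possibly dotted) cups and caps produce only dotted spheres, never a torus; the value $2$ you cite is the torus evaluation (genus reduction) and does not arise as a matrix coefficient of $\iota$, $\epsilon$, $m$, or $\Delta$. Second, the claimed action "$1 \mapsto \pm i$, $X \mapsto 0$" for the seamed cylinder does not match the paper. What Section~\ref{ssec:TQFT} records explicitly is the seamed cup $1 \mapsto \pm i\,1$ and the seamed cap $1 \mapsto 0$, $X \mapsto \pm i$; and extracting the seamed cylinder's matrix from the (UFO) values $0, i, -i, 0$ gives an endomorphism that acts on $1$ and $X$ with \emph{opposite} signs — not the map you wrote, and also not multiplication by a single scalar. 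Your overall strategy is salvageable, but you would need to invoke consistency of $\ell$ explicitly (or reorganize the argument as the paper does) and correct these two values.
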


\begin{proof}
We have already seen in subsection~\ref{ssec:TQFT} that $\mathsf{F}$ satisfies the (2D) relation. It  only remains to show that $\mathsf{F}$ satisfies the (S), (SF) and (UFO) relations. 
\begin{enumerate}
\item To show that $\mathsf{F}$ satisfies the (SF) relation, one has to show:
\[ \id = (m(X) \circ \iota) \circ \epsilon + \iota \circ(\epsilon \circ m(X)),\] where $\id$ is the identity endomorphism of $\mathcal{A}$. This holds.
\item A sphere is a cup followed by a cap, so that, one has to show $\epsilon \circ \iota = 0$:
\[1 \stackrel{\iota}{\longrightarrow} 1 \stackrel{\epsilon}{\longrightarrow} 0.\]
\noindent A sphere with a dot is either a cup with a dot followed by a cap, or a cup followed by a cap with a dot, i.e. one has to show $\epsilon \circ m(X) \circ \iota = 1$, where $m(X$) stands for multiplication by $X$ endomorphism of $\mathcal{A}$ :
\[1\stackrel{ \iota}{\longrightarrow} 1\stackrel{m(X)}{\longrightarrow} X \stackrel{\epsilon}{\longrightarrow} 1.\]
\item A \textit{ufo} foam (without dots) is a cup with a clockwise oriented singular circle followed by a cap, and we have:
\[1 \rightarrow i1 \rightarrow i\epsilon(1) = 0.\]
Moreover, the \textit{ufo} with a dot on each facet is a cup with a dot followed by a cylinder with a clockwise oriented  singular circle and then followed by a cap with a dot. Composing these we obtain:
\[1\stackrel {m(X) \iota}{\longrightarrow} X \longrightarrow iX \stackrel {\epsilon m(X)}{\longrightarrow} ia \epsilon(1) = 0.\]
\end{enumerate}
One can similarly show that the other two (UFO) relations hold.
\end{proof}

\begin{corollary}
$\mathsf{F}$ descends to a functor $\textit{Foams}_{/\ell}(\emptyset) \rightarrow \mathbb{Z}[i][a]$-Mod.
\end{corollary}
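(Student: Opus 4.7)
The plan is to invoke the universal property of the quotient category: it suffices to show that $\mathsf{F}(S) = \mathsf{F}(S')$ whenever $S, S' \in \textit{Foams}(\emptyset)$ are two closed foams related by a single application of one of the defining relations (2D), (SF), (S), or (UFO). The previous Lemma already verifies these identities for the minimal ``local'' witnesses sitting by themselves as morphisms between empty or circle webs; the remaining task is to propagate those identities to arbitrary contexts inside a larger closed foam.

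The propagation is a standard local-to-global argument resting on the functoriality of $\mathsf{F}$. If $S$ and $S'$ agree outside a cylindrical region $R \cong D^2 \times [0,1]$ chosen so that each horizontal face $\partial_\pm R \cap S$ is a legitimate web, then cutting along the top and bottom faces of $R$ produces decompositions $S = S_{\text{top}} \circ L \circ S_{\text{bot}}$ and $S' = S_{\text{top}} \circ L' \circ S_{\text{bot}}$ in $\textit{Foams}(\emptyset)$, with the common pieces $S_{\text{top}}, S_{\text{bot}}$ unchanged and only the local pieces $L, L'$ differing. Applying $\mathsf{F}$ and using compatibility with composition reduces the desired equality to $\mathsf{F}(L) = \mathsf{F}(L')$.

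For each of the four relations, this equality is exactly one of the calculations already performed in the previous Lemma: (2D) boils down to $X^2 = a \cdot 1$ in $\mathcal{A}$; (SF) to the Frobenius identity $\id_{\mathcal{A}} = (m(X) \circ \iota) \circ \epsilon + \iota \circ (\epsilon \circ m(X))$; (S) to $\epsilon \circ \iota = 0$ and $\epsilon \circ m(X) \circ \iota = 1$; and (UFO) to the values of the singular cups and caps computed in section~\ref{ssec:TQFT}, with the sign dependence on the orientation of the singular circle matching the $\pm i$ in the stated relations.

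The only real obstacle is the decomposition step: one must check that the cylindrical neighborhood $R$ of the local piece can be chosen so that its horizontal boundary intersects $S$ in a web, so that $S = S_{\text{top}} \circ L \circ S_{\text{bot}}$ is a genuine composition of morphisms in $\textit{Foams}(\emptyset)$. This is immediate for each of the pictured local relations, since in every case the local piece is already presented as a cylindrical cobordism between explicit web diagrams. With this in hand, the universal property of the quotient yields the required factorization $\mathsf{F} \colon \textit{Foams}_{/\ell}(\emptyset) \to \mathbb{Z}[i][a]\text{-Mod}$.
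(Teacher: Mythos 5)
Your proof is correct and agrees in substance with what the paper intends: the preceding Lemma verifies that $\mathsf{F}$ satisfies the relations (2D), (SF), (S), (UFO) on the minimal pictures, and the Corollary is then an immediate application of the universal property of the quotient, which the paper states without comment. You have fleshed out the implicit local-to-global step (factoring $S = S_{\text{top}}\circ L\circ S_{\text{bot}}$ and using functoriality of $\mathsf{F}$) that makes the descent rigorous. One small point worth tightening: the relations (S) and (UFO) replace an entire closed connected component by a scalar rather than relating two cylindrical local pieces between webs; for those two, the propagation step should be phrased via multiplicativity of the TQFT under disjoint union (which holds since $\mathsf{F}$ is monoidal) rather than via a cylindrical cut, though this is a cosmetic adjustment rather than a gap.
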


\begin{proposition}
The set of local relations $\ell$ are consistent and determine uniquely the evaluation of every closed foam. 
\end{proposition}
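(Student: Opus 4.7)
The statement has two parts: consistency of the set of relations $\ell$, and uniqueness of the evaluation they determine. The plan for consistency is to invoke the preceding corollary: the functor $\mathcal{F}$ extending $\mathsf{F}$ descends to $\textit{Foams}_{/\ell}(\emptyset) \to \mathbb{Z}[i][a]$-Mod, and hence assigns to every closed foam a well-defined element of $\mathbb{Z}[i][a]$ that respects each relation in $\ell$. In particular, no sequence of applications of the relations in $\ell$ can force a contradictory identification in the ground ring. Given this, uniqueness of the evaluation will follow once we produce, for every closed foam $S$, a finite reduction of $S$ to an element of $\mathbb{Z}[i][a]$ using only the relations in $\ell$; any such reduction is then forced to equal $\mathcal{F}(S)$, which removes all ambiguity.

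The reduction algorithm I would write down proceeds in four stages. Stage 1 isolates each singular circle of $S$ into a separate ufo-foam. Around a singular circle $C$, pick small parallel circles on each of the two facets meeting $C$ and apply (SF) to them in turn: each application cuts the corresponding facet along the parallel circle and caps both sides off, with a dot on one cap in each summand, so after two applications the $V \times S^1$ neighborhood of $C$ is detached as a ufo-foam with two disk facets (possibly decorated with dots) connected to the rest of $S$ only through dotted planar caps on the originating facets. Doing this at every singular circle expresses $S$ as a $\mathbb{Z}[i][a]$-linear combination of closed foams, each of which is a disjoint union of dotted closed orientable surfaces and isolated dotted ufo-foams.

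Stage 2 reduces the genus of every remaining facet to $0$ by iterating (SF) on non-separating circles together with the genus reduction formula of figure~\ref{fig:genusreduction}. Stage 3 applies (2D) to each facet until it carries at most one dot. Stage 4 evaluates the resulting connected components: dotted spheres by (S) and dotted ufo-foams by (UFO), producing an element of $\mathbb{Z}[i][a]$.

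The main obstacle I expect is Stage 1, where one must verify carefully that the two surgeries around a singular circle really do detach it cleanly as a standard ufo-foam with the correct ordering of facets, rather than producing a more entangled configuration of seams, and that the procedure terminates in finitely many steps (induction on the number of singular circles together with the fact that after each step the newly produced singular-circle-bearing component has strictly simpler facet topology). Stages 2--4 are direct consequences of the relations as displayed. The consistency argument above then guarantees that the resulting value is independent of all intermediate choices of circles and orderings, because any outcome must agree with $\mathcal{F}(S)$.
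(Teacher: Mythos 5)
Your proof is correct and follows essentially the paper's approach: surger each singular circle via two parallel (SF) surgeries to isolate ufo-foams from dotted closed surfaces, then evaluate with (2D), (S), and (UFO), with consistency grounded in the fact that $\mathsf{F}$ respects the relations $\ell$. Your only departure is to derive consistency directly from the preceding corollary (that $\mathsf{F}$ descends to $\textit{Foams}_{/\ell}(\emptyset)$) rather than re-verifying it by comparing two reduction orders on a ufo-foam as the paper does --- a mild streamlining of the same argument.
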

\begin{proof}
Let $C$ be a singular circle of a closed foam $S$. Deform $C$ to two circles $C_1, C_2$ inside the annuli of $S$ near the circle $C$, and apply the surgery formula on both $C_1$ and $C_2$. By doing this for each singular circle of $S$, we get:
\[\mathcal{F}(S)= \sum_i b_i \mathcal{F}(S_i) 
\]
where $b_i \in \mathbb{Z}[i][a]$, and each $S_i$ is a disjoint union of dotted closed orientable surfaces and \textit{ufo}-foams. Then, relations (2D), (S) and (UFO) determine $\mathcal{F}(S_i)$.

To show that the local relations $\ell$ are consistent, we remark first that  when $S$ has no singular circles, the consistency of $\mathcal{F}(S)$ follows from functoriality of the functor $\mathsf{F}$ on dotted oriented surfaces without seams, and the previous lemma. 

If $S$ is a foam containing singular circles, we separate each of them from the rest of $S$ by applying surgeries, to create a \textit{ufo}-foam for each circle. Therefore, it remains to show consistency when $S$ is a  \textit{ufo}-foam. There are two ways to evaluate a \textit{ufo}-foam, namely using relation (UFO), or applying a surgery on a circle that lies in the interior of one of its two facets, and  evaluate using (2D), (S) and (UFO) relations. These two ways to evaluate a \textit{ufo}-foam give the same answer.
\end{proof}
\begin{corollary}
The evaluation of closed foams is multiplicative with respect to disjoint unions of closed foams:
$\mathcal{F}(S_1 \cup S_2) = \mathcal{F}(S_1) \mathcal{F}(S_2)$.
\end{corollary}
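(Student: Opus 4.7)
The plan is to leverage the reduction procedure in the proof of the preceding proposition, which reduces any closed foam to a linear combination of closed foams built from dotted closed orientable surfaces and \textit{ufo}-foams, and then observe that at the base case multiplicativity is manifest.

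First I would note that the surgery formula (SF), together with the deformation of each singular circle into two nearby parallel circles, is a strictly local operation: it is performed inside a small annular neighborhood of a chosen circle lying in a facet of the foam. If $S = S_1 \cup S_2$ is a disjoint union, then every singular circle of $S$ lies in exactly one of the two components, and every surgery circle used in the reduction can be chosen to lie entirely in either $S_1$ or $S_2$. Consequently, applying the reduction procedure to $S$ is equivalent to applying it to $S_1$ and $S_2$ independently. Thus we obtain expressions
\[
\mathcal{F}(S_1) = \sum_{i} \alpha_i \,\mathcal{F}(T_i), \qquad \mathcal{F}(S_2) = \sum_{j} \beta_j \,\mathcal{F}(U_j),
\]
with $\alpha_i, \beta_j \in \mathbb{Z}[i][a]$ and each $T_i$, $U_j$ a disjoint union of dotted closed orientable surfaces and \textit{ufo}-foams, while simultaneously
\[
\mathcal{F}(S_1 \cup S_2) = \sum_{i,j} \alpha_i \beta_j \,\mathcal{F}(T_i \cup U_j).
\]
So the corollary reduces to the base case of disjoint unions of dotted closed surfaces and \textit{ufo}-foams.

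Next I would handle the base case. By the previous lemma, $\mathcal{F}$ restricted to dotted closed orientable surfaces coincides with $\mathsf{F}$, which is a $2$-dimensional TQFT and is therefore symmetric monoidal with respect to disjoint union; since each such surface is viewed as an endomorphism of $\mathbb{Z}[i][a]$, this monoidal structure gives precisely multiplicativity. For each \textit{ufo}-foam component, relation (UFO) evaluates that connected component to a scalar in $\mathbb{Z}[i][a]$ (namely $0$ or $\pm i$), independently of any other component present. Combining these evaluations componentwise with the multiplicativity for the closed-surface pieces yields $\mathcal{F}(T_i \cup U_j) = \mathcal{F}(T_i)\,\mathcal{F}(U_j)$ for all $i,j$.

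Putting everything together, we get
\[
\mathcal{F}(S_1 \cup S_2) = \sum_{i,j} \alpha_i \beta_j \,\mathcal{F}(T_i)\,\mathcal{F}(U_j) = \Bigl(\sum_i \alpha_i \mathcal{F}(T_i)\Bigr)\Bigl(\sum_j \beta_j \mathcal{F}(U_j)\Bigr) = \mathcal{F}(S_1)\,\mathcal{F}(S_2),
\]
which is the desired identity. The only subtle point — and what I expect to be the main bookkeeping obstacle — is verifying that the reduction of $S_1 \cup S_2$ can always be chosen to respect the decomposition into the two pieces; this rests on the fact that both the singular circles and the auxiliary surgery circles can be chosen locally within a single component, and on the consistency guaranteed by the previous proposition, which ensures that the final scalar in $\mathbb{Z}[i][a]$ does not depend on the choices made during the reduction.
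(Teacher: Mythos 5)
The paper states this corollary without proof, treating it as an immediate consequence of the preceding proposition, so there is no official argument to compare against; your proposal supplies the implicit reasoning and it is correct. The two essential points are in place: first, the surgeries used to reduce a closed foam (deforming each singular circle to two parallel circles and applying (SF)) are strictly local operations performed inside a single facet, so on a disjoint union $S_1 \cup S_2$ they can be carried out entirely within one component at a time, and by the consistency part of the proposition the resulting evaluation does not depend on having made these local choices; second, at the base level the relations (2D), (S), and (UFO) are all formulated component-by-component (a sphere or ufo component is deleted and replaced by a scalar, and excess dots are traded for powers of $a$ on a single facet), while for the closed-surface pieces the identification of $\mathcal{F}$ with the TQFT $\mathsf{F}$ gives multiplicativity since $\mathsf{F}$ is monoidal. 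Your explicit appeal to the well-definedness established in the proposition is exactly the right observation to make the per-component reduction legitimate, and is the point a hasty reader might overlook.

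One minor remark: you invoke the monoidality of $\mathsf{F}$ for the closed-surface base case, which is perfectly valid; an equivalent and perhaps more self-contained route within the quotient category would be to note that the genus-reduction formula together with (2D) and (S) reduces every dotted closed surface, component by component, to a scalar in $\mathbb{Z}[i][a]$, so multiplicativity on that piece also follows purely from the local relations $\ell$. Either formulation is fine.
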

\begin{corollary}
If a closed foam $S'$ is obtained from a closed foam $S$ by reversing the order of the facets at a singular circle of $U$, then
\[
\mathcal{F}(S')  = - \mathcal{F}(S)
\]
\end{corollary}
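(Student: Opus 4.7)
The plan is to reduce the statement to the case where $S$ is a single ufo-foam, where it then follows directly from the (UFO) relations. Let $C$ denote the singular circle of $S$ at which the ordering of facets is to be reversed. Following the technique used in the proof of the preceding proposition, I would first apply the surgery formula (SF) to two small circles $C_1, C_2$ lying in the two annular collars of $C$ inside the facets meeting along $C$. This expresses
\[ \mathcal{F}(S) \;=\; \sum_{j} \mathcal{F}(U_j)\,\mathcal{F}(T_j), \]
where, by the multiplicativity corollary, each summand factors through a disjoint-union decomposition: $U_j$ is a ufo-foam containing $C$ (possibly carrying dots on either facet coming from the cap-and-cup pieces of the surgery), and $T_j$ is a closed foam disjoint from $C$, obtained from the remainder of $S$ after excising the neighborhood of $C$ and capping the resulting boundary circles.

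Next I would observe that carrying out the identical surgeries on $S'$ produces exactly the same decomposition, except that each ufo-foam $U_j$ is replaced by $U_j'$, the same ufo-foam with its facet ordering reversed; the pieces $T_j$ are identical because they are disjoint from the singular circle $C$ and are insensitive to its facet ordering. By the (UFO) relations one has $\mathcal{F}(U_j') = -\mathcal{F}(U_j)$ in every dot configuration: the undotted and doubly dotted cases are both zero (so the sign flip is trivially satisfied), while the single-dot cases evaluate to $\pm i$ with the sign governed by which facet is the preferred one, so that swapping the ordering exchanges $i$ and $-i$. Summing and using multiplicativity then yields $\mathcal{F}(S') = -\mathcal{F}(S)$.

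The main obstacle I expect is making the isolation step genuinely clean, namely verifying that the surgery circles $C_1, C_2$ can be chosen in small annular neighborhoods of $C$ that are disjoint from all other singular circles and from any dots of $S$, so that the resulting ufo component $U_j$ really is disjoint from $T_j$ in the sense required by the multiplicativity corollary. Once this local separation is in place, the dot bookkeeping is automatic and the $\pm$ analysis at the ufo level is immediate from the (UFO) line. A brief remark that the same argument applies to each singular circle separately would also make clear that the claim is local in $C$ and does not depend on the global topology of $S$.
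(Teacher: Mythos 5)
Your proposal is correct and follows the approach the paper intends: the corollary is stated immediately after the consistency proposition precisely because the surgery decomposition used there (isolating each singular circle into a ufo-foam via two parallel surgery circles in the adjacent annuli) reduces the claim to the sign flip in the (UFO) relations, which is exactly what you do. The "obstacle" you flag—positioning $C_1, C_2$ away from other singular circles and dots—is not a real issue, since distinct singular circles are disjoint and dots can be moved freely within their facets away from $C$; you could state this in one sentence rather than treat it as a potential gap.
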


Let's look at the following example, in which we have reversed the ordering of the facets near the singular circle on the top.
$$\raisebox{-13pt}{
\includegraphics[width=0.5in]{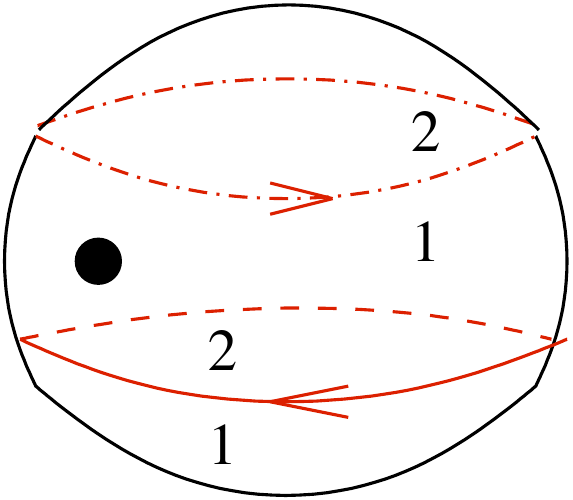}} = 1 = 
- \raisebox{-13pt}{\includegraphics[width=0.5in]{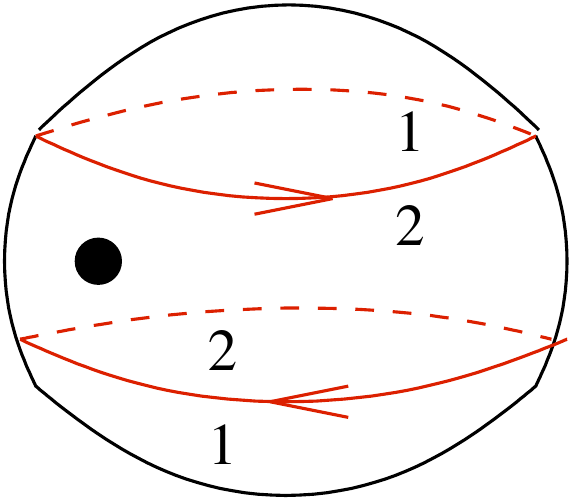}}$$
 One can easily check the above identity, by applying  (SF) and (UFO) relations.

\begin{lemma}
If $S$ is a closed surface, then $\mathcal{F}(S) = 0$ in the following cases:
\begin{enumerate}
\item $S$ has even genus,
\item $S$ has odd genus and an odd number of dots.
\end{enumerate}
\end{lemma}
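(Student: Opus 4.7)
My plan is to reduce every dotted closed surface to a dotted sphere using only the two local simplifications already available: relation (2D), which converts two dots on a facet into a multiplicative factor of $a$, and the genus reduction identity derived from (SF) in figure~\ref{fig:genusreduction}, which converts a handle into a factor of $2$ accompanied by an added dot. The proof is then finished by relation (S), which evaluates the sphere with zero or one dot.

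Writing $S_{g,d}$ for a closed, connected, oriented surface of genus $g$ carrying $d$ dots, I would first iterate the genus reduction formula $g$ times to obtain
\[
\mathcal{F}(S_{g,d}) \;=\; 2^{g}\,\mathcal{F}(S_{0,\,g+d}).
\]
Next I would apply (2D) to the sphere on the right to absorb pairs of dots as factors of $a$, giving $\mathcal{F}(S_{0,\,g+d}) = a^{\lfloor(g+d)/2\rfloor}\,\mathcal{F}(S_{0,r})$, where $r\in\{0,1\}$ is the parity of $g+d$. Since (S) gives $\mathcal{F}(S_{0,0})=0$ and $\mathcal{F}(S_{0,1})=1$, the value $\mathcal{F}(S_{g,d})$ vanishes precisely when $g+d$ is even. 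Case~(1) of the lemma (even genus, no dots present, or an even number by a preliminary (2D) reduction) has $g+d$ even; case~(2) (odd genus, odd number of dots) has $g+d$ equal to the sum of two odd numbers, hence even. In both cases the vanishing condition is met.

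I do not expect a real obstacle here — the argument is an organized unwinding of three local relations — but one must take care to track the dots introduced by successive genus reductions so that the final (2D) reduction operates on the correct count. An alternative, slightly more parallel presentation is to pair two genus reductions with a single application of (2D) to extract the recursion $\mathcal{F}(S_{g,d})=4a\,\mathcal{F}(S_{g-2,d})$, which drops the genus by $2$ at a time and handles the two parity cases separately: in case~(1) the recursion terminates at $\mathcal{F}(S_{0,d})$ with $d$ even, which is $a^{d/2}\mathcal{F}(S_{0,0})=0$; in case~(2) it terminates at $\mathcal{F}(S_{1,d})$ with $d$ odd, which the same two moves reduce to $2a^{(d+1)/2}\mathcal{F}(S_{0,0})=0$.
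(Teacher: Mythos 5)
Your proof is correct and expands precisely the one-line argument the paper gestures at when it says the lemma "follows easily from the local relations $\ell$": iterate the genus-reduction formula derived from (SF) until only a dotted sphere remains, absorb pairs of dots as factors of $a$ via (2D), and finish with (S). The uniform criterion you extract, that $\mathcal{F}(S_{g,d})=0$ exactly when $g+d$ is even, is the clean way to see both cases at once, and your parenthetical caveat in case~(1) is warranted: as your computation shows, a once-dotted sphere evaluates to $1$ (and a torus with no dots to $2$), so "even genus" alone is not sufficient; the intended hypothesis in case~(1) must be even genus together with an even number of dots (possibly zero), which is what you assume.
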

\begin{proof} This follows easily from the local relations $\ell$.
\end{proof}

The relations in $\ell$ imply a set of useful relations depicted in figure~\ref{fig:exchanging dots} which establish the way we can exchange dots between two neighboring facets. Notice that there is another variant of these relations, corresponding for the case when the singular arcs are oriented downwards.

\begin{figure}[ht]
$$\xymatrix@R=2mm{
\raisebox{-22pt}{\includegraphics[height=.6in]{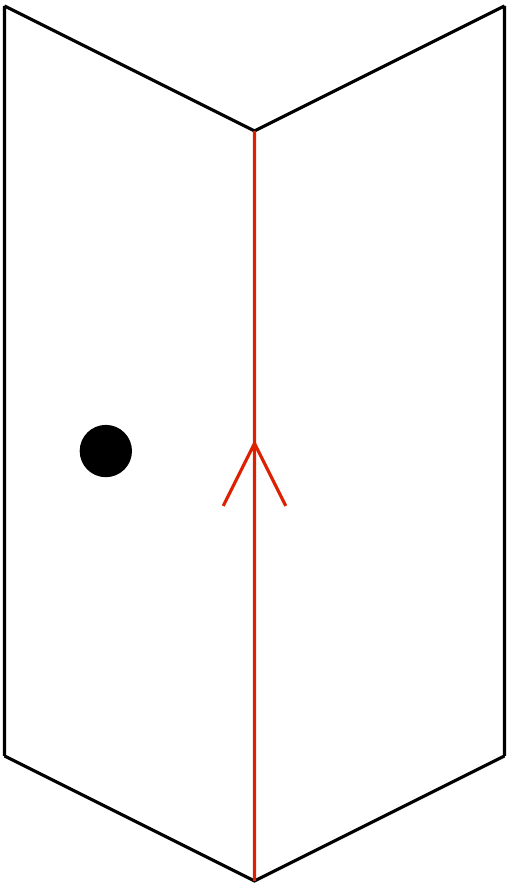}} +
\raisebox{-22pt}{\includegraphics[height=.6in]{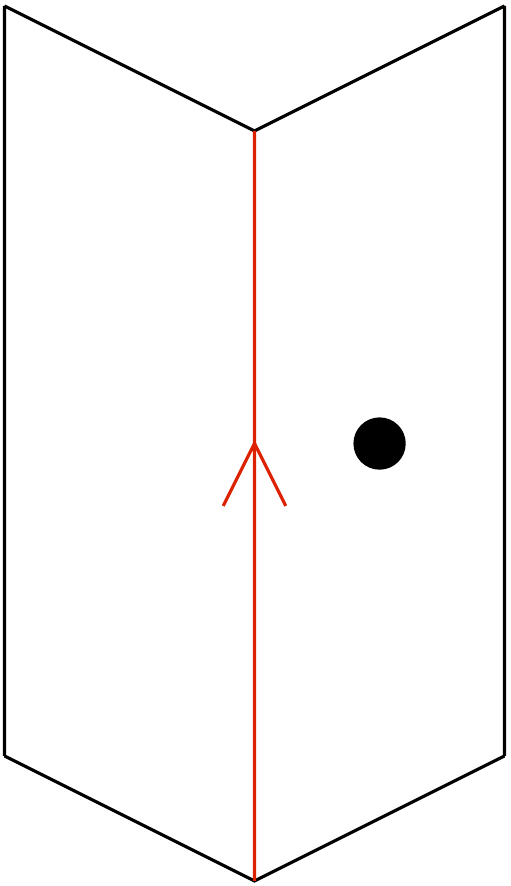}} = 0 \\
\raisebox{-22pt}{\includegraphics[height=.6in]{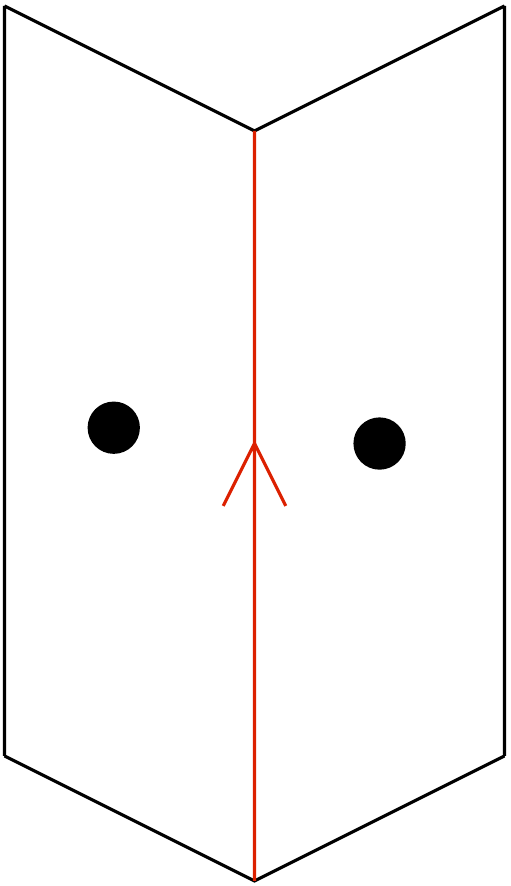}} = -a\, 
 \raisebox{-22pt}{\includegraphics[height=.6in]{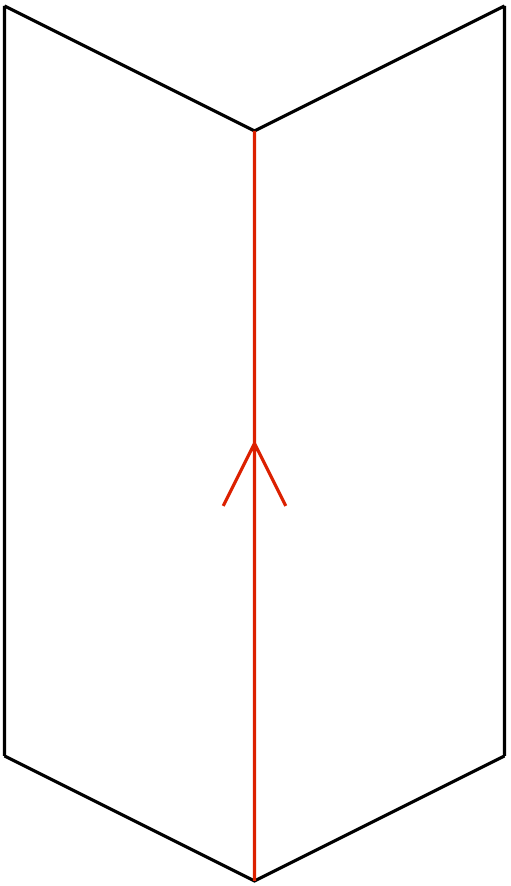}}
}$$
\caption{Exchanging dots between facets}
\label{fig:exchanging dots}
\end{figure}

\begin{definition}\label{def:quotient category}
For webs $\Gamma, \Gamma'$, foams $S_i \in \Hom_{Foams_{/\ell}}(\Gamma,\Gamma')$ and $c_i \in \mathbb{Z}[i][a]$ we say that $\sum_i c_iS_i = 0$ if and only if $\sum_i c_i \mathcal{F}( V'S_i V) = 0$ holds, for any foam $V \in \Hom_{Foams_{/\ell}}(\emptyset, \Gamma)$ and $V' \in \Hom_{Foams_{/\ell}}(\Gamma', \emptyset).$ 
\end{definition}
\begin{definition}
If $S$ is a foam (closed or not) with $d$ dots in $\textit{Foams}(B)$ we define the \textit{grading} of $S$ by deg($S) = -\chi(S) + \frac{1}{2}\vert B\vert +2d$, where $\chi$ is the Euler characteristic and $\vert B\vert$ is the cardinality of $B$.

One can easily show that deg($S_1S_2$)=deg($S_1$)+deg($S_2$), for any composable foams $S_1, S_2$.
\end{definition}

\begin{example}
\[
\text{deg} \left(\raisebox{-5pt}{\includegraphics[height=0.28in]{cuplo.pdf}}\right) = 
\text{deg} \left(\raisebox{-5pt}{\includegraphics[height=0.3in]{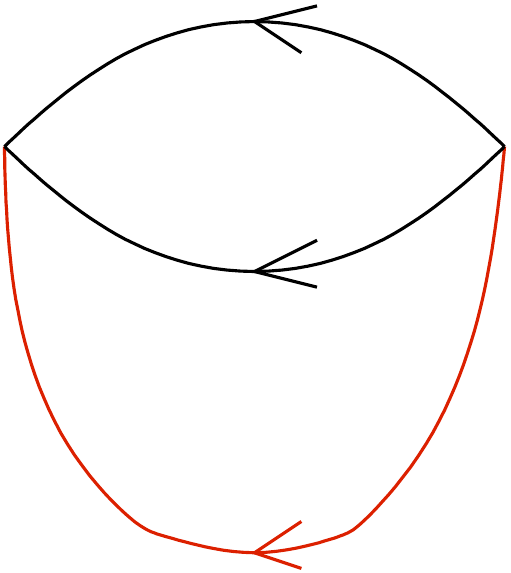}} \right) = 
\text{deg} \left(\raisebox{-5pt}{\includegraphics[height=0.3in]{caplo.pdf}} \right) = 
\text{deg} \left(\raisebox{-5pt}{\includegraphics[height=0.3in]{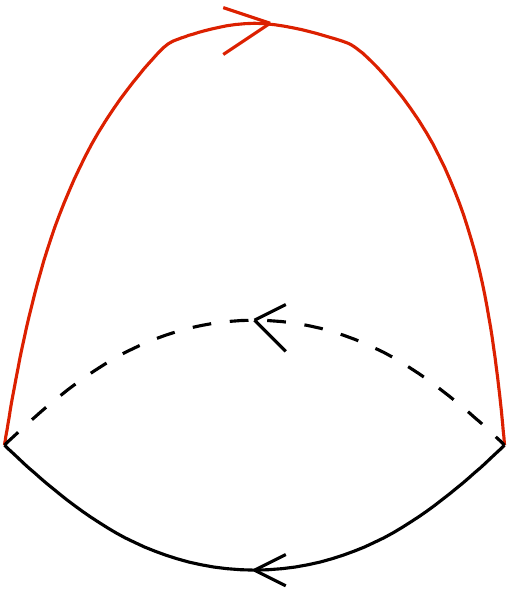}} \right) = -1,\]
\[
\text{deg} \left(\raisebox{-5pt}{\includegraphics[height=0.28in]{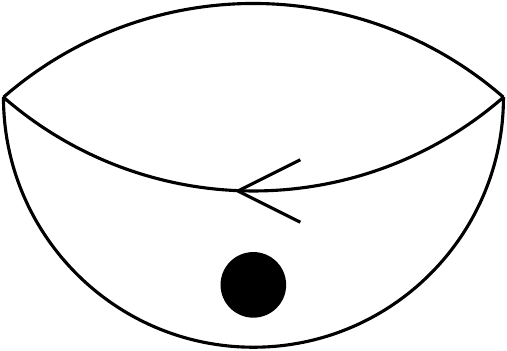}} \right) = 
\text{deg} \left(\raisebox{-5pt}{\includegraphics[height=0.3in]{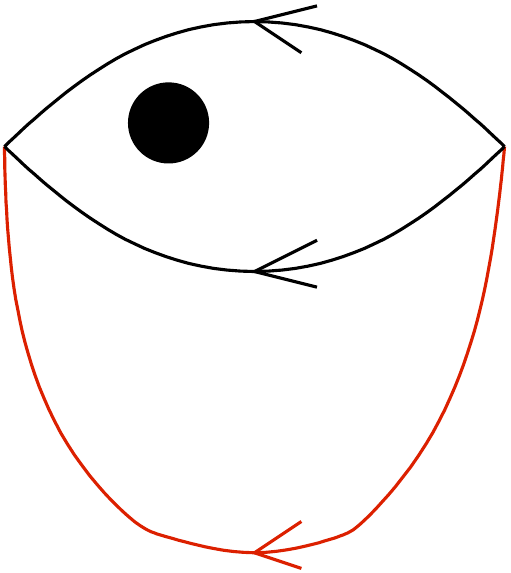}} \right) = 
\text{deg} \left(\raisebox{-5pt}{\includegraphics[height=0.3in]{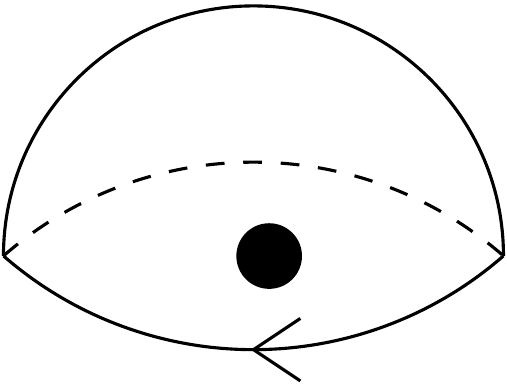}} \right) = 
\text{deg} \left(\raisebox{-5pt}{\includegraphics[height=0.3in]{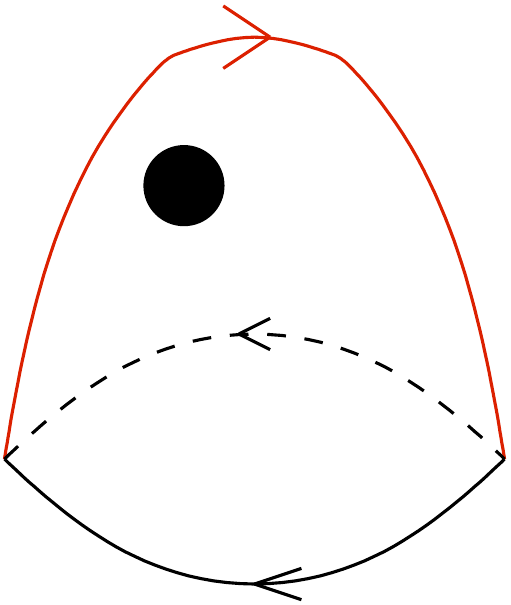}} \right) = 1.
\]
Also deg $\left(\raisebox{-8pt}{\includegraphics[height=0.4in]{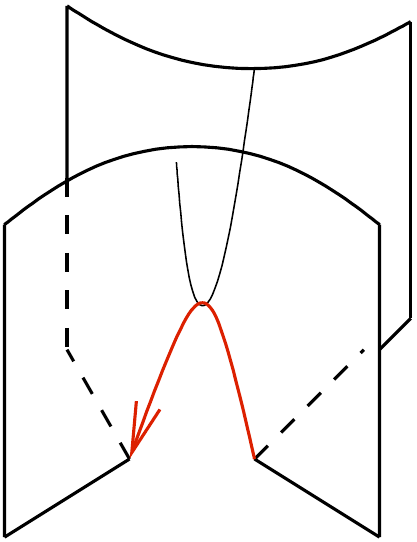}}\right) = 1.$
\end{example}

With the previous definition at hand, our category $\textit{Foams}$ is graded and so is $\textit{Foam}_{/\ell}$, since the local relations $\ell$ are degree-preserving. 
\begin{lemma}The following relations hold in $\textit{Foams}_{/\ell}$:
$$\xymatrix@R=2mm{
\raisebox{-8pt}{\includegraphics[width=0.35in, height =0.35in]{capsc1.pdf}}= i
\raisebox{-8pt}{\includegraphics[width=0.35in, height=0.35in]{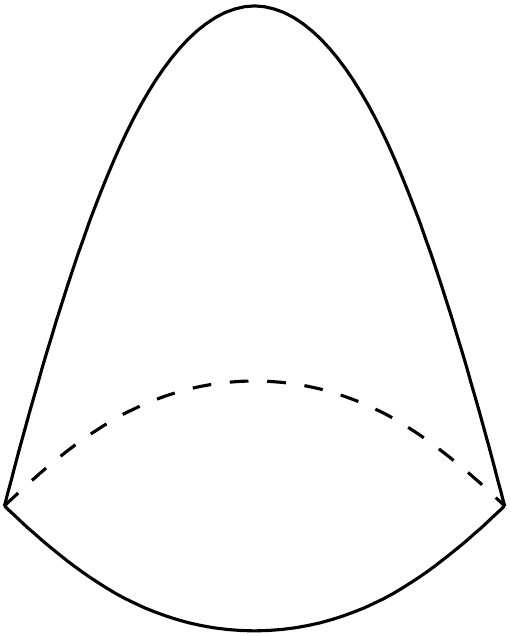}} & 
\raisebox{-8pt}{\includegraphics[width=0.35in,height =0.35in]{capsc2.pdf}}= -i
\raisebox{-8pt}{\includegraphics[width=0.35in, height=0.35in]{cap1.pdf}} \\
\raisebox{-8pt}{\includegraphics[width=0.35in,height =0.35in]{cupsc1.pdf}}=i
\raisebox{-8pt}{\includegraphics[width=0.35in, height=0.35in]{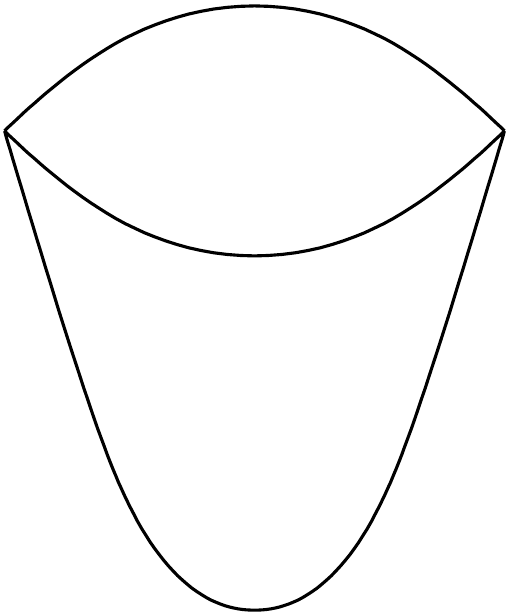}} &
\raisebox{-8pt}{\includegraphics[width=0.35in,height =0.35in]{cupsc2.pdf}}=-i
\raisebox{-8pt}{\includegraphics[width=0.35in, height=0.35in]{cup1.pdf}} \\
}$$
\end{lemma}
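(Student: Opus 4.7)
The strategy is to verify each of the four identities by passing to the TQFT functor $\mathsf{F}$ and invoking the closure criterion of Definition~\ref{def:quotient category}. The key observation is that an equality of foams in $\textit{Foams}_{/\ell}$ can be detected after applying $\mathsf{F}$: by Definition~\ref{def:quotient category}, a relation $S_1 = c\,S_2$ holds in $\textit{Foams}_{/\ell}$ precisely when $\mathcal{F}(V'S_1V) = c\,\mathcal{F}(V'S_2V)$ for every pair of closing foams $V \in \Hom(\emptyset,\Gamma)$, $V' \in \Hom(\Gamma',\emptyset)$. Since $\mathsf{F}$ extends to a functor on the full category of dotted, seamed cobordisms (the Remark in Section~\ref{ssec:TQFT}) and coincides with $\mathcal{F}$ on closed foams (the first lemma of Section~\ref{relations l}), one has
\[\mathcal{F}(V'SV) \;=\; \bigl(\mathsf{F}(V') \circ \mathsf{F}(S) \circ \mathsf{F}(V)\bigr)(1),\]
so any equality $\mathsf{F}(S_1) = c\,\mathsf{F}(S_2)$ of the underlying $\mathbb{Z}[i][a]$-module maps promotes automatically to $S_1 = c\,S_2$ in $\textit{Foams}_{/\ell}$.

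With this reduction in hand, the four identities become immediate from the explicit module maps recorded in Section~\ref{ssec:TQFT}. For the first identity, the seamed cap on the left-hand side is the map $\mathcal{A}\to\mathbb{Z}[i][a]$ with $1\mapsto 0$ and $X\mapsto i$, while the unseamed cap on the right-hand side is the trace $\epsilon\colon 1\mapsto 0,\ X\mapsto 1$; these two module maps differ precisely by the factor $i$, which by the previous paragraph yields the claimed identity in $\textit{Foams}_{/\ell}$. The remaining three identities are verified in exactly the same fashion, reading off $\mathsf{F}$ of each seamed cup/cap from the lists in Section~\ref{ssec:TQFT} and comparing with the unseamed unit $\iota$ or trace $\epsilon$.

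The only point requiring any care is the sign bookkeeping: the scalar is $+i$ or $-i$ according to the orientation of the singular circle, equivalently the designation of the preferred facet, following the convention fixed by the (UFO) relation. There is no deeper obstacle here; the content of the lemma is essentially a translation of the TQFT data of Section~\ref{ssec:TQFT} into foam identities via the characterization of the quotient category given in Definition~\ref{def:quotient category}.
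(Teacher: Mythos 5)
Your proof is correct, but it takes a genuinely different route from the paper's. The paper's proof is a direct foam manipulation: apply (SF) along a circle parallel to the singular circle to split off a ufo-foam, then evaluate that ufo by (UFO); the only surviving term is $\pm i$ times a plain cap (or cup), with the sign governed by the ufo facet convention. Your proof instead passes to the algebraic side: you observe that all four foams live in the subcategory where $\mathsf{F}$ is defined (the boundary webs are plain circles and the empty web), invoke the Remark of Section~\ref{ssec:TQFT} that $\mathsf{F}$ is a functor on dotted seamed cobordisms together with the first lemma of Section~\ref{relations l} identifying $\mathcal{F}$ with $\mathsf{F}$ on closed foams, and then use the closure criterion of Definition~\ref{def:quotient category} to reduce the foam identity to the already-computed equalities of module maps. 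Both arguments are sound. The trade-off: your route gets the scalars for free from Section~\ref{ssec:TQFT}, but it leans on the (unproven) Remark and on the closure criterion being built into the morphism spaces; the paper's route uses nothing beyond (SF) and (UFO), and therefore proves the relation already in the quotient by local relations alone, a formally stronger statement. For the category $\textit{Foams}_{/\ell}$ as the paper ultimately uses it, the distinction is immaterial. One small wording note: you say the sign bookkeeping follows ``the convention fixed by the (UFO) relation,'' but in your argument the signs are actually read off from the $\mathsf{F}$-values of the seamed cylinder in Section~\ref{ssec:TQFT}; (UFO) enters only implicitly, through the consistency lemma equating $\mathcal{F}$ with $\mathsf{F}$.
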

\begin{proof} These relations are immediate and follow from (SF) and (UFO) (compare with the discussion after figure~\ref{fig:meaning of singular arcs}).\end{proof}

One can easily show, using again only (SF) and (UFO) relations, that the following lemma holds.
\begin{lemma}\label{handy relations}
The following relations hold in $\textit{Foams}_{/\ell}$:
$$\xymatrix@R=2mm{
\raisebox{-13pt}{\includegraphics[height =0.5in]{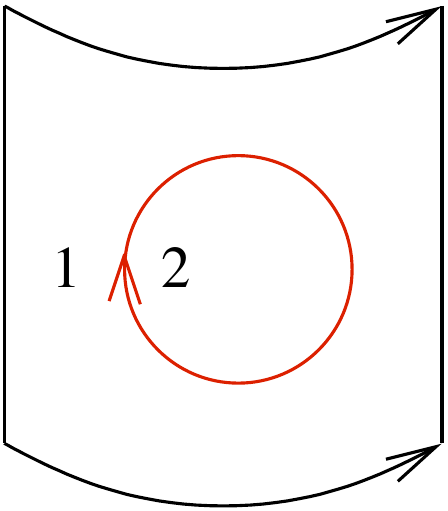}}= i
\,\raisebox{-13pt}{\includegraphics[height=0.5in]{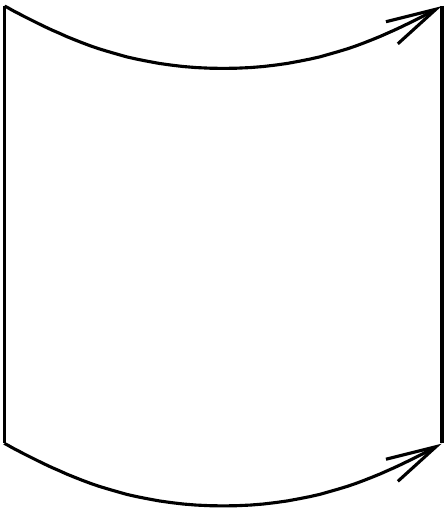}} &
\raisebox{-13pt}{\includegraphics[height =0.5in]{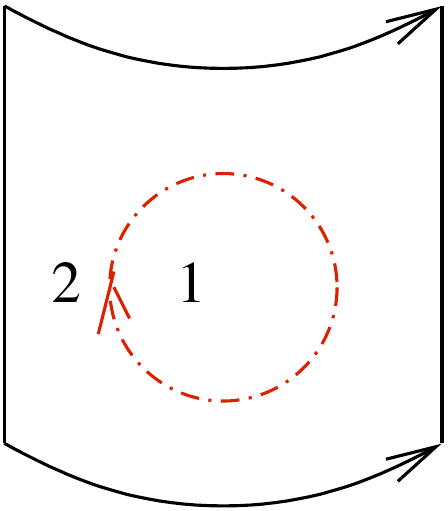}}= -i
\,\raisebox{-13pt}{\includegraphics[height=0.5in]{handy-rel3.pdf}} \\
\raisebox{-13pt}{\includegraphics[height =0.5in]{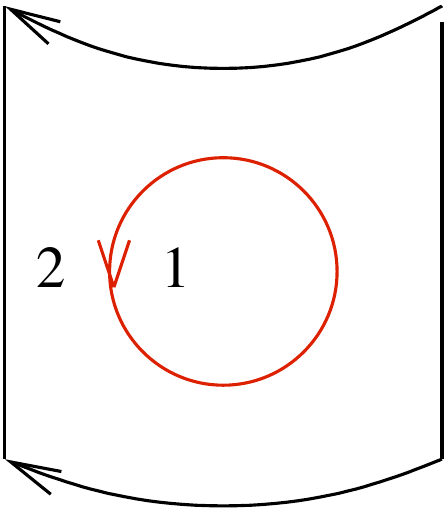}}=-i
\,\raisebox{-13pt}{\includegraphics[ height=0.5in]{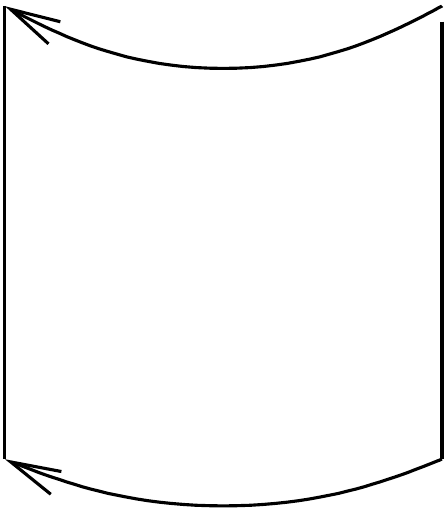}} &
\raisebox{-13pt}{\includegraphics[height =0.5in]{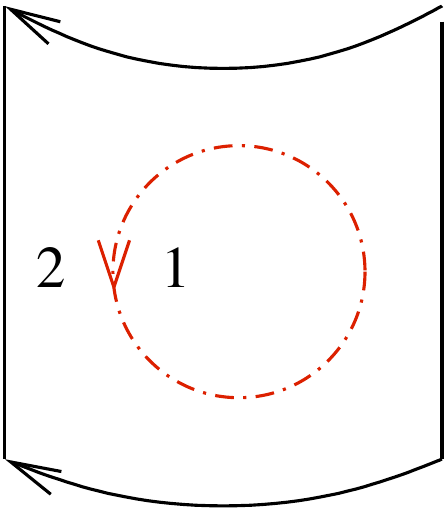}}=i
\,\raisebox{-13pt}{\includegraphics[ height=0.5in]{handy-rel6.pdf}} \\ 
}$$
\end{lemma}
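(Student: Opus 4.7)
The plan is to mimic the technique used for the preceding lemma about singular caps and cups: in each of the four pictured foams, locate a small contractible circle $C$ sitting inside one facet, close to the singular arc, such that surgery along $C$ splits off a \textit{ufo}-foam (i.e.\ a cap--singular-circle--cup sphere) from an otherwise dot-free surface. Then the surgery formula (SF) expresses the foam as a sum of two terms, each of which is a disjoint union of a \textit{ufo}-foam (possibly with dots on one or both facets) and the target foam on the right-hand side. By (UFO), the \textit{ufo}-foams without dots and the \textit{ufo}-foams with a dot on each facet both evaluate to $0$, while a \textit{ufo}-foam carrying a single dot on its preferred facet contributes a factor of $i$ (and $-i$ if the ordering of the facets is reversed). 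The surviving term is then exactly $\pm i$ times the dotted foam on the right, with the sign dictated by the orientation of the singular circle relative to the preferred facet.

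More concretely, first I would pick the surgery circle $C$ so that, after cutting and regluing two discs, the singular arc is swallowed into a closed singular circle bounding a two-sphere with seam, i.e.\ a \textit{ufo}-foam. The two summands produced by (SF) differ by where the extra dot created by surgery is placed: one term places the dot on a facet of the \textit{ufo}, the other places it on the large remaining facet. Exactly one of the two terms is killed by (UFO) (because the \textit{ufo} ends up either without dots or with a dot on each facet), and the surviving term is a product of a single-dotted \textit{ufo} and the right-hand side foam, contributing the factor $\pm i$ as dictated by Definition~\ref{def:quotient category} and the second part of (UFO).

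The signs in the four cases are then determined by whether the preferred facet of the singular circle produced by surgery lies on the left or right, i.e.\ by whether it is drawn as a solid or a dotted red curve; this is exactly the rule ``reversing the ordering of the facets reverses the sign'' recorded in the corollary following the consistency proposition. Handling the four configurations separately yields the four signs $i,-i,-i,i$ respectively.

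The main obstacle I expect is purely bookkeeping: choosing the surgery circle $C$ so that after the cut the two resulting components are cleanly a \textit{ufo} plus the right-hand side foam, and then correctly reading off the induced ordering of the facets at the newly created singular circle so the sign comes out right. Once the correct $C$ is chosen and the orientation convention is tracked, the calculation reduces to one application of (SF) followed by (UFO), with no need for (2D) or (S).
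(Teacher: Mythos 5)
Your proposal is correct and matches the paper's (very brief) argument: the paper simply remarks that the handy relations follow from (SF) and (UFO), and your surgery-then-evaluate strategy is exactly that, with the sign read off from the induced facet ordering at the split-off ufo. The only quibble is a small overcomplication in your description of where the new dot can land — when the foam on the left-hand side carries no dots, the split-off ufo in the vanishing term has zero dots (never two), and the surviving term places the single surgery dot on a facet of the ufo, leaving the large surface dot-free so that it is literally the right-hand side foam; but this does not affect the soundness of the argument.
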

\begin{lemma}\label{nice relations}
The following relations hold in $\textit{Foams}_{/\ell}$:
$$\xymatrix@R=2mm{
\raisebox{-22pt}{\includegraphics[height=0.7in]{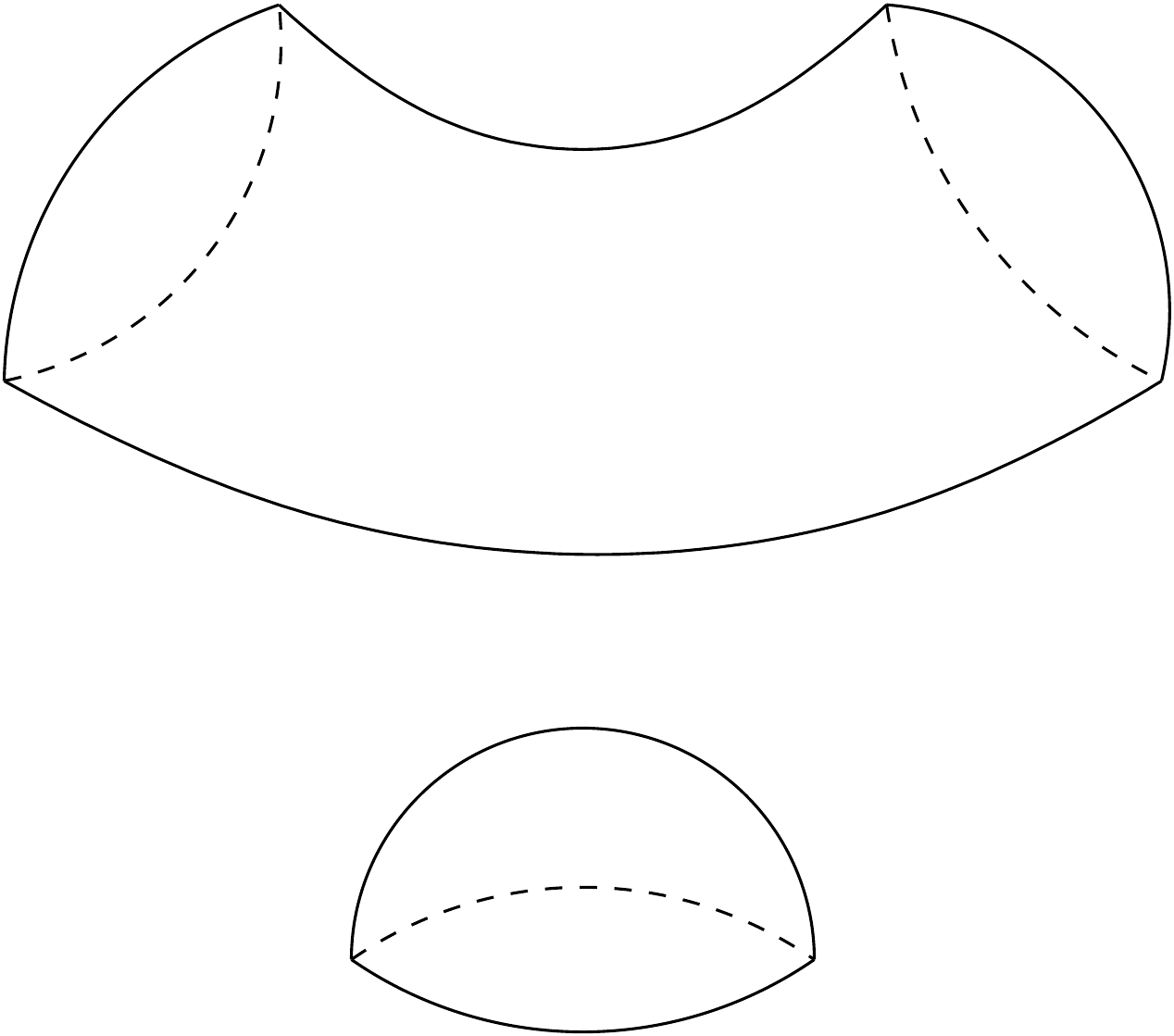}}+
\raisebox{-22pt}{\includegraphics[height=0.7in]{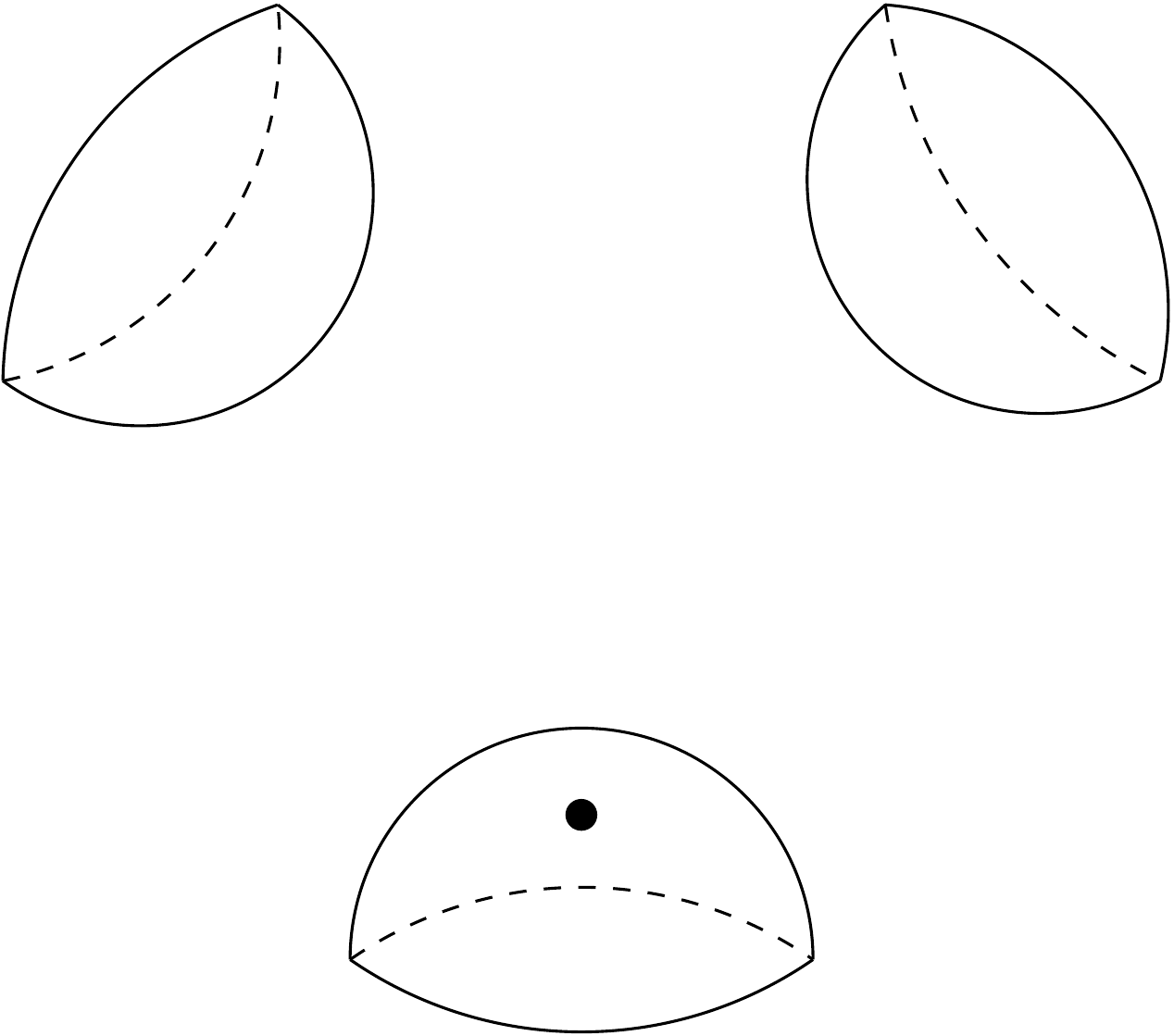}}=
\raisebox{-22pt}{\includegraphics[height=0.7in]{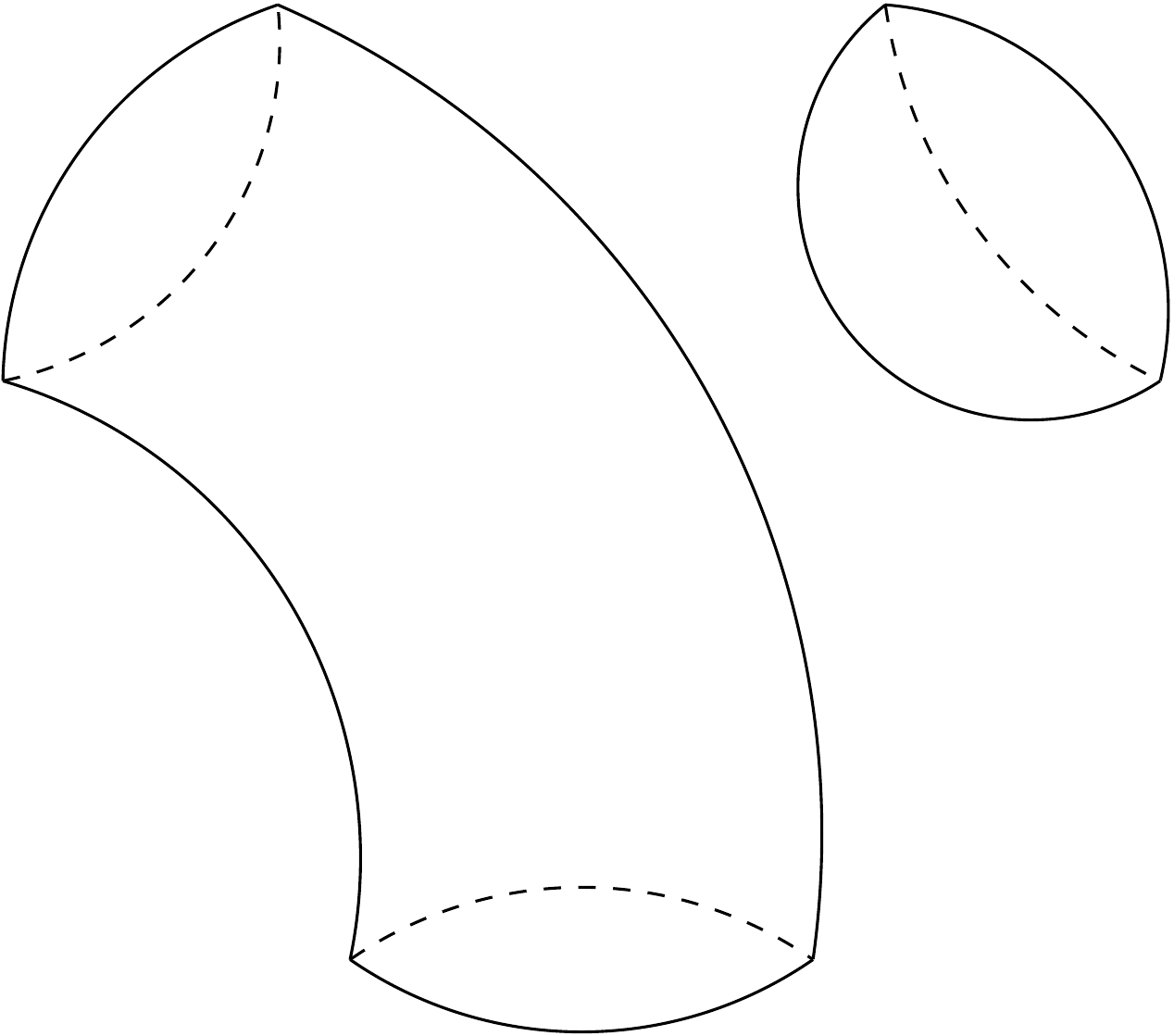}}+
\raisebox{-22pt}{\includegraphics[height=0.7in]{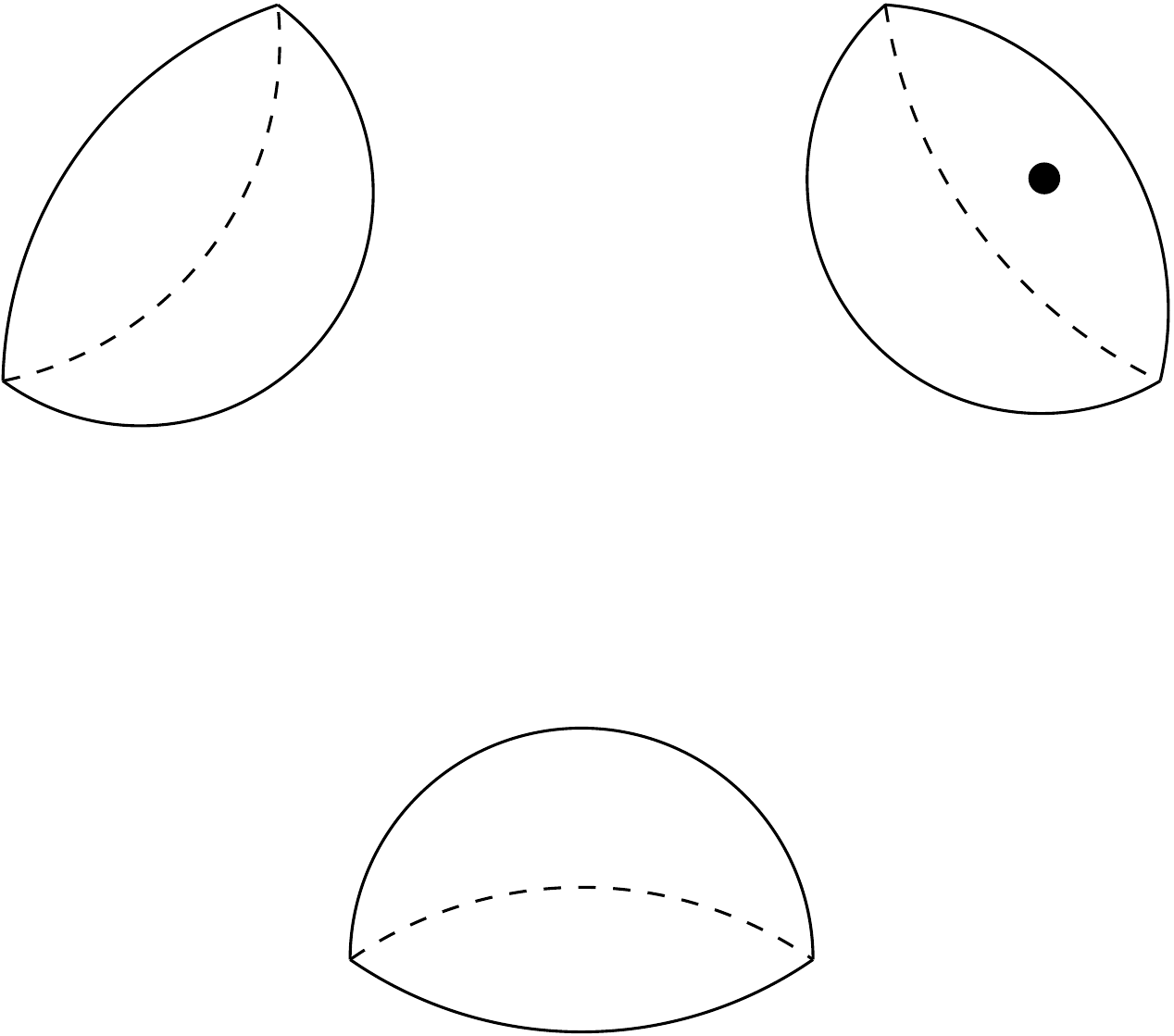}}
&\text{(3C)} \\
\raisebox{-22pt}{\includegraphics[height=0.7in]{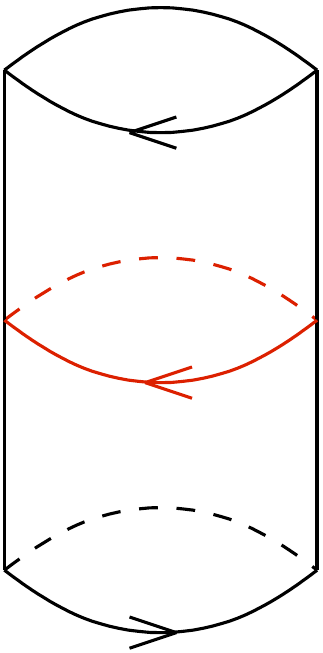}}= i
\raisebox{-22pt}{\includegraphics[height=0.7in]{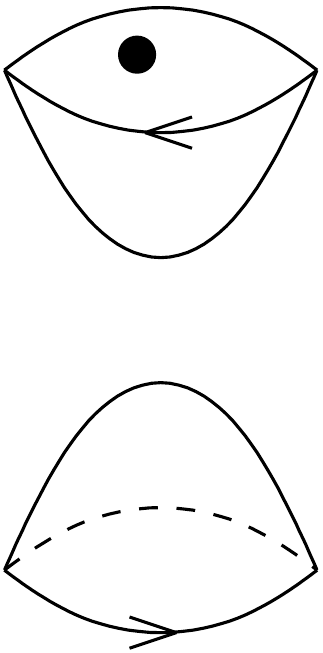}}-i
\raisebox{-22pt}{\includegraphics[height=0.7in]{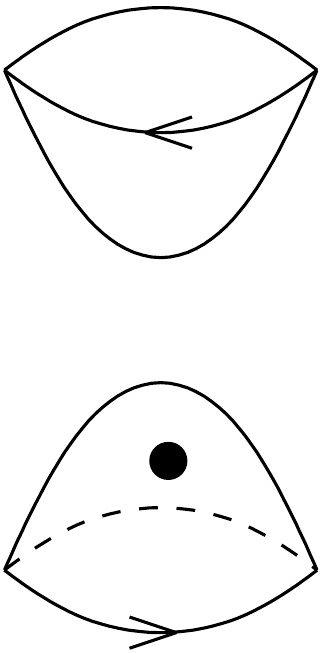}}
&\text{(RSC)} \\
\raisebox{-22pt}{\includegraphics[height=0.7in]{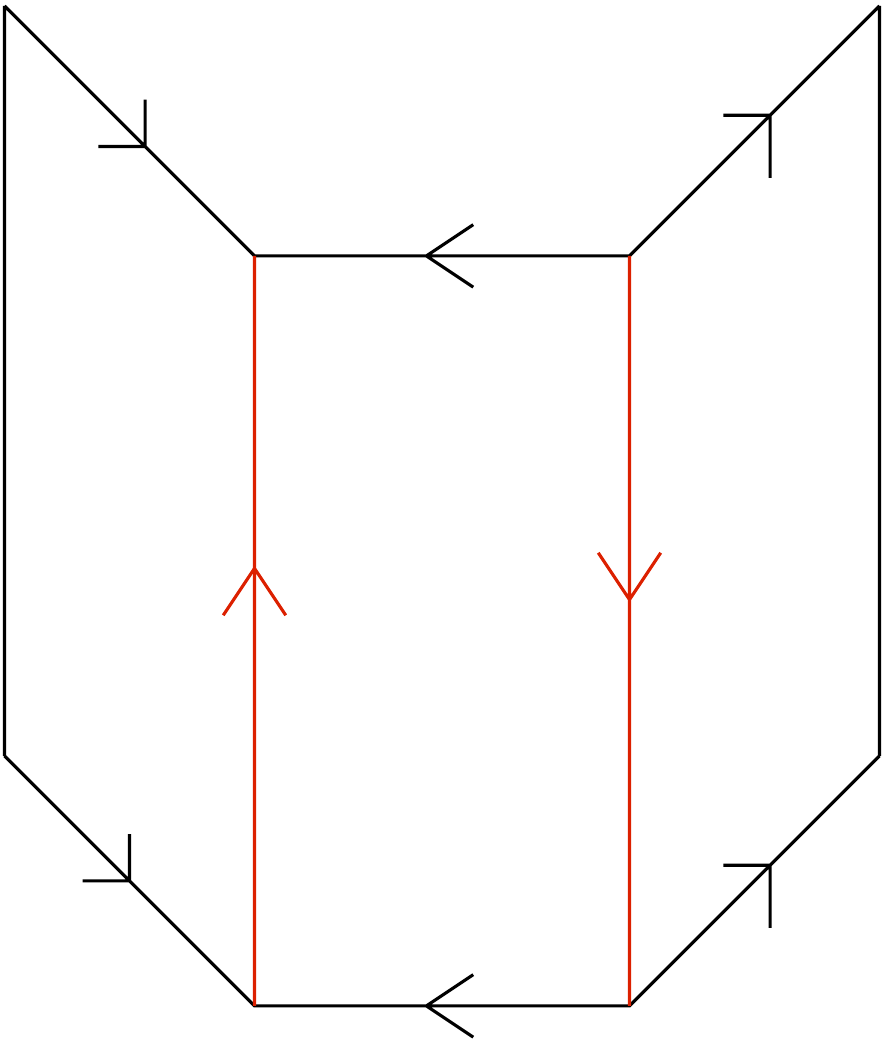}}= -i \,
\raisebox{-22pt}{\includegraphics[height=0.7in]{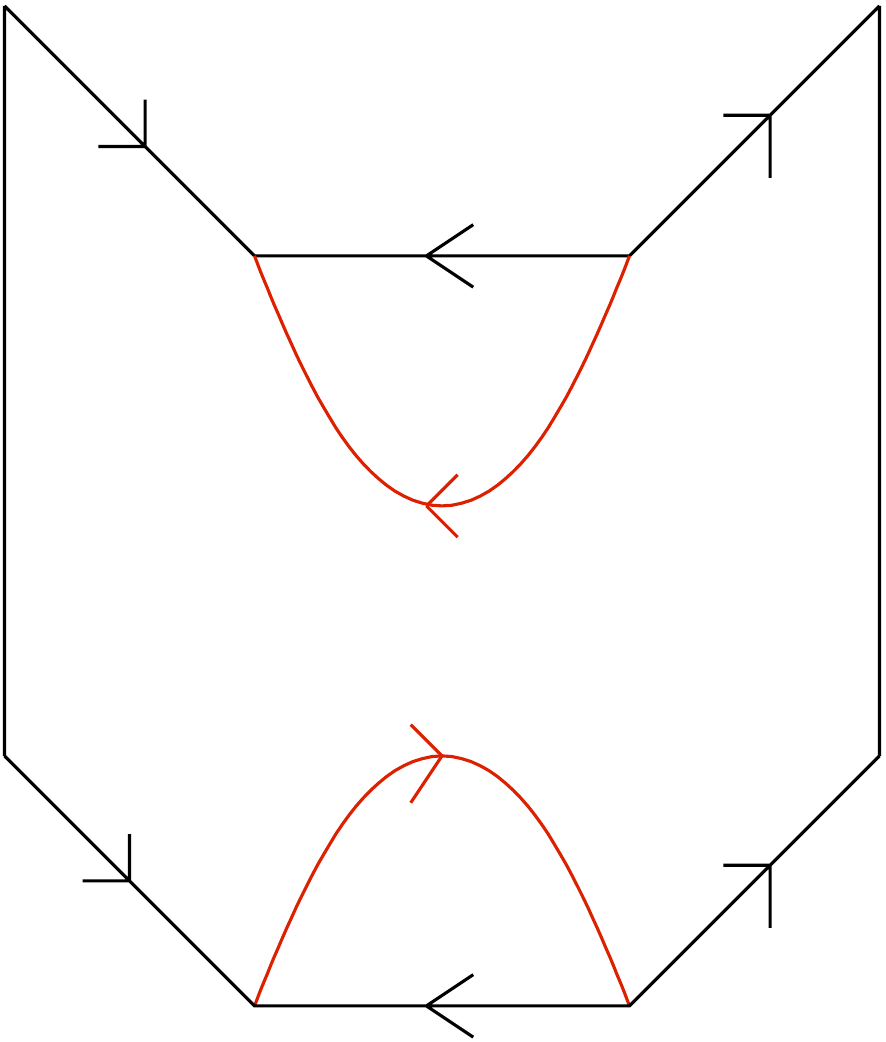}} \quad \text{and}\quad
\raisebox{-22pt}{\includegraphics[height=0.7in]{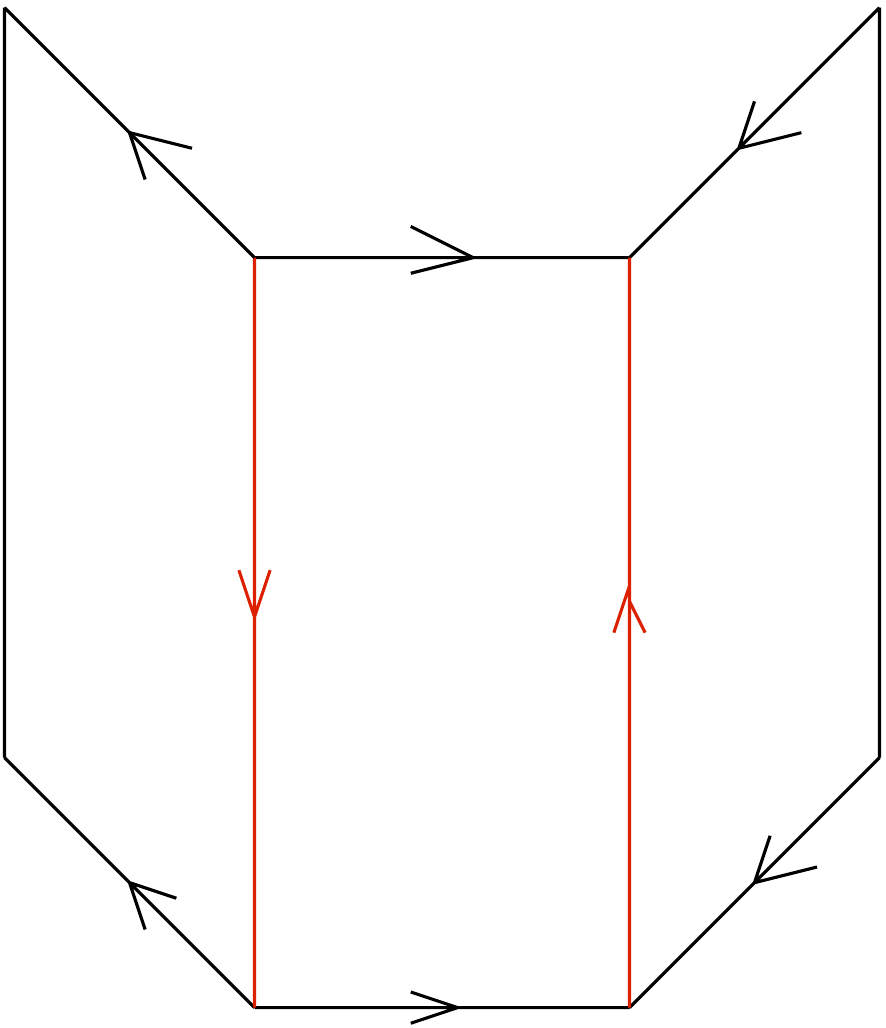}}= i\,
\raisebox{-22pt}{\includegraphics[height=0.7in]{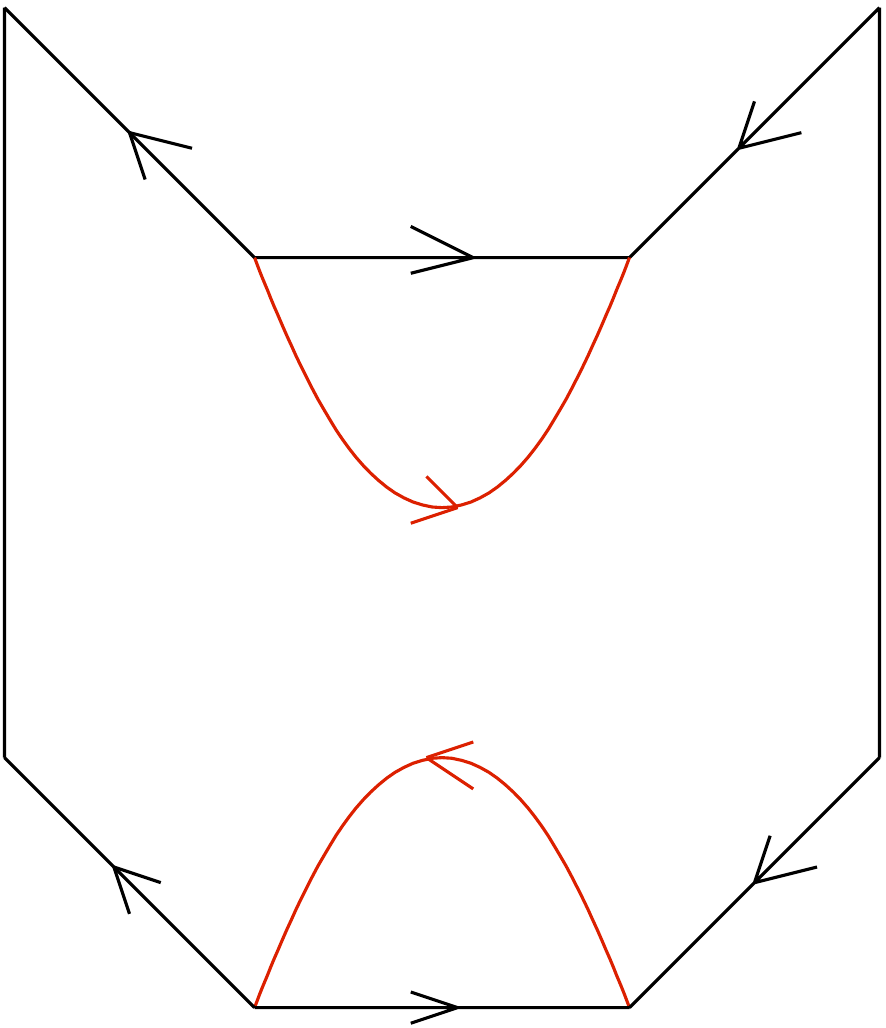}}
&\text{(CI)} \\
\raisebox{-22pt}{\includegraphics[height=0.7in]{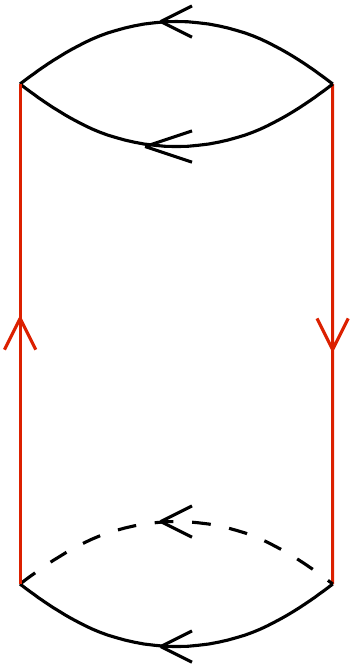}}= -i
\raisebox{-22pt}{\includegraphics[height=0.7in]{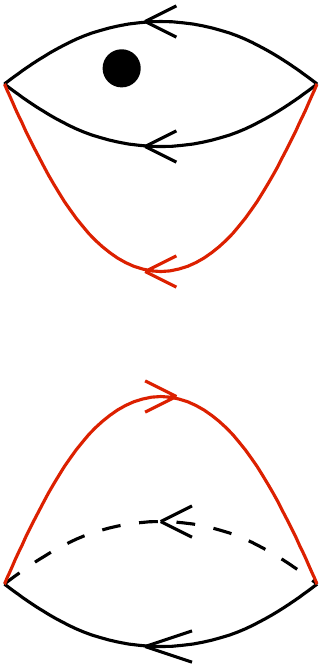}} -i
\raisebox{-22pt}{\includegraphics[height=0.7in]{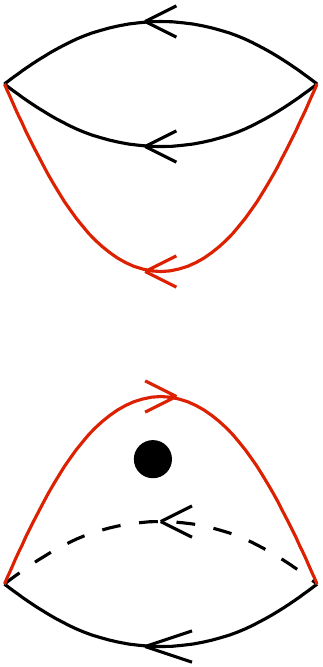}}
&\text{(CN)} 
}$$
where the dots in (CN) are on the preferred facets (those in the back).
\end{lemma}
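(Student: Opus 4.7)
The plan is to verify each of the four identities (3C), (RSC), (CI), (CN) by reducing both sides to a common form through repeated application of the surgery formula (SF) followed by the base evaluations (2D), (S), (UFO) and the previously established Lemmas on singular cups/caps and Lemma \ref{handy relations}. Equality of foams modulo $\ell$ is checked in the sense of Definition \ref{def:quotient category}: close off with arbitrary $V\in\Hom(\emptyset,\Gamma)$ and $V'\in\Hom(\Gamma',\emptyset)$, and compare $\sum_i c_i\mathcal{F}(V'S_iV)$ on both sides. In practice, since the local moves we need are entirely localized in a small ball, it suffices to perform the surgeries inside that ball and match the outputs term by term.

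For (3C), I would perform surgery (SF) on a horizontal disc across the middle facet of each of the four pictured foams. Each term splits into two summands with dots relocated near the surgery boundary, and by (2D) any doubled dots may be reduced; the resulting simpler foams on the two sides are then seen to be identical after a cancellation of pairs. For (RSC), I would apply (SF) on a circle encircling the singular circle inside the preferred annulus. One summand yields a cap-with-a-dot glued onto the singular cylinder, which by the earlier identification $\raisebox{-5pt}{\includegraphics[width=0.3in,height=0.3in]{capsc1.pdf}}=i\raisebox{-5pt}{\includegraphics[width=0.3in,height=0.3in]{cap1.pdf}}$ contributes the factor $+i$; the other summand, where the dot is on the opposite facet, contributes $-i$, matching the right-hand side.

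For (CI), I would regard the left-hand foam as a composition involving two oppositely oriented singular arcs meeting through a cylinder; cutting and reassembling by (SF) produces a singular cup followed by a singular cap of the opposite orientation, whose scalar $\mp i$ is read off directly from the singular cup/cap lemma. The correct sign depends on which facet is the preferred one, and this is the same check as in Lemma \ref{handy relations}. For (CN), the strategy mirrors (RSC): apply (SF) on a circle on the back (preferred) facet of the neck, so that the two summands each acquire a singular-circle piece evaluating to $-i$ by (UFO); the location of the dot on the preferred facet distinguishes the two terms on the right and accounts for the signs.

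The main obstacle throughout will be the careful bookkeeping of preferred-facet orderings and induced orientations of singular arcs/circles, since by the corollary of the consistency proposition each reversal of ordering flips the sign of $\mathcal{F}$. In particular, for (CN) one must be sure that the dot being on the preferred (back) facet — rather than its neighbor — produces $-i$ and not $+i$ when the surgery is performed; this sign-tracking, rather than any topological subtlety, is where the genuine work lies. Once a consistent convention for the pictured preferred facets is fixed, each of (3C), (RSC), (CI), (CN) reduces to a short computation with (SF), (UFO), and the cup/cap identities already in hand.
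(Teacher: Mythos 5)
Your overall framework — close off with arbitrary $V\in\Hom(\emptyset,\Gamma)$, $V'\in\Hom(\Gamma',\emptyset)$ as in Definition~\ref{def:quotient category}, then reduce to the base evaluations — is the right one, and your treatments of (3C) and (RSC) do match the paper: both are obtained by applying (SF) across tubes (resp.\ on circles flanking the singular circle) and then invoking (2D), (S), (UFO).

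The gap is in the sentence ``since the local moves we need are entirely localized in a small ball, it suffices to perform the surgeries inside that ball,'' which you then use for (CI) and (CN). This is false for those two identities. Inside the ball, the left-hand side of (CI) is the identity curtain on $\Gamma_1$, whose singular set is two disjoint vertical arcs; the right-hand side $-i\,\alpha_1\beta_1$ has, inside that same ball, a single singular circle (the two arcs of the cap and cup are pinched together). Relation (SF) is a surgery on an embedded circle lying in the interior of a \emph{facet}, disjoint from the singular set; it alters the smooth topology of a facet but can never create, destroy, merge, or split singular arcs or singular circles. Hence no sequence of local applications of (SF), (2D), (S), (UFO) inside the ball can carry one side of (CI) to the other — these two foams are genuinely inequivalent as local pictures, and only become equal after quotienting. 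The same objection applies to (CN), where the left side has two vertical singular arcs in the cylinder $\Gamma\times I$ while each $\nu_k\mu_k$ has a cone-point-like singular arc configuration in a cap glued to a cup.

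What the paper actually does for (CI) and (CN) is nonlocal in exactly the way your shortcut avoids: it closes off with arbitrary $V,U$, then performs surgeries on $V\!\cdot\!\operatorname{Id}\!\cdot\!U \cong VU$ \emph{far away from} the small ball (near each singular circle of $VU$), and performs identical far-away surgeries on the right-hand closure. This reduces the verification to a short explicit list — $VU$ a single ufo-foam, or a connected sum of two ufo-foams — which is then checked by hand with (UFO) and (2D); analogously for (CN) one checks all choices of dotted singular cups and caps for $U$ and $V$. Your proposal needs this closure-and-reduction step; without it, the argument for (CI) and (CN) would fail, since the purely local ``cut and reassemble by (SF)'' you describe does not exist.
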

\begin {proof} The first  two relations follows immediate from (SF). Applying a surgery on each tube in (3C) we end up with the same combination of foams in both sides of relation (3C). Similarly, doing surgeries above and below the singular circle of the left foam in (RSC) and then using the (UFO) relations, we get the right-hand side of (RSC). 

We prove (CI) in a similar way to the proof of proposition 8 in~\cite{Kh3}. 
Consider the webs in figure~\ref{fig:webs in (CI)1} and the cobordisms $\alpha_1$ and $\beta_1$ between $\Gamma_1$ and $\Gamma'_1$, given in figure~\ref{fig:cobordisms in CI}.
\begin{figure}[ht]
$$\xymatrix@R=2mm{
\raisebox{-8pt}{\includegraphics[height=0.23in]{2vertweb.pdf}} \Gamma_1 \qquad 
\raisebox{-8pt}{\includegraphics[height=0.23in]{arcro.pdf}}\Gamma'_1
}$$
\caption{}
\label{fig:webs in (CI)1}
\end{figure}  

\begin{figure}[h]
$$\xymatrix@R=2mm{
\alpha_1 = -i \,\raisebox{-22pt}{\includegraphics[height=.7in]{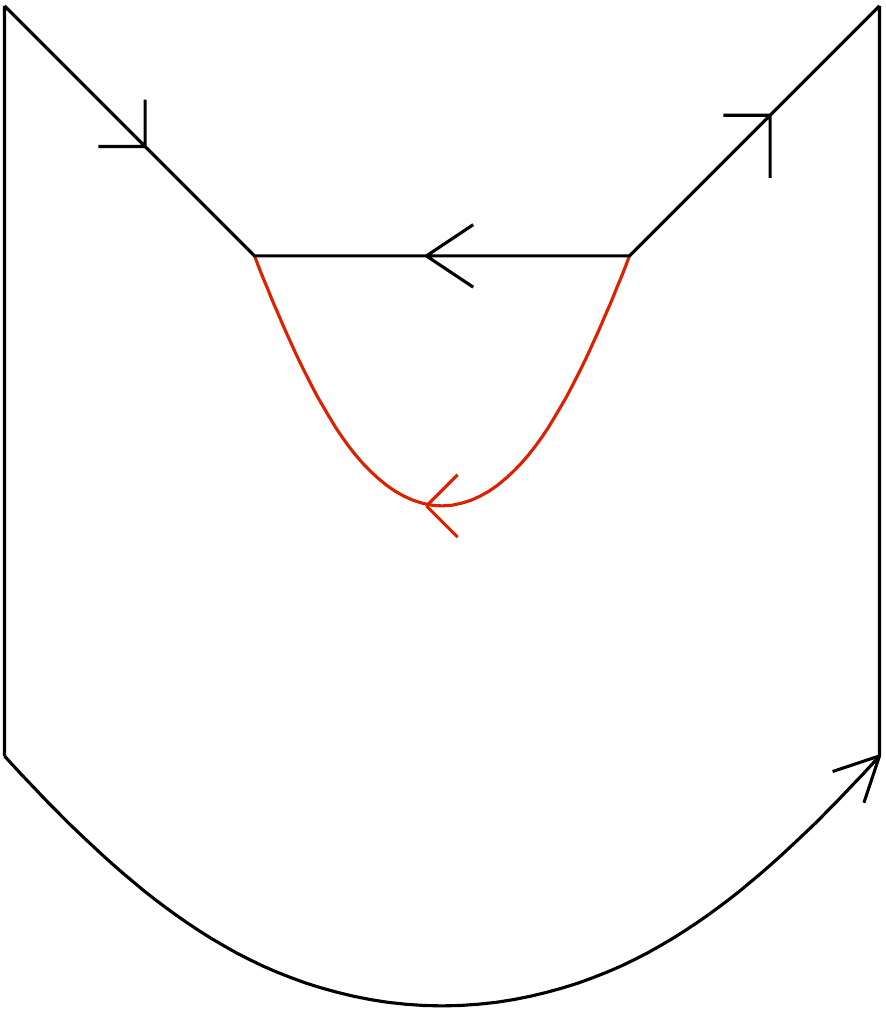}}\qquad 
\raisebox{-22pt}{\includegraphics[height=.7in]{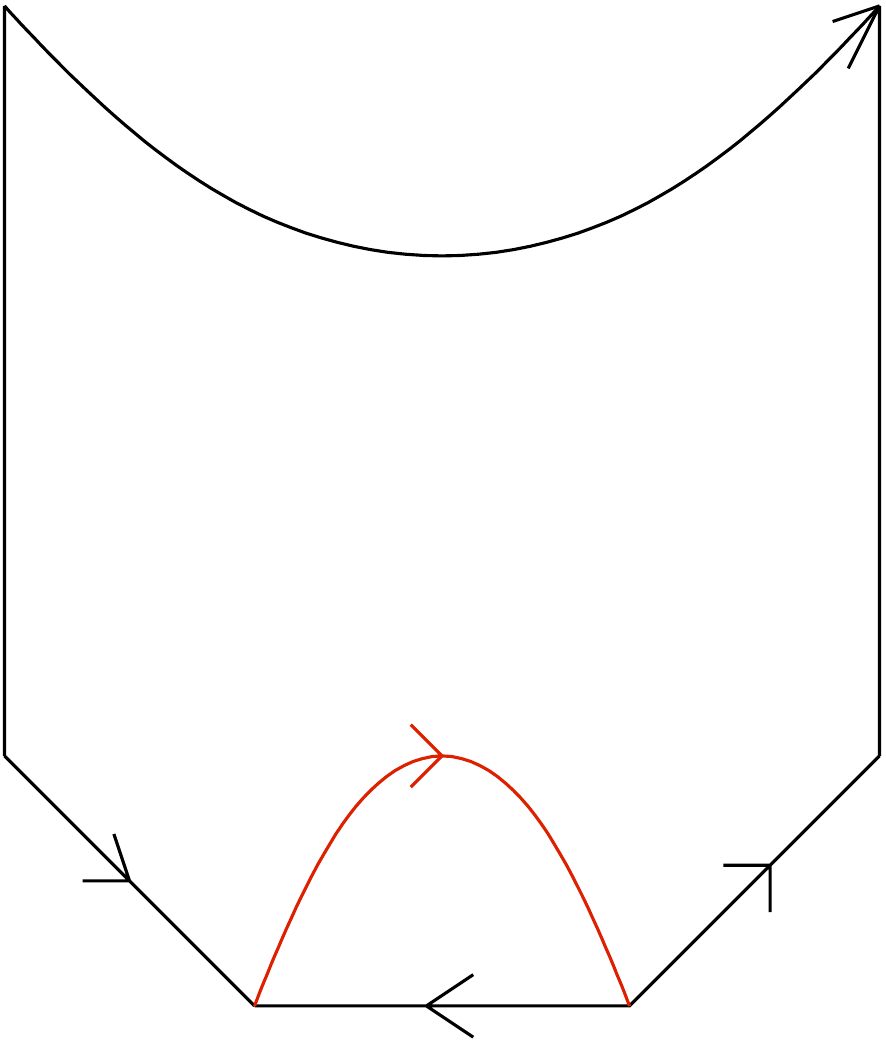}}=  \beta_1
}$$
\caption{Cobordisms $\alpha_1$ and $\beta_1$}
\label{fig:cobordisms in CI}
\end{figure}
\noindent We claim that 
\begin{equation}\label{foam relations in (CI)}
\begin{array}{ccc}
\alpha_1 \beta_1  & =&  Id_{\Gamma_1}. 
\end{array}
\end{equation}
Notice that the left foam in (CI) is $Id_{\Gamma_1}$. By definition~\ref{def:quotient category}, in order to prove that equation~\ref{foam relations in (CI)} holds, we have to show that for any foams $U \in \Hom_{Foams_{/\ell}}(\emptyset, \Gamma_1)$ and $V \in \Hom_{Foams_{/\ell}}(\Gamma_1, \emptyset)$ the following equality of closed foams holds:
 \begin{equation}\label{eqn:closed foams in (CI)}
 \mathcal{F}(V Id_{\Gamma_1} U) = \mathcal{F}(V \alpha_1 \beta_1 U) 
 \end{equation}
 
There are two cases two consider: when the two singular arcs of $Id_{\Gamma_1}$ belong to the same singular circle of $V Id_{\Gamma_1} U$ or not. In each case we do surgeries on $VId_{\Gamma_1}U \cong VU$ near each singular circle, so that the foam $Id_{\Gamma_1}$ is far away from the surgery circles; identical surgeries must be done on the closed foam on the right side. Then, we only need to check equation~\ref{eqn:closed foams in (CI)} in the following cases (this is because  the other closed foams we get after these surgeries are identical on both sides) :
\begin{enumerate}
\item $VU$ is a \textit{ufo} foam. Then $V \alpha_1 \beta_1 U$ has two singular circles.
\item $VU$ has two singular circles and is a connected sum of two \textit{ufo} foams. Then $V \alpha_1 \beta_1 U$ is a \textit{ufo} foam.
\end{enumerate}
Both cases can be verified using the local relations $\ell$. The first relation in (CI) follows.

The second relation in (CI) is proved similarly. 
Consider the webs in figure~\ref{fig:webs in (CI)2} and the cobordisms $\alpha_2$ and $\beta_2$ between $\Gamma_2$ and $\Gamma'_2$, given in figure~\ref{fig:cobordisms in CI 2}:

\begin{figure}[ht]
$$\xymatrix@R=2mm{
\raisebox{-8pt}{\includegraphics[height=0.2in]{2vertwebleft.pdf}} \Gamma_2 \qquad 
\raisebox{-8pt}{\includegraphics[height=0.2in]{arclo.pdf}}\Gamma'_2
}$$
\caption{}
\label{fig:webs in (CI)2}
\end{figure}  

\begin{figure}[ht]
$$\xymatrix@R=2mm{
\alpha_2 = i \,\raisebox{-22pt}{\includegraphics[height=.6in]{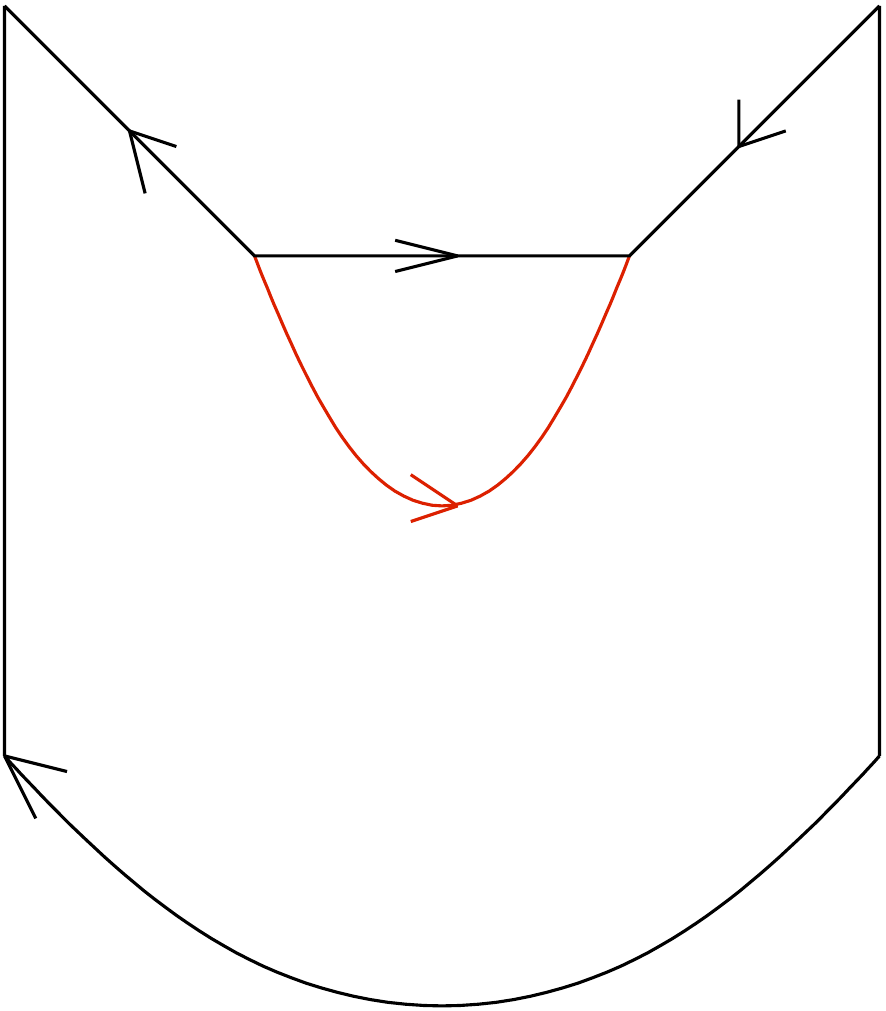}}\qquad 
\raisebox{-22pt}{\includegraphics[height=.6in]{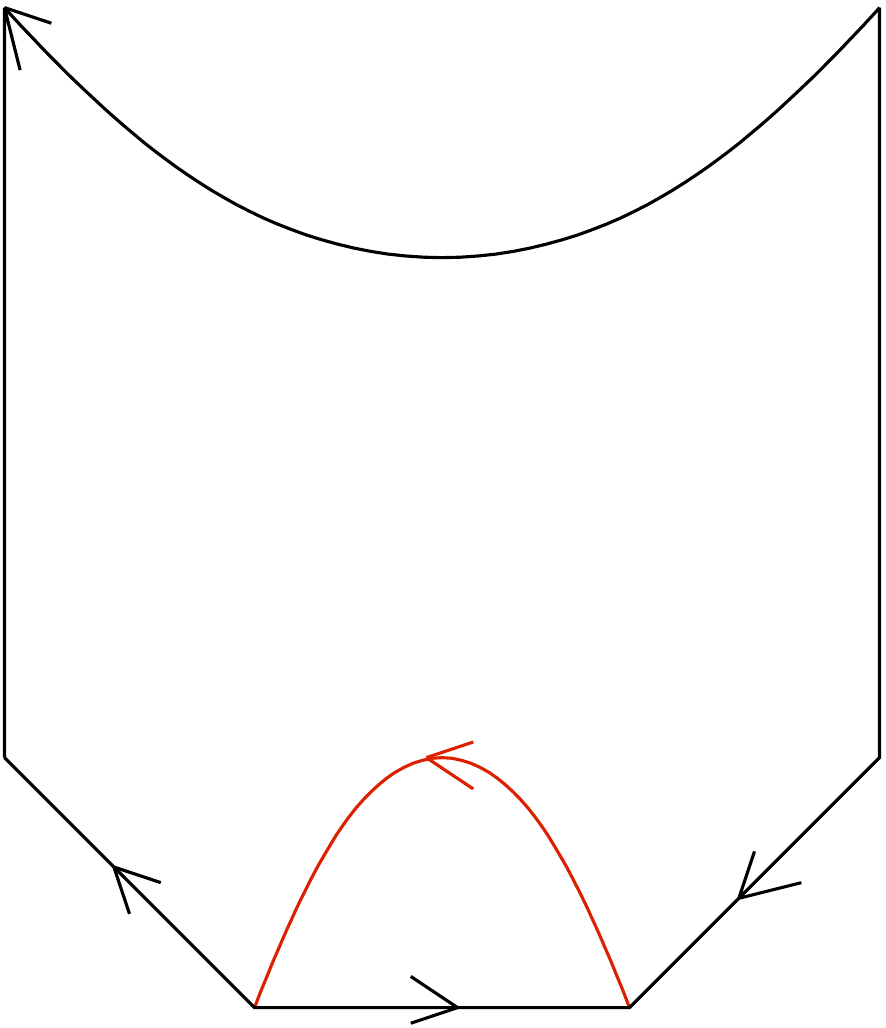}}= \beta_2
}$$
\caption{Cobordisms $\alpha_2$ and $\beta_2$}
\label{fig:cobordisms in CI 2}
\end{figure}

One can show (in a similar manner as previously) that
\begin{equation}\label{foam relations in (CI)2}
\begin{array}{ccc}
\alpha_2 \beta_2  & =&  Id_{\Gamma_2} 
\end{array}
\end{equation} 
and the second relation in (CI) follows.

Let $\Gamma$ be the the closed web in figure~\ref{fig:circle2sv}.
\begin{figure}[ht]
$$\xymatrix@R=2mm{
\raisebox{-16pt}{\includegraphics[height=.35in]{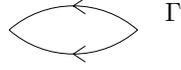}}\quad \Gamma
}$$
\caption{Basic closed web with singular points}
\label{fig:circle2sv}
\end{figure}
Consider the foams $\nu_1, \nu_2,\mu_1,\mu_2$ shown in figure~\ref{fig:cneck isomorphisms}, which are cobordisms between the empty web and $\Gamma$ (with the dots on the back facets, which are the preferred ones for the singular arcs).
\begin{figure}[ht]
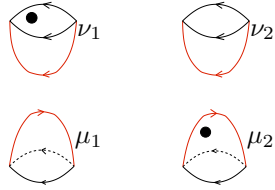

$$\xymatrix@R=2mm{
\raisebox{-16pt}{\includegraphics[height=.4in]{cupsad.pdf}}\nu_1
&\raisebox{-16pt}{\includegraphics[height=.4in]{cupsa.pdf}}\nu_2 \\
\raisebox{-16pt}{\includegraphics[height=.4in]{capsa.pdf}}\mu_1
&\raisebox{-16pt}{\includegraphics[height=.4in]{capsad.pdf}}\mu_2
}$$
\caption{Foams $\nu_1, \nu_2,\mu_1,\mu_2$}
\label{fig:cneck isomorphisms}
\end{figure}

Relation (CN) translates to
\begin{equation}\label{eqn:cneck}
Id_\Gamma = -i\nu_1\mu_1 -i \nu_2\mu_2
\end{equation}
\noindent We need to show that for any foams $U \in \Hom_{Foams_{/\ell}}(\emptyset, \Gamma)$ and $V \in \Hom_{Foams_{/l}}(\Gamma, \emptyset)$, the following equality of closed foam evaluations holds: 
\begin{equation}\label{eqn:closed foams in cneck}
\mathcal{F}(V Id_{\Gamma} U) =-i \mathcal{F}(V \nu_1 \mu_1 U) -i \mathcal{F}(V \nu_2 \mu_2 U)
\end{equation}
$V$ and $U$ above are dotted singular cups or caps, respectively, and equation~\ref{eqn:closed foams in cneck} can be checked by hand, for all admissible foams $V$ and $U$.
\end{proof}

We have seen that the first (CI) relation translates to the identity of figure~\ref{fig:translation of CI}.
\begin{figure}[ht]
$$\xymatrix@R=2mm{
-i\, \raisebox{-16pt}{\includegraphics[height=.5in]{curtainsaup.pdf}}\circ
\raisebox{-16pt}{\includegraphics[height=.5in]{curtainsadown.pdf}} = 
\raisebox{-16pt}{\includegraphics[height=0.5in]{curtainid.pdf}} = Id(\raisebox{-5pt}{\includegraphics[height=0.15in]{2vertweb.pdf}}
)}$$
\caption{Figure}
\label{fig:translation of CI}
\end{figure}
Moreover, from the first relation in lemma~\ref{handy relations} we also have the relations from figure~\ref{fig:translation of handy relations}.
\begin{figure}[ht]
$$\xymatrix@R=2mm{
 \raisebox{-16pt}{\includegraphics[height=.5in]{curtainsadown.pdf}}\circ
-i\,\raisebox{-16pt}{\includegraphics[height=.5in]{curtainsaup.pdf}} = 
\raisebox{-16pt}{\includegraphics[height=0.5in]{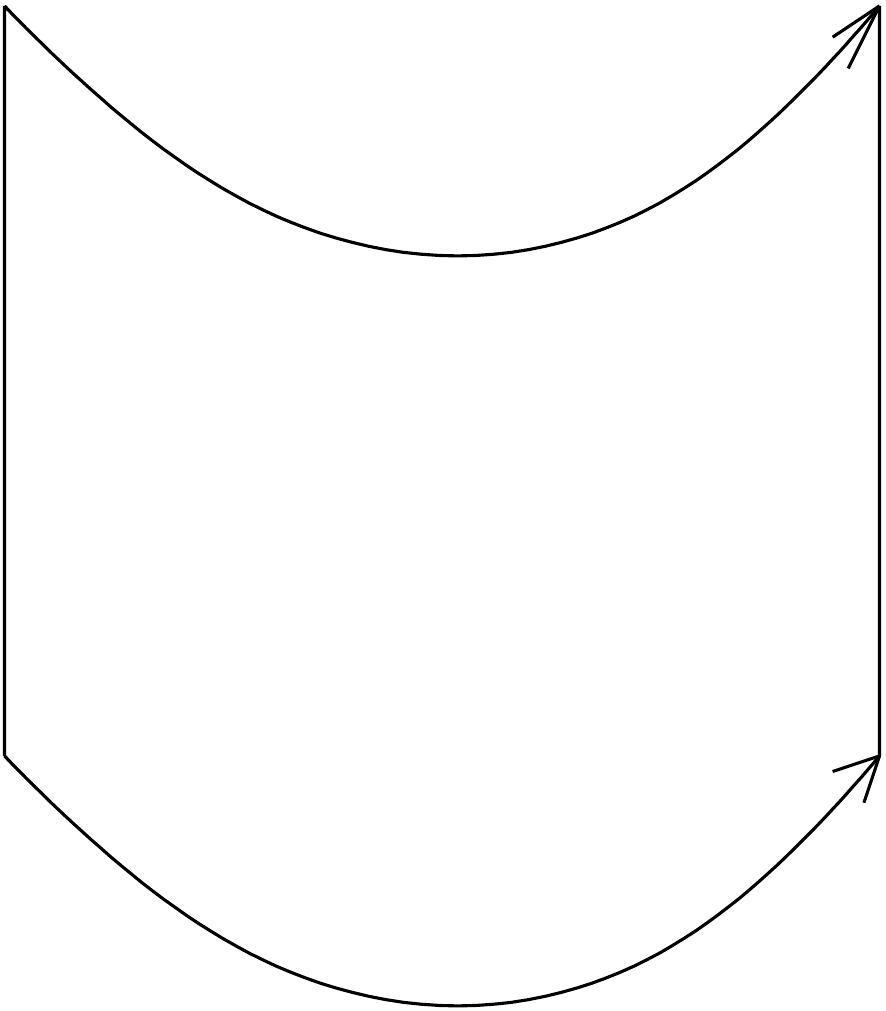}} = Id(\raisebox{-5pt}{\includegraphics[height=0.15in]{arcro.pdf}}
)}$$
\caption{Figure}
\label{fig:translation of handy relations}
\end{figure}

Therefore, $\alpha_1= -i\, \raisebox{-8pt}{\includegraphics[height=.3in]{curtainsaup.pdf}}$ and $\beta_1= \raisebox{-8pt}{\includegraphics[height=.3in]{curtainsadown.pdf}}$ are mutually inverse isomorphisms in the quotient category $\textit{Foam}_{/\ell}$.
Similarly, $\alpha_2=  i\,\raisebox{-8pt}{\includegraphics[height=.3in]{curtainsaupleft.pdf}}$ and $\beta_2= \raisebox{-8pt}{\includegraphics[height=.3in]{curtainsadownleft.pdf}}$ are mutually inverse isomorphisms (to show this one, we use the second relations in (CI) and lemma~\ref{handy relations}).
 
The following result follows at once.

 \begin{corollary}\label{removing singular points in pairs}
 The following isomorphisms hold in the category $\textit{Foam}_{/\ell}$:
 \begin{figure}[ht]
$$\xymatrix@R=2mm{
\raisebox{-35pt}{\includegraphics[height=1in]{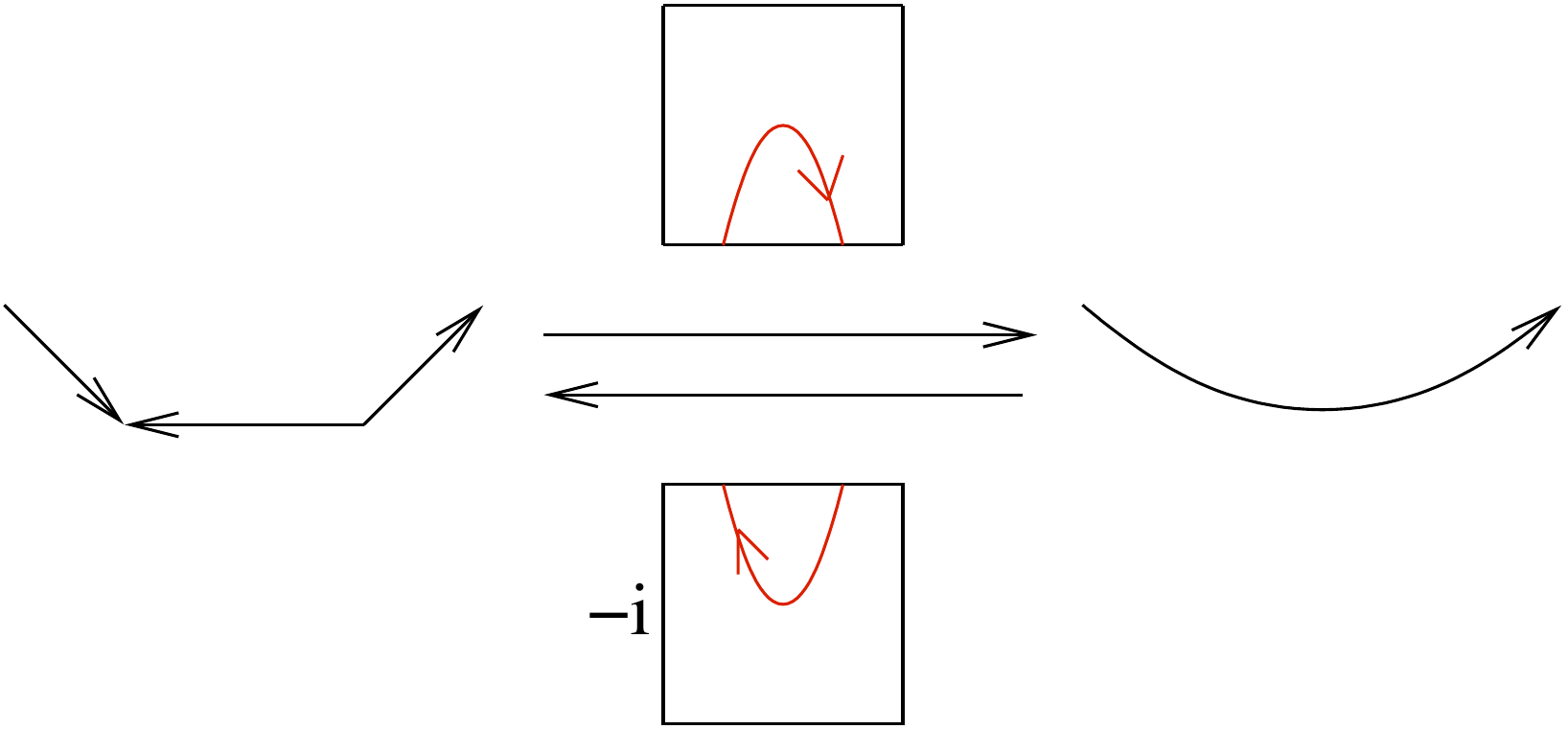}}
 \quad \text{and} \quad
\raisebox{-35pt}{\includegraphics[height=1in]{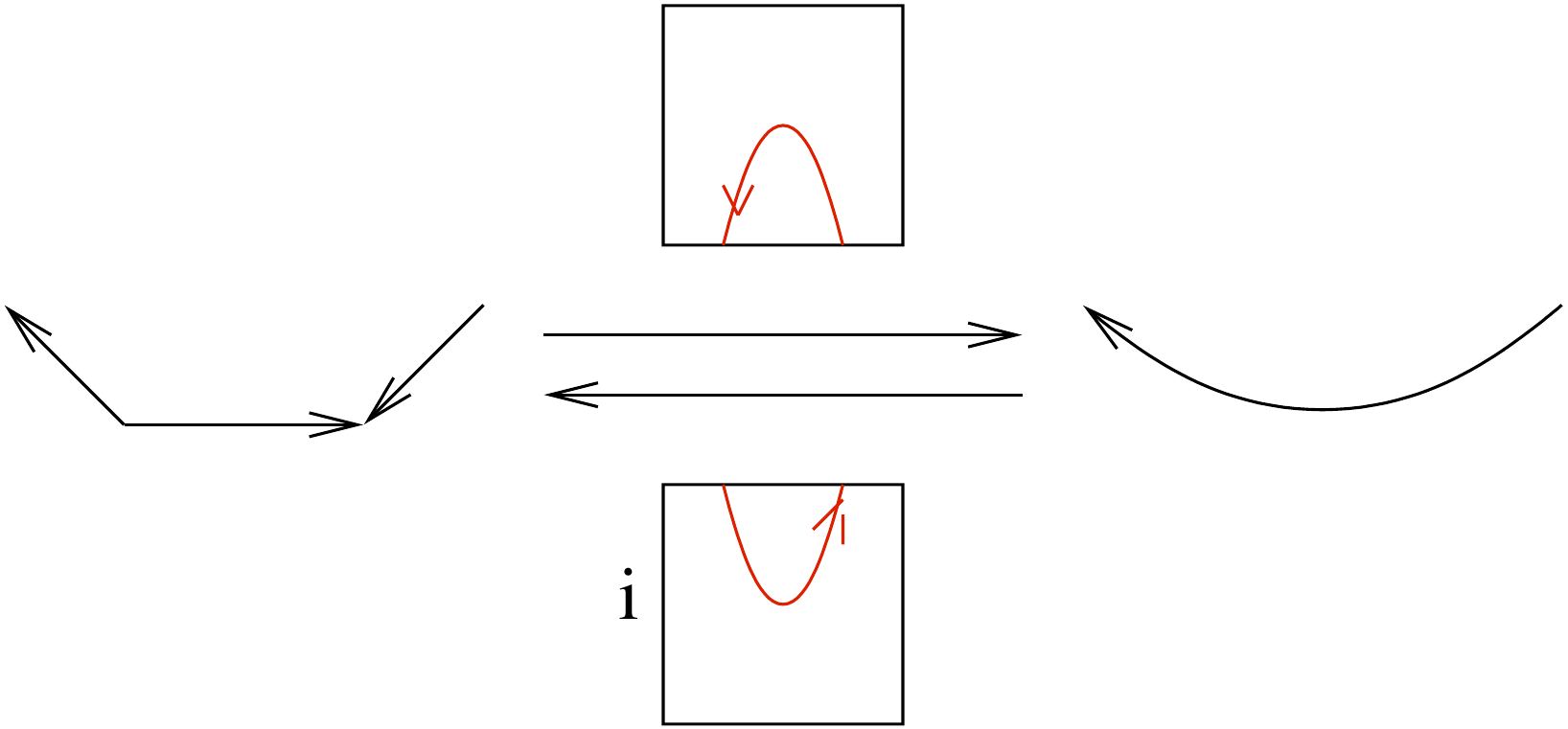}}
}$$
\caption{Removing singular points in pairs}
\label{fig:removing singular points in pairs}
\end{figure}  
\end{corollary}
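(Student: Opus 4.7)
The plan is to realize each isomorphism by an explicit pair of mutually inverse foams, reusing the cobordisms $\alpha_1,\beta_1,\alpha_2,\beta_2$ already constructed during the proof of Lemma~\ref{nice relations}. Since the two isomorphisms in the corollary correspond to the two ``flavors'' of adjacent singular-point pairs (right-preferred versus left-preferred, i.e.\ $\Gamma_1 \leftrightarrow \Gamma_1'$ and $\Gamma_2 \leftrightarrow \Gamma_2'$), I would treat them in parallel.

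For the first isomorphism, I take the morphism $\Gamma_1 \to \Gamma_1'$ to be $\alpha_1 = -i\,\raisebox{-5pt}{\includegraphics[height=0.25in]{curtainsaup.pdf}}$ and the morphism $\Gamma_1' \to \Gamma_1$ to be $\beta_1 = \raisebox{-5pt}{\includegraphics[height=0.25in]{curtainsadown.pdf}}$. The identity $\alpha_1\beta_1 = Id_{\Gamma_1}$ is precisely the first (CI) relation of Lemma~\ref{nice relations}, established in equation~\ref{foam relations in (CI)} via the case analysis after equation~\ref{eqn:closed foams in (CI)}. The reverse identity $\beta_1\alpha_1 = Id_{\Gamma_1'}$ is exactly the content of figure~\ref{fig:translation of handy relations}, which is a reformulation of the first relation of Lemma~\ref{handy relations}. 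Thus both compositions reduce to the identity in $\textit{Foams}_{/\ell}$, giving the claimed isomorphism.

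The second isomorphism $\Gamma_2 \cong \Gamma_2'$ is handled by the symmetric pair $\alpha_2 = i\,\raisebox{-5pt}{\includegraphics[height=0.25in]{curtainsaupleft.pdf}}$ and $\beta_2 = \raisebox{-5pt}{\includegraphics[height=0.25in]{curtainsadownleft.pdf}}$: the composition $\alpha_2\beta_2 = Id_{\Gamma_2}$ is the second (CI) relation (equation~\ref{foam relations in (CI)2}), while $\beta_2\alpha_2 = Id_{\Gamma_2'}$ is the direct analogue of figure~\ref{fig:translation of handy relations} obtained from the second relation of Lemma~\ref{handy relations}. Combining the two pairs yields both isomorphisms depicted in figure~\ref{fig:removing singular points in pairs}.

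Since every composition needed is one of the relations already verified in Lemmas~\ref{handy relations} and~\ref{nice relations}, there is no substantial obstacle here; the corollary is essentially a repackaging of those lemmas as ``two-sided inverse'' statements. The only care required is bookkeeping: the factors of $\pm i$ attached to $\alpha_1$ and $\alpha_2$ are forced by the (UFO) relations so that the vertical compositions evaluate to identities without a stray unit, and the preferred-facet convention (lower hemisphere for ufo-foams, continuous versus dotted red curves for singular arcs) must be tracked consistently so that all compositions are well-defined morphisms in $\textit{Foams}_{/\ell}$.
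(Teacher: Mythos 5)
Your proof is correct and takes essentially the same route as the paper: in the text immediately preceding the corollary, the paper records that $\alpha_1,\beta_1$ and $\alpha_2,\beta_2$ are mutually inverse isomorphisms in $\textit{Foam}_{/\ell}$ (by the (CI) relations together with Lemma~\ref{handy relations}), and then states that the corollary ``follows at once.'' Your argument simply makes that two-sided-inverse observation explicit.
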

We remark that the previous corollary says that we can `remove' pairs of adjacent singular points of the same type. 
\begin{corollary}\label{Isomorphisms 1 and 2}
The following isomorphisms hold in the category $\textit{Foam}_{/\ell}$:
\begin{figure}[ht]
$$\xymatrix@R=2mm{
\raisebox{-35pt}{\includegraphics[height=1in]{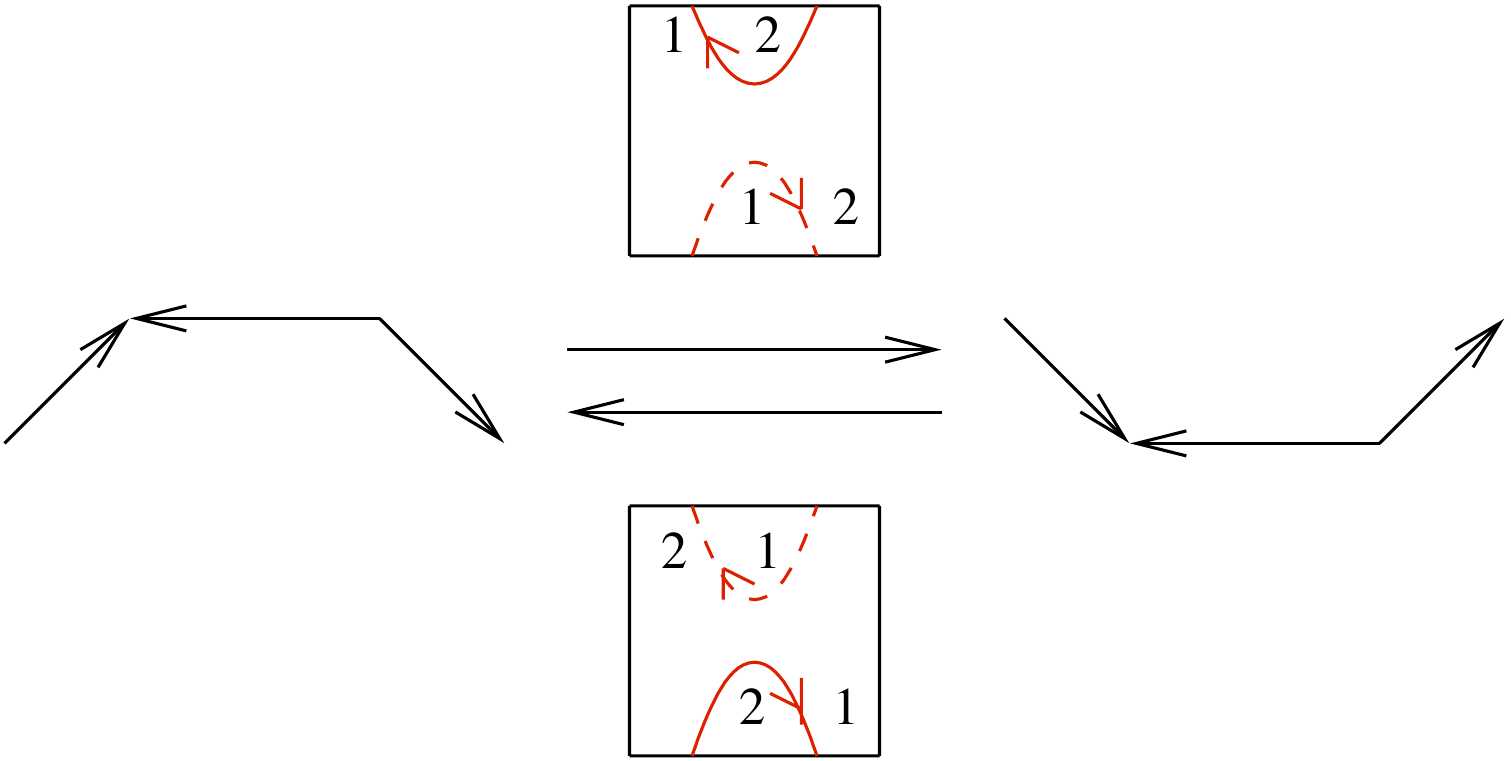}} \quad \text{and} \quad
\raisebox{-35pt}{\includegraphics[height=1in]{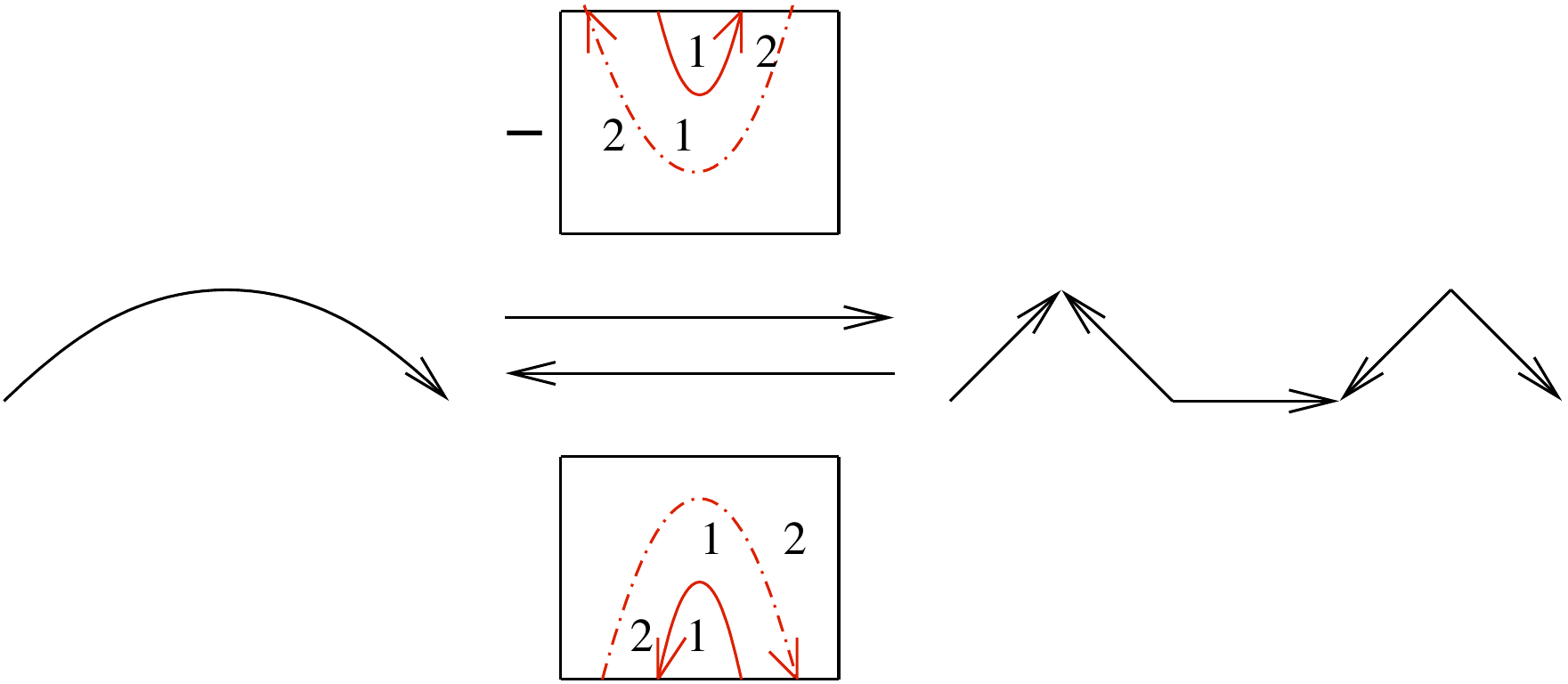}} 
}$$
\caption{}
\end{figure}
\end{corollary}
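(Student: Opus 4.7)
The plan is to exhibit mutually inverse morphisms realizing each isomorphism explicitly, following the same pattern used for the (CI) relation and for Corollary~\ref{removing singular points in pairs}. The components of the isomorphism maps will be cups and caps (possibly singular, possibly decorated with dots), assembled into row and column ``matrices'' whose entries are chosen to be degree zero once the grading shifts appearing on the right-hand sides are taken into account.

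The strategy is the same in each of the two cases: one of the round-trip composites is an endomorphism of a closed web which I would recognize as the identity via one of the relations just proved --- the surgery formula (SF) when the closed web is an ordinary unknot, and the (CN) relation when the closed web is the $2$-vertex circle $\Gamma$ of Figure~\ref{fig:circle2sv}. The other composite is a matrix of closed foams obtained by composing cups with caps, and each entry evaluates to either $0$ or $1$ by the local relations (S), (2D) and (UFO), yielding the identity matrix.

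Concretely, for the decomposition involving $\Gamma$, I would take $g=(\nu_1,\nu_2)$ and $f=(-i\mu_1,-i\mu_2)^{T}$, where $\nu_j,\mu_j$ are the foams of Figure~\ref{fig:cneck isomorphisms}. Then $g\circ f = -i\nu_1\mu_1 - i\nu_2\mu_2 = Id_\Gamma$ is literally the statement of (CN), while the four entries of $f\circ g$ are closed ufo-foams whose dot patterns can be read off from which of $\nu_j,\mu_k$ carry a dot on the preferred facet: the diagonal entries acquire a single dot on the preferred facet and evaluate to $i$ by (UFO), cancelling the prefactor $-i$ to yield $1$, while the off-diagonal entries are either dotless or carry a dot on each facet and therefore vanish by (UFO). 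The case of the ordinary unknot summand is entirely analogous, with (SF) playing the role of (CN) and the (S), (2D) evaluations playing the role of the (UFO) evaluations.

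The main obstacle is bookkeeping rather than any genuine mathematical content: one must choose the grading shifts so that every component of $f$ and $g$ is degree zero, keep track of which facet is preferred at each singular arc, and be vigilant about the sign flip incurred when reversing the order of the facets at a singular circle. Once these choices are fixed correctly, no relation beyond those already established in Lemmas~\ref{handy relations} and~\ref{nice relations} is required.
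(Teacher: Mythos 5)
You have misidentified the content of the corollary. The isomorphisms you describe — $\raisebox{-4pt}{\includegraphics[height=0.2in]{unknot-clockwise.pdf}}\cong\emptyset\{-1\}\oplus\emptyset\{+1\}$ and $\raisebox{-3pt}{\includegraphics[height=0.18in]{circle2sv.pdf}}\cong\emptyset\{-1\}\oplus\emptyset\{+1\}$, realized by column/row vectors of (singular) cups and caps whose round-trip composites are checked via (SF)/(CN) on one side and (S)/(2D)/(UFO) on the other — are precisely the \emph{delooping} statement, Lemma~\ref{lemma:delooping} of Section~\ref{sec:fast computations}. They are not Corollary~\ref{Isomorphisms 1 and 2}.

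Two structural signals show the interpretation cannot be right. First, Corollary~\ref{Isomorphisms 1 and 2} asserts isomorphisms \emph{in the category} $\textit{Foam}_{/\ell}$ itself. Direct sums and grading-shifted summands of objects do not exist there; they live in $\mathrm{Mat}(\textit{Foams}_{/\ell})$, which is exactly where the delooping lemma is stated (``isomorphic in $\mathrm{Mat}(\textit{Foams}_{/\ell})$''). Your whole matrix machinery, the appeal to (CN) for $g\circ f = \mathrm{Id}_\Gamma$, and the evaluation of the four entries of $f\circ g$ as ufo-foams, only make sense in that larger additive category. Second, the paper's proof of the corollary is a single line appealing to the (CI) identities and Lemma~\ref{handy relations}, none of which involve closed foams, surgeries, or ufo evaluations; the intended argument is simply to concatenate the already-established mutually inverse singular-curtain isomorphisms $\alpha_1,\beta_1$ and $\alpha_2,\beta_2$ (whose invertibility is exactly what (CI) plus Lemma~\ref{handy relations} give) along the relevant arcs of the two webs, producing an honest isomorphism of a web with two singular points to another web with a different singular-point configuration. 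The surrounding corollaries (``removing singular points in pairs,'' ``changing the ordering of arcs'') confirm this reading: they are about rearranging singular points on a strand, not about decomposing a closed circle into degree-shifted copies of the empty web.

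So the gap is not in the calculations you carried out (which are correct as a proof of Lemma~\ref{lemma:delooping}) but in the target: you proved a different statement. To repair the argument you should identify the two webs in the figure, express each as a composition in which the local pieces $\Gamma_1,\Gamma'_1,\Gamma_2,\Gamma'_2$ appear, and then splice in the pairs $(\alpha_1,\beta_1)$ and $(\alpha_2,\beta_2)$ — no (SF), (S), (UFO), (2D), or (CN) relation is needed, and no grading shift or direct sum appears.
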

\begin{proof}
This follows easily from  (CI) identities and lemma~\ref{handy relations}.
\end{proof}

We remark that there are similar isomorphisms corresponding to webs with opposite orientations than those in the previous corollary.
The next result is a particular case of the corollary~\ref{Isomorphisms 1 and 2}.
\begin{corollary}
The isomorphism given in figure~\ref{fig:changing the ordering of arcs} holds in $\textit{Foam}_{/\ell}$:
\begin{figure}[ht]
\raisebox{-35pt}{\includegraphics[height=1.3in]{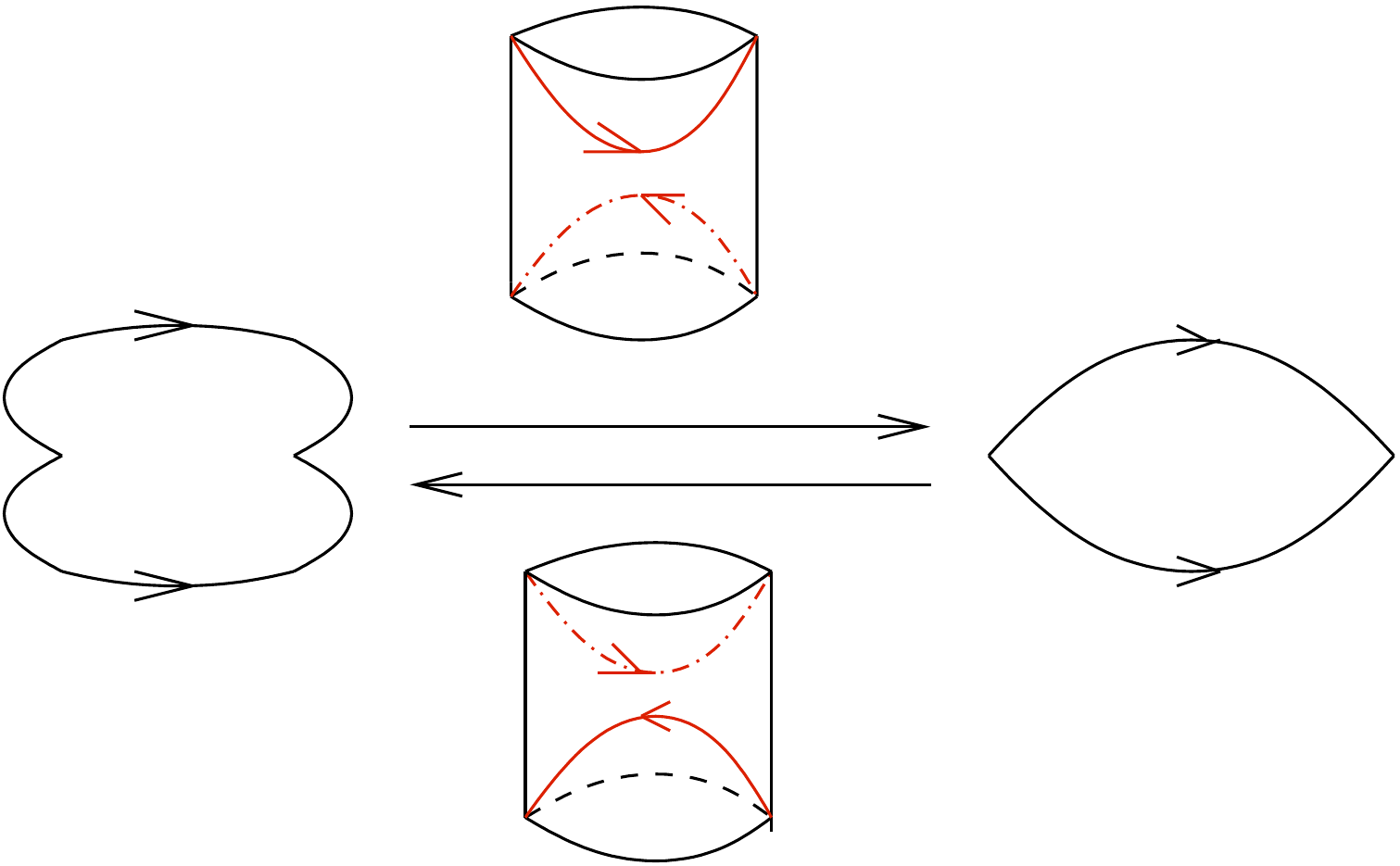}}
\caption{Changing the ordering of arcs}\label{fig:changing the ordering of arcs} 
\end{figure}
\end{corollary}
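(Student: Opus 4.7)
The plan is to exhibit the two mutually inverse foams that implement the isomorphism in figure~\ref{fig:changing the ordering of arcs}, and then check invertibility by direct reduction to Corollary~\ref{Isomorphisms 1 and 2}. Since the statement is flagged as a particular case, the strategy is not to rebuild machinery but to recognize the displayed web as one of the configurations covered by the previous corollary and to invoke that corollary on the appropriate pieces.

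First I would identify the two webs in the figure as differing only by the local swap of which of two adjacent singular points has its preferred arc shared. By Corollary~\ref{removing singular points in pairs}, a pair of adjacent singular points of the same type can be replaced by a pair of smooth arcs via mutually inverse foams built from the cobordisms $\alpha_1, \beta_1$ (and $\alpha_2, \beta_2$) from the proof of Lemma~\ref{nice relations}. I would therefore define the candidate isomorphism as the composition $\alpha_2 \circ \beta_1$ (up to the factors of $\pm i$ already absorbed into $\alpha_1, \alpha_2$) going one way, and $\alpha_1 \circ \beta_2$ going the other way. Both composites are explicit ``curtain'' foams with a pair of singular arcs, and can be drawn locally without reference to the rest of the web.

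Next I would verify these are mutually inverse by stacking and applying the (CI) identities of Lemma~\ref{nice relations}, together with the two relations of Lemma~\ref{handy relations}. The key computation is that $\alpha_2\beta_1$ followed by $\alpha_1\beta_2$ collapses, curtain by curtain, to $\id_{\Gamma_1}$ (and symmetrically to $\id_{\Gamma_2}$), because each (CI) identity absorbs one curtain at the cost of a factor of $\pm i$ which cancels against the prefactor built into $\alpha_1$ and $\alpha_2$. The compatibility of signs is the only subtle point: the orientation of the singular arcs determines whether one uses the first or second form of (CI), and one must track the $\pm i$'s arising from Lemma~\ref{handy relations} carefully so that all phases multiply to $1$.

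The main obstacle, as just noted, is this bookkeeping of the $\pm i$ factors along the seams. Once that is checked in one orientation, Corollary~\ref{Isomorphisms 1 and 2} immediately yields the other cases by the remark that analogous isomorphisms hold for webs with reversed orientations. The corollary then follows without any new foam-level computation beyond what is already encapsulated in (CI) and Lemma~\ref{handy relations}.
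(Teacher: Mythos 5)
Your conceptual picture is right: the two webs in figure~\ref{fig:changing the ordering of arcs} differ by which facet is preferred at a pair of adjacent same-type singular points, and the natural way to build an isomorphism is to compose a curtain that removes the pair with a curtain that re-inserts it with the other ordering, then check invertibility via (CI) and Lemma~\ref{handy relations}. However, the specific composition you propose does not typecheck. The cobordism $\beta_1$ is a curtain from $\Gamma_1 = \raisebox{-2pt}{\includegraphics[height=0.12in]{2vertweb.pdf}}$ to the vertex-free arc $\Gamma'_1 = \raisebox{-2pt}{\includegraphics[height=0.12in]{arcro.pdf}}$, while $\alpha_2$ is a curtain from $\Gamma'_2 = \raisebox{-2pt}{\includegraphics[height=0.12in]{arclo.pdf}}$ to $\Gamma_2 = \raisebox{-2pt}{\includegraphics[height=0.12in]{2vertwebleft.pdf}}$. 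These two arcs $\Gamma'_1$ and $\Gamma'_2$ carry opposite orientations (they appear on the right sides of the two separate web skein relations in figure~\ref{fig:web skein relations}), so $\alpha_2 \circ \beta_1$ is not a composable pair of morphisms, and likewise $\alpha_1 \circ \beta_2$ is undefined. What is actually needed is a second curtain that re-inserts a pair of singular points into the \emph{same} arc $\Gamma'_1$ but with the opposite ordering of facets — an analog of $\alpha_1$ reflected along the seam, not $\alpha_2$.

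This is precisely the content of Corollary~\ref{Isomorphisms 1 and 2} (together with the preceding remark about webs with opposite orientations), and the paper's own argument is simply to observe that the present corollary is a particular case of it; Corollary~\ref{Isomorphisms 1 and 2} was already established from the (CI) identities and Lemma~\ref{handy relations}. So the machinery you want to use is the right one, but your construction pulls from the wrong pair of (CI) data; once the intermediate arc is the same on both legs of the composite, the $\pm i$ bookkeeping you describe goes through as in the verification of $\alpha_1\beta_1 = \id_{\Gamma_1}$ and $\beta_1\alpha_1 = \id_{\Gamma'_1}$.
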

Finally, a particular result of corollary~\ref{removing singular points in pairs} is given below.
\begin{corollary}\label{replacing basic closed webs}
The isomorphisms of figure~\ref{fig:replacing basic closed webs by oriented loops} hold in the category $\textit{Foam}_{/\ell}$:
\begin{figure}[ht]
$$\xymatrix@R=2mm{
\raisebox{-50pt}{\includegraphics[height=1.5in]{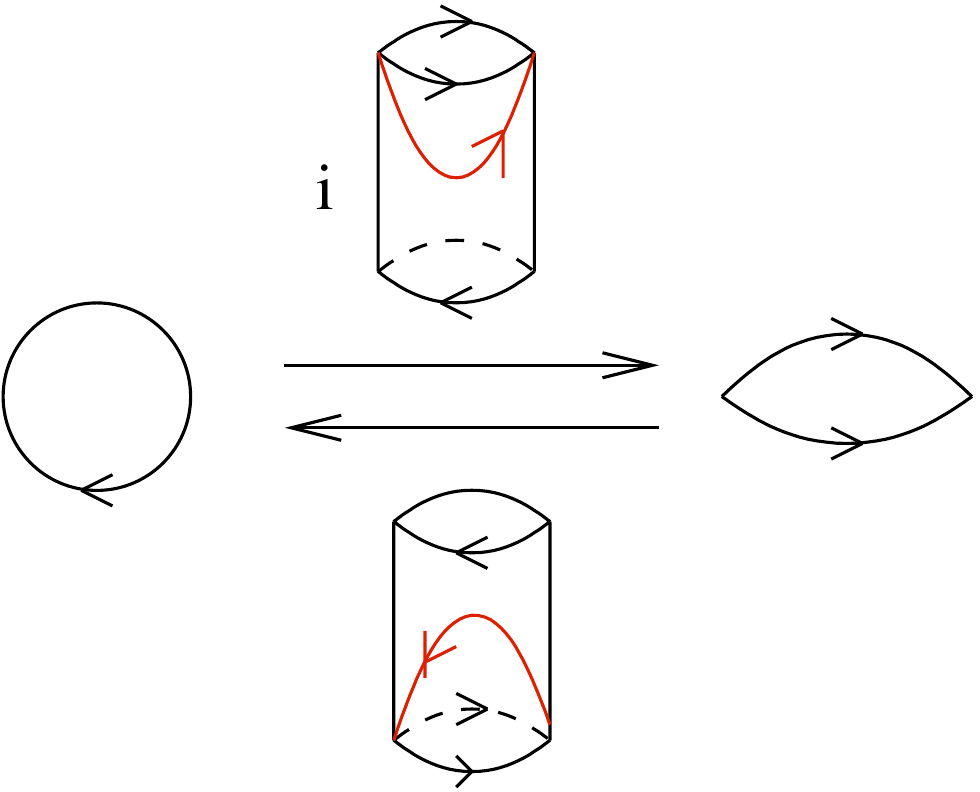}} \quad \text{and} \quad
\raisebox{-50pt}{\includegraphics[height=1.5in]{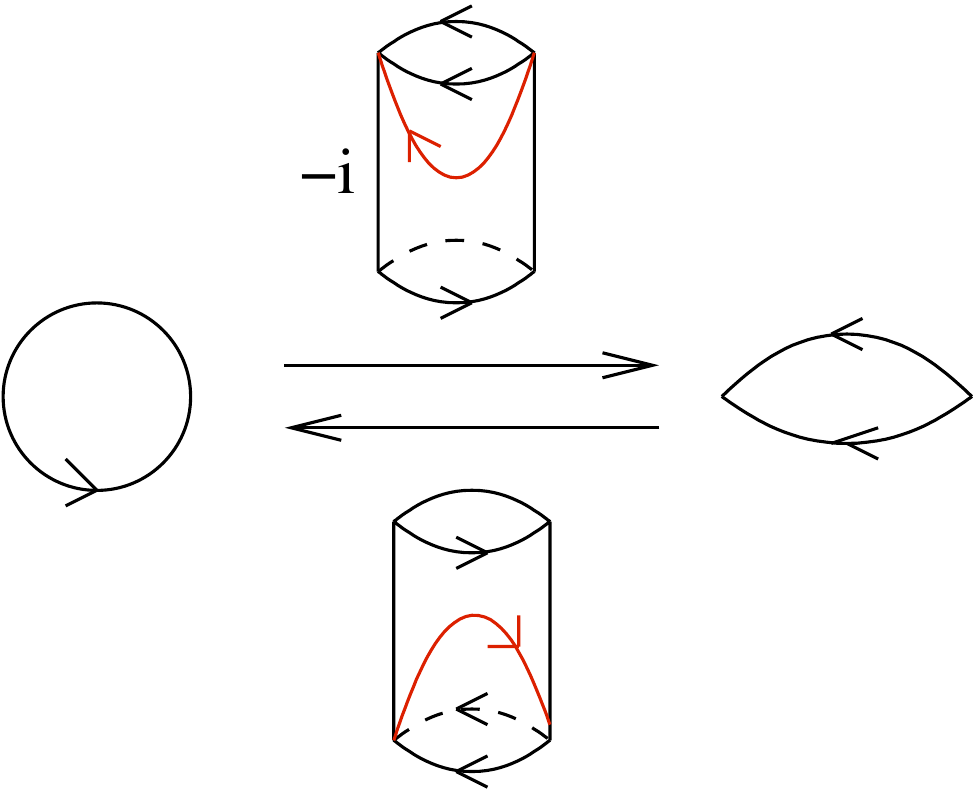}} 
}$$
\caption{Replacing basic closed webs by oriented loops}\label{fig:replacing basic closed webs by oriented loops}
\end{figure}
\end{corollary}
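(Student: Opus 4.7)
The plan is to recognize that each basic closed web shown in figure~\ref{fig:replacing basic closed webs by oriented loops} is precisely what one gets by taking the local picture $\Gamma_1$ (respectively $\Gamma_2$) from figure~\ref{fig:webs in (CI)1} (resp.\ figure~\ref{fig:webs in (CI)2}) and closing off its four boundary endpoints by a trivial oriented band. Under the same closure, the web $\Gamma'_1$ (resp.\ $\Gamma'_2$) becomes the oriented loop on the right-hand side. So the claim is nothing more than the ``closed-up'' version of the mutually inverse pair $(\alpha_1,\beta_1)$ (resp.\ $(\alpha_2,\beta_2)$) already produced in corollary~\ref{removing singular points in pairs}.

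Concretely, I would define foams between the basic closed web and the oriented loop by taking $\alpha_i$ (with its factor of $\mp i$) and $\beta_i$ and gluing to the top and bottom the identity cobordism of the complementary trivial band. This gives two foams in $\textit{Foams}_{/\ell}$ in both directions. Because the band glued on is just an annulus acting as the identity, the compositions of the new foams reduce, after the obvious cancellation of that identity piece, to $\alpha_i\beta_i$ and $\beta_i\alpha_i$. The relations $\alpha_i\beta_i = \id_{\Gamma_i}$ and $\beta_i\alpha_i = \id_{\Gamma'_i}$ established in the proof of lemma~\ref{nice relations} therefore immediately give the identity foams on the basic closed web and on the oriented loop, proving that the two closed-up foams are mutually inverse isomorphisms.

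Since this is exactly the specialization of corollary~\ref{removing singular points in pairs} to the case where the two singular points of the same type sit inside a closed web (rather than a general web with additional structure around them), there is essentially no new content to prove; the only bookkeeping is to check, for the two variants in figure~\ref{fig:replacing basic closed webs by oriented loops}, that the two singular points do form an adjacent pair of the same type, so that the appropriate case of (CI) from lemma~\ref{nice relations} applies. The main (mild) obstacle is precisely this orientation/type bookkeeping: one has to make sure which of $\alpha_1,\beta_1$ vs.\ $\alpha_2,\beta_2$ to use for each of the two webs appearing in the figure, and to keep track of the $\pm i$ normalization inherited from the definitions of $\alpha_1$ and $\alpha_2$.
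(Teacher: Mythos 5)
Your proof is correct and matches the paper's intended argument: the paper introduces this corollary with the line ``Finally, a particular result of corollary~\ref{removing singular points in pairs} is given below,'' so the intended proof is exactly the specialization you describe, obtained by gluing the local isomorphisms $\alpha_i,\beta_i$ from corollary~\ref{removing singular points in pairs} to the identity on the complementary arc. The orientation/type bookkeeping you flag is the only (mild) thing to check, and the paper leaves it implicit as well.
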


\section{\textbf{Constructing complexes from tangle diagrams}}\label{chain complex}

We start with a generic diagram $T$ of a tangle with boundary points $B$, and we distinguish between $\textit{positive}$ and $\textit{negative}$ crossings of $D$ as in figure~\ref{fig:crossings}.
\bigskip
\begin{figure}[ht]
\includegraphics[height=.6in]{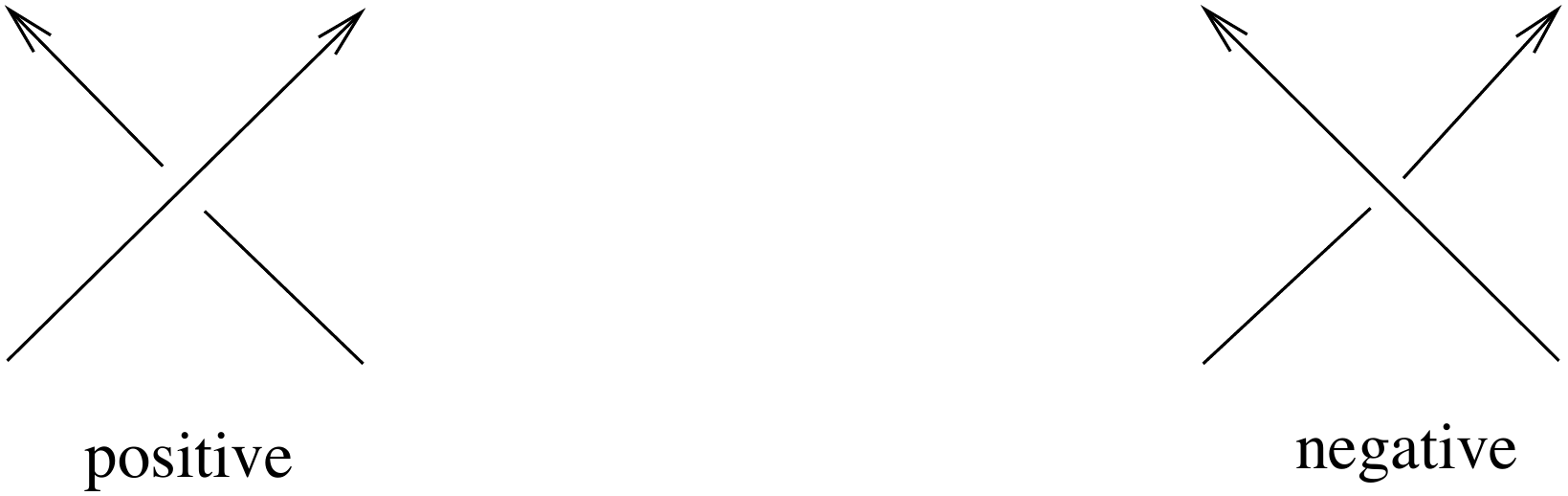} 
\caption{Types of crossings}
\label{fig:crossings}
\end{figure}

Let $I$ be the set of all crossings in $T$ and $n_+, n_-$ the number of positive, respectively negative crossings. Thus $\vert I \vert = n_{+} + n_-$. 

We resolve the crossings according to figure~\ref{fig:resolutions}, and we form an $\vert I \vert$-dimensional cube of resolutions, analogous to the one in~\cite{Kh1}. Vertices of the cube are in a bijection with subsets of $I$. To $J\subset I$ we associate the web $\Gamma_J$, which is obtained by giving the 1-resolution to all crossings in $J$ and the 0-resolution to the others, and we place $\Gamma_J \{2n_+ -n_- -\vert J\vert\}$ in the vertex $J$ of the cube, where \{m\} is the grading shift operator that lowers the grading down by m. 
Each edge of the cube is labeled by the foam whose  bottom boundary is the tail web and whose top boundary is the head web of that edge. Since the tail and the head of any edge of the cube are webs that differ only inside a disk $\mathcal{D}^2$ around one of the crossings of $T$, we associate to it the foam that is the identity everywhere except inside the cylinder $\mathcal{D}^2 \times [0,1]$, and it looks like one of the basic foams in figure~\ref{fig:saddle}. More precisely, to an inclusion $J \subset (J \cup \{b\})$ we assign the foam
\[
\Gamma_J \{2n_+ - n_ - -\vert J\vert\} \rightarrow \Gamma_{J\cup b}\{2n_+ - n_- -\vert J\vert -1\}
\]

To make each square in the cube anticommute, we add minus signs to some edges. One way to do this is described, for example, in section 2.7 in~\cite{BN1}. 

Next we construct a finite chain of web diagrams, analogous to the one in~\cite{BN1}. The chain objects are column vectors of webs and differentials are matrices of foams. We place the first non-zero term $\Gamma_{\emptyset} \{2n_+-n_-\}$ in cohomological degree $-n_+$. The chain is non-zero in cohomological degrees between $-n_+$ and $n_-$ (notice that in~\cite{BN1} the cohomological degrees run between $-n_-$ and $n_+$):
\[
\begin{array}{cccc}
[T]: & [T]^{-n_+}\{2n_+ - n_-\}  & \rightarrow\, ... \,\rightarrow & [T]^{n_-}\{n_+ - 2n_-\}
\end{array}
\]
The term $[T]^{-n_+ +i} $ is the formal direct sum of  the elements $\Gamma_J\{2n_+ - n_- -i\}$,  for all $J \subset I $ with $\vert J \vert = i$.
Figure~\ref{fig:mapcones} explains the construction of $[T]$, where the numbers $-1,0$ and $1$ under the resolutions indicate the cohomological degree.

\begin{figure}[ht]
\includegraphics[height=2in]{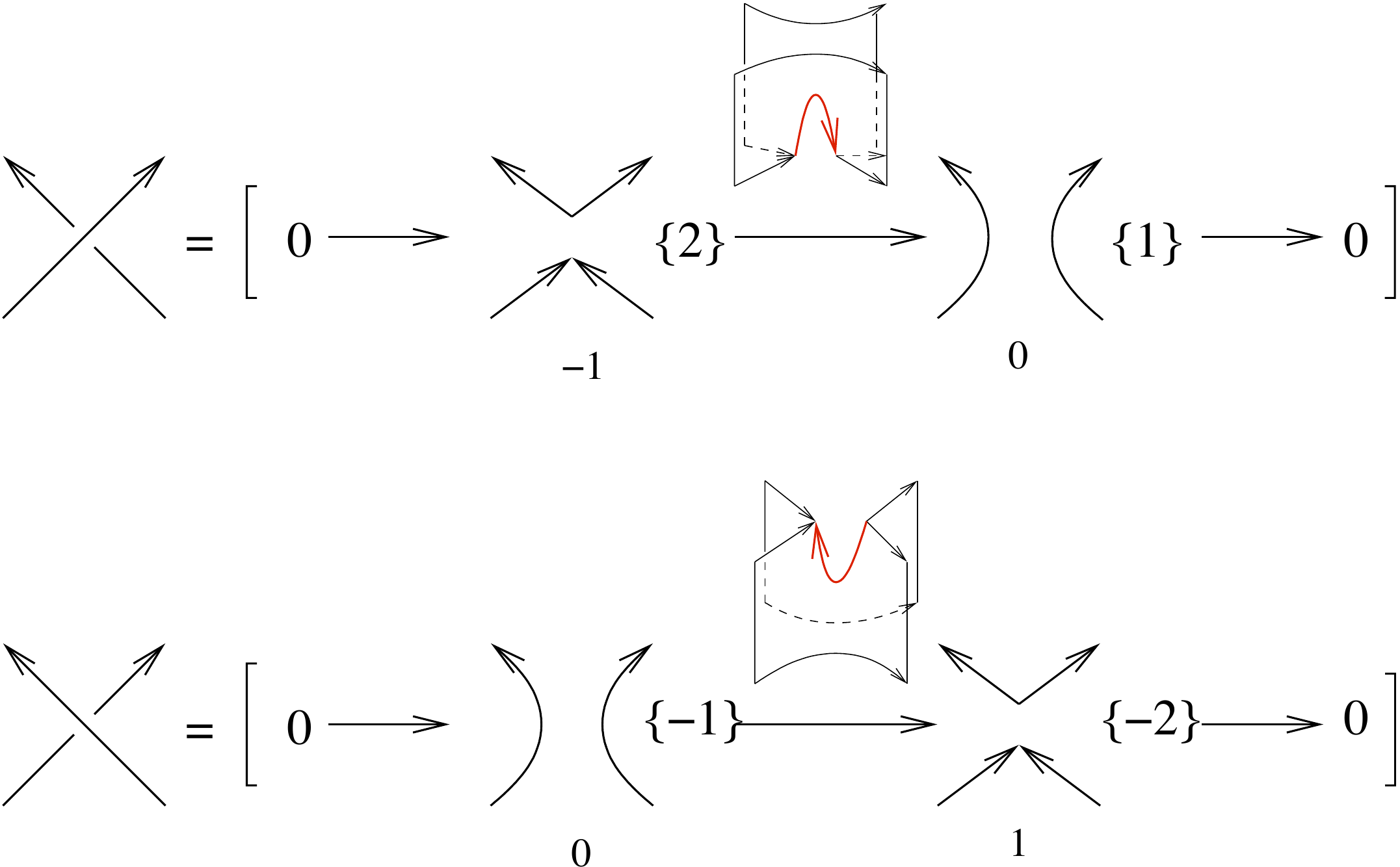} 
\caption{Constructing the chain complex [T]} \label{fig:mapcones}
\end{figure}

We borrow some notations from~\cite{BN1} and denote the category of complexes in $\textit{Foams}$  by Kom(Mat($\textit{Foams}$)) and its `modulo homotopies' subcategory by \linebreak Kom$_{/h}$(Mat($\textit{Foams}$)). In other words, the latter one has the same objects as the first one but less morhisms; specifically, homotopic morphisms in Kom(Mat($\textit{Foams}$)) are declared the same in the homotopy category. As the category $\textit{Foams}$ is graded (by degree), so are these new categories.
\pagebreak

\begin{proposition}.
\begin{enumerate}
\item For any tangle diagram $T$ the chain $[T]$ is a complex in Kom(Mat($\textit{Foams}(B)$)), where $B=\partial{T}$; that is $d^r \circ d^{r-1} = 0$.
\item All differentials in $[T]$ are of degree zero.
\end{enumerate}
\end{proposition}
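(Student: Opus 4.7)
Both statements are essentially formal once the local structure of the edge foams is understood.

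\emph{Part (1): the differential squares to zero.} The plan is the standard cube-of-resolutions argument: show that each square facet of the cube commutes \emph{before} signs are inserted, and then invoke the sign convention from section 2.7 of \cite{BN1} to turn each commutative square into an \emph{anti}commutative one, so that when $d^r \circ d^{r-1}$ is expanded every pair of paths around a square cancels. To get commutativity of a square, I would fix two distinct crossings $b_1, b_2 \in I$ and a subset $J \subset I \setminus \{b_1, b_2\}$. The four corner webs $\Gamma_J$, $\Gamma_{J \cup \{b_1\}}$, $\Gamma_{J \cup \{b_2\}}$, $\Gamma_{J \cup \{b_1,b_2\}}$ differ only inside small disks $\mathcal{D}_1^2, \mathcal{D}_2^2$ around $b_1$ and $b_2$, which may be chosen disjoint. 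The four edge foams are therefore supported in the disjoint cylinders $\mathcal{D}_1^2 \times [0,1]$ and $\mathcal{D}_2^2 \times [0,1]$ and are the identity elsewhere. Either composite around the square can then be deformed, by an obvious height-swap isotopy rel boundary that slides the $b_2$-saddle past the $b_1$-saddle through disjoint supports, into the foam which performs both local saddles simultaneously. The two composites are thus isomorphic as morphisms in $\textit{Foams}(B)$, hence equal in $\textit{Foams}_{/\ell}(B)$.

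\emph{Part (2): all differentials have degree zero.} Every non-zero matrix entry of $d^r$ is, up to a sign, an elementary foam $S$ that is the identity outside a cylinder over a disk around a single crossing $b$ and equals one of the basic saddles of figure~\ref{fig:saddle} inside. Applying the formula $\deg(S) = -\chi(S) + \tfrac{1}{2}|B| + 2d$ with $d=0$ (and computing $\chi(S)$ by gluing a saddle-disk of $\chi = 1$ to the identity cylinder over the common part of the two webs along four vertical arcs) gives $\deg(S) = 1$, in agreement with the example at the end of section~\ref{relations l}. Passing from $\Gamma_J$ to $\Gamma_{J \cup \{b\}}$ increases $|J|$ by one, so the shift $\{2n_+ - n_- - |J|\}$ on the target is one unit below that on the source. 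With the grading-shift conventions adopted in the paper, this shift cancels the $+1$ contributed by the saddle, and $S$ is a morphism of degree zero between the shifted webs. Since every entry of $d^r$ is of this form, the whole differential has degree zero.

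\emph{Where I expect the difficulty.} Neither statement is delicate. Part (1) hinges only on the disjointness of the local models at different crossings together with the fact that a consistent sign assignment on the cube edges exists, both of which are essentially given. Part (2) is a bookkeeping check matching the saddle's degree against the cube-edge shift. The substantive work of the paper lies elsewhere --- in the invariance of $[T]$ under Reidemeister moves and the subsequent verification of the movie moves --- rather than in confirming that $[T]$ is an honest complex with degree-zero differentials.
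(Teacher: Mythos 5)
Your argument matches the paper's own (terse) proof: part~(1) rests on the isotopy that time-reorders spatially separated saddles to give commutativity of each square face before signs are inserted and anticommutativity after, while part~(2) rests on $\deg(\text{saddle}) = 1$ together with the grading shifts built into the definition of $[T]$. One small clarification: since morphisms in $\textit{Foams}(B)$ are already defined only up to boundary-preserving isotopy, your height-swap isotopy already yields \emph{equality} in $\textit{Foams}(B)$ itself --- you do not need to pass to $\textit{Foams}_{/\ell}(B)$, and indeed the proposition as stated concerns Kom(Mat($\textit{Foams}(B)$)), not the quotient category.
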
 

\begin{proof} The first part follows from the fact that spatially separated saddles can be time reordered within a foam by an isotopy. Thus, every square face of morphisms in the cube of $[T]$ anticommutes. The second assertion follows from deg(saddle) $= 1$ and from the presence of the grading shift in the definition of morphisms in $[T]$. 
\end{proof}

Define $\textit{Kof}=$Kom(Mat($\textit{Foams}_{/\ell}$)) and $\textit{Kof}_{/h}=$Kom$_{/h}$(Mat($\textit{Foams}_{/\ell}$)) (note that those are analogous to Bar-Natan's Kob = Kom(Mat(Cob$^3_{/l}$)) and Kob$_{/h}$ = Kom$_{/h}$ (Mat(Cob$^3_{/l}$)), and remark that they are graded. 
 
\section{\textbf{Invariance under the Reidemeister moves}}\label{invariance}

We will work in a more general case, by allowing tangles to have singular points on their strings.

\begin{lemma}\label{lemma:removing singular points in pairs}
The following homotopy equivalences hold in $\textit{Kof}$:
\[
\left[\,\raisebox{-13pt}{\includegraphics[height=0.4in]{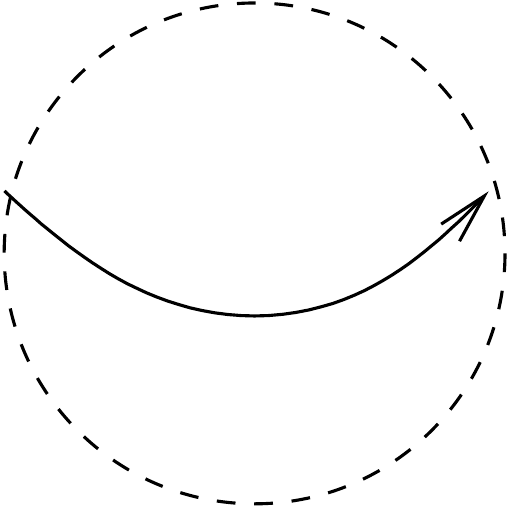}}\,\right] \sim \left[\,\raisebox{-13pt}{\includegraphics[height=0.4in]{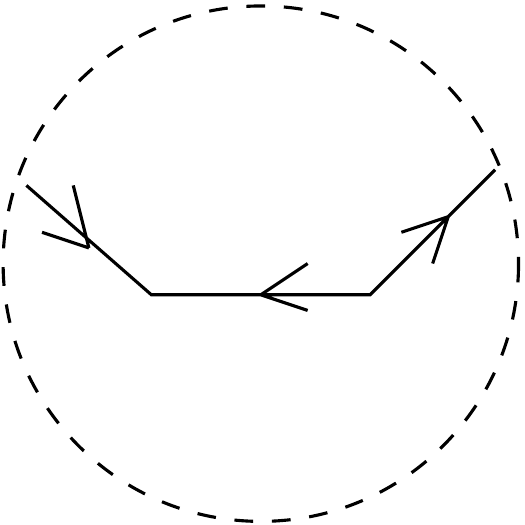}}\,\right]\,, \quad\,
\left[\,\raisebox{-13pt}{\includegraphics[height=0.4in]{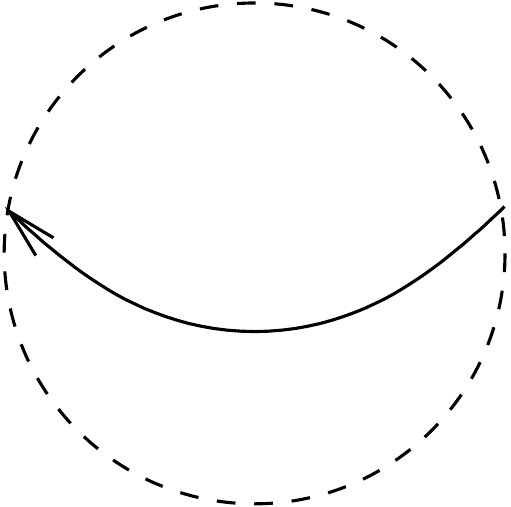}}\,\right] \sim \left[\,\raisebox{-13pt}{\includegraphics[height=0.4in]{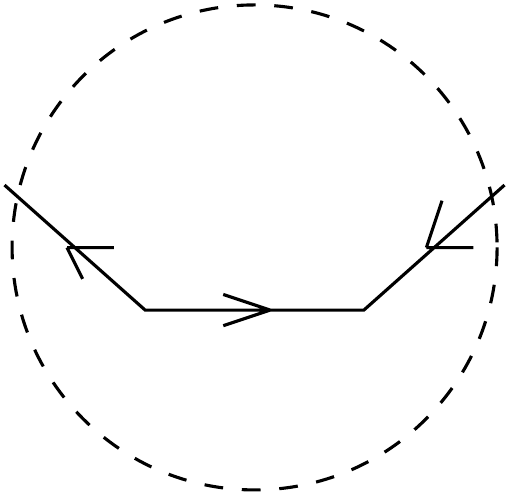}}\,\right]
.\]
\end{lemma}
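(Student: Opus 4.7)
The plan is to reduce this lemma directly to Corollary \ref{removing singular points in pairs}. Since the diagrams appearing on either side of each homotopy equivalence contain no crossings, the associated formal complexes $[\cdot]$ in $\textit{Kof}$ are concentrated in a single cohomological degree and consist of one web each, with zero differential. Moreover, in both cases $n_+ = n_- = 0$, so the grading shift $\{2n_+ - n_-\}$ is trivial and matches on both sides. Hence the asserted homotopy equivalences in $\textit{Kof}$ are literally the same data as isomorphisms of single webs in $\textit{Foams}_{/\ell}$.

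The two diagrams inside each of the two equivalences differ only by the insertion of a pair of adjacent singular points of the same type on an oriented arc. Corollary \ref{removing singular points in pairs} gives precisely such an isomorphism in $\textit{Foams}_{/\ell}$. Concretely, I would exhibit the isomorphism for the first equivalence as the pair of foams $\alpha_1 = -i\,\raisebox{-8pt}{\includegraphics[height=.3in]{curtainsaup.pdf}}$ and $\beta_1 = \raisebox{-8pt}{\includegraphics[height=.3in]{curtainsadown.pdf}}$ appearing in the proof of Lemma \ref{nice relations}, and invoke the first (CI) identity together with the first relation of Lemma \ref{handy relations} to conclude $\alpha_1\beta_1 = \text{Id}_{\Gamma_1}$ and $\beta_1\alpha_1 = \text{Id}_{\Gamma'_1}$. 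For the second equivalence I would use $\alpha_2 = i\,\raisebox{-8pt}{\includegraphics[height=.3in]{curtainsaupleft.pdf}}$ and $\beta_2 = \raisebox{-8pt}{\includegraphics[height=.3in]{curtainsadownleft.pdf}}$, and invoke the second (CI) identity together with the corresponding relation from Lemma \ref{handy relations}.

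Finally, I would note explicitly that because each side is a one-term complex, a degree-zero isomorphism in $\textit{Foams}_{/\ell}$ is automatically a chain map and a homotopy equivalence, so no auxiliary null-homotopies need to be produced. There is essentially no obstacle to overcome here; the content of the statement is entirely repackaged from Corollary \ref{removing singular points in pairs}, with the only subtle point being the bookkeeping of grading shifts, which trivially works out because neither diagram has any crossings.
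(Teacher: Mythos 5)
Your proof is correct and takes the same route as the paper, which simply cites Corollary~\ref{removing singular points in pairs}; you have merely unpacked the observation that both sides are one-term complexes so a degree-zero isomorphism in $\textit{Foams}_{/\ell}$ already gives the desired homotopy equivalence in $\textit{Kof}$. The explicit inverse pairs $\alpha_1,\beta_1$ and $\alpha_2,\beta_2$ you cite are exactly those the paper produces via the (CI) relations and Lemma~\ref{handy relations}.
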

\begin{proof}
This follows immediately from corollary~\ref{removing singular points in pairs}.
\end{proof}

\begin{theorem}\label{thm:invariance}
The isomorphism class of the chain complex $[T]$, regarded in $\textit{Kof}_{/h}$, is an invariant of the tangle $T$.
\end{theorem}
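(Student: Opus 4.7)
The plan is to prove invariance of $[T]$ in $\textit{Kof}_{/h}$ under the three Reidemeister moves, following the strategy Bar-Natan employs in his $\text{Cob}^3$ setup in~\cite{BN1}. Because the tangles considered here may carry singular points on their strings, one extra ``move'' must also be handled, namely the removal of a pair of adjacent singular points of the same type; this is exactly Lemma~\ref{lemma:removing singular points in pairs}. Combined with the locality of the construction (to be formalized via canopolies in Section~\ref{sec:planar algebras}), it then suffices to verify invariance on a small diagrammatic neighborhood surrounding each classical Reidemeister move.

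For R1, I would compute $[T]$ where $T$ is a single kink: this is a two-term complex whose vertices are, up to grading shifts, a bigon web and a single oriented arc, connected by a singular saddle. Using the (CN) relation from Lemma~\ref{nice relations} to cut the bigon's neck, together with the isomorphisms of Corollary~\ref{replacing basic closed webs} to exchange basic closed webs for oriented loops, I construct explicit chain maps to and from the one-term complex consisting of the unknotted arc, and verify via the (2D), (S), (SF) and (UFO) relations that their compositions equal the identities up to explicit nullhomotopies. The opposite-handed kink is treated symmetrically, the factors of $\pm i$ produced by the seamed pieces being absorbed into the chosen homotopies using Lemma~\ref{handy relations}.

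For R2, the complex associated to the standard R2 tangle is a $2$-dimensional cube of resolutions, hence a length-three complex whose middle term is a direct sum of two webs. The key observation is that one of the four edge foams is an isomorphism in $\textit{Foams}_{/\ell}$ by Corollary~\ref{Isomorphisms 1 and 2}; applying Gaussian elimination (the ``cancellation lemma'' in the sense of~\cite{BN1}) then contracts the complex down to a single-term complex consisting of two parallel strands, as desired.

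For R3, I expect the principal difficulty. The plan is to imitate Bar-Natan's ``categorified Kauffman bracket'' trick: on each side of the move, expand one chosen crossing using the definition of $[T]$ as a mapping cone, apply R2 invariance to simplify the resulting summands, and exhibit an explicit chain isomorphism between the two surviving mapping cones. The (3C) relation of Lemma~\ref{nice relations} is precisely what makes the two sides agree on the nose. The hard part will be bookkeeping---tracking both the edge signs chosen to make the cube anticommute and the $\pm i$ factors introduced by the seamed cobordisms, so that the candidate chain maps constructed from the two sides really do agree in $\textit{Kof}_{/h}$ rather than merely up to a global unit; the relations of Lemma~\ref{handy relations} and Corollary~\ref{removing singular points in pairs} are what I expect to use to absorb any stray units into homotopies.
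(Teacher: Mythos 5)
Your outline for R1 and R2 is a legitimate alternative to the paper's main argument: rather than exhibiting explicit chain maps $f$, $g$ and a homotopy $h$, you propose delooping the circle/bigon and then running Gaussian elimination, which the paper itself demonstrates is viable in Section~\ref{sec:fast computations}. That route buys you a more mechanical, computation-friendly proof at the cost of not producing the concrete strong deformation retracts needed later (the paper's explicit morphisms for R2a and R2b are reused in the mapping-cone argument for R3 and then again in the functoriality checks of Section~\ref{sec:functoriality}).

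The real gap is in R3. You expect the ``categorified Kauffman trick'' to close the argument, but in this seamed setting it does \emph{not} handle all orientation/sign configurations of the triple point. The trick consists of resolving one chosen crossing into a mapping cone and then applying R2 invariance to simplify the two arms of the cone; for this to succeed you need the arms to be sub-tangles related by an R2 move. When the three crossings are mixed (two of one sign, one of the other) this works, using the strong deformation retracts from R2a or R2b together with the ``sliding'' isomorphisms of Lemmas~\ref{lemma:sliding1}--\ref{lemma:sliding4}. But when all three crossings have the same sign, after unzipping one crossing the arms of the cone carry singular points sitting at a crossing, and the simplification would require an ``R2 move with a singular point,'' which has not been established and is not available. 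The paper circumvents this with a separate preparatory step: introduce a pair of adjacent singular points (Lemma~\ref{lemma:removing singular points in pairs}), slide them past crossings using Lemmas~\ref{lemma:slide singular pt. pass neg. crossing-(a)}--\ref{lemma:slide singular pt. pass pos. crossing-(b)} and the middle-string isomorphisms $\gamma$, $\mu$ of Lemmas~\ref{lemma:sliding the middle string1}--\ref{lemma:sliding the middle string2}, thereby changing the signs of two crossings and reducing to the already-verified mixed case, and then undo all the slides. Your proposal contains none of this machinery, and a proof along your lines would stall precisely at the same-sign R3 configurations.

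A smaller inaccuracy: you single out the (3C) relation as what makes the two sides of R3 ``agree on the nose.'' In fact the paper's R3 verification never invokes (3C); the equality of the two candidate chain maps (the ``third and fourth morphisms'') is established from the (CI) relations and Lemma~\ref{handy relations} together with Lemma~\ref{mapping cone lemma}. The (CN) relation you cite for R1 is used by the paper in R2a, not R1; for R1 the paper relies on (SF), (S), (CI), the exchange-of-dots relations, and Lemma~\ref{handy relations}.
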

\begin{proof}
Clearly $[T]$ does not depend on the ordering of the layers of the hypercube as column vectors and on the ordering of the crossings. To prove invariance under Reidemeister moves up to homotopy we work diagramatically and we show it for the small tangles representing these moves. In section~\ref{sec:planar algebras} we show that [$T$] behaves well with respect to tangle compositions; in particular, this proves the invariance under Reidemeister moves within larger tangles.

\subsection*{Reidemeister\, 1a}
Consider diagrams $D_1$ and $D'$ that differ only in a circular region as in the 
figure below.
$$D_1=\raisebox{-13pt}{\includegraphics[height=0.4in]{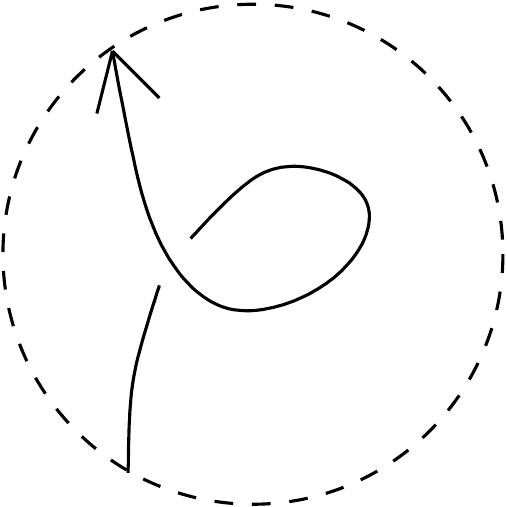}}\qquad
D'=\raisebox{-13pt}{\includegraphics[height=0.4in]{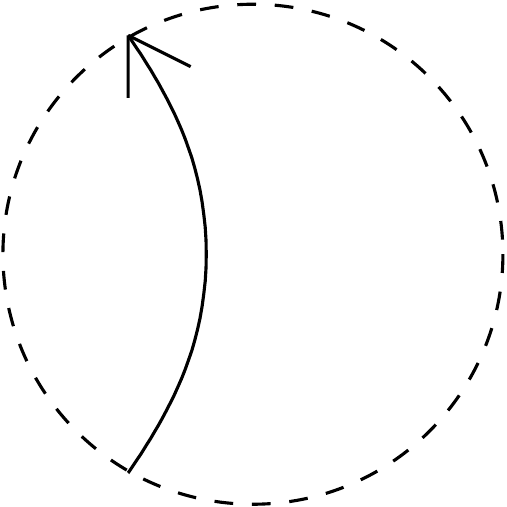}}$$
We have to show that the formal complex
$[D'] = (0 \longrightarrow \underline{\raisebox{-4pt} {\includegraphics[height=0.2in]{reid1-1.pdf}}} \longrightarrow 0)$ is homotopy equivalent to the formal complex $[D_1] = (0 \longrightarrow \underline{\raisebox{-4pt}{\includegraphics[height=0.2in]{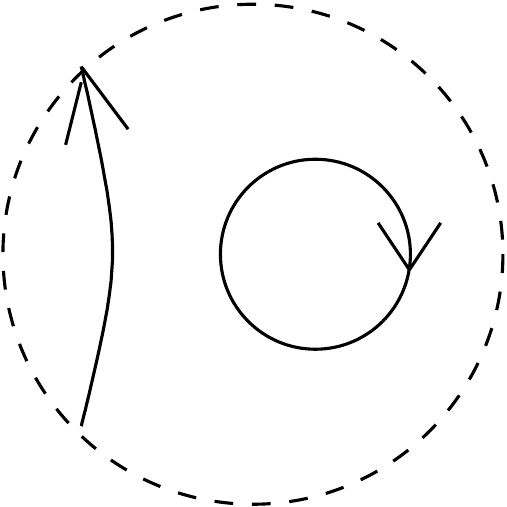}}\{-1\}}\stackrel{d}{\longrightarrow}\raisebox{-4pt} {\includegraphics[height=0.2in]{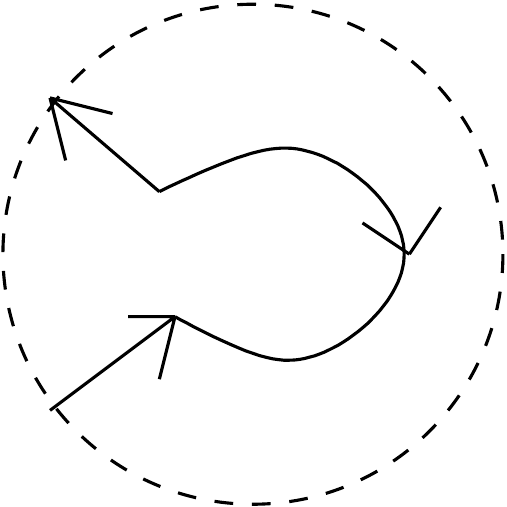}}\{-2\} \longrightarrow 0)$, where $d = \raisebox{-8pt}{\includegraphics[height=0.3in]{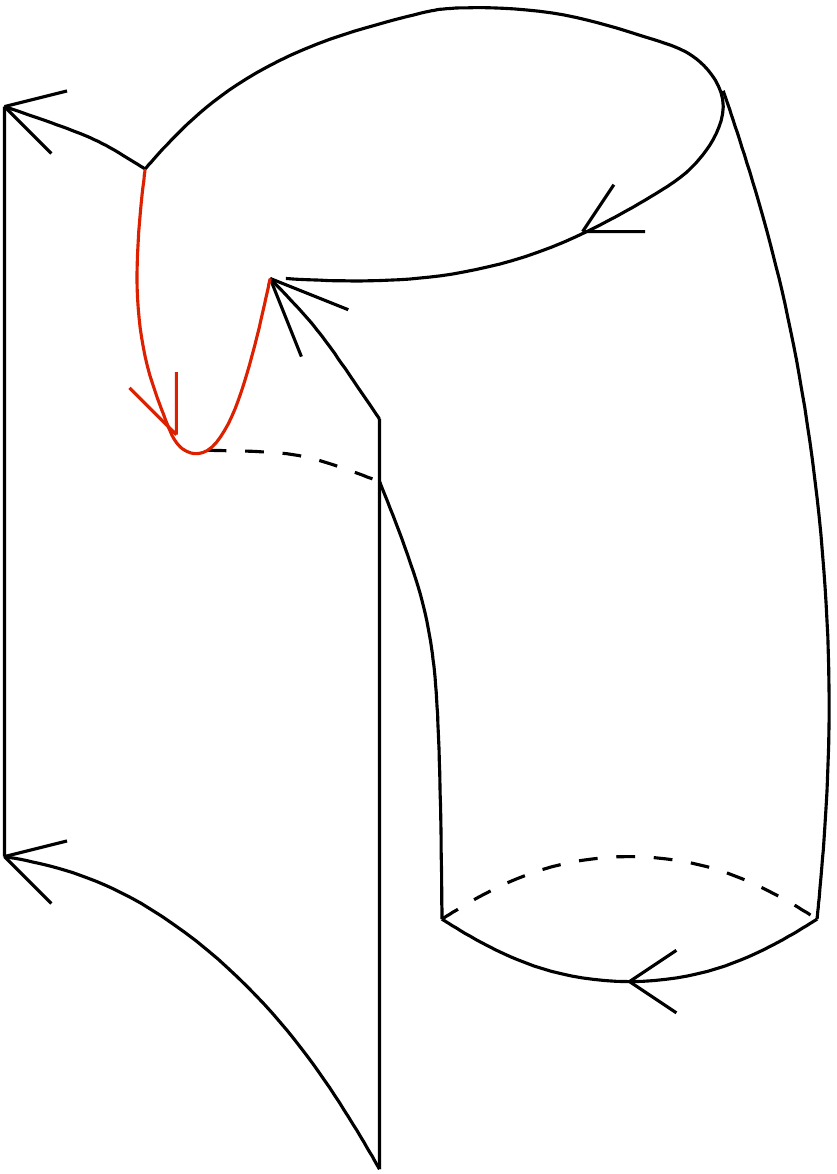}}$ and the underlined objects are at the cohomological degree $0$. To do this we construct homotopically inverse morphisms $f: [D'] \rightarrow [D_1]$ and $g: [D_1] \rightarrow [D']$ given in figure~\ref{fig:R1a_Invariance}. 

\begin{figure}[ht]
$\xymatrix@R=32mm{
  [D'] \ar [d]^f:
\\
  [D_1] \ar@<4pt> [u]^g:
}
\xymatrix@C=35mm@R=28mm{
 \includegraphics[height=0.3in]{reid1-1.pdf}
  \ar@<4pt>[d]^{
        f^0 \;=\; \raisebox{-12pt}{\includegraphics[height=0.4in]{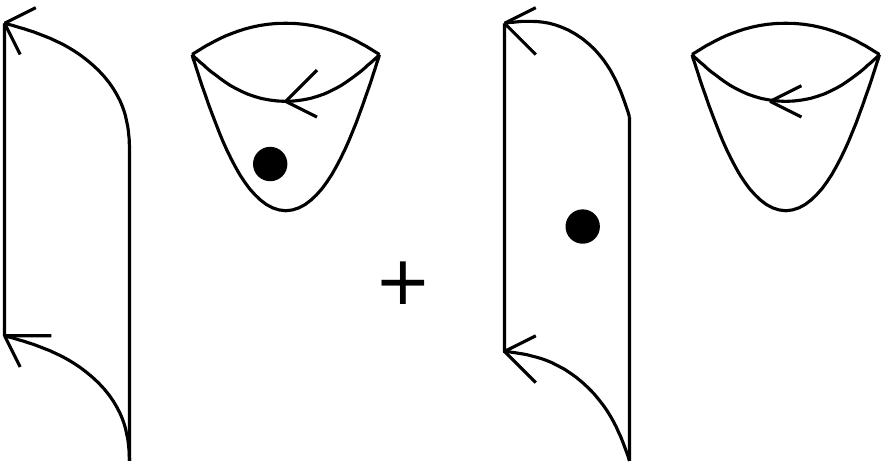}}} 
  \ar@<4pt>[r]^0
         &0 \\
 \includegraphics[height=0.3in]{reid1-2.pdf}\ar[r]^{
        d \;=\; \raisebox{-12pt}{\includegraphics[height=0.35in]{reid1-d.pdf}} }
  \ar@<4pt>[u]^{
        g^0 \;=\; \raisebox{-12pt}{\includegraphics[height=0.4in]{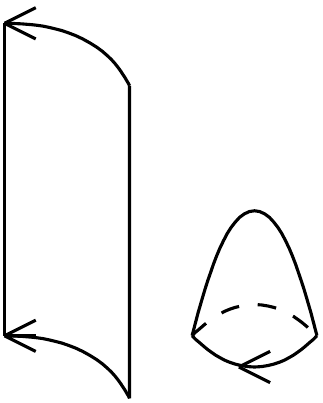}} } &
   \includegraphics[height=0.3in]{reid1-3.pdf} \ar@{<->}[u]_0 \ar@<2pt>[l]^{
     h \;=\; i \; \raisebox{-12pt}{\includegraphics[height=0.4in]{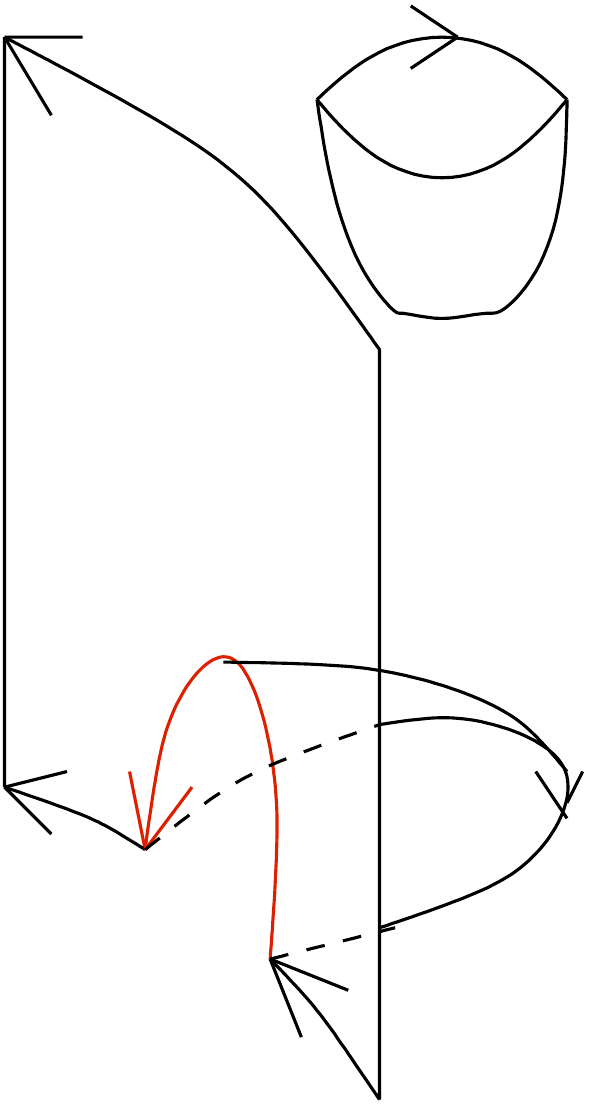}}  }
}$
\caption{Invariance under $Reidemeister\,1a$} \label{fig:R1a_Invariance}
\end{figure}

The morphism $f$ is defined by $f^0 = \raisebox{-8pt}{\includegraphics[height=0.3in]{reid1-f0.pdf}}$ and $f^{\neq 0} = 0$. The morphism $g$ is defined by $g^0 = \raisebox{-8pt}{\includegraphics[height=0.3in]{reid1-g0.pdf}}$ (a vertical curtain union a cap) and $g^{\neq 0} = 0$. 
\noindent To show that $f$ and $g$ are morphisms, the only non-trivial commutativity to verify is $df^0$, which follows from the first relation of figure~\ref{fig:exchanging dots}. From the (S) relations is immediate that $g^0f^0 = Id(\raisebox{-4pt} {\includegraphics[height=0.15in]{reid1-1.pdf}})$. Thus $gf = Id([D'])$. Consider the homotopy morphism $h = \raisebox{-8pt}{\includegraphics[height=0.3in]{reid1-h.pdf}}: [D_1]^1 = \raisebox{-4pt} {\includegraphics[height=0.2in]{reid1-3.pdf}}\{-2\} \rightarrow \raisebox{-4pt}{\includegraphics[height=0.2in]{reid1-2.pdf}}\{-1\} = [D_1]^0$. Then, $f^1g^1 + dh = dh = Id(\raisebox{-4pt} {\includegraphics[height=0.2in]{reid1-3.pdf}})$, from the first (CI) relation. The equality $f^0g^0 + hd  = Id(\raisebox{-4pt}{\includegraphics[height=0.2in]{reid1-2.pdf}})$ follows from relation (SF) and lemma~\ref{handy relations}. Therefore $fg \sim Id([D_1])$ which completes the proof that $[D_1] $ and $[D']$ are homotopy equivalent. 

We note that if we reverse the orientation of the string in $D$ and $D'$, the diagram that shows the homotopy equivalence is very similar to the previous one, with the difference that  the homotopy map $h$ must be taken with negative sign; in other words, the coefficient of the corresponding foam is $-i$, instead of $i$. 

\subsection*{Reidemeister\, 1b}
Consider diagrams $D_2$ and $D'$ that differ only in a circular region as in the 
figure below.
$$D_2=\raisebox{-13pt}{\includegraphics[height=0.4in]{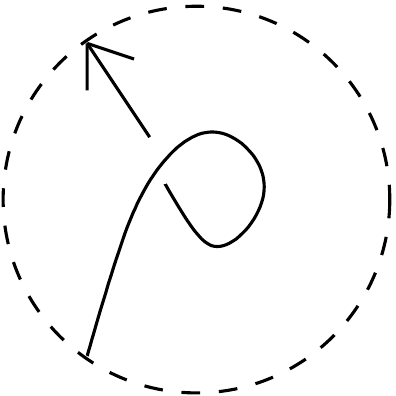}}\qquad
D'=\raisebox{-13pt}{\includegraphics[height=0.4in]{reid1-1.pdf}}$$

The homotopy equivalence between $[D_2] = (0 \longrightarrow \raisebox{-4pt}{\includegraphics[height=0.2in]{reid1-3.pdf}}\{2\}\stackrel{d}{\longrightarrow} \underline{\raisebox{-4pt} {\includegraphics[height=0.2in]{reid1-2.pdf}}\{1\}} \longrightarrow 0)$ and $[D'] = (0 \longrightarrow \underline{\raisebox{-4pt} {\includegraphics[height=0.2in]{reid1-1.pdf}}} \longrightarrow 0)$ is given in figure~\ref{fig:R1b_Invariance}.

\begin{figure}[ht]
$\xymatrix@R=32mm{
  [D'] \ar [d]^f:
\\
  [D_2] \ar@<4pt>[u]^g:
}
\xymatrix@C=35mm@R=28mm{
0  \ar@<4pt>[r]^0
& \includegraphics[height=0.3in]{reid1-1.pdf}
  \ar@<4pt>[d]^{
       f^0 \;=\; \raisebox{-15pt}{\includegraphics[height=0.4in]{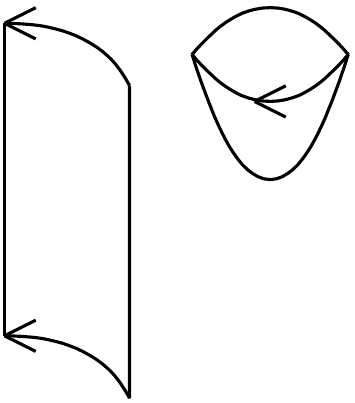}}} 
        \\
 \includegraphics[height=0.3in]{reid1-3.pdf} \ar@{<->}[u]_0
 \ar[r]^{
        d \;=\; \raisebox{-12pt}{\includegraphics[height=0.35in]{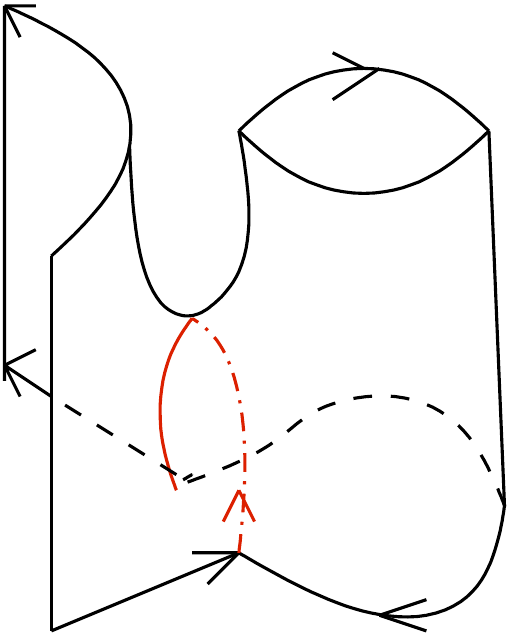}} }
 & \includegraphics[height=0.3in]{reid1-2.pdf} 
  \ar@<4pt>[u]^{
  g^0 \;=\; \raisebox{-12pt}{\includegraphics[height=0.4in]{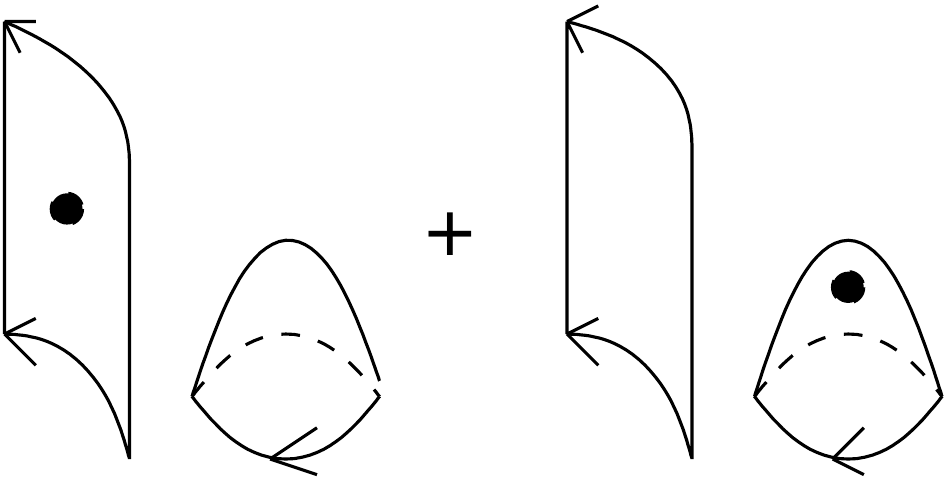}} } 
  \ar@<2pt>[l]^{
     h \;=\; i \;\raisebox{-12pt}{\includegraphics[height=0.4in]{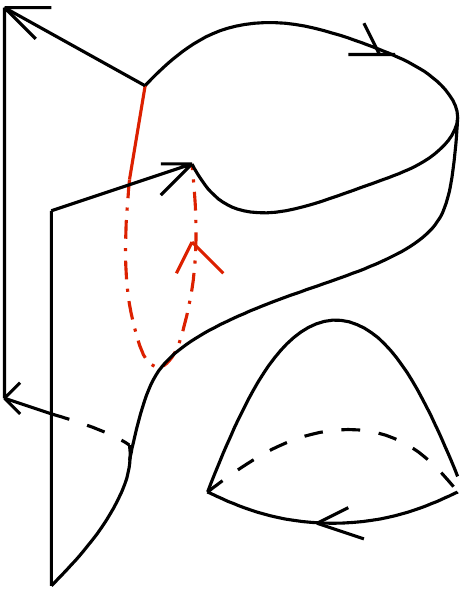}}  }
}$
\caption{Invariance under $Reidemeister\,1b$} \label{fig:R1b_Invariance}
\end{figure}

Clearly, $g^0f^0 = Id(\raisebox{-4pt} {\includegraphics[height=0.2in]{reid1-1.pdf}})$ (it uses the (S) relations). Hence $gf = Id([D'])$. 
$g^0d = 0$ follows from first identity in figure~\ref{fig:exchanging dots}. From (CI) local relations, we have $hd = Id(\raisebox{-4pt} {\includegraphics[height=0.2in]{reid1-3.pdf}})$. Moreover, $f^0g^0 + dh = Id(\raisebox{-4pt} {\includegraphics[height=0.2in]{reid1-2.pdf}})$, which follows from relation (SF) and first relation in lemma~\ref{handy relations} and figure~\ref{fig:exchanging dots}. Thus, $fg \sim Id([D_2])$, and $[D_2]$ is homotopy equivalent to $[D']$.

As in the Reidemeister 1a case, if we reverse the orientation of the string, the homotopy map $h$ has the coefficient $-i$, instead of $i$. Everything else stays the same.


\subsection*{Reidemeister\, 2a}
Consider diagrams $D$ and $D'$ that differ in a circular region, as in the figure below:
$$D=\raisebox{-13pt}{\includegraphics[height=0.4in]{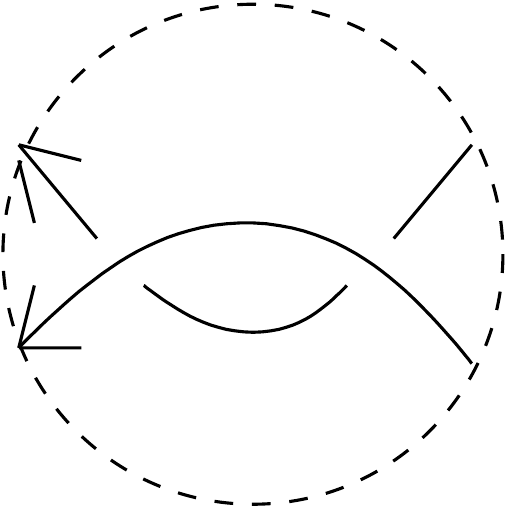}}\qquad
D'=\raisebox{-13pt}{\includegraphics[height=0.4in]{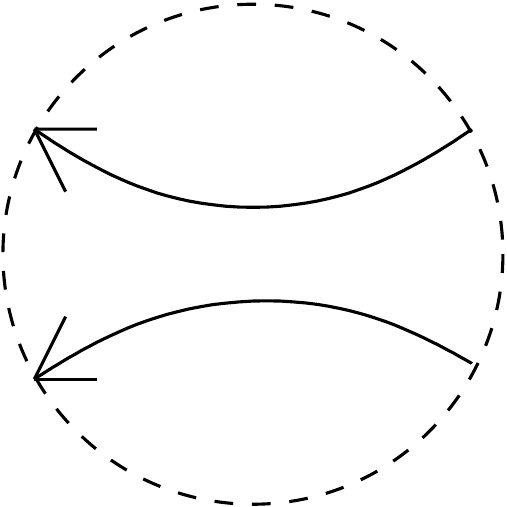}}\ $$
\begin{figure}[ht]
\includegraphics[width=3.5in]{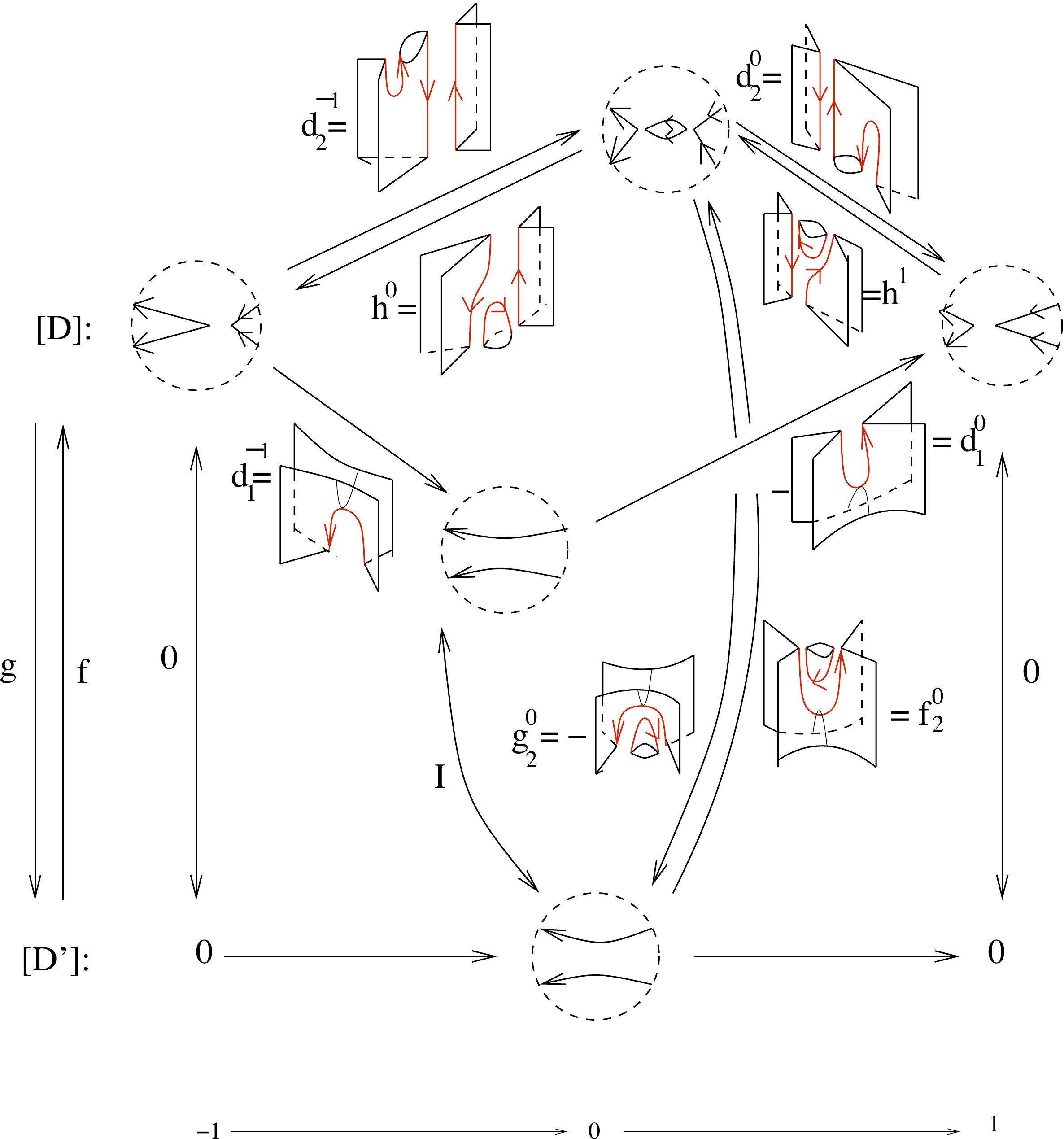}
\caption{Invariance under $Reidemeister\,2a$} \label{fig:R2a_Invariance}
\end{figure}

The corresponding chain complexes associated to $D$ and $D'$ are:
\[[D] :(0 \longrightarrow \raisebox{-4pt} {\includegraphics[height=0.2in]{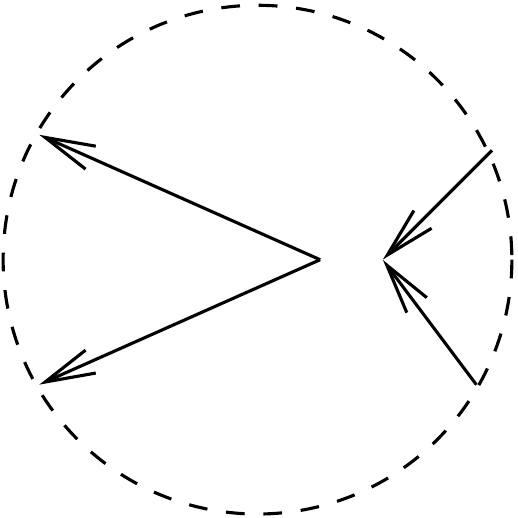}}\{1\} \longrightarrow \underline{\raisebox{-4pt} {\includegraphics[height=0.2in]{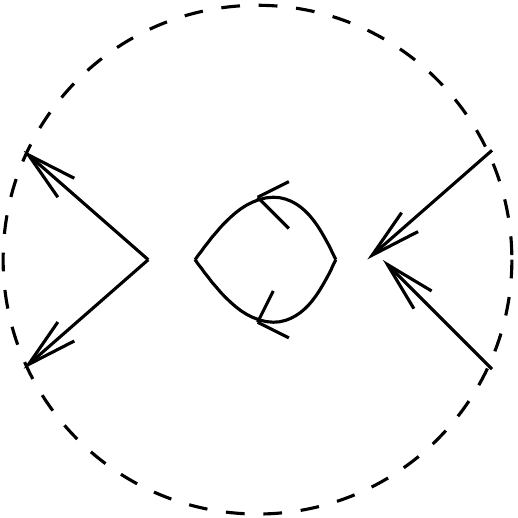}}\{0\} \oplus \raisebox{-4pt} {\includegraphics[height=0.2in]{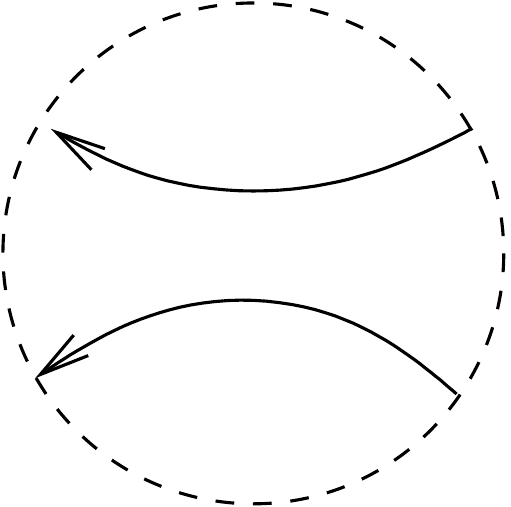}}\{0\}}\longrightarrow \raisebox{-4pt} {\includegraphics[height=0.2in]{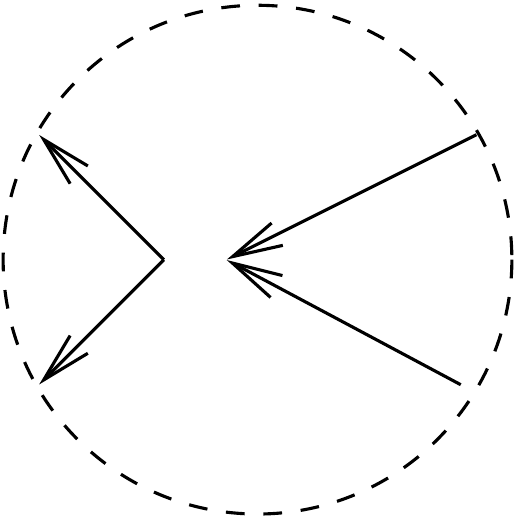}}\{-1\} )\, \text {and}\] \[[D']: (0 \longrightarrow \underline{\raisebox{-4pt} {\includegraphics[height=0.2in]{2arcslo.pdf}}} \longrightarrow 0).\] 
\noindent We give the homotopy equivalence between these complexes  in figure~\ref{fig:R2a_Invariance}.

Applying isotopies we have  
\[ d_1^{-1} +  g_2^0d_2^{-1} = 0, \quad  d_1^0 + d_2^0f_2^0 = 0, \]
which implies that $f$ and $g$ are morphisms of complexes. 
\noindent To show that $fg \sim Id([D])$ we have to check that
 \[
 \begin{array}{rrr}
  h^0d_2^{-1} = Id([D]^{-1}), \\ d_2^0h^{1} = Id([D]^1), \\
 f_2^0g_2^0 + d_2^{-1}h^0 + h^{1}d_2^0= Id([D]_2^0).
 \end{array}
 \]
The first two use only isotopies and the last one follows from the (CN) relation.
Using the (S) relation we get that $g_2^0f_2^0 = 0$. Therefore, $g^0f^0 = (Id,  g_2^0) (Id,  f_2^0)^t = Id([D']^0)$, which is equivalent to saying that $gf = Id([D'])$. Hence, the formal complexes $[D]$ and $[D']$ are homotopy equivalent. 

If we tuck the two strings in this second Reidemeister move in the other way, that is the lower string under the upper one, the chain maps are identical to those in the previous case.

\subsection*{Reidemeister\, 2b}

Consider diagrams $D$ and $D'$ that differ in a circular region, as in the figure below.
$$D=\raisebox{-13pt}{\includegraphics[height=0.4in]{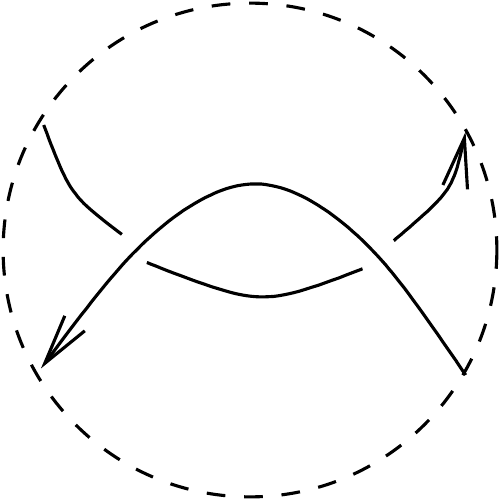}}\qquad
D'=\raisebox{-13pt}{\includegraphics[height=0.4in]{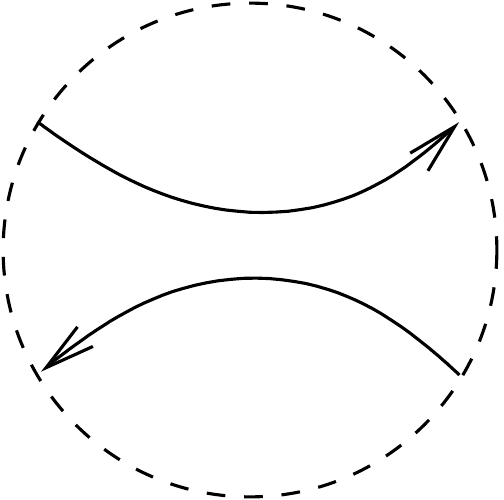}}\ $$

The chain complexes associated to $D$ and $D'$ are:
 \[[D] :(0 \longrightarrow \raisebox{-4pt} {\includegraphics[height=0.2in]{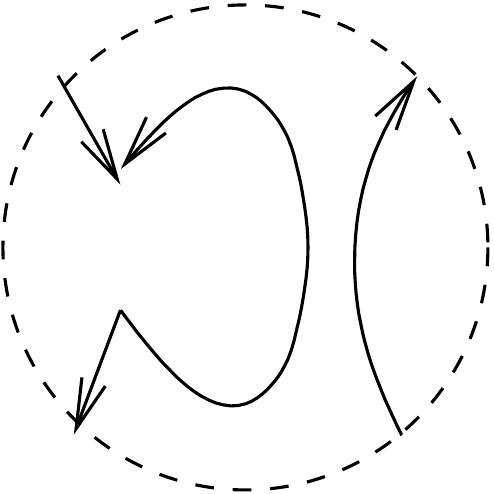}}\{1\} \longrightarrow \underline{\raisebox{-4pt} {\includegraphics[height=0.2in]{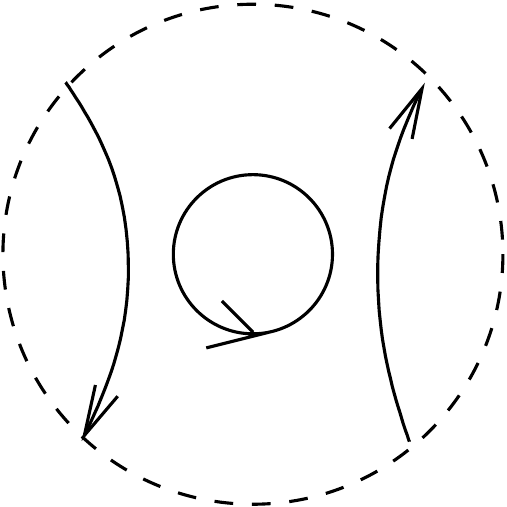}}\{0\} \oplus \raisebox{-4pt} {\includegraphics[height=0.2in]{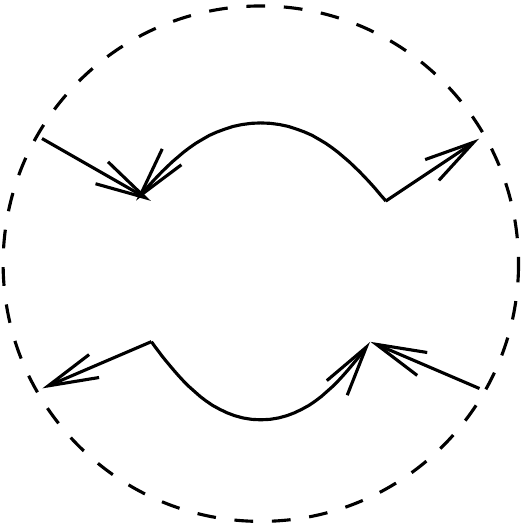}}\{0\}}\longrightarrow \raisebox{-4pt} {\includegraphics[height=0.2in]{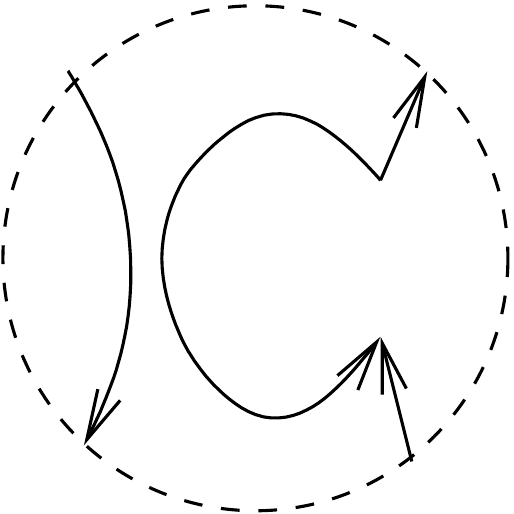}}\{-1\} )\, \text {and}\] \[[D']: (0 \longrightarrow \underline{\raisebox{-4pt} {\includegraphics[height=0.2in]{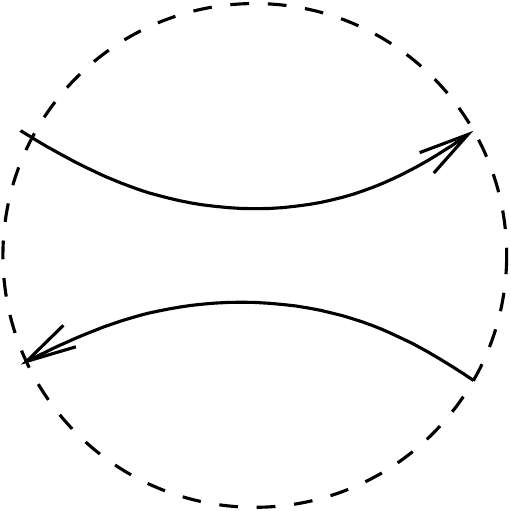}}} \longrightarrow 0).\]  
The homotopy equivalence between them is given in figure~\ref{fig:R2b_Invariance}.
 
\begin{figure}[ht]
\includegraphics[width= 3.5in, height = 3.5in]{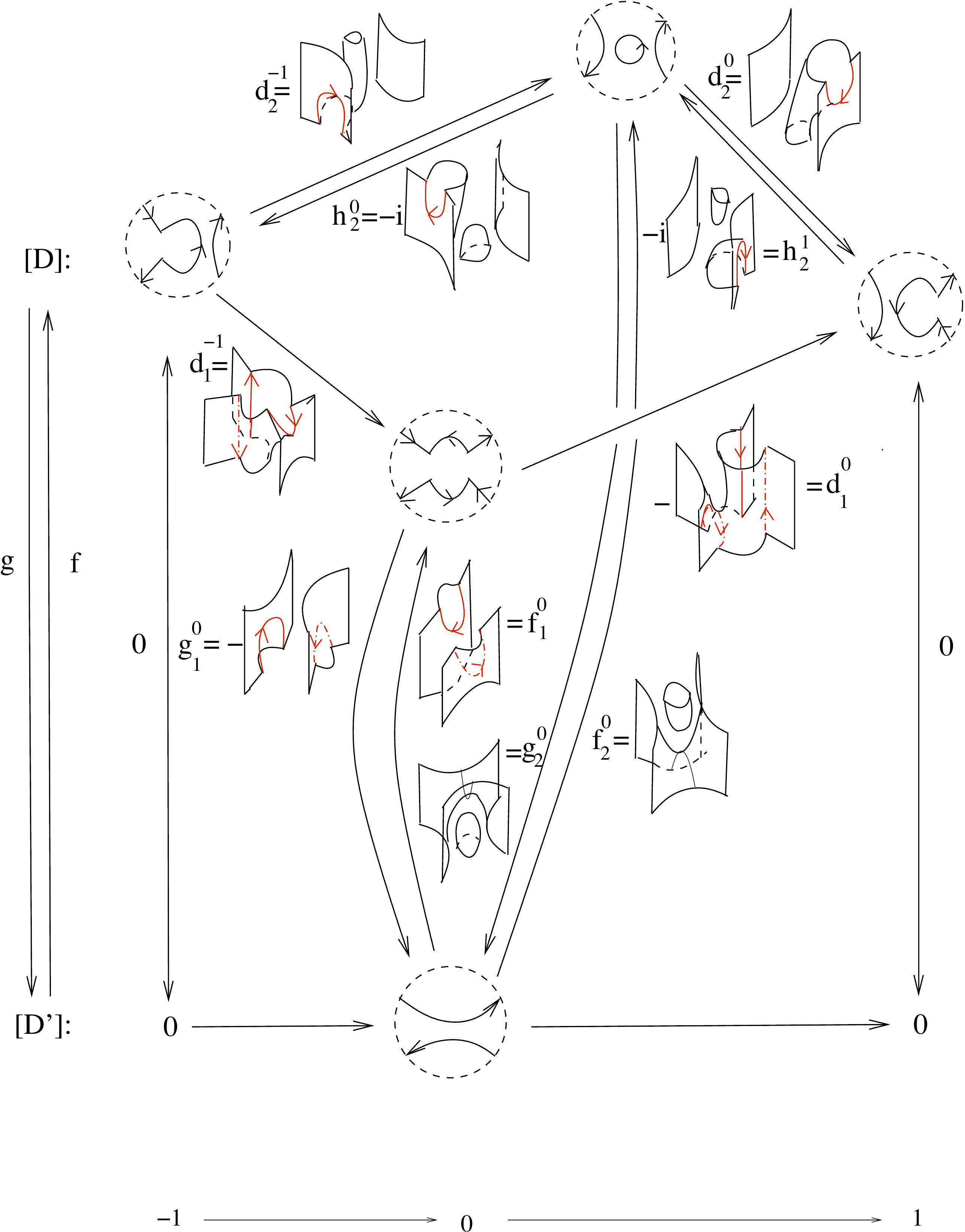}
\caption{Invariance under $Reidemeister\,2b$} \label{fig:R2b_Invariance}
\end{figure}
\noindent One can easily check that $f$ and $g$ are morphisms of complexes:  
\[g_1^0 d_1^{-1} +  g_2^0d_2^{-1} = 0, \quad  d_1^0f_1^0 + d_2^0f_2^0 = 0, \text{(apply isotopies)} \]

To show that $fg \sim Id([D])$ we have to check the following equalities:
 \[
 \begin{array}{rr}
 h_2^0d_2^{-1}  = Id([D]^{-1}), \\ d_2^0h_2^1  = Id([D]^1), \\
 f_1^0g_1^0 = Id([D]_1^0), \\  f_2^0g_2^0 + d_2^{-1}h_2^0 + h_2^1d_2^0= Id([D]_2^0).
 \end{array}
 \]
The first three equalities follow from the (CI) relation and the last one from the (SF) relation and lemma~\ref{handy relations}. (Remark that $h_1^0 = 0 = h_2^0$).
 
Finally, using the (S) relation and lemma~\ref{handy relations}, we have that $g_1^0f_1^0 + g_2^0f_2^0 = Id$. Therefore, $g^0f^0 = Id([D']^0)$ and $gf = Id([D'])$. It follows that the formal complexes $[D]$ and $[D']$ are homotopic. 

Reversing the orientations of the strings in the two sides of the second Reidemeister move, the corresponding chain complex associated to $D$ is the same. The chain maps $f$ and $g$ between $[D]$ and $[D']$ stay the same, as well, but the homotopy map must be considered with opposite signs on each component of $[D]$.

\begin{definition}
A morphism of complexes $g: A^* \rightarrow B^*$ is a \textit{strong deformation} \textit{retract} if there exist a morphism $f: B^* \rightarrow A^*$ and a homotopy $h$ from $A$ to itself so that $gf = Id_B, Id_A -fg = dh + hd$ and $hf = 0 = gh$. We also say that $f$ is the $\textit{inclusion in a strong deformation}$ $\textit{retract}$. 
\end{definition}

From the proofs of invariance under $R2$ moves we obtain the next corollary.
\begin{corollary}
The morphisms $ \raisebox{-5pt}{\includegraphics[height=0.25in]{Dreid2a.pdf}}\rightarrow
\raisebox{-5pt}{\includegraphics[height=0.25in]{twoarcs.pdf}}$ and $\raisebox{-5pt}{\includegraphics[height=0.25in]{Dreid2b.pdf}}\rightarrow \raisebox{-5pt}{\includegraphics[height=0.25in]{twoarcsop.pdf}} $ are strong deformation retracts.
\end{corollary}
To show the invariance under the third Reidemeister move, one can find homotopically invertible morphisms between the formal complexes at the two sides of $R3$. However, we will present the invariance in a different way, using mapping cones and strong deformation retracts, in the spirit of~\cite{BN1}. For this, we need to recall a few concepts and results.

Given a morphism of complexes $\Psi: (C_1^*,d_1) \rightarrow (C_2^*,d_2)$, the \textit{ mapping cone} $\mathbf{M}(\Psi)$ of $\Psi$ is the complex with chain spaces $\mathbf{M}^r(\Psi)  = C_1^{r+1} \oplus C_2^r$ and with differentials 
$D^r = 
\left (
\begin{array}{cc}
 -d_1^{r+1} & 0 \\ \Psi^{r+1} & d_2^r 
\end{array}
\right )$ 

\begin{lemma}\label{lemma:crossings as cones}
$[\,\raisebox{-5pt}{\includegraphics[height=0.2in]{poscrossing.pdf}}\,] = \mathbf{M}(\,[\,\raisebox{-5pt}{\includegraphics[height=0.2in]{creation-ann.pdf}}\,]\, \longrightarrow \, [ \,\raisebox{-5pt}{\includegraphics[height=0.2in]{orienres.pdf}}\, ]\,)$ and $[ \,\raisebox{-5pt}{\includegraphics[height=0.2in]{negcrossing.pdf}}\,] = \mathbf{M}(\,[ \,\raisebox{-5pt}{\includegraphics[height=0.2in]{orienres.pdf}}\, ] \longrightarrow \, [ \,\raisebox{-5pt}{\includegraphics[height=0.2in]{creation-ann.pdf}}\, ]\,)[-1]$, where [$s$] is the shift operator that shifts complexes $s$ steps to the left; that is, if $C^i$ is the chain object in the $i$th position of some complex $C$, then $C^{s+i}$ is the chain object in the $i$th position of $C$[s]. 
\end{lemma}
We recall from~\cite{BN1} the following useful result:
\begin{lemma}\label{mapping cone lemma}
The  mapping cone construction is invariant up to homotopy under composition with strong deformation retracts and under composition with inclusions in strong deformation retracts. That is, given the diagram below, where $g_{1,2}$ is a strong deformation retract with inclusion $f_{1,2}$
\[
\xymatrix{
 C_1^*
  \ar [d]^{\psi} 
  \ar @<4pt> [r]^{g_1}
         &A^*\ar  [l]^{f_1}\\
 C_2^*
  \ar @<4pt>[r]^{g_2 } &B^*
  \ar [l]^{f_2}
   }\]
then the mapping cones $M(\psi)$, $M(\psi f_1)$ and $M(g_2\psi)$ are homotopy equivalent.
 \end{lemma} 
 \begin{proof}
 Let $h_1: C_1^* \rightarrow C_1^{*-1}$ be a homotopy for which $g_1f_1 = I, I- f_1g_1 = dh_1 + h_1d$ and $h_1f_1 = 0 =g_1h_1$. Then the morphisms
 \[M(\psi f_1) = A^{*+1} \oplus C_2^* \stackrel{F_1}{\longrightarrow} C_1^{*+1}\oplus C_2^* = M(\psi),
  F_1 = \left ( 
 \begin{array}{cc} f_1 & 0 \\ 0 & I \end{array} \right)\]
 and \[
 M(\psi) = C_1^{*+1} \oplus C_2^* \stackrel{G_1}{\longrightarrow}A^{*+1} \oplus C_2^* = M(\psi f_1),
 G_1  = \left (\begin{array}{cc} g_1 & 0 \\ \psi h_1 & I \end{array} \right) \]
 together with the homotopy $H_1: M(\psi)^*\rightarrow M(\psi)^{*-1}, H_1 = \left ( 
 \begin{array}{cc} -h_1 & 0 \\ 0 & 0\end{array} \right)$ define a homotopy equivalence between $M(\psi f_1)$ and $M(\psi)$. (It is easy to check that $F_1$ and $G_1$ are morphisms, and that $G_1F_1 = I$ and $I - F_1G_1 = d'h_1 + h_1d'$, where $d'$ is the differential in $M(\psi)$).

Similarly, let $h_2: C_2^* \rightarrow C_2^{*-1}$ be a homotopy for which $g_2f_2 = I, I- f_2g_2 = dh_2 + h_2d$ and $h_2f_2 = 0 =g_2h_2$. The homotopy equivalence between $M(\psi)$ and $M(g_2 \psi)$ is shown using the morphisms:
 \[M(g_2\psi) = C_1^{*+1} \oplus B^* \stackrel{F_2}{\longrightarrow}M(\psi) = C_1^{*+1}\oplus C_2^*,
  F_2 = \left ( 
 \begin{array}{cc} I & 0 \\ -h_2 \psi & f_2 \end{array} \right)\]
 and \[
 M(\psi) = C_1^{*+1} \oplus C_2^* \stackrel{G_2}{\longrightarrow}M(g_2 \psi)= C_1^{*+1} \oplus B^*,
 G_2  = \left (\begin{array}{cc} I & 0 \\ 0  & g_2 \end{array} \right) \]
 and the homotopy $H_2: M(\psi)^*\rightarrow M(\psi)^{*-1}, H_2 = \left ( 
 \begin{array}{cc} 0 & 0 \\ 0 & h_2\end{array} \right)$.
\end{proof}

\begin{lemma}
The cone construction is invariant under composition with isomorphisms. That is, given the diagram below, where $f_{1,2}$ are isomorphisms with inverses $g_{1,2}$
\[
\xymatrix{
 C_1^*
  \ar [d]^{\psi} 
  \ar @<4pt> [r]^{g_1}
         &A^*\ar  [l]^{f_1}\\
 C_2^*
  \ar @<4pt>[r]^{f_2 } &B^*
  \ar [l]^{g_2}
   }\]
then the mapping cones $M(\psi)$, $M(\psi f_1)$ and $M(f_2 \psi)$ are isomorphic.
\end{lemma}
\begin{proof}
Consider the maps
 \[M(\psi f_1) = A^{*+1} \oplus C_2^* \stackrel {\widetilde{F_1}}{\longrightarrow} C_1^{*+1}\oplus C_2^* = M(\psi),
 \widetilde{ F_1} = \left ( 
 \begin{array}{cc} f_1 & 0 \\ 0 & I \end{array} \right)\]
 and \[
 M(\psi) = C_1^{*+1} \oplus C_2^* \stackrel{\widetilde{G_1}}{\longrightarrow}A^{*+1} \oplus C_2^* = M(\psi f_1),
 \widetilde{G_1}  = \left (\begin{array}{cc} g_1 & 0 \\ 0 & I \end{array} \right). \]
 One can easily check that $\widetilde{F_1}$ and $\widetilde{G_1}$ are chain maps and that are mutually inverse isomorphisms, thus  $M(\psi)$ and $M(\psi f_1)$ are isomorphic. Similarly, we consider
  \[M(f_2 \psi) = C_1^{*+1} \oplus B^* \stackrel{\widetilde{G_2}}{\longrightarrow} C_1^{*+1}\oplus C_2^* = M(\psi),
  \widetilde{G_2} = \left ( 
 \begin{array}{cc} I & 0 \\ 0 & g_2 \end{array} \right)\]
 and \[
 M(\psi) = C_1^{*+1} \oplus C_2^* \stackrel{\widetilde{F_2}}{\longrightarrow}C_1^{*+1} \oplus B^* = M(f_2 \psi),
\widetilde{F_2}  = \left (\begin{array}{cc} I & 0 \\ 0 & f_2 \end{array} \right). \] 
 Also $\widetilde{F_2}$ and $\widetilde{G_2}$ are chain maps and mutually inverse isomorphisms, therefore the mapping cones $M(\psi)$ and $M(f_2 \psi)$ are isomorphic.
 \end{proof}
 
 
 \subsection*{Moves with singular points}
 We want to prove invariance under Reidemeister 3 moves using `the categorified Kauffman trick'. To do so, we need first to show a few other moves involving tangles with singular points.

 \begin{lemma}\label{lemma:sliding1}
 The associated chain complexes corresponding to the diagrams that differ in a circular region, as in the figure below, are isomorphic in the category $\textit{Kof}_{/h}$.
 $$\raisebox{-13pt}{\includegraphics[height=0.4in]{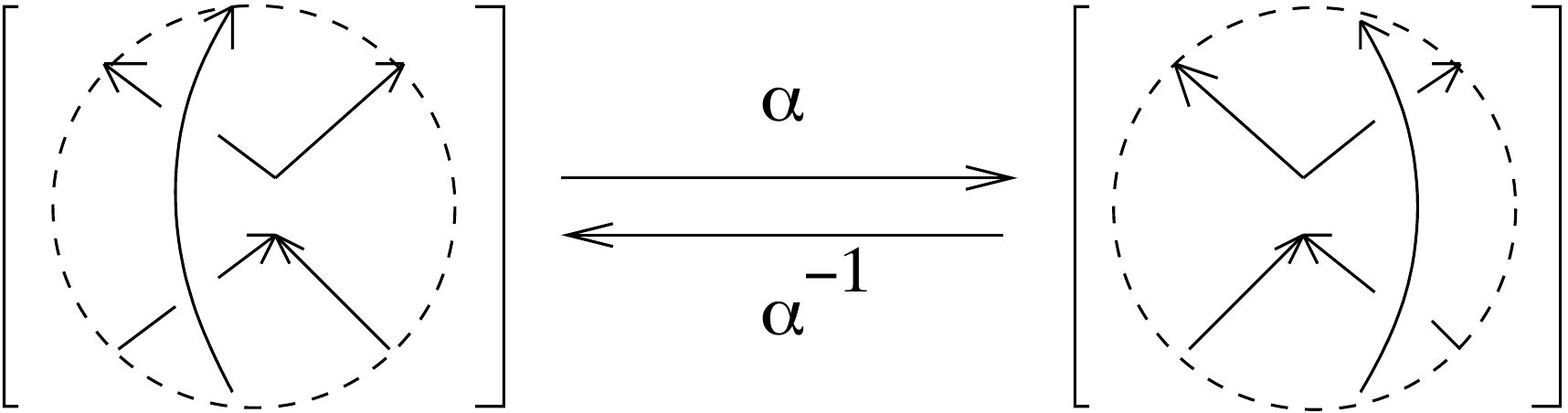}}$$
 \end{lemma}

\begin{proof}
The isomorphism of the corresponding chain complexes is given in figure~\ref{fig:sliding1}.
\begin{figure}[ht]
 \raisebox{-13pt}{\includegraphics[height=3in]{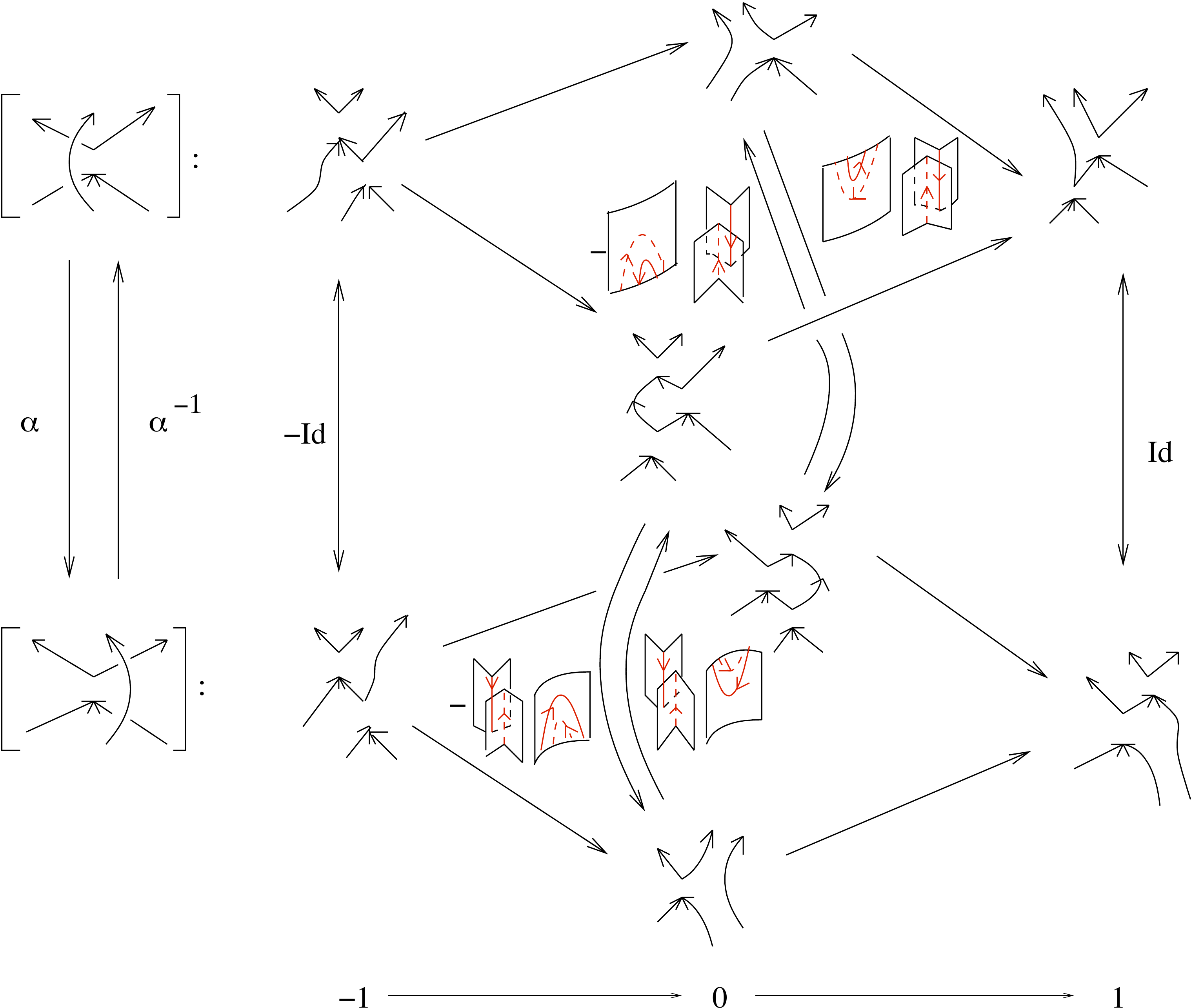}}
 \caption{Isomorphism $\alpha$}
 \label{fig:sliding1}
 \end{figure}

One can easily check (by using the isomorphisms of corollaries~\ref{removing singular points in pairs} and~\ref{Isomorphisms 1 and 2}) that $\alpha$ and $\alpha^{-1}$ are chain maps and are mutually inverse isomorphisms.
\end{proof}

In a similar manner it can be shown the next result.
\begin{lemma}\label{lemma:sliding2}
The associated chain complexes corresponding to the diagrams that differ in a circular region, as in the figure below, are isomorphic in the category $\textit{Kof}_{/h}$.
$$\raisebox{-13pt}{\includegraphics[height=0.4in]{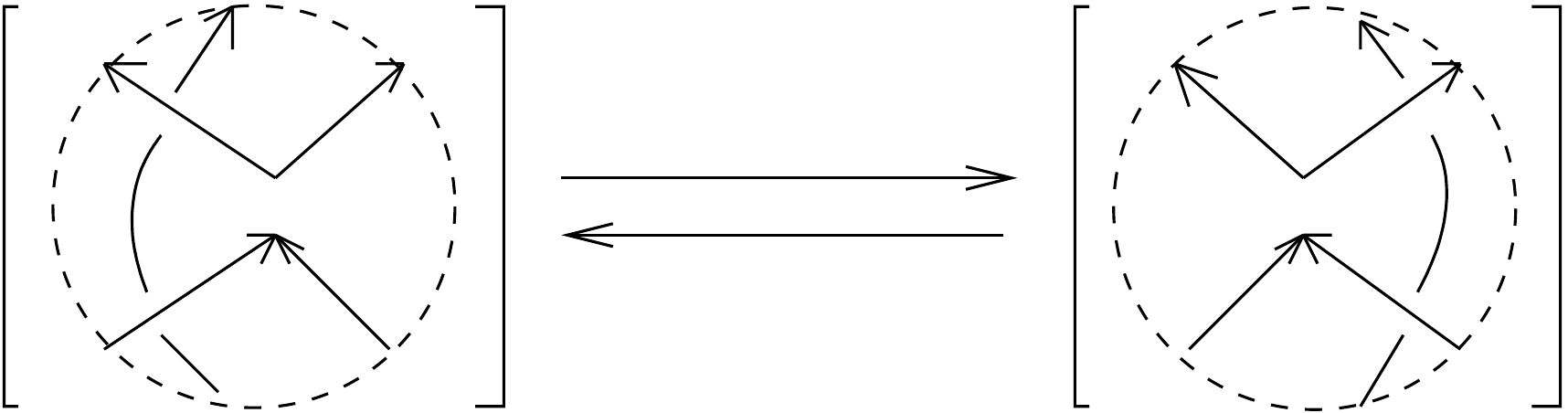}}$$
\end{lemma}
\begin{proof}
The associated complexes are almost the same as in the previous lemma: the objects at height zero are the same, while those at heights $-1$ and $1$ are the previous corresponding ones, after a flip. The isomorphism has the same component maps as in lemma~\ref{lemma:sliding1}.
\end{proof}

\begin{lemma}\label{lemma:sliding3}
The associated chain complexes corresponding to the diagrams that differ in a circular region, as in the figure below, are isomorphic in the category $\textit{Kof}_{/h}$.
$$\raisebox{-13pt}{\includegraphics[height=0.4in]{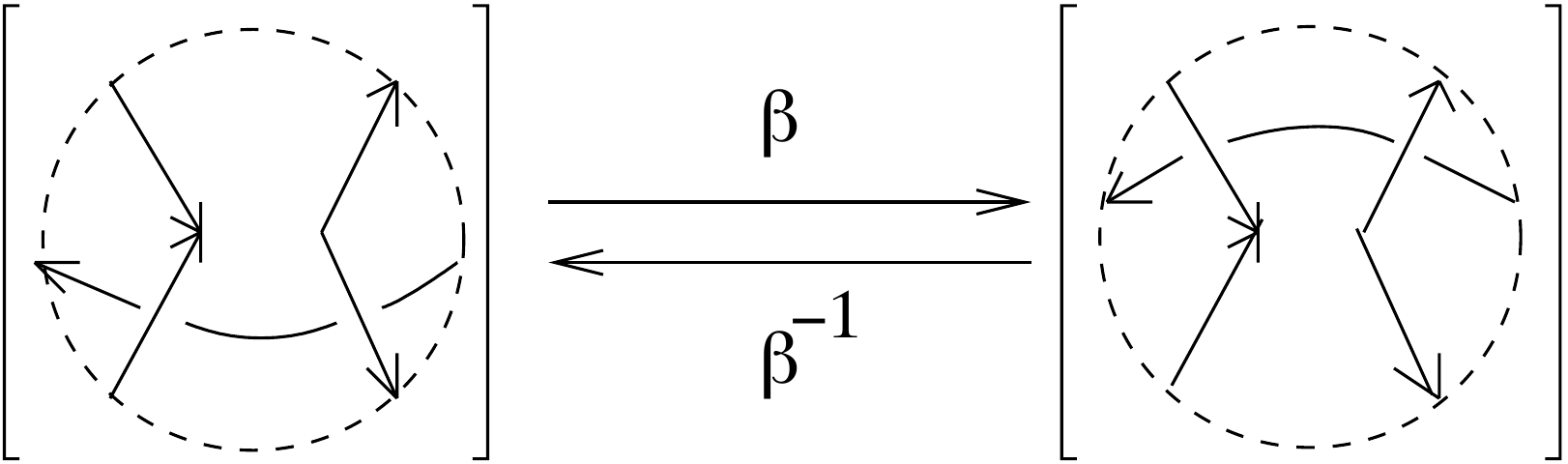}}$$
\end{lemma}
\begin{proof}
It can be easily checked that the diagram from figure~\ref{fig:sliding3}
gives an isomorphism between the corresponding chain complexes.
\begin{figure}[ht]
\raisebox{-13pt}{\includegraphics[height=3in]{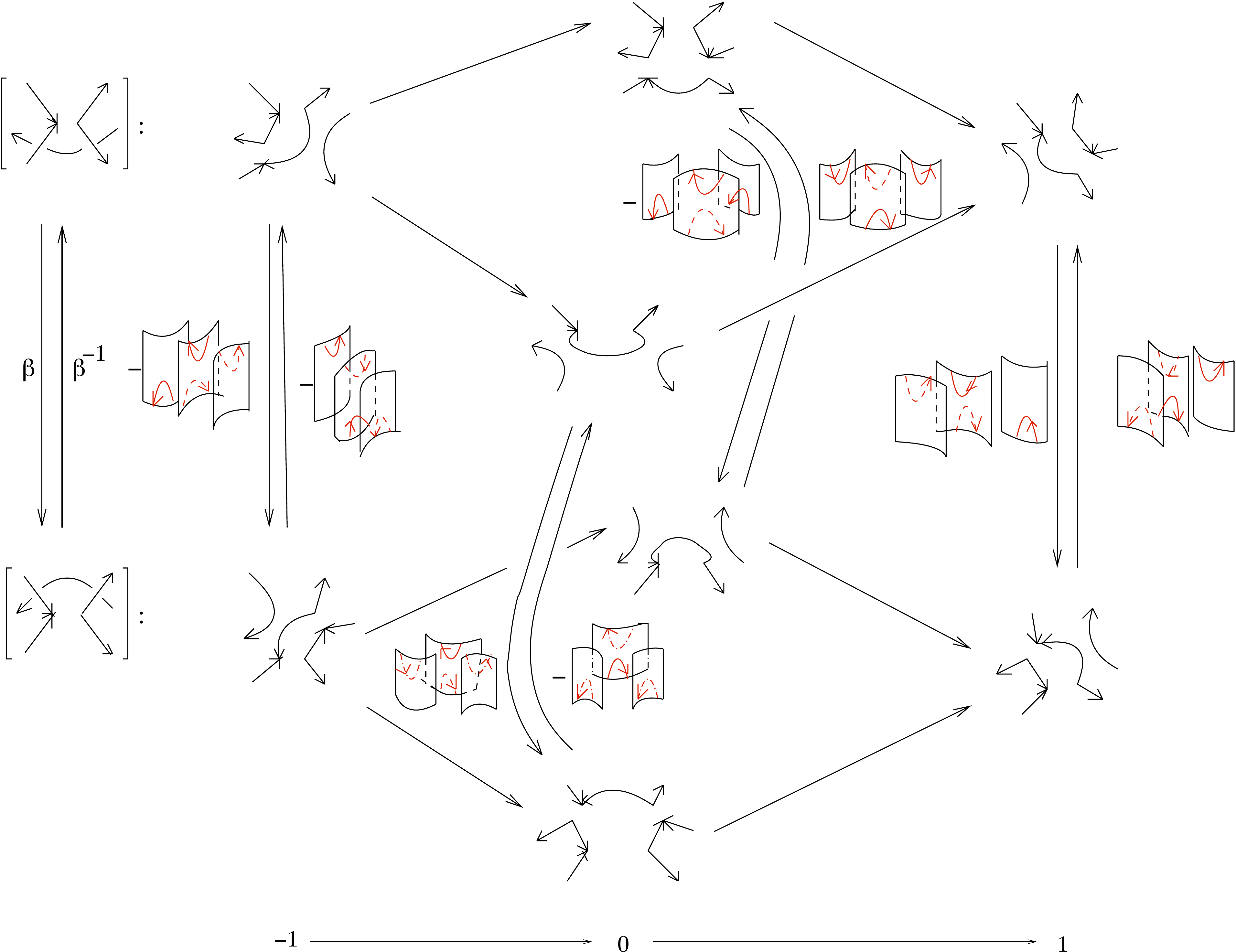}}
\caption{Isomorphism $\beta$}\label{fig:sliding3}
\end{figure}
\end{proof}

We also have the following result:
\begin{lemma}\label{lemma:sliding4}
The associated chain complexes corresponding to the diagrams that differ in a circular region, as in the figure below, are isomorphic in the category $\textit{Kof}_{/h}$.
$$\raisebox{-13pt}{\includegraphics[height=0.4in]{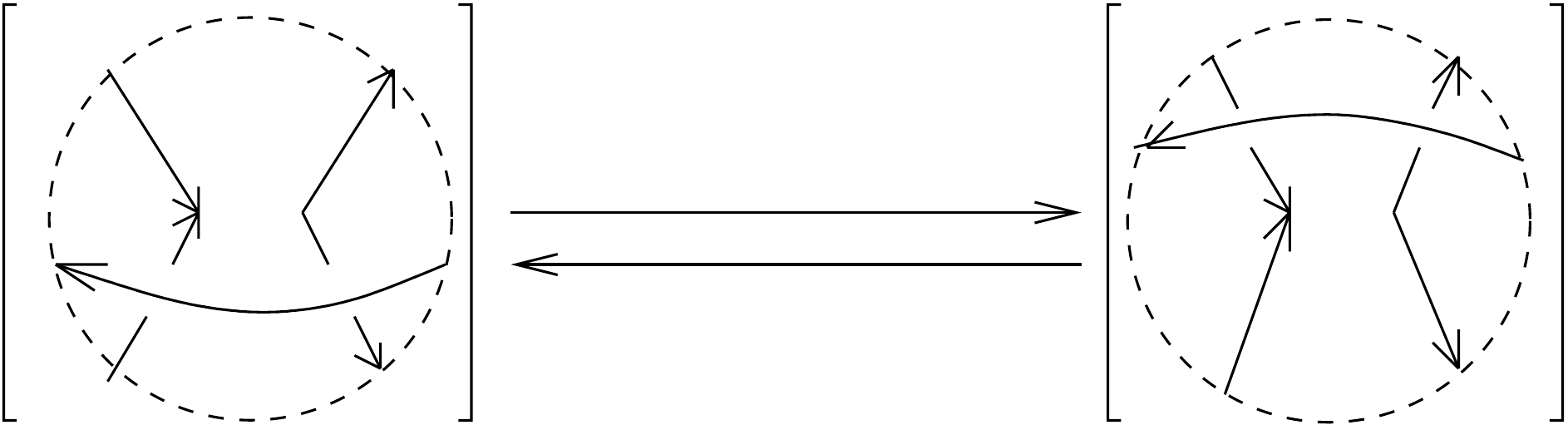}}$$
\end{lemma}

\begin{proof} The objects at height zero are the same as in the previous lemma, and those at heights $-1$ and $1$ are the corresponding ones from lemma~\ref{lemma:sliding3}, after we apply a rotation. The isomorphism of the complexes is then the same as in lemma~\ref{lemma:sliding3}.
\end{proof}

\begin{lemma}\label{lemma:sliding the middle string1}
There is an isomorphism in the category $\textit{Kof}_{/h}$ between the chain complexes corresponding to the diagrams below:
$$\raisebox{-13pt}{\includegraphics[height=0.5in]{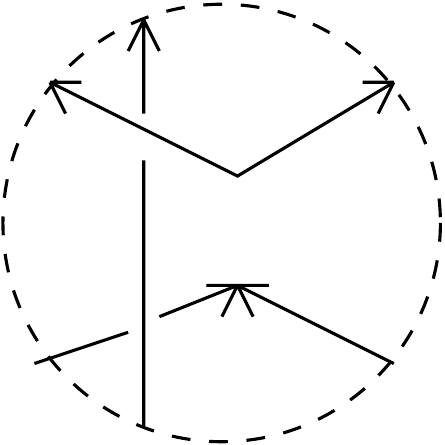}} \qquad \text{and} \qquad  \raisebox{-13pt}{\includegraphics[height=0.5in]{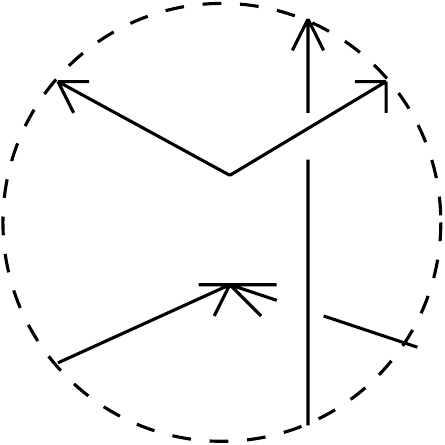}}$$
\end{lemma}
\begin{proof}
We give the chain isomorphism $[ \raisebox{-4pt}{\includegraphics[height=0.2in]{sliding-5.pdf}}] \cong [\raisebox{-4pt}{\includegraphics[height=0.2in]{sliding-6.pdf}}]$ in figure~\ref{fig:sliding the middle string1}. To show that $f$ and $g$ are chain maps and mutually inverse isomorphisms one uses only relations (CI) and lemma~\ref{handy relations}.

\begin{figure}[ht]
$$\raisebox{-13pt}{\includegraphics[height=2.5in]{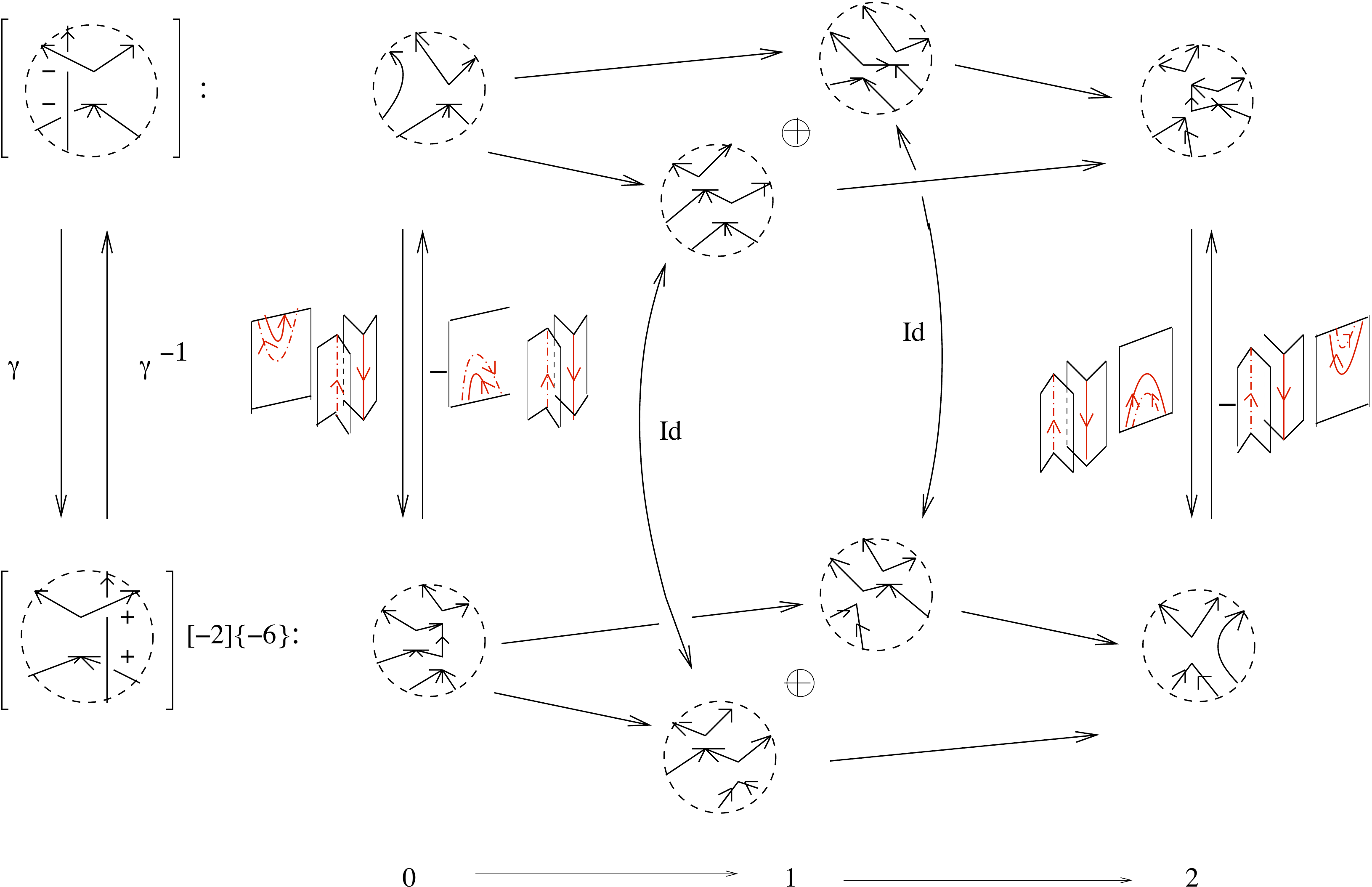}}$$
\caption{Isomorphism $\gamma$}
\label{fig:sliding the middle string1}
\end{figure}
\end{proof}

\begin{lemma}\label{lemma:sliding the middle string2}
There is an isomorphism in the category $\textit{Kof}_{/h}$ between the chain complexes corresponding to the diagrams below:
$$\raisebox{-13pt}{\includegraphics[height=0.5in]{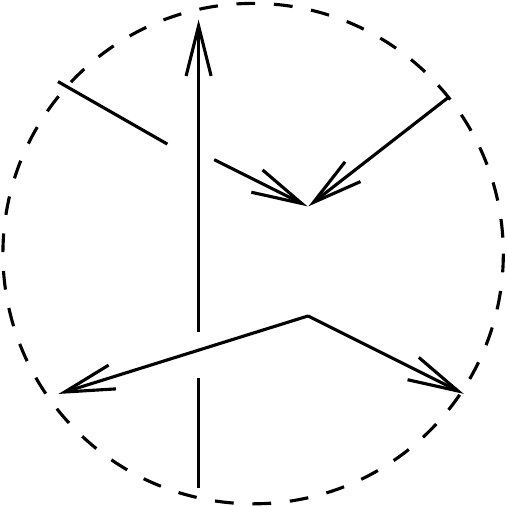}} \qquad \text{and} \qquad  \raisebox{-13pt}{\includegraphics[height=0.5in]{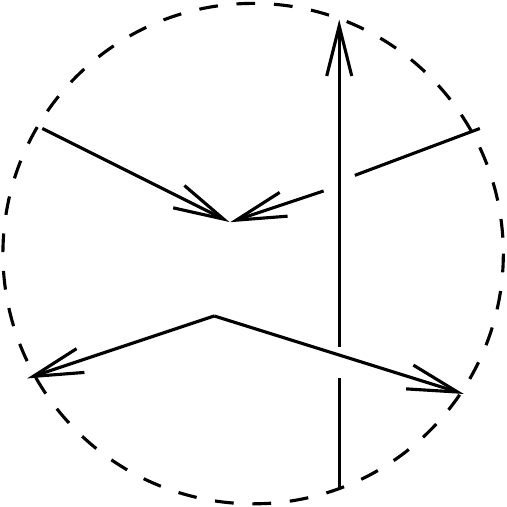}}$$
\end{lemma}
\begin{proof}
One can show that the diagram in figure~\ref{fig:sliding the middle string2} defines an isomorphism $[ \raisebox{-4pt}{\includegraphics[height=0.2in]{sliding-7.pdf}}] \cong [\raisebox{-4pt}{\includegraphics[height=0.2in]{sliding-7.pdf}}][-2]\{-6\}$ (as before, only relations (CI) and lemma~\ref{handy relations} are needed).

\begin{figure}[ht]
$$\raisebox{-13pt}{\includegraphics[height=2.5in]{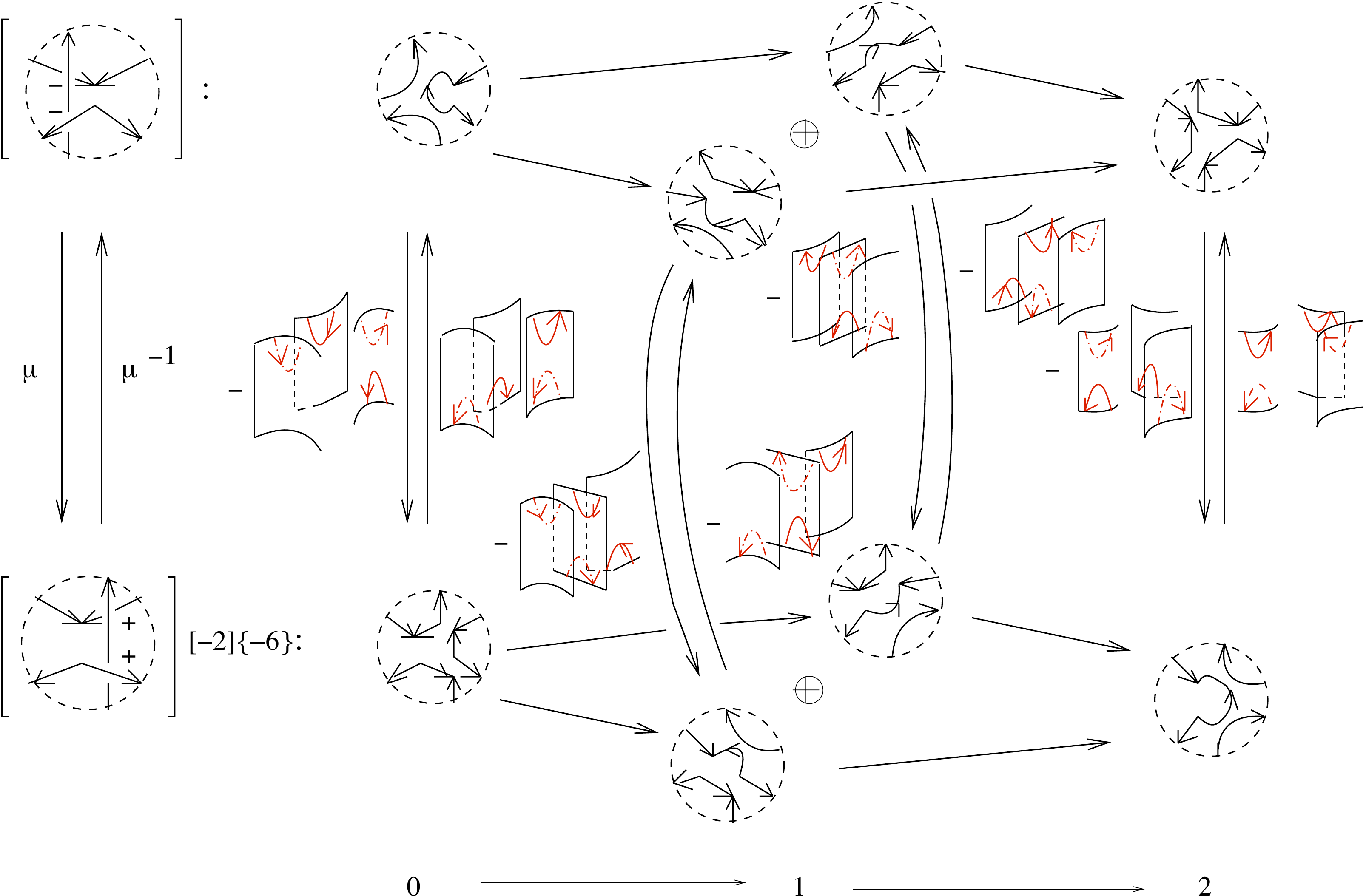}}$$
\caption{Isomorphisms $\mu$}
\label{fig:sliding the middle string2}
\end{figure}
\end{proof}

Now we will show that we can slide singular points pass a crossing, as long as the points are of different type, that is one is a `sink' and the other a `source'. We consider first the case of a negative crossing and then of a positive crossing. In each of these cases, there are two sub cases to look at, namely when the source vertex belongs to the upper or lower string. We also remark that the crossings  in the diagrams of the two sides of a certain move are formed by the arcs that are either both the preferred or not for the corresponding singular points.
\begin{lemma}\label{lemma:slide singular pt. pass neg. crossing-(a)}
There is an isomorphism in the category $\textit{Kof}_{/h}$ between the chain complexes corresponding to the diagrams:
$$\raisebox{-13pt}{\includegraphics[height=0.5in]{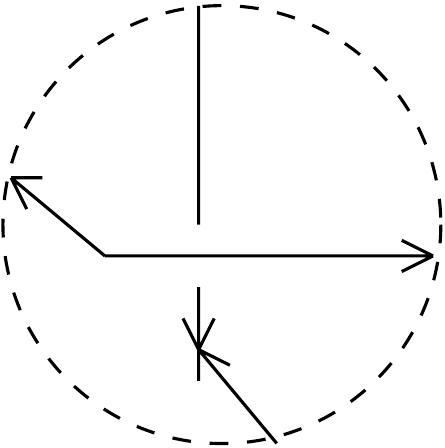}} \qquad \text{and} \qquad  \raisebox{-13pt}{\includegraphics[height=0.5in]{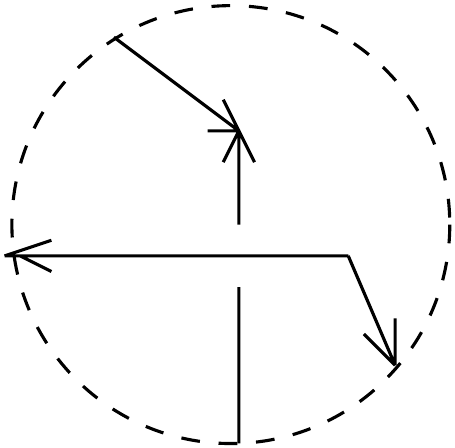}}.$$
\end{lemma}
\begin{proof}
The isomorphism of formal chain complexes $[ \raisebox{-4pt}{\includegraphics[height=0.2in]{slide-pass-cross-1.pdf}}] \cong [\raisebox{-4pt}{\includegraphics[height=0.2in]{slide-pass-cross-2.pdf}}]$ is given in figure ~\ref{fig:slide singular pt. pass neg. crossing-(a)}.

\begin{figure}[ht]
$$\raisebox{-13pt}{\includegraphics[height=2.3in]{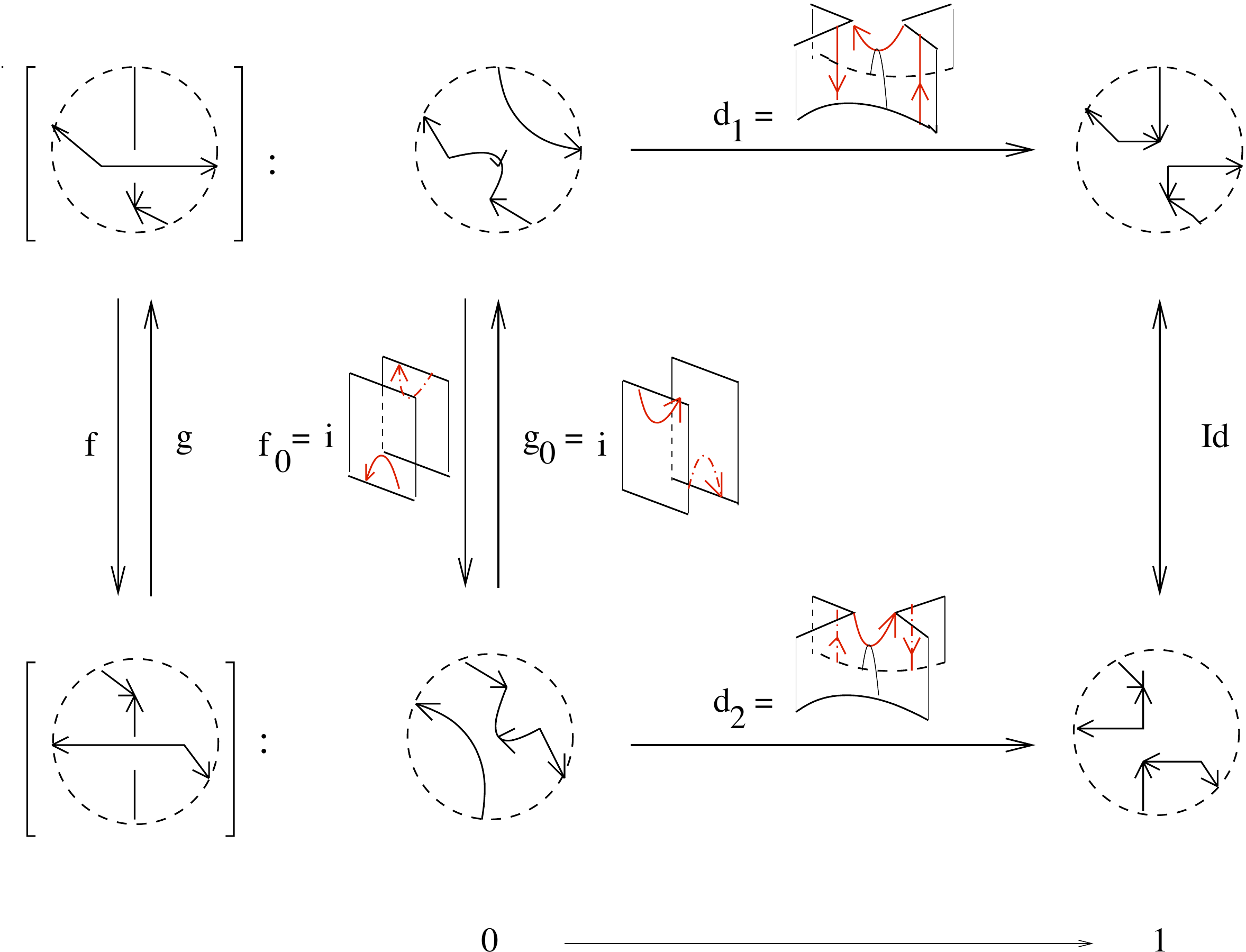}}$$
\caption{Sliding singular points pass a negative crossing-(a)}
\label{fig:slide singular pt. pass neg. crossing-(a)}
\end{figure}

Using only the (CI) relations and lemma ~\ref{handy relations} we have that $d_2 f_0 = d_1$ and $d_1 g_0 = d_2$, thus $f$ and $g$ are chain maps. Moreover, $f_0g_0 = Id([\raisebox{-5pt}{\includegraphics[height=0.2in]{slide-pass-cross-2.pdf}}^0])$ and $g_0f_0 = Id([\raisebox{-5pt}{\includegraphics[height=0.2in]{slide-pass-cross-1.pdf}}]^0)$.
\end{proof}
We remark that in the previous lemma, the crossings in the diagrams of the two sides of the move were between the preferred arcs associated to singular points. In the next case, the crossings are between those arcs that are not the preferred ones.
\begin{lemma}\label{lemma:slide singular pt. pass neg. crossing-(b)}
There is an isomorphism in the category $\textit{Kof}_{/h}$ between the chain complexes corresponding to the diagrams:
$$\raisebox{-13pt}{\includegraphics[height=0.5in]{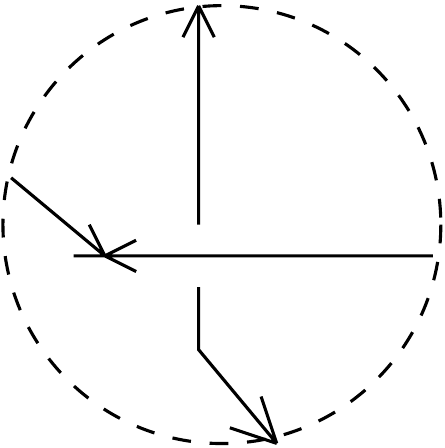}} \qquad \text{and} \qquad  \raisebox{-13pt}{\includegraphics[height=0.5in]{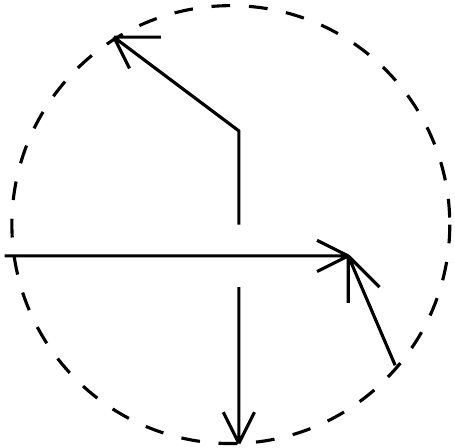}}.$$
\end{lemma}
\begin{proof}
The isomorphism of formal chain complexes $[ \raisebox{-4pt}{\includegraphics[height=0.2in]{slide-pass-cross-1bis.pdf}}] \cong [\raisebox{-4pt}{\includegraphics[height=0.2in]{slide-pass-cross-2bis.pdf}}]$ is given in figure ~\ref{fig:slide singular pt. pass neg. crossing-(b)}.

\begin{figure}[ht]
$$\raisebox{-13pt}{\includegraphics[height=2.3in]{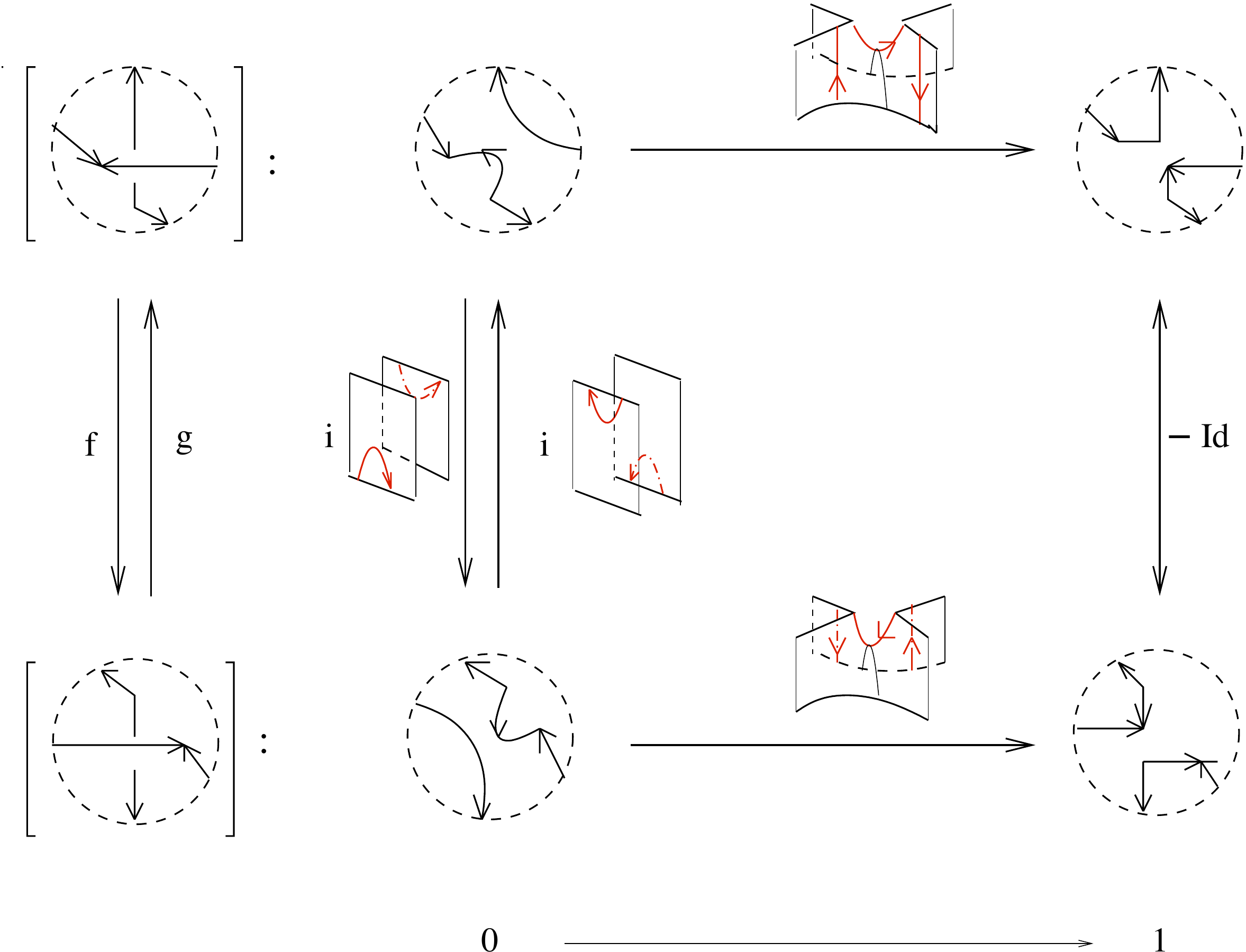}}$$
\caption{Sliding singular points pass a negative crossing-(b)}
\label{fig:slide singular pt. pass neg. crossing-(b)}
\end{figure}
\end{proof}

A similar result holds in the case of a positive crossing as well.
\begin{lemma}\label{lemma:slide singular pt. pass pos. crossing-(a)}
There is an isomorphism in the category $\textit{Kof}_{/h}$ between the chain complexes corresponding to the following diagrams:
$$\raisebox{-13pt}{\includegraphics[height=0.5in]{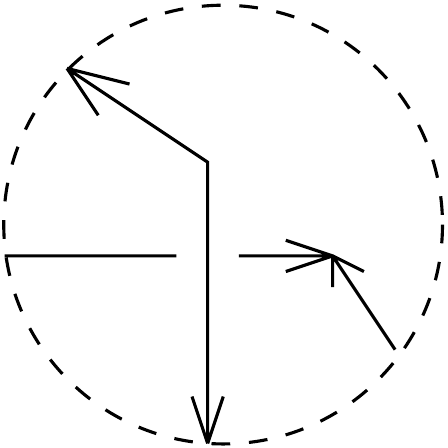}} \qquad \text{and} \qquad  \raisebox{-13pt}{\includegraphics[height=0.5in]{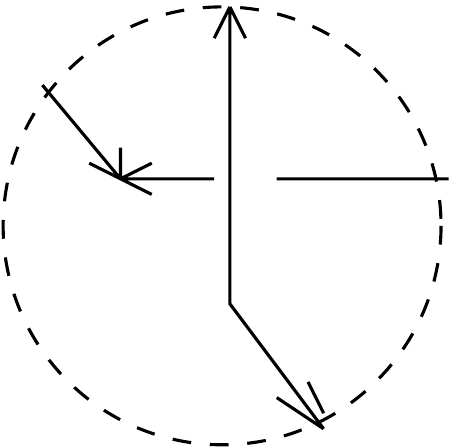}}$$
\end{lemma}
\begin{proof}
We give the isomorphism of formal complexes $[ \raisebox{-4pt}{\includegraphics[height=0.2in]{slide-pass-cross-3.pdf}}] \cong [\raisebox{-4pt}{\includegraphics[height=0.2in]{slide-pass-cross-4.pdf}}]$ in figure~\ref{fig:slide singular pt. pass pos. crossing-(a)}.

\begin{figure}[ht]
$$\raisebox{-13pt}{\includegraphics[height=2.3in]{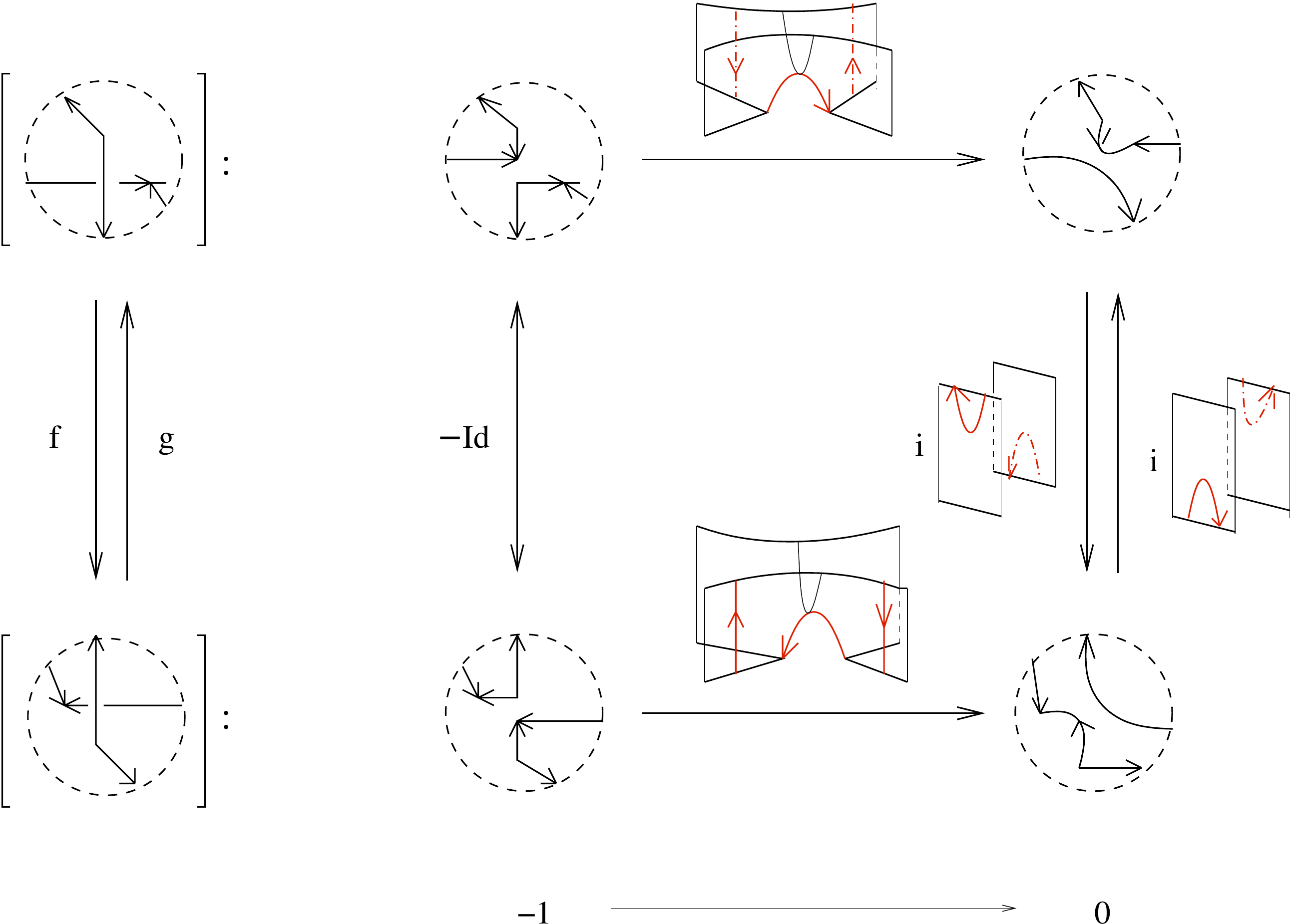}}$$
\caption{Sliding singular points pass a positive crossing-(a)}
\label{fig:slide singular pt. pass pos. crossing-(a)}
\end{figure}
\end{proof}

\begin{lemma}\label{lemma:slide singular pt. pass pos. crossing-(b)}
There is an isomorphism in the category $\textit{Kof}_{/h}$ between the chain complexes corresponding to the following diagrams:
$$\raisebox{-13pt}{\includegraphics[height=0.5in]{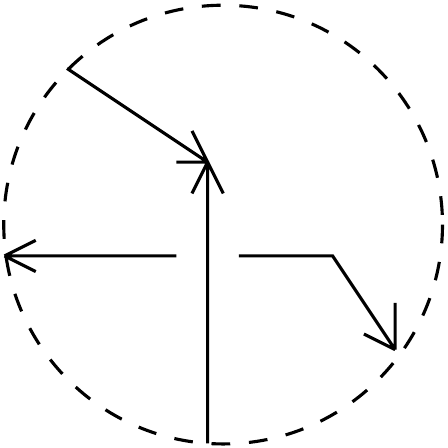}} \qquad \text{and} \qquad  \raisebox{-13pt}{\includegraphics[height=0.5in]{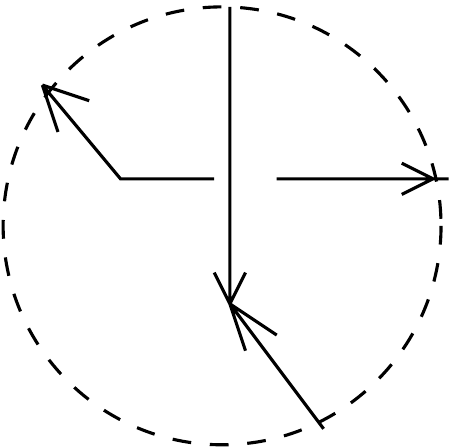}}$$
\end{lemma}
\begin{proof}
The isomorphism of formal complexes $[ \raisebox{-4pt}{\includegraphics[height=0.2in]{slide-pass-cross-3bis.pdf}}] \cong [\raisebox{-4pt}{\includegraphics[height=0.2in]{slide-pass-cross-4bis.pdf}}]$ is given in figure~\ref{fig:slide singular pt. pass pos. crossing-(b)}.

\begin{figure}[ht]
$$\raisebox{-13pt}{\includegraphics[height=2.3in]{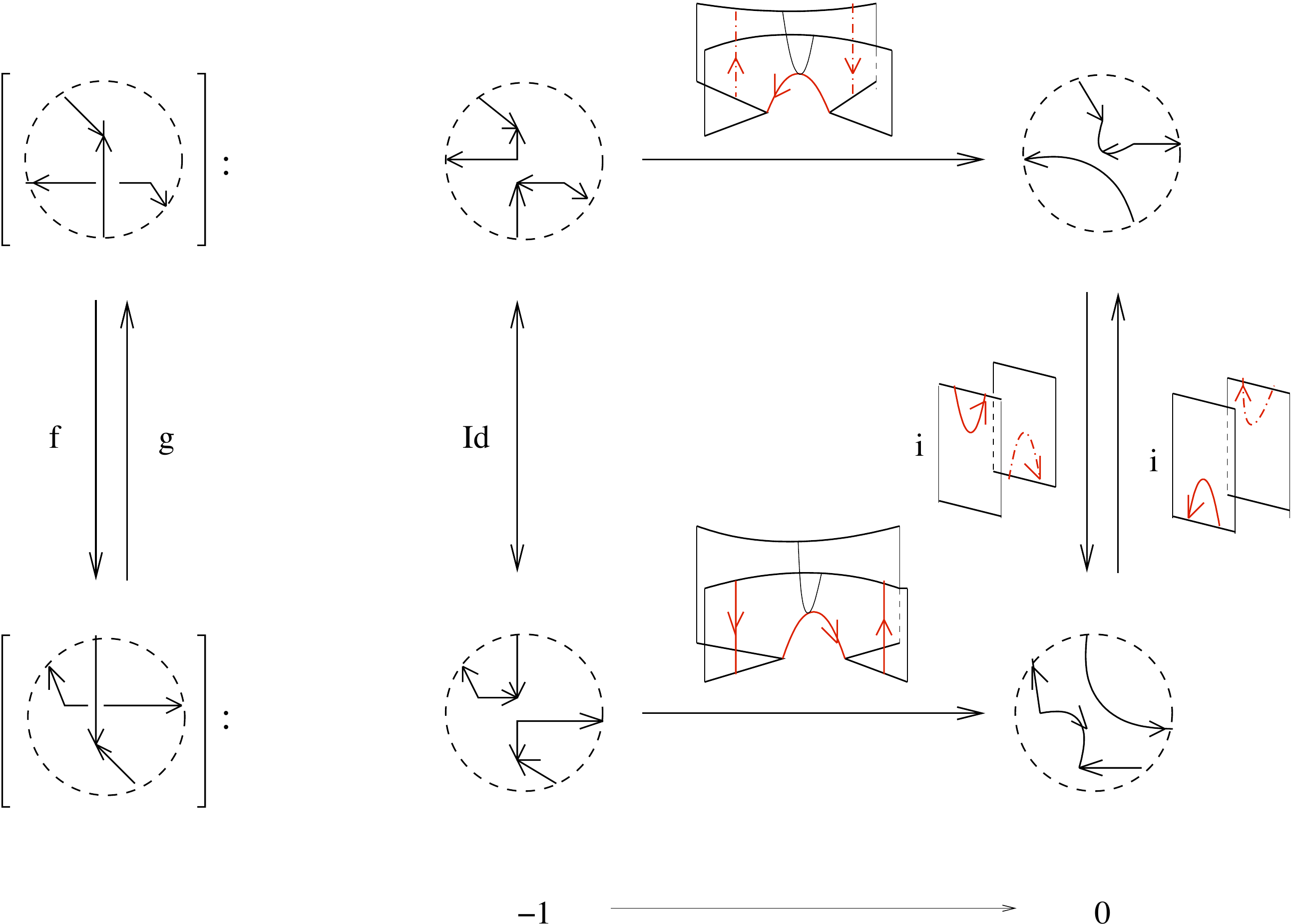}}$$
\caption{Sliding singular points pass a positive crossing-(b)}
\label{fig:slide singular pt. pass pos. crossing-(b)}
\end{figure}
\end{proof}
Let's show now that by `creating' two pairs of singular points on the two strings of a crossing, sliding them over the crossing and eventually `erasing' them corresponds to identity endomorphism of the formal complex associated to the tangle represented by that crossing. We show the case involving a negative crossing, but one obtains a similar result for the case of a positive crossing.
$$\raisebox{-13pt}{\includegraphics[height=1.8in]{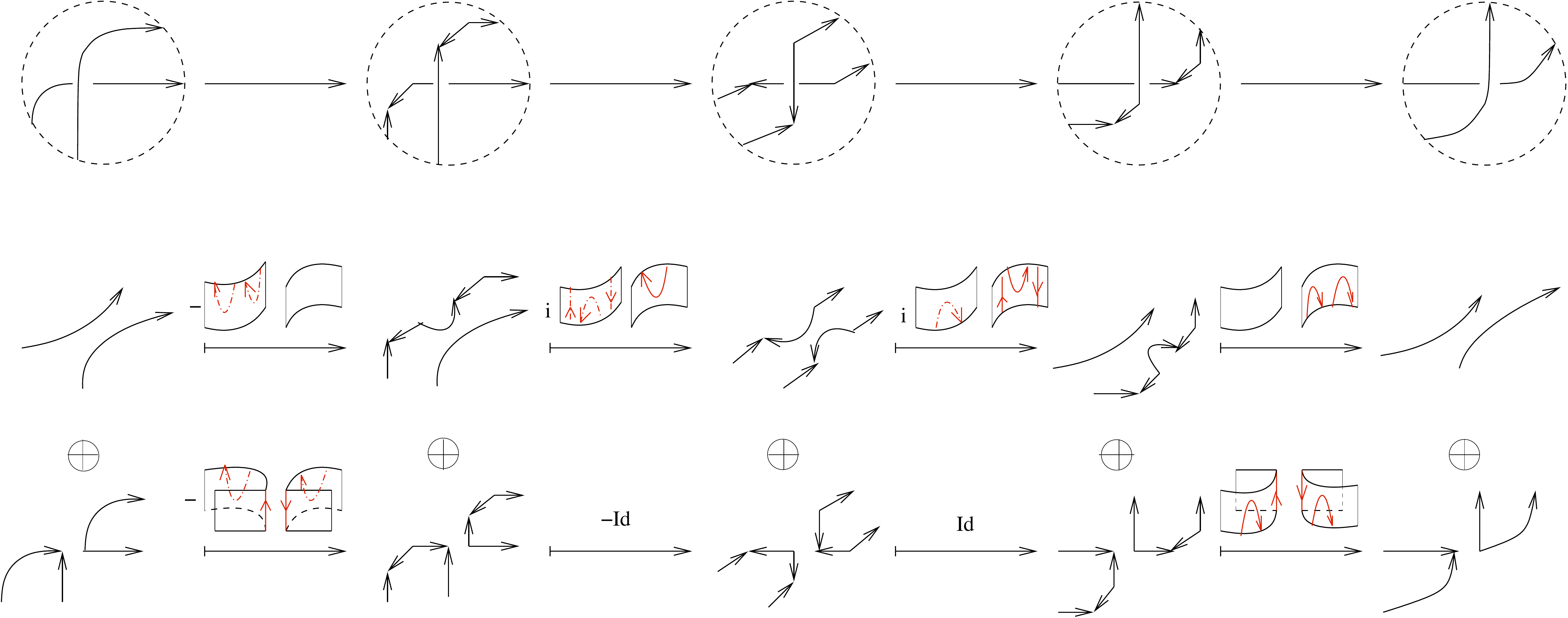}}$$
Composing we get in the first row (i.e.at the zero height of the associated chain complexes) 
$ (- i^2) \raisebox{-8pt}{\includegraphics[height=0.35in]{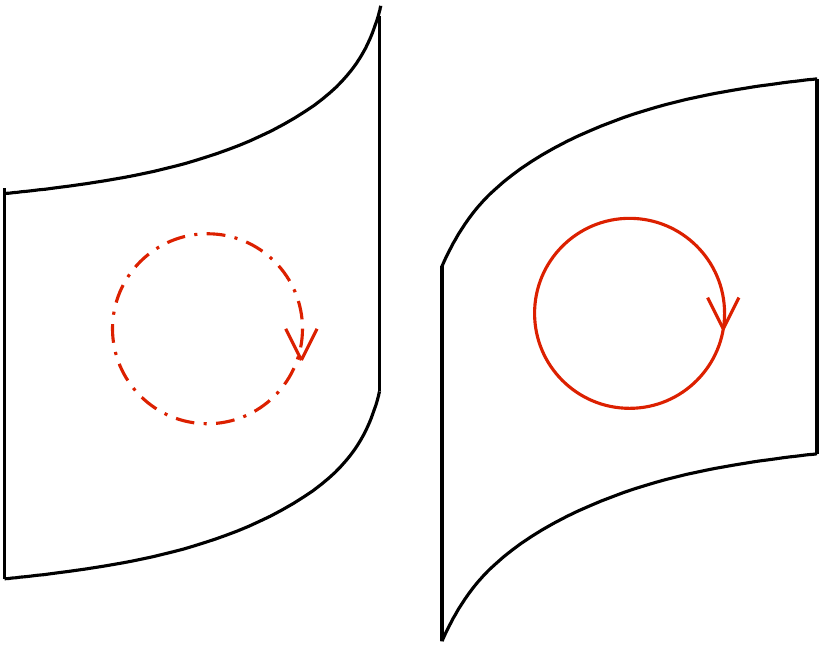}} = (-i^2) (-i^2) Id = Id$, and in the second row (i.e.at $-1$ height) $\raisebox{-8pt}{\includegraphics[height=0.3in]{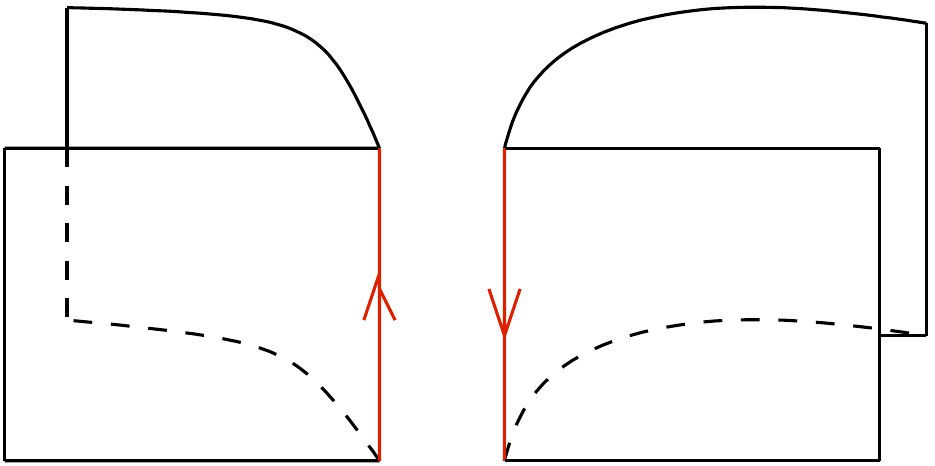}} = Id$, after applying isotopies. Notice that we used lemma~\ref{handy relations} that says that a clockwise (or counterclockwise) singular circle can be `erase' and replace it by $i$ (or $-i$). Thus, we see that we obtained the identity map.

\subsection*{Reidemeister\, 3}

From lemma~\ref{lemma:crossings as cones}, each side of the Reidemeister move $R3$ can be realized as the mapping cone over the morphism switching between the two resolutions of a crossing.  Using our moves with singular points we just looked at, we will apply the `categorified Kauffman trick' for the cases in which not all crossings are of the same type, that is, two are positive (or negative) and one negative (or positive). When all crossings of the two sides of R3 moves are either negative or positive we cannot use this method, as we would need a R2 move with one singular point, but we do not have such a move. We want to reduce these cases to the previous ones. To do it, we will first use lemma~\ref{lemma:removing singular points in pairs} to introduce two pairs of singular points on two particular strings of the left side of R3 move we want to check, then use lemmas~\ref{lemma:slide singular pt. pass neg. crossing-(a)},~\ref{lemma:slide singular pt. pass neg. crossing-(b)},~\ref{lemma:slide singular pt. pass pos. crossing-(a)} or~\ref{lemma:slide singular pt. pass pos. crossing-(b)}, and finally lemma~\ref{lemma:sliding the middle string1} or~\ref{lemma:sliding the middle string2} to change the type of two crossings. After doing this, we arrive at  an R3 move we have checked before (using the categorified Kauffman trick), thus we can perform it. The last step is to apply the same lemmas backwards, so that we end up with the other side of the R3 move we wanted to check.

When applying the categorified Kauffman trick we will always consider the mapping cone corresponding to one of the two positive or negative crossings. 

\subsection*{\textbf{Two negative crossings}}

Consider the following tangles and the cones corresponding to the crossings labeled $2$: 
\[\raisebox{-18pt}{\includegraphics[height=0.5in]{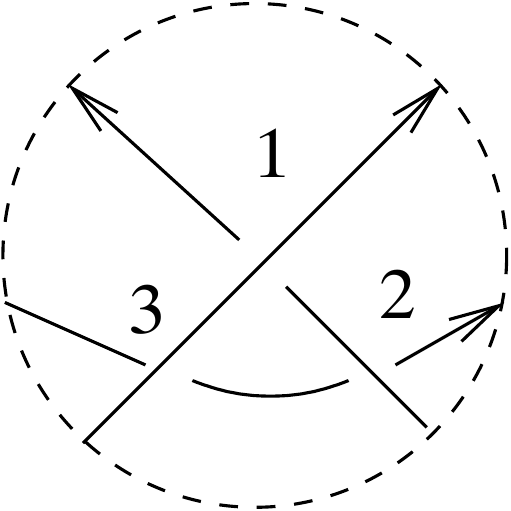}} \quad \text{and} \quad \raisebox{-18pt}{\includegraphics[height=0.5in]{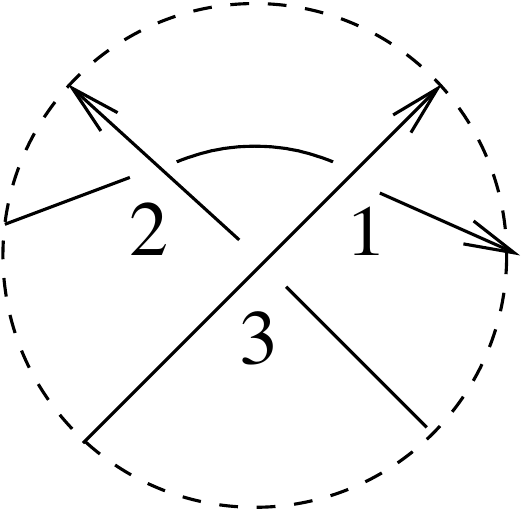}}\]

Then we have:
$$
 [\,\raisebox{-8pt}{\includegraphics[height=0.4in]{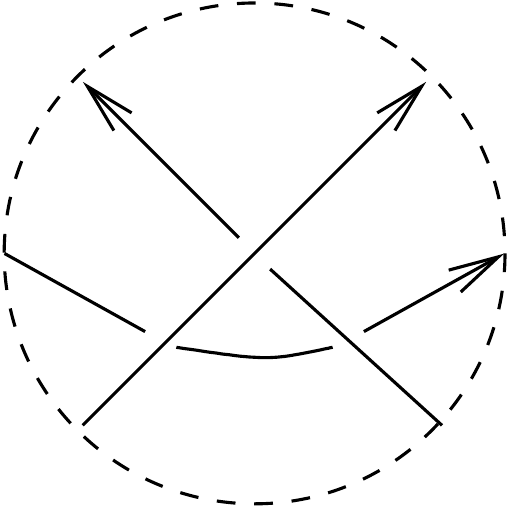}}\,] = \mathbf{M}\left([\,\raisebox{-8pt}{\includegraphics[height=0.4in]{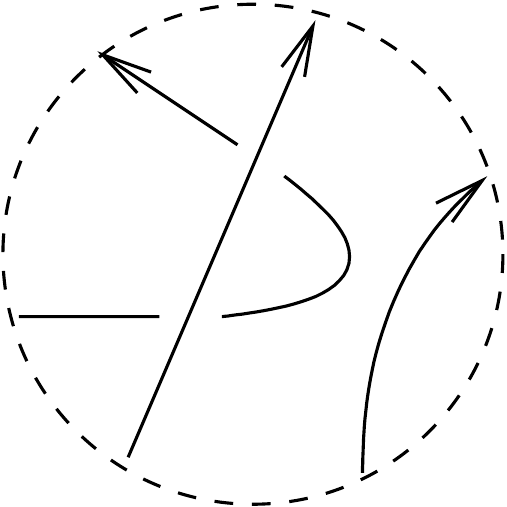}}\,] \xrightarrow{\psi_1} [\,\raisebox{-8pt}{\includegraphics[height=0.4in]{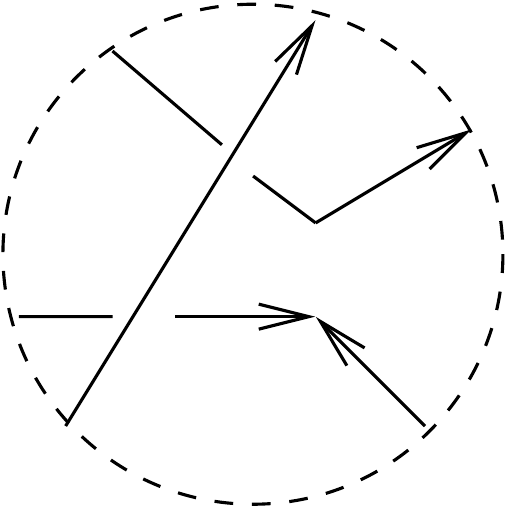}}\,] \right)[-1] \xrightarrow [G_1] {\cong} $$
 
 $$ \xrightarrow [G_1] {\cong}\mathbf{M}\left([\,\raisebox{-8pt}{\includegraphics[height=0.4in]{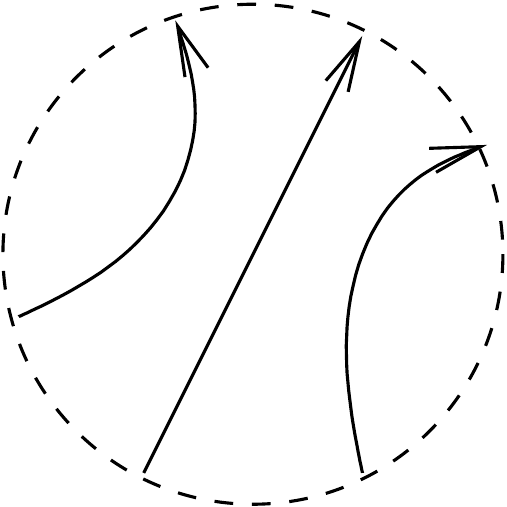}}\,] \xrightarrow{f_1} [\,\raisebox{-8pt}{\includegraphics[height=0.4in]{reid3-1.pdf}}\,] \xrightarrow{\psi_1} [\,\raisebox{-8pt}{\includegraphics[height=0.4in]{reid3-unor1.pdf}}\,] \right)[-1]\xrightarrow[\Lambda]{\cong}$$ 
 
 $$\xrightarrow[\Lambda]{\cong} \mathbf{M}\left([\,\raisebox{-8pt}{\includegraphics[height=0.4in]{reid3-2.pdf}}\,] \xrightarrow{f_1} [\,\raisebox{-8pt}{\includegraphics[height=0.4in]{reid3-1.pdf}}\,] \xrightarrow{\psi_1} [\,\raisebox{-8pt}{\includegraphics[height=0.4in]{reid3-unor1.pdf}}\,]\xrightarrow{\alpha}[\,\raisebox{-8pt}{\includegraphics[height=0.4in]{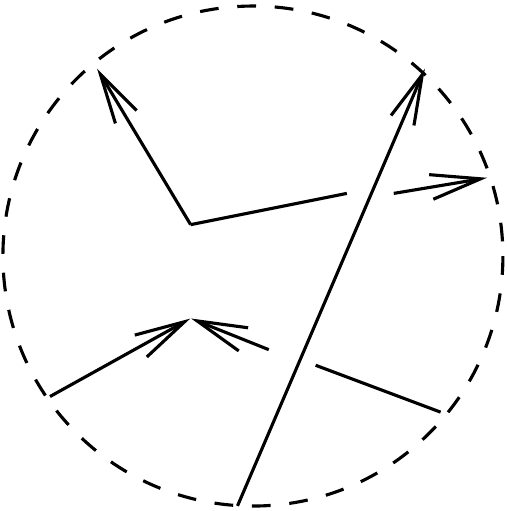}}\,]\right)[-1]=$$
 
$$  = \mathbf{M}\left([\,\raisebox{-8pt}{\includegraphics[height=0.4in]{reid3-2.pdf}}\,] \xrightarrow{f'_1} [\,\raisebox{-8pt}{\includegraphics[height=0.4in]{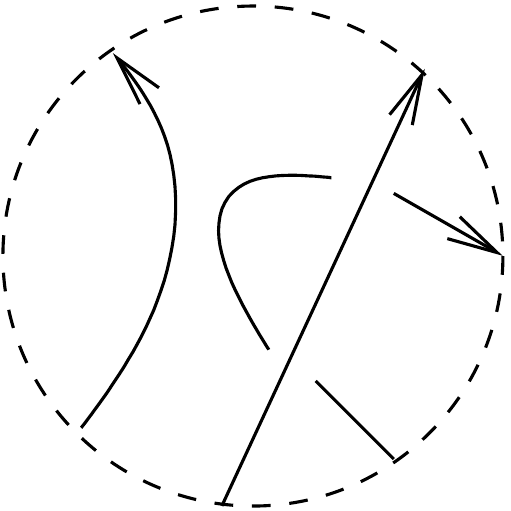}}\,] \xrightarrow{\psi'_1} [\,\raisebox{-8pt}{\includegraphics[height=0.4in]{reid3-unor2.pdf}}\,]\right)[-1] 
\xrightarrow[F'_1]{\cong}$$

$$\xrightarrow[F'_1]{\cong} \mathbf{M}\left([\,\raisebox{-8pt}{\includegraphics[height=0.4in]{reid3-3.pdf}}\,] \xrightarrow{\psi'_1} [\,\raisebox{-8pt}{\includegraphics[height=0.4in]{reid3-unor2.pdf}}\,]\right)[-1] = [\,\raisebox{-8pt}{\includegraphics[height=0.4in]{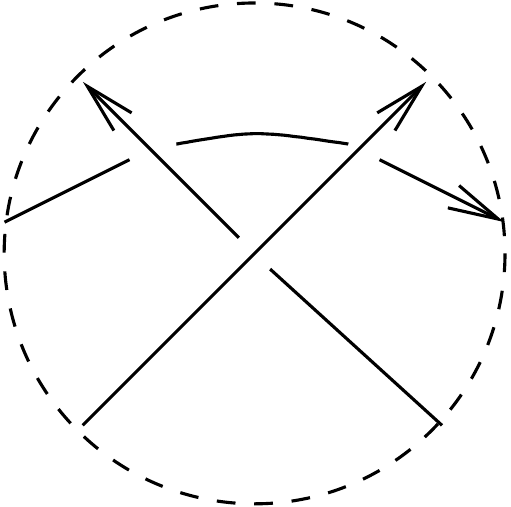}}\,].
 $$
 
Morphisms $f_1$ and $f'_1$ are the inclusions in the strong deformation retracts $g_1$ and $g'_1$ respectively, from the proof of invariance under R2 moves. Therefore, the morphisms $G_1$ and $F'_1$ are as in lemma~\ref{mapping cone lemma}:
$$F'_1 = \left ( \begin{array}{cc} f'_1 & 0 \\ 0 & I \end{array} \right ), \quad G_1= \left ( \begin{array}{cc} g_1 & 0 \\ \psi_1 h_1 & I \end{array} \right )$$
for some homotopy $h_1$. Moreover, 
$$\Lambda = \left (\begin{array}{cc} I & 0 \\ 0 &\alpha \end{array} \right), \text{where} \,\alpha\, \text{is the isomorphism from lemma~\ref{lemma:sliding1}}.$$

We are left to show that the third and fourth morphisms are the same. Let's first look at the third one:
$$\raisebox{-8pt}{\includegraphics[height=2.5in]{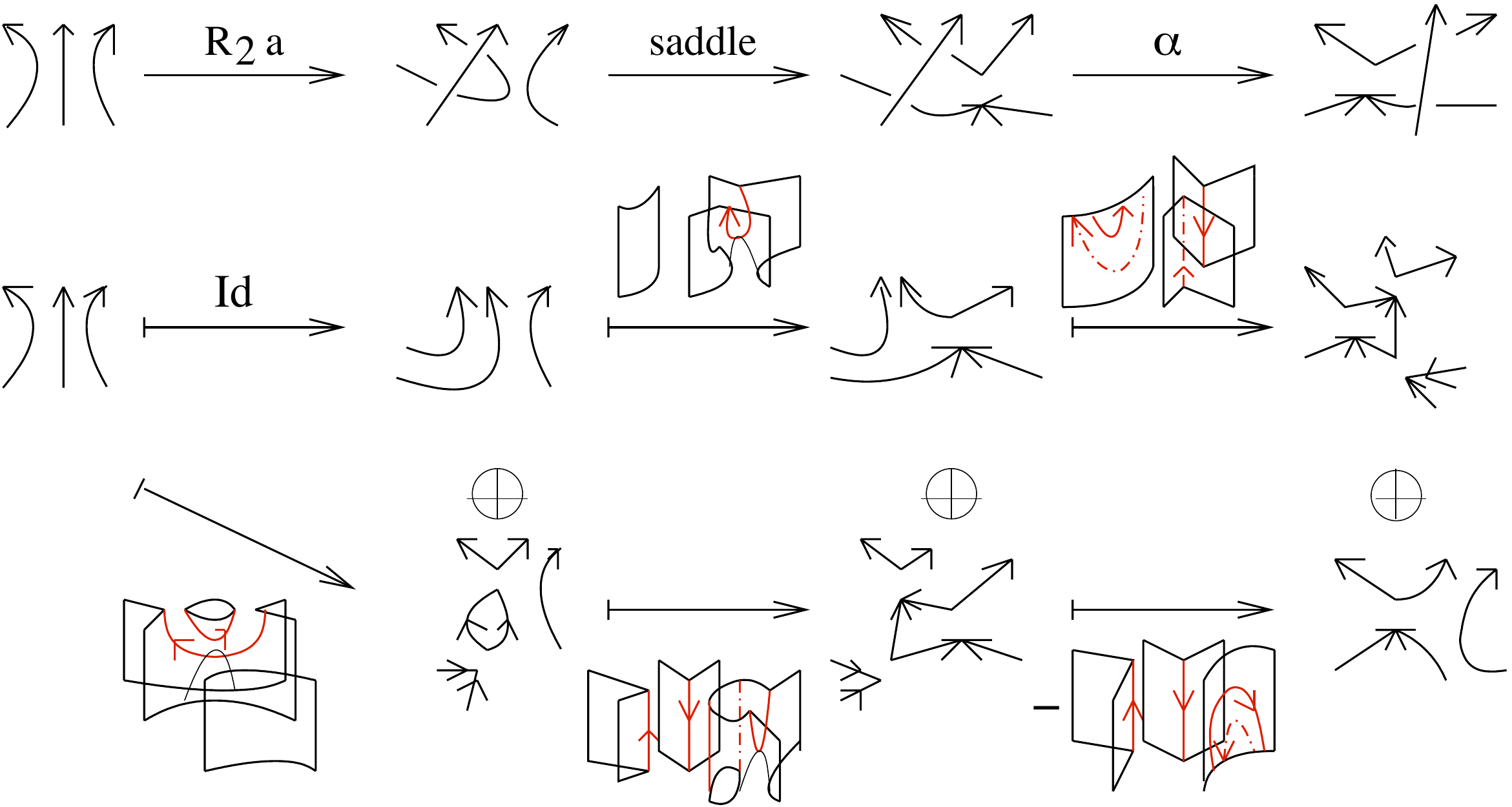}}$$

For the fourth morphism we have:
$$\raisebox{-8pt}{\includegraphics[height=2.5in]{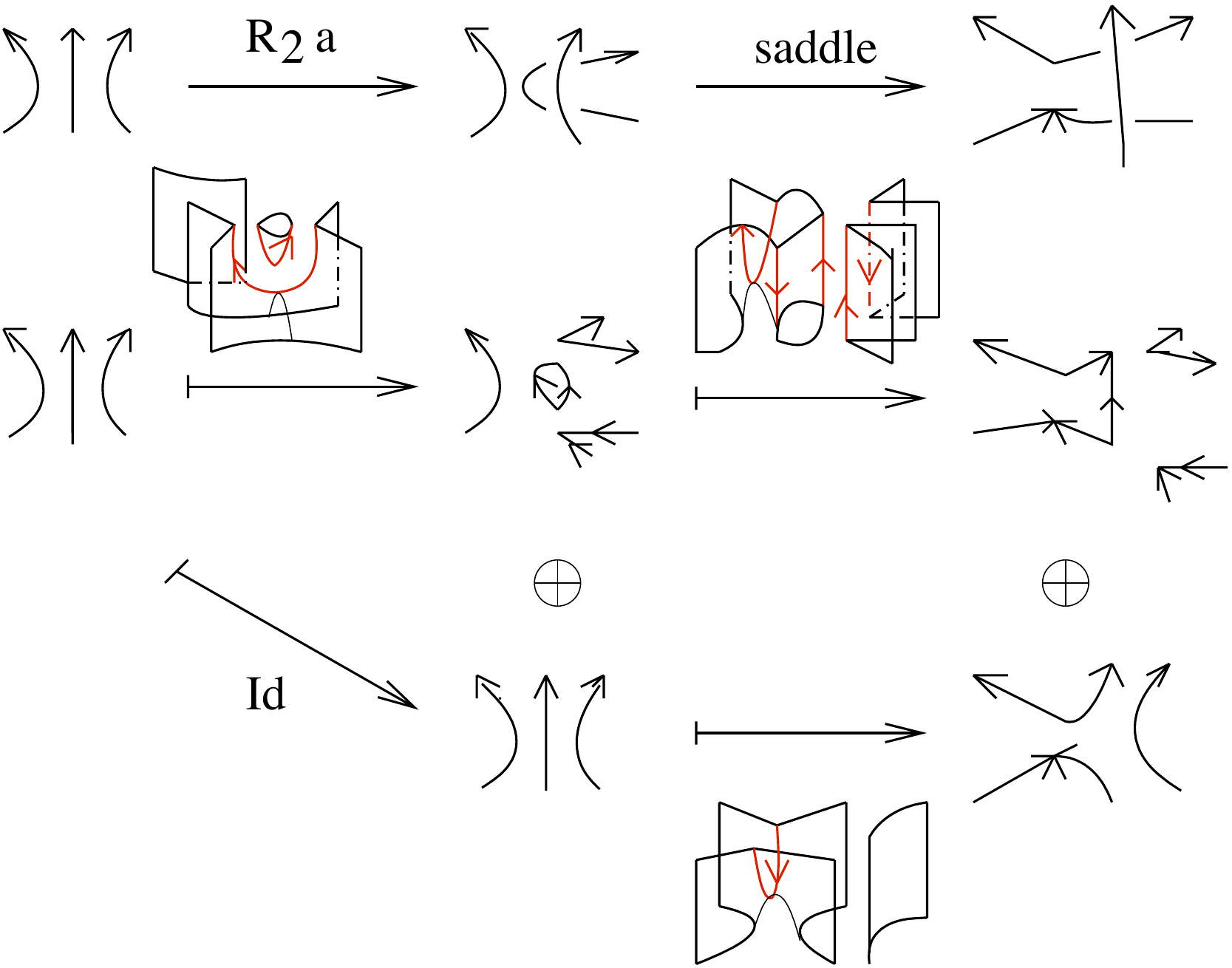}}$$
Composing the maps, applying isotopies and the first (CI) relation twice (for the map in the second row of the first diagram) we obtain that both morphisms are 
$ \left( \begin{array}{c} \raisebox{-8pt}{\includegraphics[height=.4in]{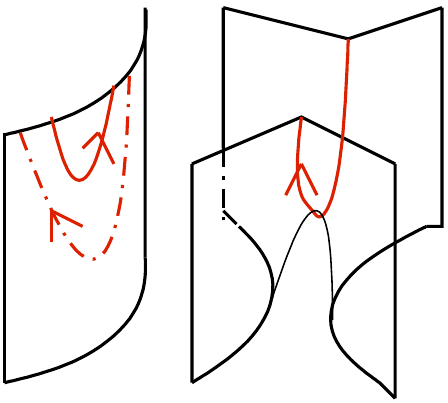}} \\ \raisebox{-8pt}{\includegraphics[height=.4in]{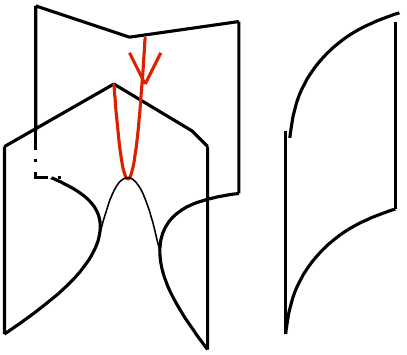}} \end{array} \right).$

\begin{remark}\label{remark:needed for movie moves-1}
From the previous proof we have that the chain map $ [\,\raisebox{-8pt}{\includegraphics[height=0.3in]{Dreid3-1.pdf}}\,] \longrightarrow  [\,\raisebox{-8pt}{\includegraphics[height=0.3in]{Dreid3-2.pdf}}\,] $ is given by: $$F'_1 \Lambda G_1 = \left ( \begin{array}{cc} f'_1 & 0 \\ 0 & I \end{array} \right ) \left (\begin{array}{cc} I & 0 \\ 0 &\alpha \end{array} \right)  \left ( \begin{array}{cc} g_1 & 0 \\ \psi_1 h_1 & I \end{array} \right ) = \left ( \begin{array}{cc} f'_1 g_1 & 0 \\ \alpha \psi_1 h_1 & \alpha \end{array} \right ).$$
Looking at how $f'_1$, $g_1$, and $h_1$ are defined (in R2a) we obtain:
\begin{itemize}
\item $f'_1g_1= I$ and  $h_1 = 0$ on the oriented resolution of \raisebox{-8pt}{\includegraphics[height=0.3in]{reid3-1.pdf}}.
\item  The object in the chain complex of \raisebox{-8pt}{\includegraphics[height=0.3in]{reid3-1.pdf}} in which the top crossing is given the piecewise oriented resolution and the lower crossing the oriented resolution is mapped to zero (as $g_1$ is zero on this component).
\end{itemize}
\end{remark}
We summarize these results in the following corollary.

\begin{corollary}\label{cor:needed for movie moves-1}
The map from the completely oriented resolution (each crossing is given the oriented resolution) corresponding to \raisebox{-8pt}{\includegraphics[height=0.3in]{Dreid3-1.pdf}} is the identity to the similar resolution of \raisebox{-8pt}{\includegraphics[height=0.3in]{Dreid3-2.pdf}}, and zero to \raisebox{-8pt}{\includegraphics[height=0.3in]{reid3-unor2.pdf}}. Moreover, the object in the associated complex of \raisebox{-8pt}{\includegraphics[height=0.3in]{R3_1.pdf}} in which the crossing labelled $1$ is given the piecewise oriented resolution while both crossings $2$ and $3$ the oriented one is mapped to zero. 
\end{corollary}

Similar maps and results we have for the variant of the R3 move in which the `central' crossing is positive, the horizontal string is over the other two and oriented est-west.

We remark that the variants of R3 moves we looked so far involved the results from R2a moves. Now we will consider other variants, in which the results from invariance under R2b moves are used. For example, consider the following tangles and the cones corresponding to the crossings labeled $2$: 
\[\raisebox{-18pt}{\includegraphics[height=0.5in]{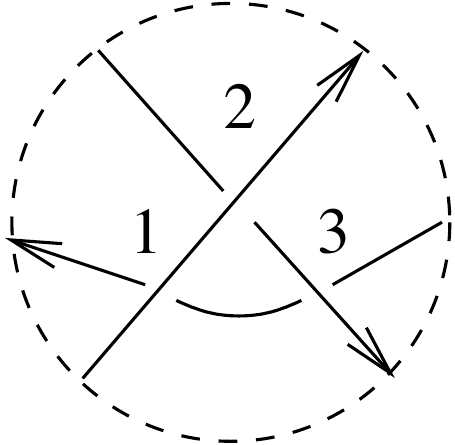}} \quad \text{and} \quad \raisebox{-18pt}{\includegraphics[height=0.5in]{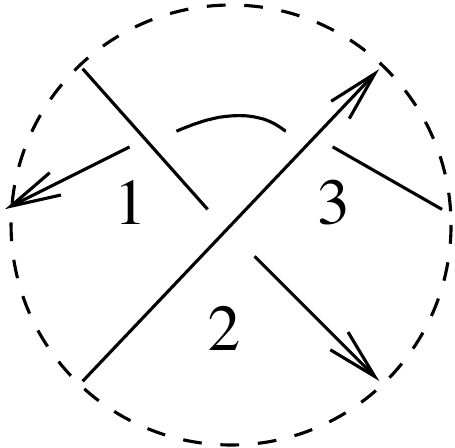}}\]

Then we have:
$$
 [\,\raisebox{-8pt}{\includegraphics[height=0.4in]{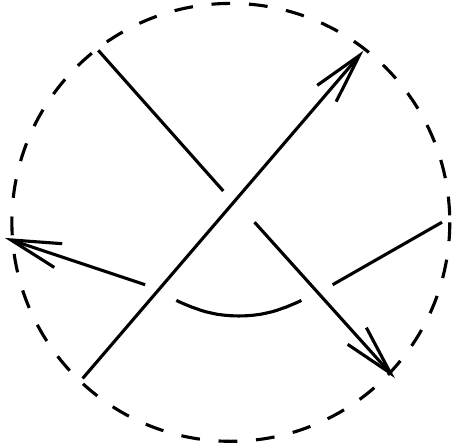}}\,] = \mathbf{M}\left([\,\raisebox{-8pt}{\includegraphics[height=0.4in]{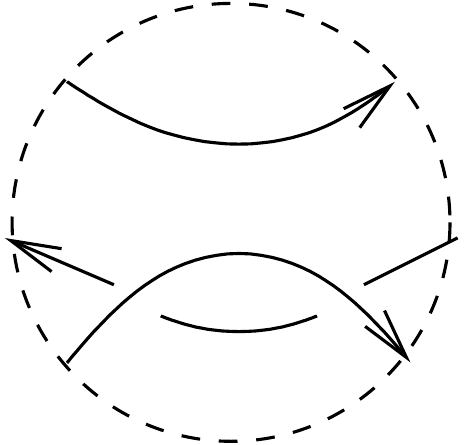}}\,] \xrightarrow{\psi_1} [\,\raisebox{-8pt}{\includegraphics[height=0.4in]{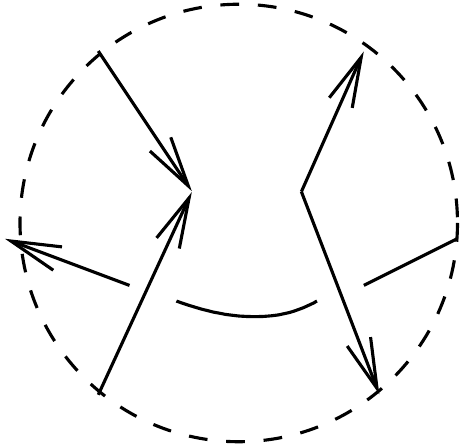}}\,] \right)[-1] \xrightarrow [G_1] {\cong} $$
 
 $$ \xrightarrow [G_1] {\cong}\mathbf{M}\left([\,\raisebox{-8pt}{\includegraphics[height=0.4in]{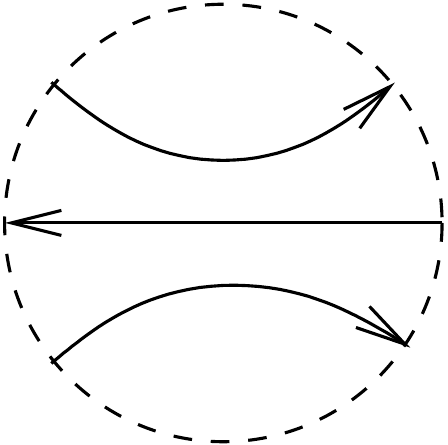}}\,] \xrightarrow{f_1} [\,\raisebox{-8pt}{\includegraphics[height=0.4in]{reid3-9.pdf}}\,] \xrightarrow{\psi_1} [\,\raisebox{-8pt}{\includegraphics[height=0.4in]{reid3-unor5.pdf}}\,] \right)[-1]\xrightarrow[\Delta]{\cong}$$ 
 
 $$\xrightarrow[\Delta]{\cong} \mathbf{M}\left([\,\raisebox{-8pt}{\includegraphics[height=0.4in]{reid3-or3.pdf}}\,] \xrightarrow{f_1} [\,\raisebox{-8pt}{\includegraphics[height=0.4in]{reid3-9.pdf}}\,] \xrightarrow{\psi_1} [\,\raisebox{-8pt}{\includegraphics[height=0.4in]{reid3-unor5.pdf}}\,]\xrightarrow{\beta}[\,\raisebox{-8pt}{\includegraphics[height=0.4in]{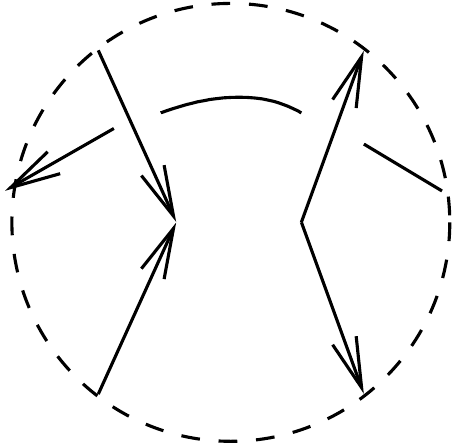}}\,]\right)[-1]=$$
 
$$  = \mathbf{M}\left([\,\raisebox{-8pt}{\includegraphics[height=0.4in]{reid3-or3.pdf}}\,] \xrightarrow{f'_1} [\,\raisebox{-8pt}{\includegraphics[height=0.4in]{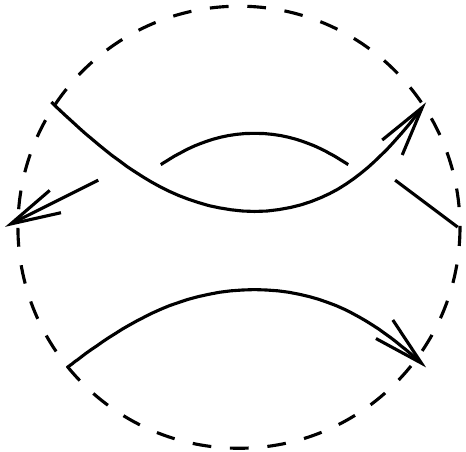}}\,] \xrightarrow{\psi'_1} [\,\raisebox{-8pt}{\includegraphics[height=0.4in]{reid3-unor6.pdf}}\,]\right)[-1] 
\xrightarrow[F'_1]{\cong}$$

$$\xrightarrow[F'_1]{\cong} \mathbf{M}\left([\,\raisebox{-8pt}{\includegraphics[height=0.4in]{reid3-10.pdf}}\,] \xrightarrow{\psi'_1} [\,\raisebox{-8pt}{\includegraphics[height=0.4in]{reid3-unor6.pdf}}\,]\right)[-1] = [\,\raisebox{-8pt}{\includegraphics[height=0.4in]{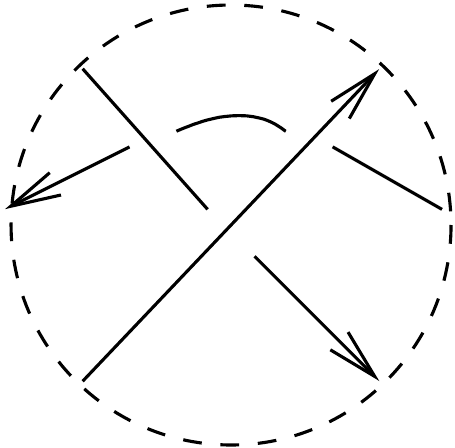}}\,].
 $$
\noindent Here $\beta$ is the isomorphism from lemma~\ref{lemma:sliding3}, and $f_1$ and $f'_1$ are the inclusions in strong deformation retracts from invariance under R2b moves. The matrices of $G_1$ and $F'_1$ are as in the first R3 case considered here, while $\Delta = \left( \begin{array}{cc} I & 0\\ 0& \beta \end{array} \right) $.

The map in the third row of the previous relations has the form:
$$\raisebox{-8pt}{\includegraphics[height=2.4in]{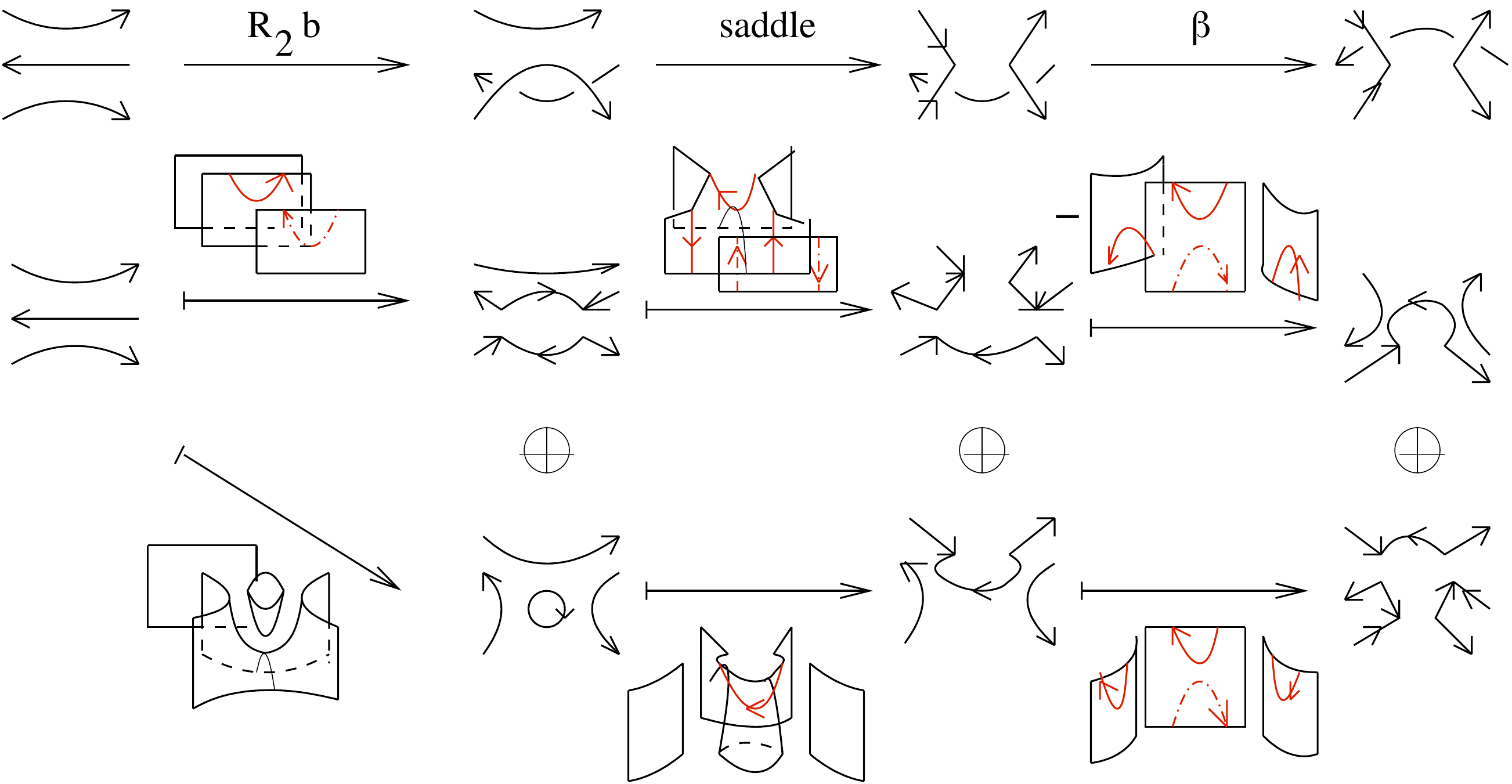}}$$
\noindent and the map in the fourth row is:
$$\raisebox{-8pt}{\includegraphics[height=2.5in]{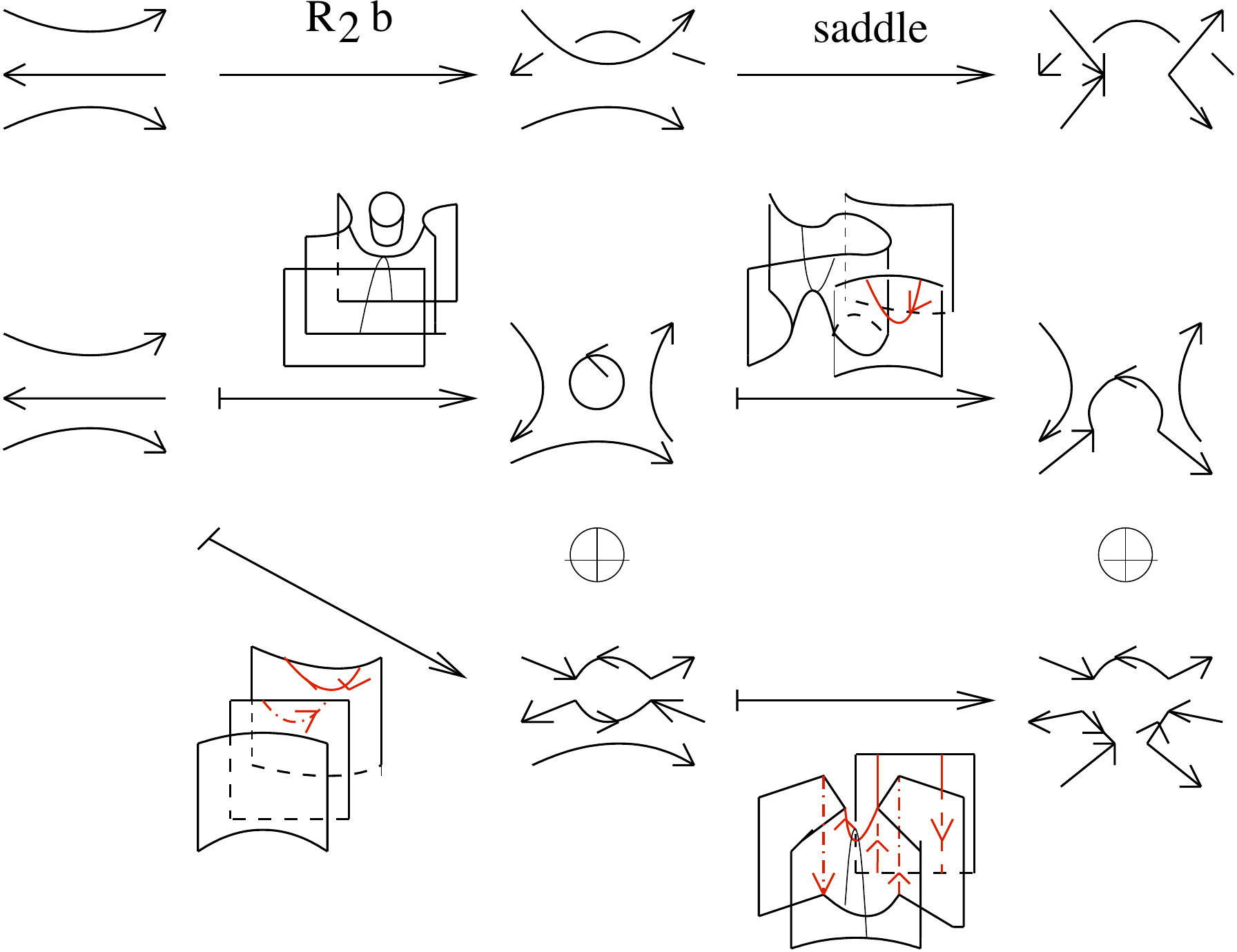}}$$

Composing the morphisms and applying (CI) relations we obtain that both chain maps are equal to $ \left( \begin{array}{c} \raisebox{-8pt}{\includegraphics[height=.4in]{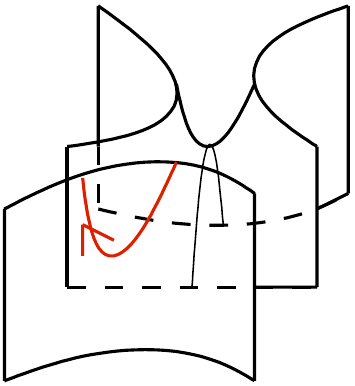}} \\ -i\, \raisebox{-8pt}{\includegraphics[height=.4in]{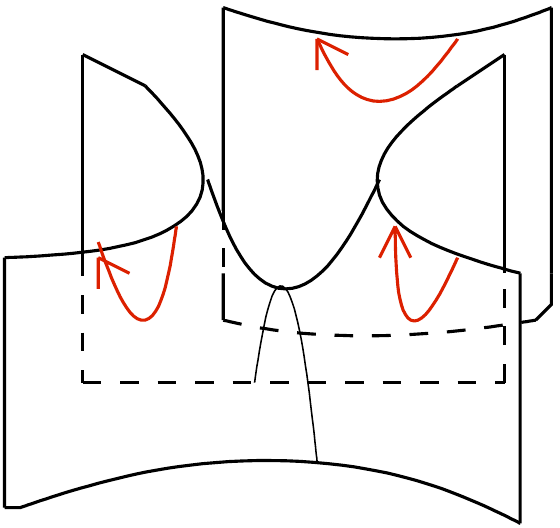}} \end{array} \right )$, up to isotopies.

\begin{remark}\label{remark:needed for movie moves-2}The chain map $[\,\raisebox{-8pt}{\includegraphics[height=0.3in]{reid3-70.pdf}}\,] \longrightarrow [\,\raisebox{-8pt}{\includegraphics[height=0.3in]{reid3-80.pdf}}\,] $ is given by 
$\left ( \begin{array}{cc} f'_1 g_1 & 0 \\ \beta \psi_1 h_1 & \beta \end{array} \right )$.
\end{remark}

\subsection*{\textbf{Two positive crossings}}
Now we will have a look at a variant of Reidemeister 3 with a negative `central' crossing. In particular, we will show that the formal chain complexes associated to $\raisebox{-8pt}{\includegraphics[height=0.3in]{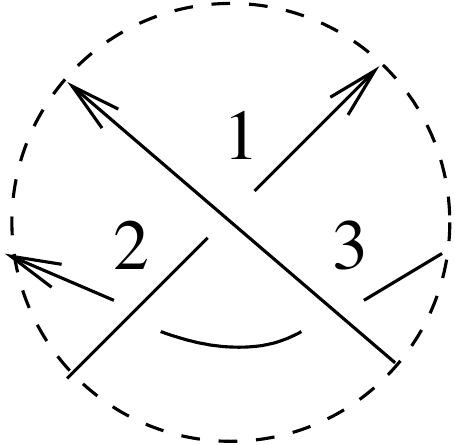}}$ and $\raisebox{-8pt}{\includegraphics[height=0.3in]{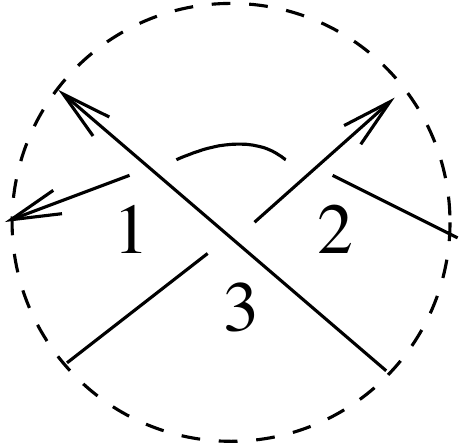}}$ are homotopy equivalent. We will consider again the mapping cones corresponding to the crossings labelled $2$.

Then we have:
$$
 [\,\raisebox{-8pt}{\includegraphics[height=0.4in]{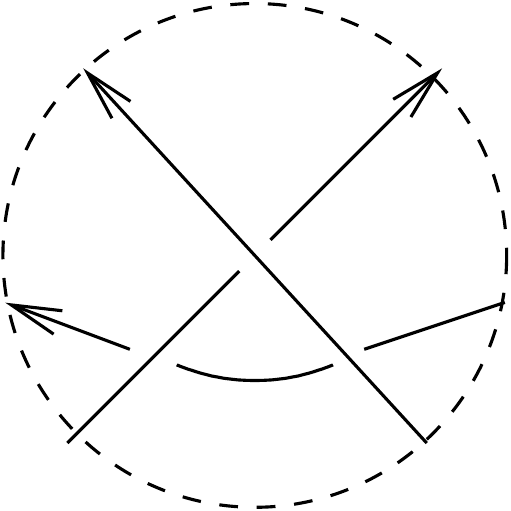}}\,] = \mathbf{M}\left([\,\raisebox{-8pt}{\includegraphics[height=0.4in]{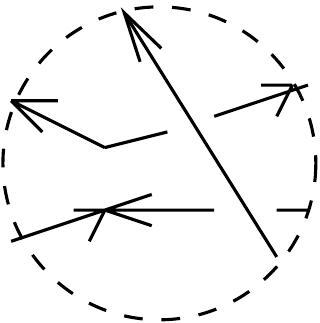}}\,] \xrightarrow{\psi_2} [\,\raisebox{-8pt}{\includegraphics[height=0.4in]{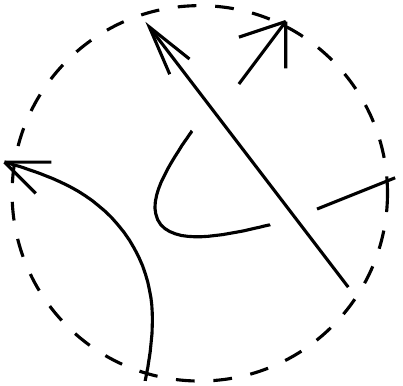}}\,] \right) \xrightarrow [G_2] {\cong} $$
 
 $$ \xrightarrow [G_2] {\cong}\mathbf{M}\left([\,\raisebox{-8pt}{\includegraphics[height=0.4in]{reid3-unor3.pdf}}\,] \xrightarrow{\psi_2} [\,\raisebox{-8pt}{\includegraphics[height=0.4in]{reid3-7.pdf}}\,] \xrightarrow{g_2} [\,\raisebox{-8pt}{\includegraphics[height=0.4in]{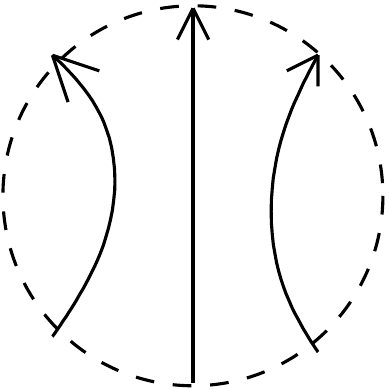}}\,] \right)\xrightarrow[\Omega]{\cong}$$ 
 
 $$\xrightarrow[\Omega]{\cong} \mathbf{M}\left([\,\raisebox{-8pt}{\includegraphics[height=0.4in]{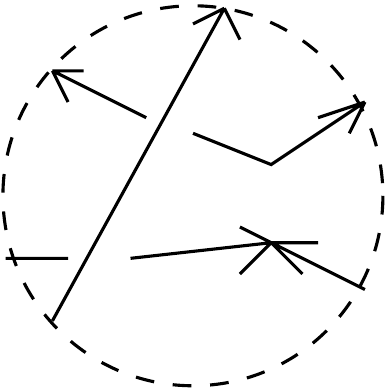}}\,] \xrightarrow{\alpha} [\,\raisebox{-8pt}{\includegraphics[height=0.4in]{reid3-unor3.pdf}}\,] \xrightarrow{\psi_2} [\,\raisebox{-8pt}{\includegraphics[height=0.4in]{reid3-7.pdf}}\,]\xrightarrow{g_2}[\,\raisebox{-8pt}{\includegraphics[height=0.4in]{reid3-or.pdf}}\,]\right)=$$
 
$$  = \mathbf{M}\left([\,\raisebox{-8pt}{\includegraphics[height=0.4in]{reid3-unor4.pdf}}\,] \xrightarrow{\psi'_2} [\,\raisebox{-8pt}{\includegraphics[height=0.4in]{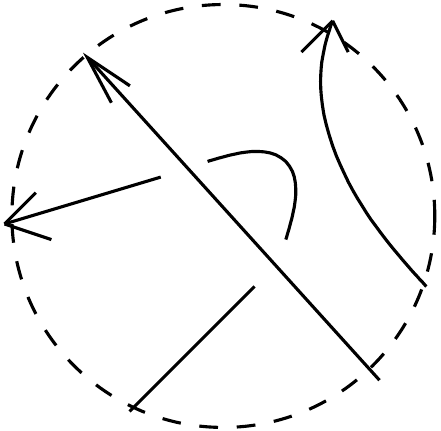}}\,] \xrightarrow{g'_2} [\,\raisebox{-8pt}{\includegraphics[height=0.4in]{reid3-or.pdf}}\,]\right)
\xrightarrow[F'_2]{\cong}$$

$$\xrightarrow[F'_2]{\cong} \mathbf{M}\left([\,\raisebox{-8pt}{\includegraphics[height=0.4in]{reid3-unor4.pdf}}\,] \xrightarrow{\psi'_2} [\,\raisebox{-8pt}{\includegraphics[height=0.4in]{reid3-8.pdf}}\,]\right) = [\,\raisebox{-8pt}{\includegraphics[height=0.4in]{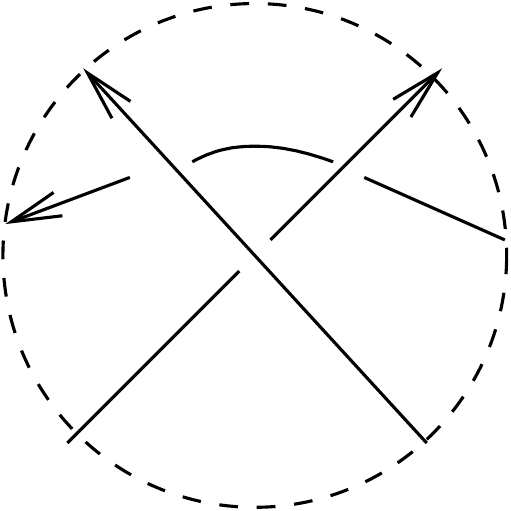}}\,],
 $$
 
where $G_2 = \left ( \begin{array}{cc} I & 0 \\ 0 & g_2 \end{array} \right ), F'_2 = \left ( \begin{array}{cc} I & 0 \\ -h'_2  \psi'_2 &f'_2  \end{array} \right )$, with $g_2$ a strong deformation retract and  $f'_2$ inclusion in a strong deformation retract corresponding to Reidemeister 2 moves, and $\Omega = \left ( \begin{array}{cc} \alpha^{-1} & 0 \\ 0 & I \end{array} \right )$, with $\alpha$ and $\alpha^{-1}$ being the isomorphisms from lemma~\ref{lemma:sliding1}.

In what follows, we show that the third and fourth morphisms in the above equations are indeed the same. There are four morphisms in each of these chain maps, but two of them are zero. The non-trivial maps for the third morphism are:
$$\raisebox{-8pt}{\includegraphics[height=2.3in]{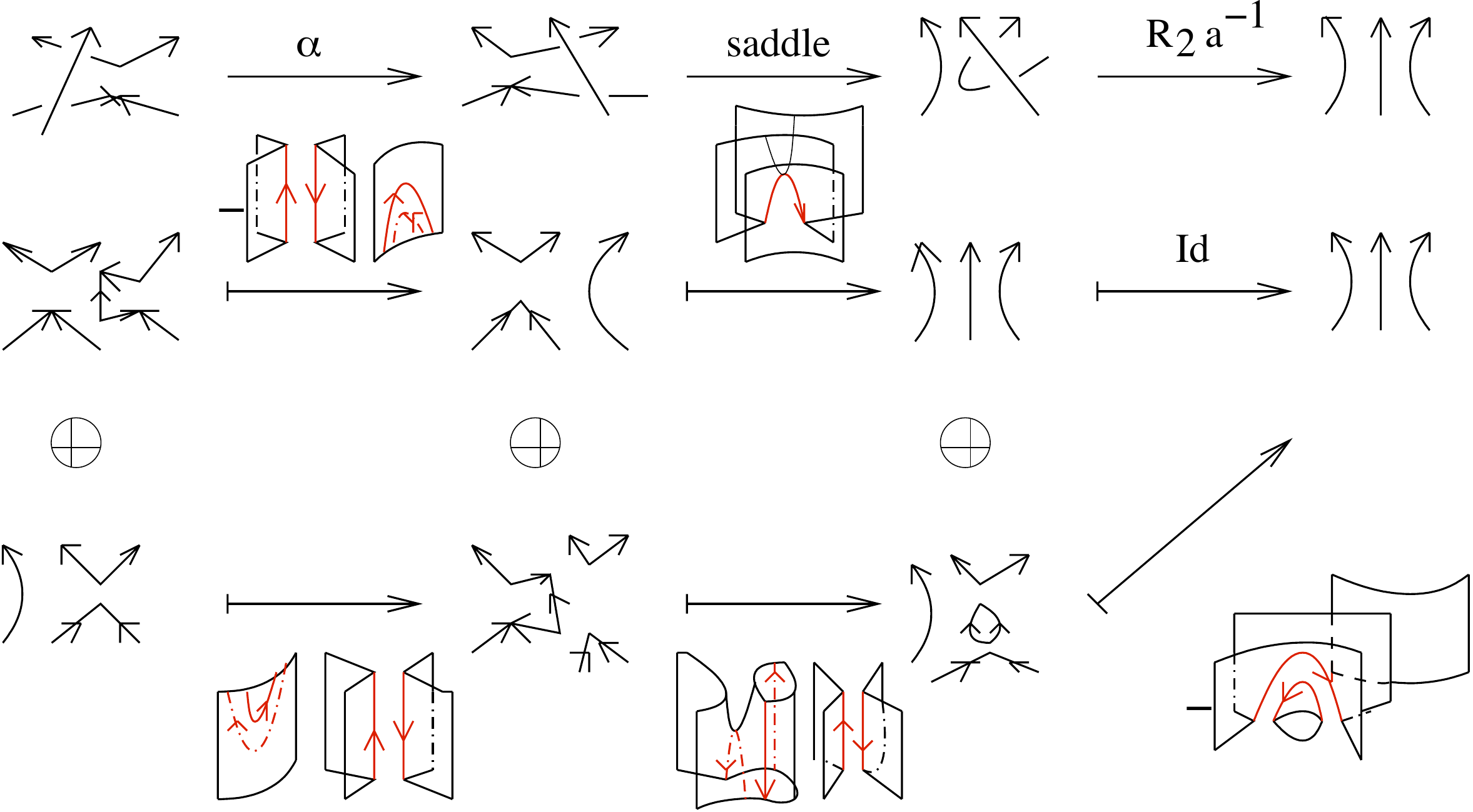}}$$
\noindent  For the fourth morphism we obtain:
$$\raisebox{-8pt}{\includegraphics[height=2.4in]{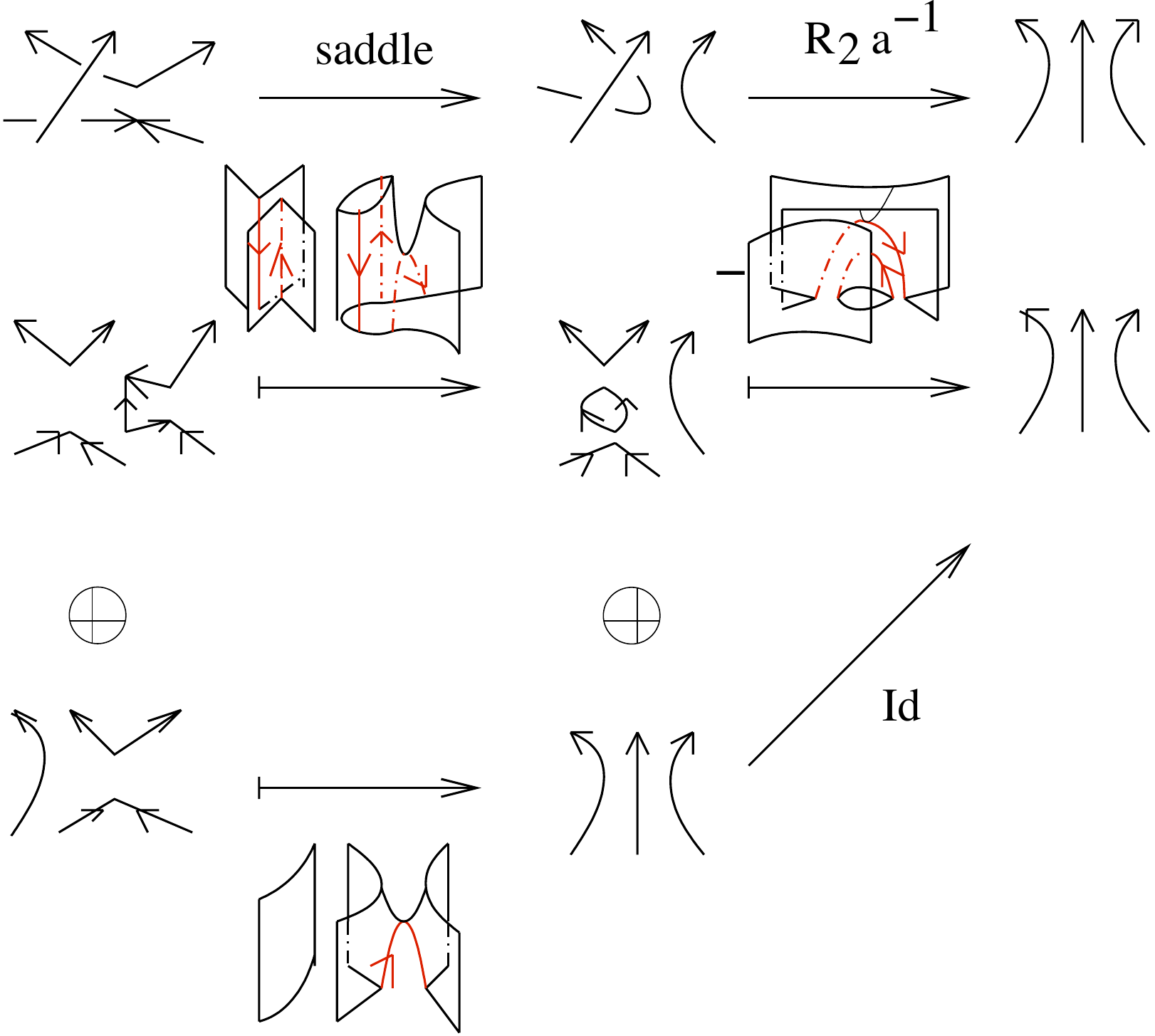}}$$
\noindent Composing, both morphisms are equal to $ \left (\begin{array}{cc} -\raisebox{-8pt}{\includegraphics[height=0.4in]{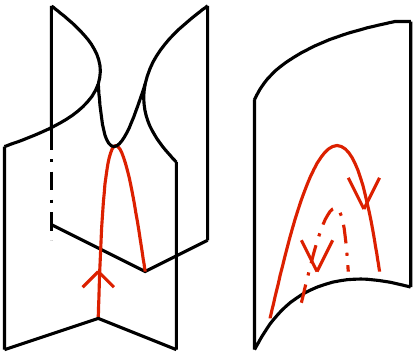}} & \raisebox{-8pt}{\includegraphics[height=0.4in]{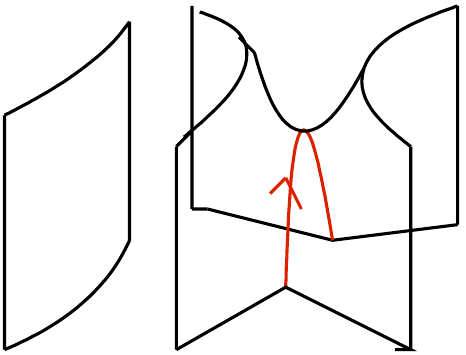}} \end{array} \right )$, up to isotopies.

\begin{remark}\label{remark:needed for movie moves-3} The chain map $[\,\raisebox{-8pt}{\includegraphics[height=0.3in]{reid3-5.pdf}}\,] \longrightarrow  [\,\raisebox{-8pt}{\includegraphics[height=0.3in]{reid3-6.pdf}}\,] $ is given by $$ \left ( \begin{array}{cc}\alpha^{-1} & 0\\ -h'_2 \psi'_2 \alpha^{-1} & f'_2g_2 \end{array}\right).$$
\end{remark} 

Similar maps and results we obtain for the variant of R3 move in which the central crossing is negative, the horizontal string is over the other two and orientated west-est. Notice that for this oriented representatives of R3 move, the results for a R2a move were used.

Let's examine now a case with two positive crossings in which a R2b move is involved.  Consider two diagrams that differ in a circular region as below:
$$\raisebox{-13pt}{\includegraphics[height=0.5in]{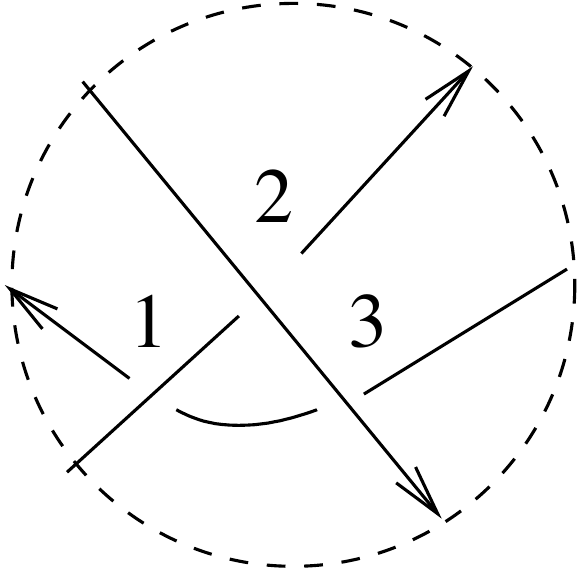}}\qquad \text{and}\qquad \raisebox{-13pt}{\includegraphics[height=0.5in]{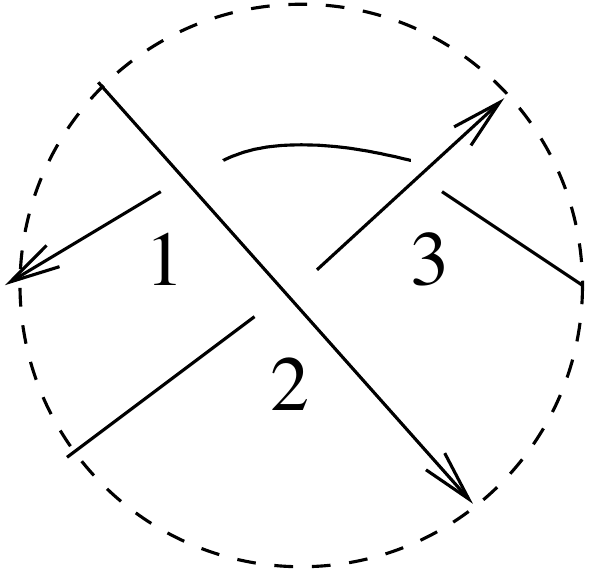}}$$
We will consider again the mapping cones corresponding to the crossings labelled $2$.

$$ [\,\raisebox{-8pt}{\includegraphics[height=0.4in]{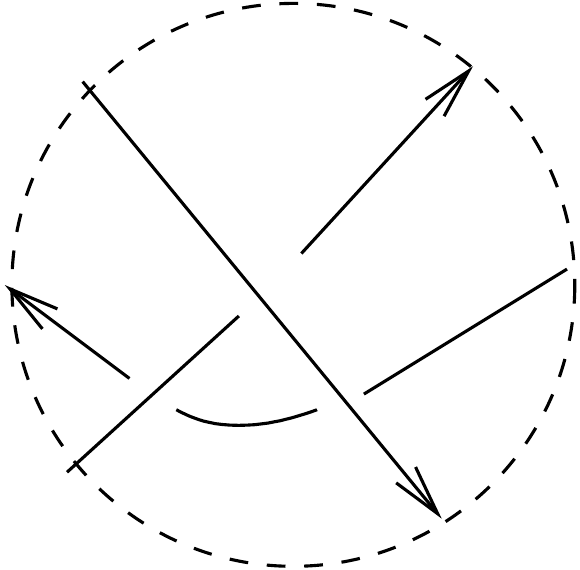}}\,] = \mathbf{M}\left([\,\raisebox{-8pt}{\includegraphics[height=0.4in]{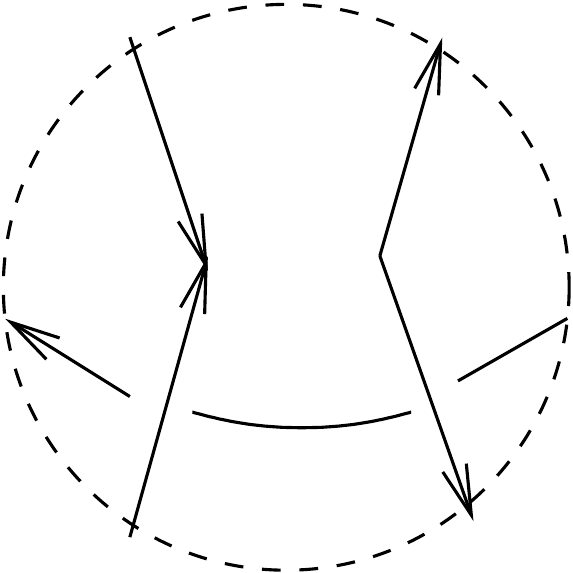}}\,] \xrightarrow{\psi_2} [\,\raisebox{-8pt}{\includegraphics[height=0.4in]{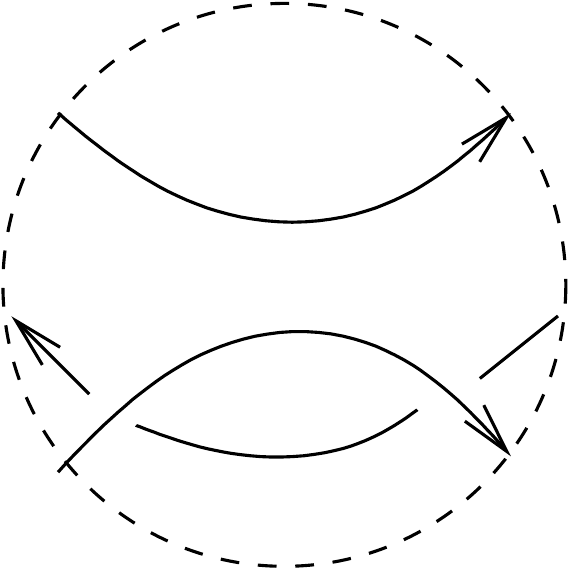}}\,] \right) \xrightarrow [G_2] {\cong} $$
 
 $$ \xrightarrow [G_2] {\cong}\mathbf{M}\left([\,\raisebox{-8pt}{\includegraphics[height=0.4in]{reid3-unor7.pdf}}\,] \xrightarrow{\psi_2} [\,\raisebox{-8pt}{\includegraphics[height=0.4in]{reid3-11.pdf}}\,] \xrightarrow{g_2} [\,\raisebox{-8pt}{\includegraphics[height=0.4in]{reid3-or3.pdf}}\,] \right)\xrightarrow[\Phi]{\cong}$$ 
 
 $$\xrightarrow[\Phi]{\cong} \mathbf{M}\left([\,\raisebox{-8pt}{\includegraphics[height=0.4in]{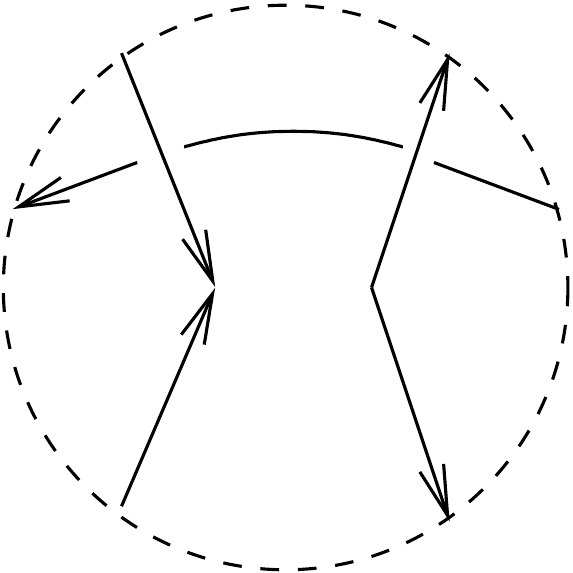}}\,] \xrightarrow{\beta^{-1}} [\,\raisebox{-8pt}{\includegraphics[height=0.4in]{reid3-unor7.pdf}}\,] \xrightarrow{\psi_2} [\,\raisebox{-8pt}{\includegraphics[height=0.4in]{reid3-11.pdf}}\,]\xrightarrow{g_2}[\,\raisebox{-8pt}{\includegraphics[height=0.4in]{reid3-or3.pdf}}\,]\right)=$$
 
$$  = \mathbf{M}\left([\,\raisebox{-8pt}{\includegraphics[height=0.4in]{reid3-unor8.pdf}}\,] \xrightarrow{\psi'_2} [\,\raisebox{-8pt}{\includegraphics[height=0.4in]{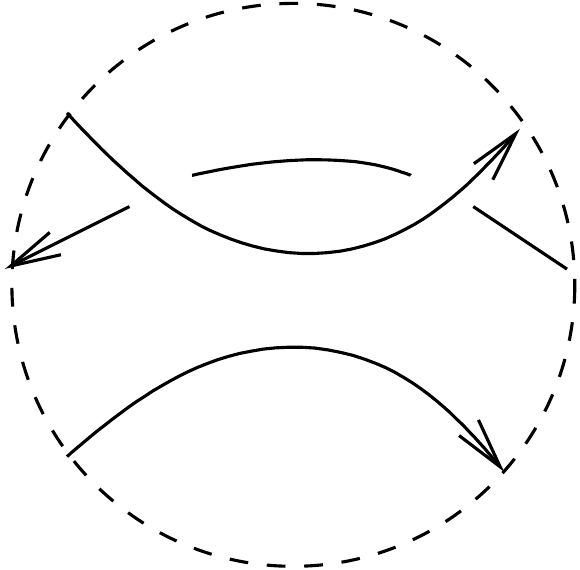}}\,] \xrightarrow{g'_2} [\,\raisebox{-8pt}{\includegraphics[height=0.4in]{reid3-or3.pdf}}\,]\right)\xrightarrow[F'_2]{\cong}$$

$$\xrightarrow[F'_2]{\cong} \mathbf{M}\left([\,\raisebox{-8pt}{\includegraphics[height=0.4in]{reid3-unor8.pdf}}\,] \xrightarrow{\psi'_2} [\,\raisebox{-8pt}{\includegraphics[height=0.4in]{reid3-12.pdf}}\,]\right) = [\,\raisebox{-8pt}{\includegraphics[height=0.4in]{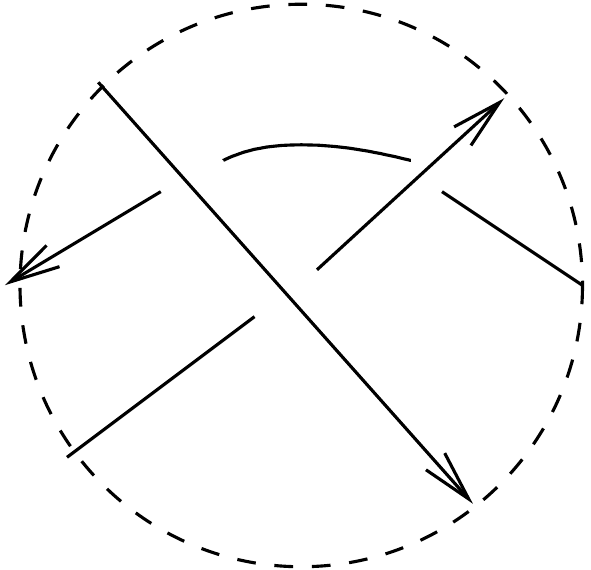}}\,],$$

where $\Phi = \left ( \begin{array}{cc} \beta & 0 \\ 0 & I \end{array} \right )$ and $\beta$ is the isomorphism from lemma~\ref{lemma:sliding3}. As before, it can be shown that the maps in the third and fourth rows above are the same. 

\begin{remark}\label{remark:needed for movie moves-4} The chain map $[\,\raisebox{-8pt}{\includegraphics[height=0.3in]{reid3-71.pdf}}\,] \longrightarrow  [\,\raisebox{-8pt}{\includegraphics[height=0.3in]{reid3-81.pdf}}\,]$ is given by 
$$ \left ( \begin{array}{cc}\beta & 0\\ -h'_2 \psi'_2 \beta & f'_2g_2 \end{array}\right),$$ where $f'_2, h'_2$ are the inclusion in a strong deformation retract and the homotopy map associated to $[\,\raisebox{-8pt}{\includegraphics[height=0.3in]{reid3-12.pdf}}\,] \sim [\,\raisebox{-8pt}{\includegraphics[height=0.3in]{reid3-or3.pdf}}\,]$ respectively, while $g_2$ is the strong deformation retract corresponding to the homotopy equivalence 
$[\,\raisebox{-8pt}{\includegraphics[height=0.3in]{reid3-11.pdf}}\,] \sim [ \,\raisebox{-8pt}{\includegraphics[height=0.3in]{reid3-or3.pdf}}\,]$. 
\end{remark}
\begin{corollary}\label{cor:needed for movie moves-4}
The map $ \raisebox{-8pt}{\includegraphics[height=0.3in]{reid3-11.pdf}} \longrightarrow \raisebox{-8pt}{\includegraphics[height=0.3in]{reid3-unor8.pdf}}$ is zero. Moreover, the map from the `dounbly-oriented' resolution of [\,\raisebox{-8pt}{\includegraphics[height=0.3in]{reid3-unor7.pdf}}\,] to the `doubly-piecewise-oriented' resolution of [\,\raisebox{-8pt}{\includegraphics[height=0.3in]{reid3-12.pdf}}\,] is zero (as the homotopy $h_2$ is zero on the later one). Moreover, the objects of [\,\raisebox{-8pt}{\includegraphics[height=0.3in]{Dreid3-9.pdf}}\,] in which crossings labeled 1 or 3 are given the piecewise oriented resolution while the others the oriented resolution are mapped to zero (as $g_2$ is zero on this object).
\end{corollary} 

Similar results we obtain when considering the oriented R3 variant with a positive central crossing and with the horizontal string being over the other two and oriented west-est.

\subsection*{\textbf{Three negative crossings}}

$$[\,\raisebox{-13pt}{\includegraphics[height=.5in]{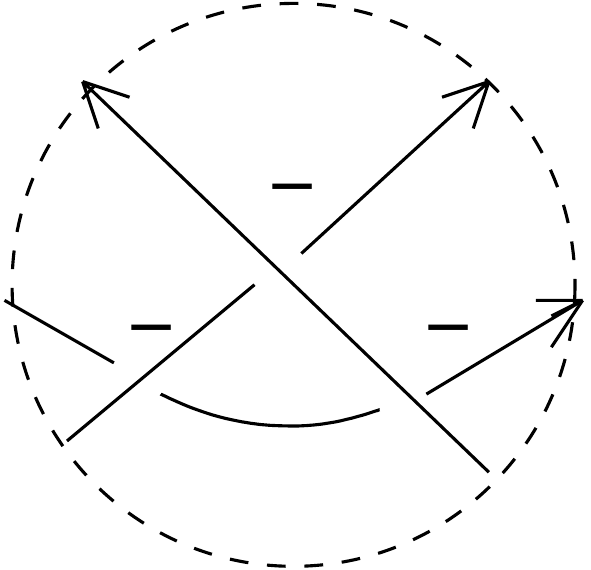}}\,] \cong [\,\raisebox{-13pt}{\includegraphics[height=.5in]{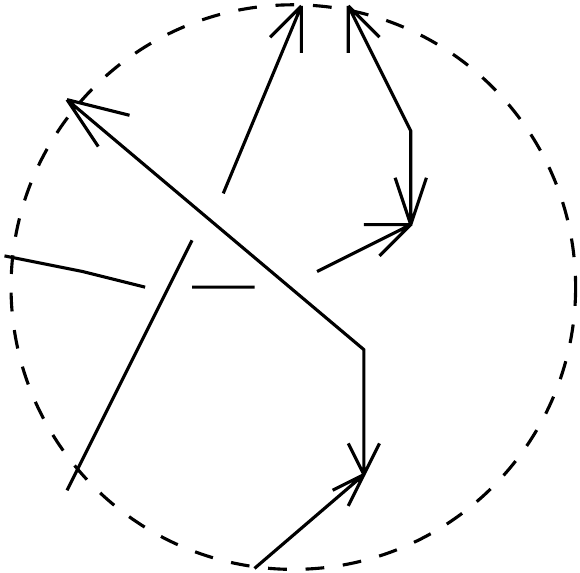}}\,] \cong [\, \raisebox{-13pt}{\includegraphics[height=.5in]{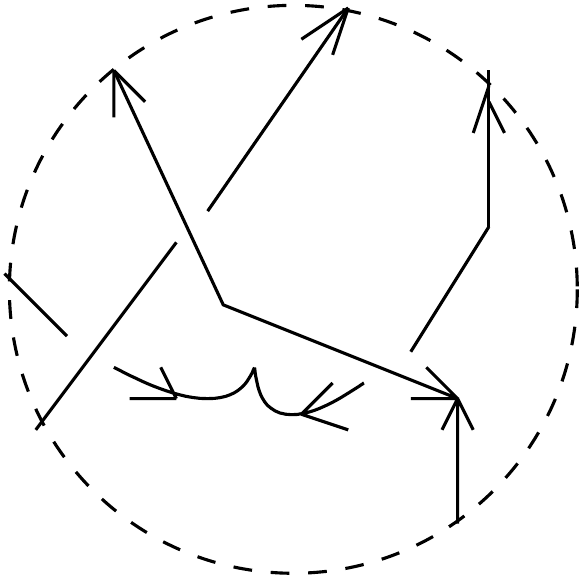}}\,] \stackrel{\gamma}{\cong} [\,\raisebox{-13pt}{\includegraphics[height=.5in]{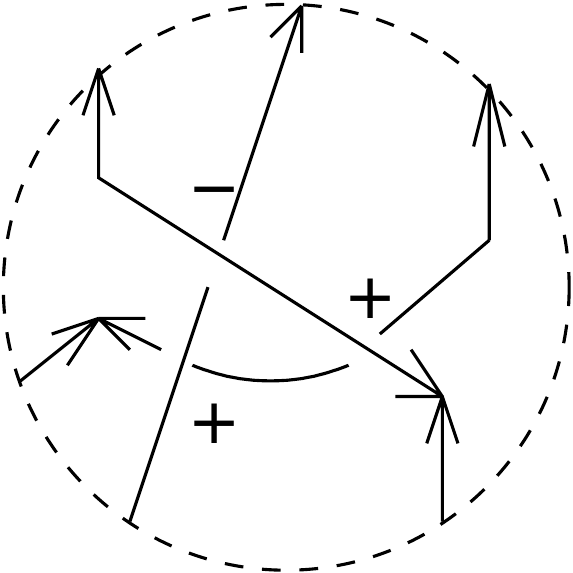}}\,] [-2]\{-6\}\stackrel{R3}{\sim} $$

$$\stackrel{R3}{\sim} [\,\raisebox{-13pt}{\includegraphics[height=.5in]{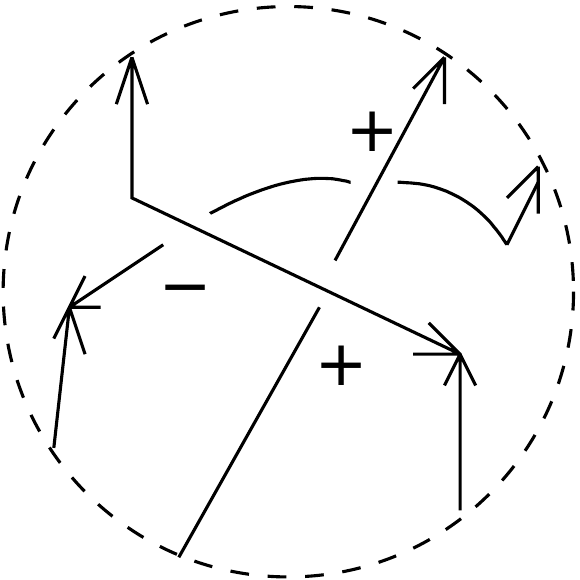}}\,] [-2]\{-6\}\stackrel{\mu^{-1}}{\cong} [\,\raisebox{-13pt}{\includegraphics[height=.5in]{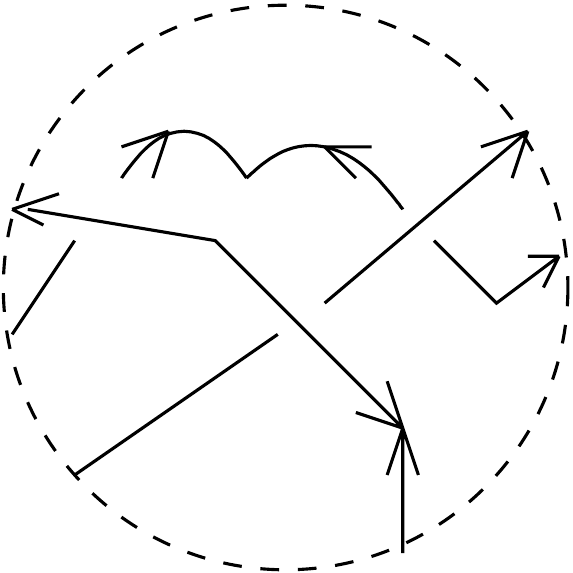}}\,][-2]\{-6\} \cong [\,\raisebox{-13pt}{\includegraphics[height=.5in]{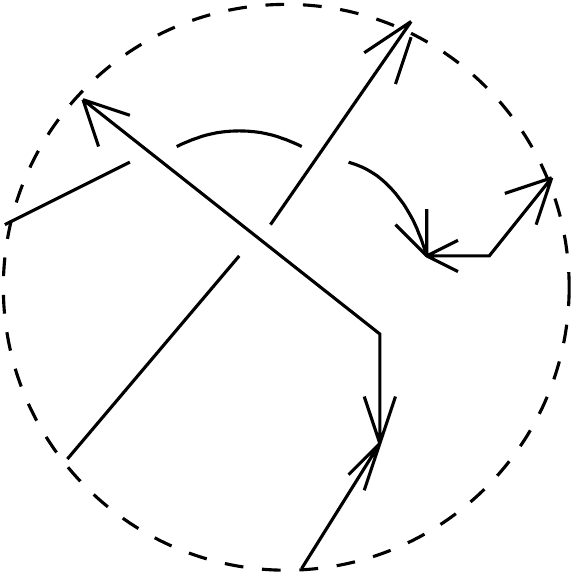}}\,] \cong [\,\raisebox{-13pt}{\includegraphics[height=.5in]{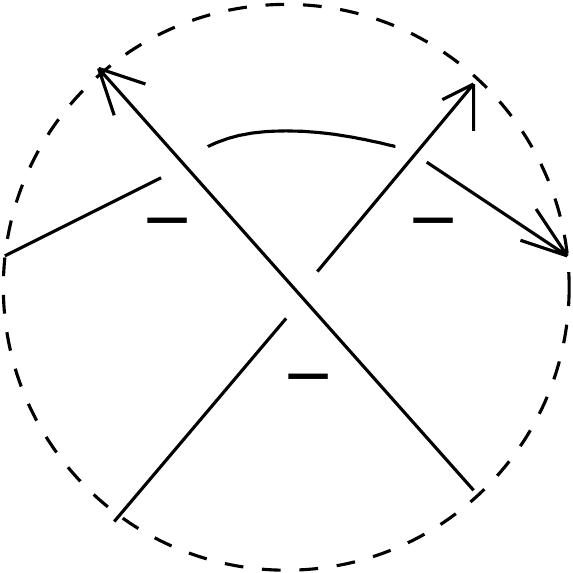}}\,]. $$ 

The first and last isomorphisms are given by lemma~\ref{lemma:removing singular points in pairs}, the second and fourth by lemma~\ref{lemma:slide singular pt. pass neg. crossing-(a)}. The third and fifth isomorphisms are given in figure~\ref{fig:sliding the middle string1} and figure~\ref{fig:sliding the middle string2}, respectively. We also remark that the fourth and fifth tangles above are the two sides of a Reidemeister 3 move with two positive crossings.

When checking the movie moves for functoriality of our invariant, we need to know the maps between resolutions of the two sides of Reidemeister 3 moves. In particular, we would like to know how the complete oriented resolution of [\,\raisebox{-8pt}{\includegraphics[height=.3in]{R3neg-1.pdf}}\,] is mapped into[\,\raisebox{-8pt}{\includegraphics[height=.3in]{R3neg-8.pdf}}\,] (we will need it for MM6, MM8, and MM10). Then we obtain:

\raisebox{-8pt}{\includegraphics[height=1.6in]{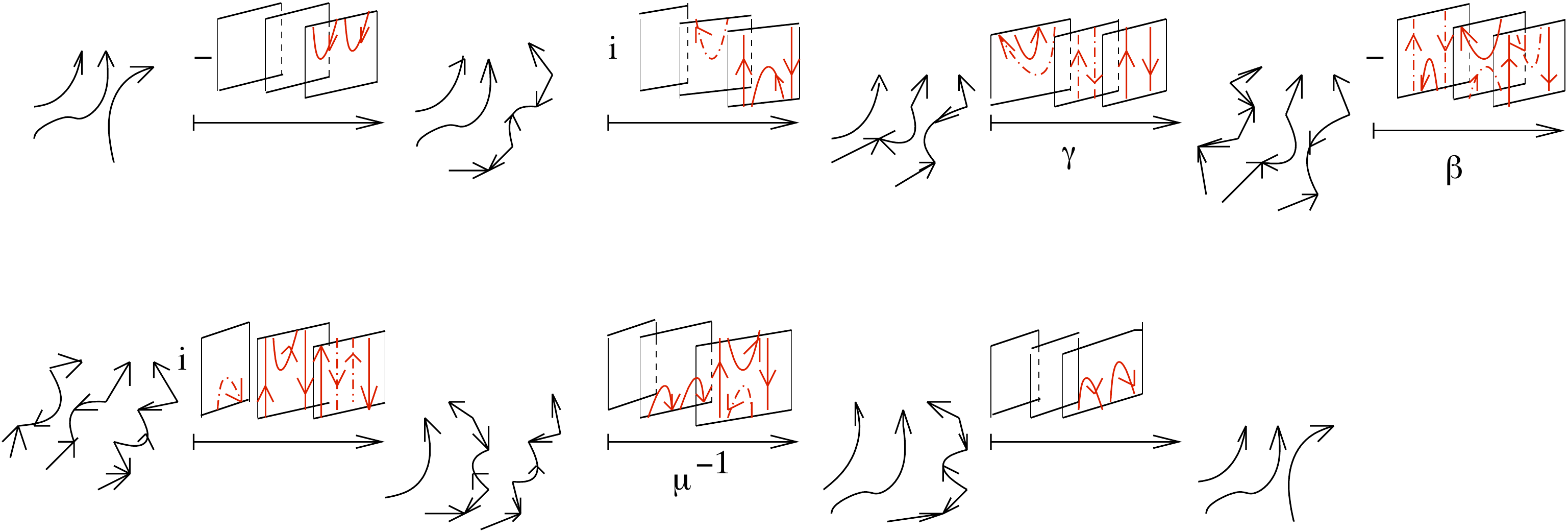}}

Composing and using lemma~\ref{handy relations} to replace the oriented circles by $i$ or $-i$, we get $ -i^2 \raisebox{-8pt}{\includegraphics[height=.3in]{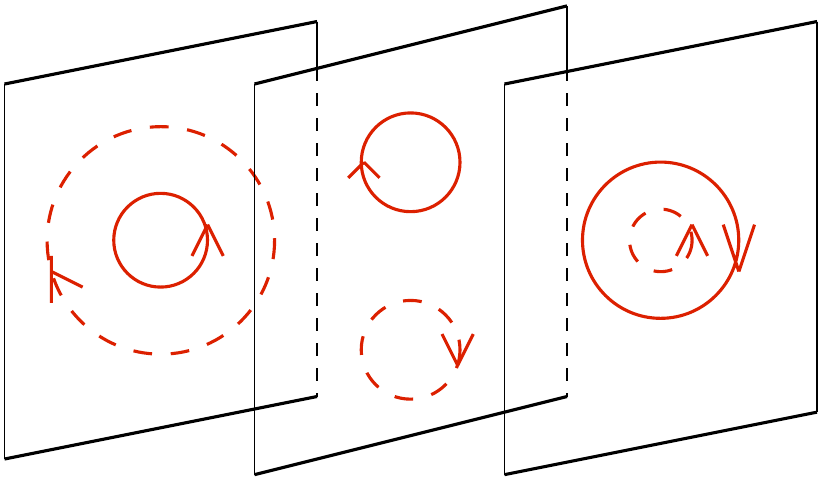}} = Id$. Recall that having a dotted circle means that its preferred  normal direction is on the other side, thus the reader should imagine himself/herself on the other side when looking at those circles; in other words, one can replace these dotted circles by non-dotted circles with opposite orientation.

Similarly, the object of [\,\raisebox{-8pt}{\includegraphics[height=.3in]{R3neg-1.pdf}}\,] in which the upper crossing is given the piecewise oriented resolution while the others the oriented resolution is mapped to zero (we will need it for MM6). We show the calculation below.
$$\raisebox{-8pt}{\includegraphics[height=.7in]{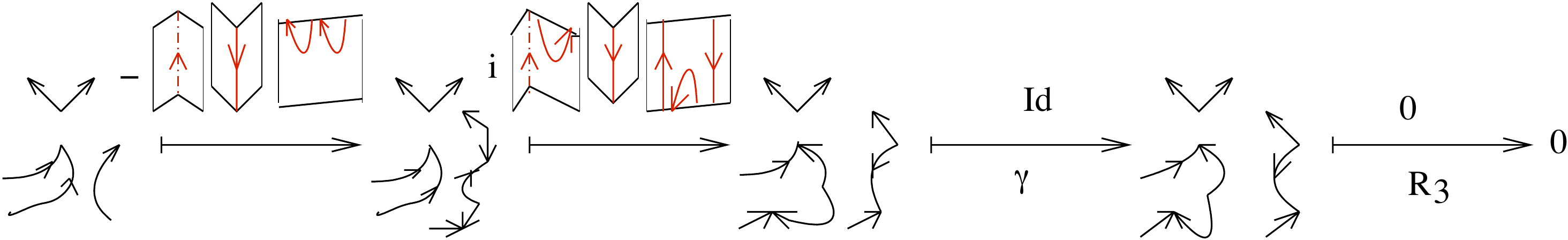}}$$

One can see that the last map above is zero from the results of corollary~\ref{cor:needed for movie moves-4}.

\subsection*{\textbf{Three positive crossings}}

$$[\,\raisebox{-13pt}{\includegraphics[height=.5in]{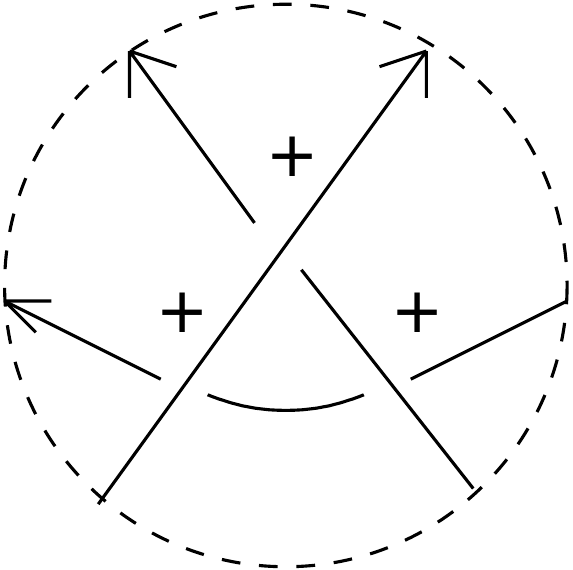}}\,] \cong [\,\raisebox{-13pt}{\includegraphics[height=.5in]{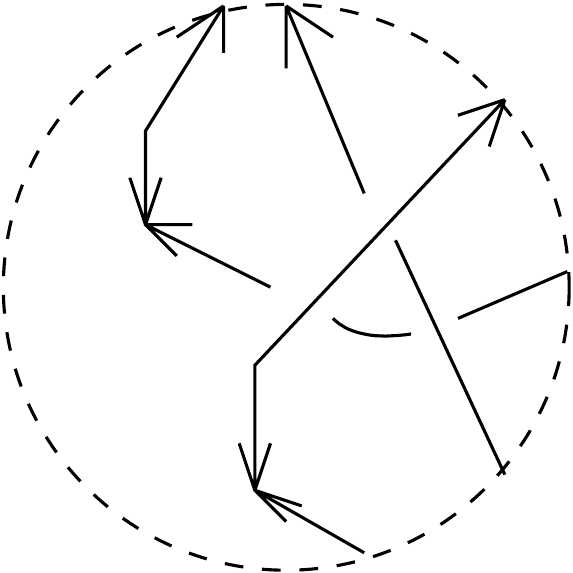}}\,] \cong [\, \raisebox{-13pt}{\includegraphics[height=.5in]{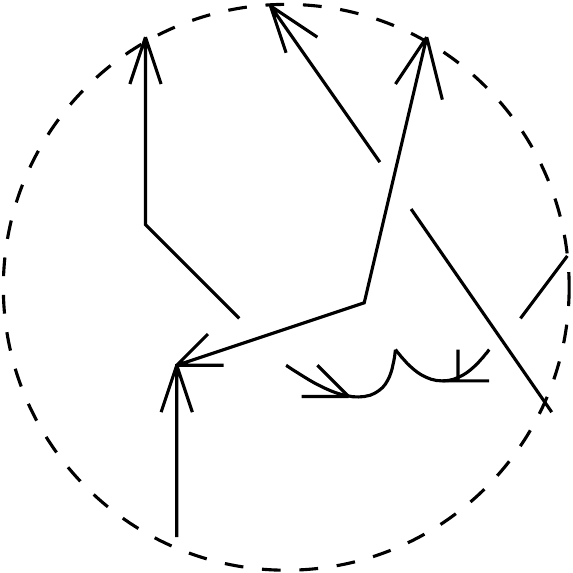}}\,]\stackrel{\gamma^{-1}}{\cong} [\,\raisebox{-13pt}{\includegraphics[height=.5in]{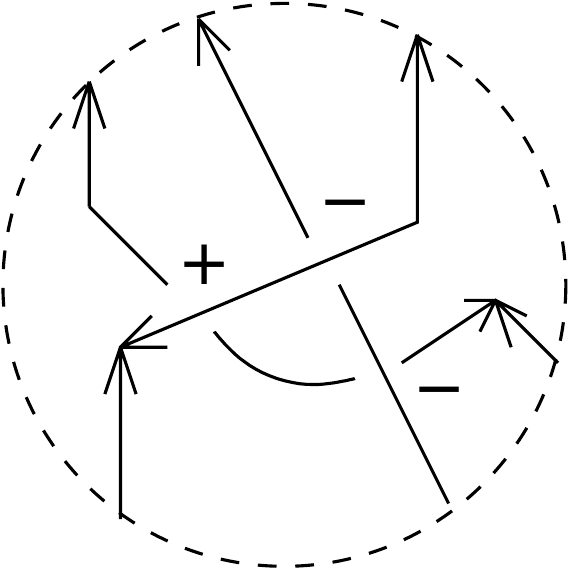}}\,] [2]\{6\}\stackrel{R3}{\sim} $$

$$\stackrel{R3}{\sim} [\,\raisebox{-13pt}{\includegraphics[height=.5in]{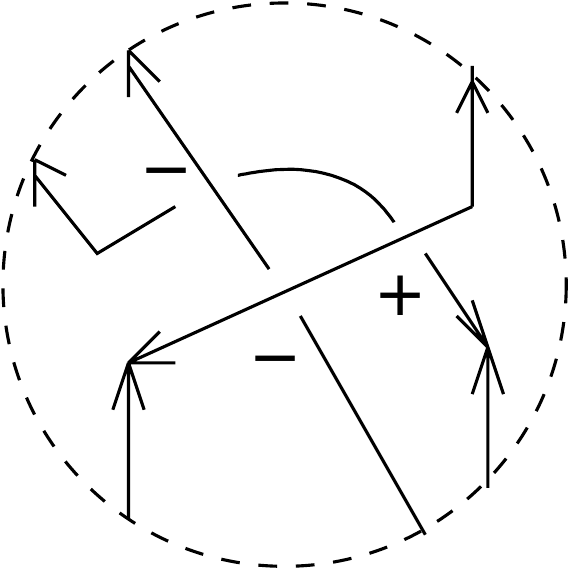}}\,] [2]\{6\}\cong [\,\raisebox{-13pt}{\includegraphics[height=.5in]{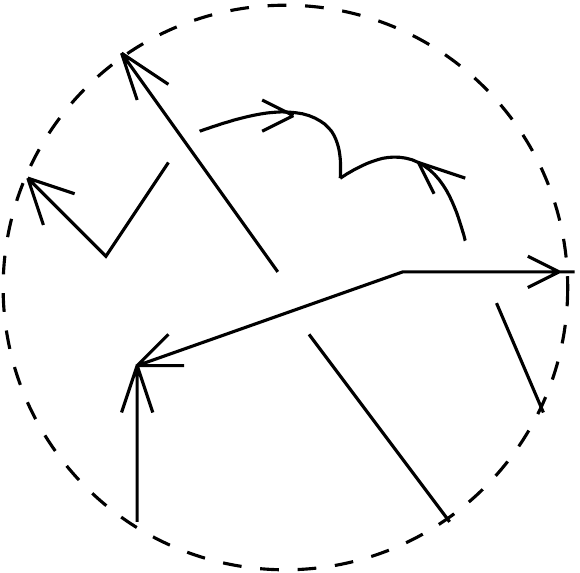}}\,][2]\{6\} \stackrel{\mu}{\cong} [\,\raisebox{-13pt}{\includegraphics[height=.5in]{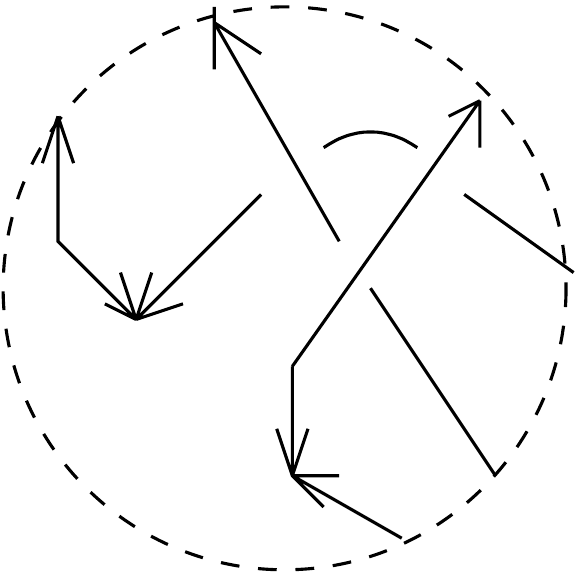}}\,] \cong [\,\raisebox{-13pt}{\includegraphics[height=.5in]{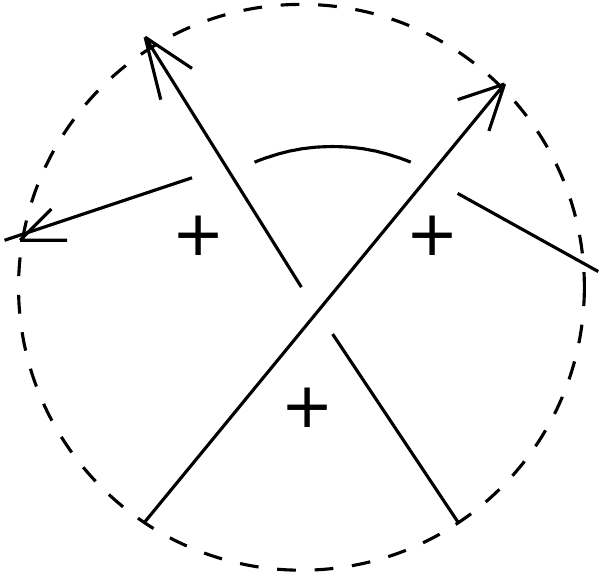}}\,]. $$ 

Here, the second and fourth isomorphisms are given in lemma~\ref{lemma:slide singular pt. pass pos. crossing-(a)}. Also, a Reidemeister 3 move with two negative crossings is involved. Now let's have a look at the map from the completely oriented resolution of the left side of R3 move to the completely oriented resolution of the right side.

\raisebox{-8pt}{\includegraphics[height=1.7in]{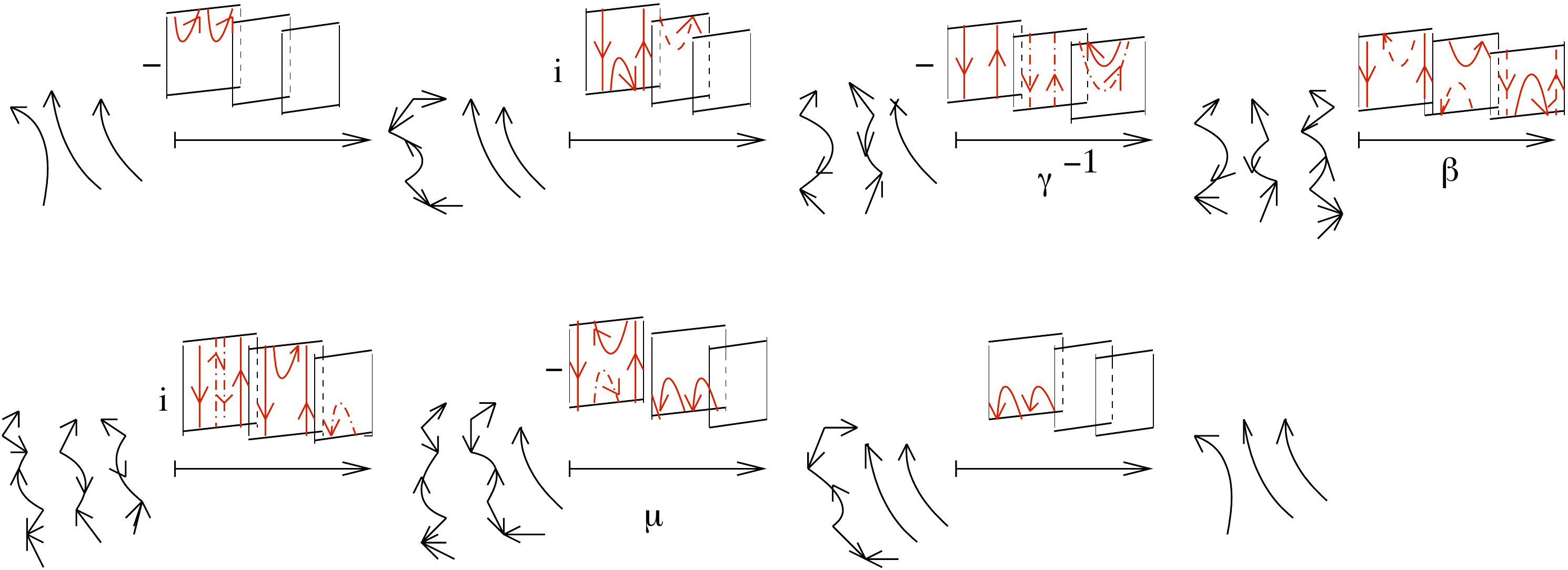}}

Composing we get again the identity map $-i^2 \raisebox{-8pt}{\includegraphics[height=.35in]{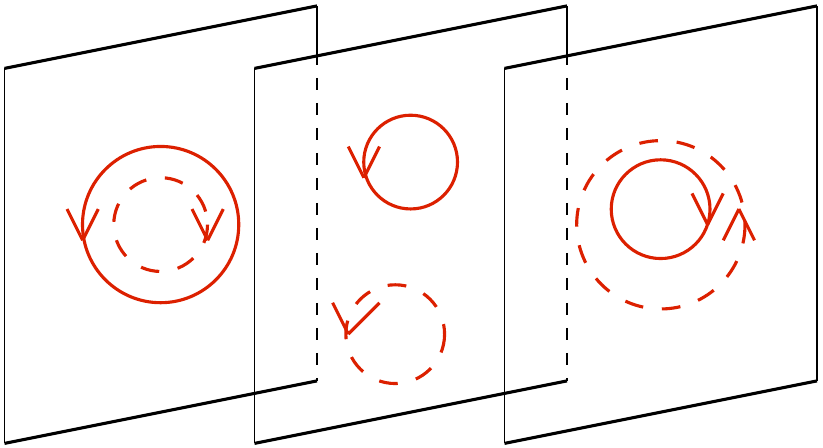}} = Id$.

This completes the proof of Theorem~\ref{thm:invariance}.
\end{proof}


\section{\textbf{Planar algebras, tangle composition and canopolies}}\label{sec:planar algebras}

We will briefly recall from~\cite{BN1}, section 5, what `oriented planar arc diagrams' are, how they turn the collection of (singular) tangles into a planar algebra, and show that formal complexes in  $\textit{Kof}$ also form a planar algebra. For more informations on planar algebras we refer the interested reader to~\cite{Jo}.

The concepts and definitions we are working with here are almost the same as in~\cite{BN1}. The main difference is that we allow a tangle to have singular points and that our objects and morphisms are piecewise oriented, as opposed to unoriented. In other words, we work with `singular tangles' as opposed to classical tangles. The proofs used in section 5 of~\cite{BN1} remain true for our setting, as well.

\begin{definition}
A $d-$input \textit{oriented planar arc diagram} $D$ (see figure below) is an `output' disk with $d$ `input' disks removed, together with a collection of disjoint embedded oriented arcs that begin and end on the boundary. There are also allowed oriented loops. Each input disk is labeled by one of the integers from 1 to $d$, and there is a base point ($\star$) marked on each of the input disks as well as on the output disk. Two oriented planar arc diagrams are considered the same (isomorphic) if they differ by a planar isotopy.
\end{definition} 

\begin{figure}[ht]
\includegraphics[width=1in]{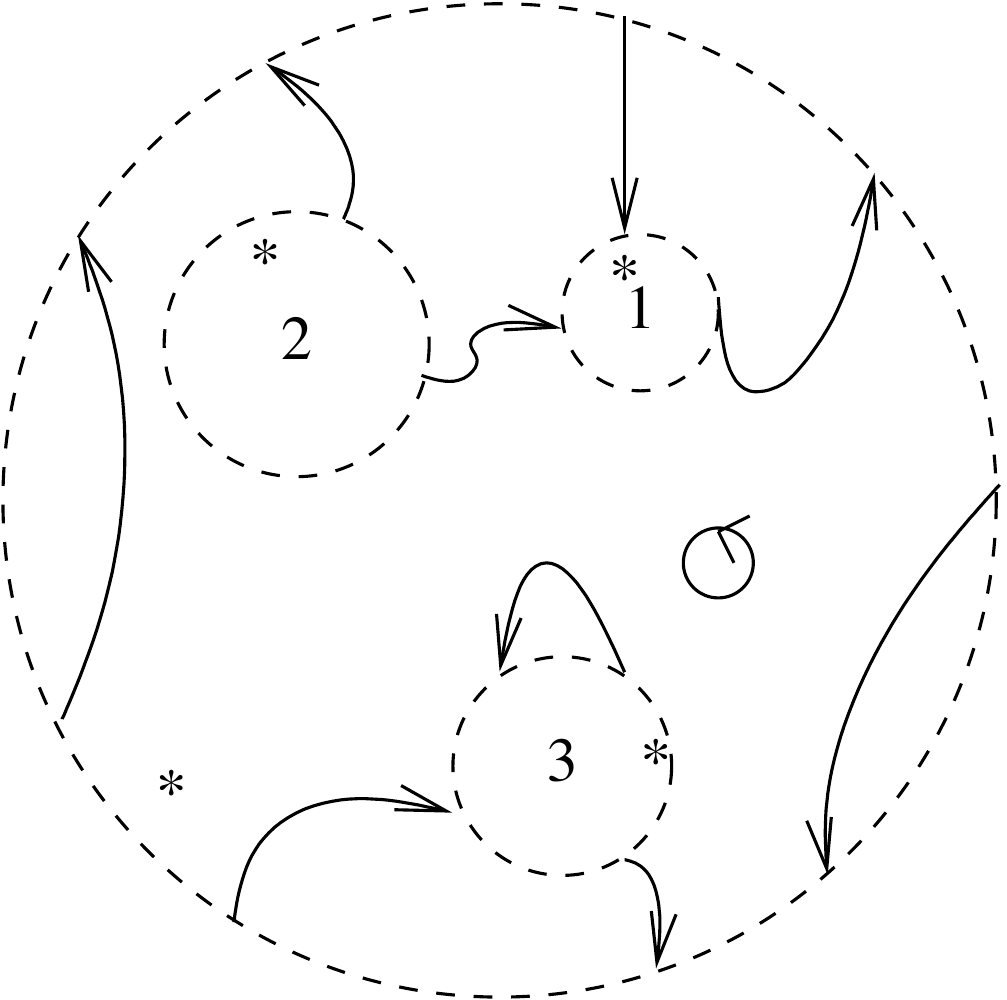}
\caption{Example of a 4-input oriented planar arc diagram} 
\end{figure}

\begin{definition}
Let $\mathcal{T}^0(s)$ denote the collection of all $\vert s\vert$-ended oriented singular tangle diagrams (where $s$ is a string of in ($\uparrow$) and out ($\downarrow$) symbols, whose length is $\vert s\vert$)  in a based disk with incoming and outgoing  strands as specified by $s$, starting at the base point and going around counterclockwise. Let $\mathcal{T}(s)$ denote the quotient of $\mathcal{T}^0(s)$ by the Reidemester moves.
\end{definition}

For example, the tangle diagram \raisebox{-8pt}{\includegraphics[height=0.3in]{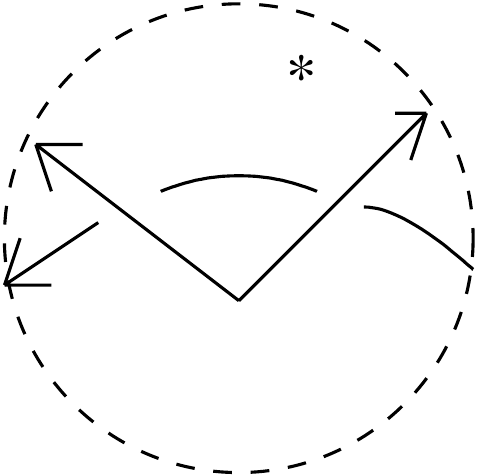}} is an element of $\mathcal{T}^0_{\downarrow \downarrow \uparrow \downarrow}$.

\begin{definition}
An \textit{oriented planar algebra} is a collection of sets ($\mathcal{P}(s)$) with some extra conditions:
\begin{itemize}
\item  to each disk with $\vert s_i\vert$ marked points on the boundary it associates $\mathcal{T}(s_i)$,
where $s_i$ is the in/out string read along the boundary of the disk;
\item  to any $d$-input oriented planar arc diagram $D$ with $\vert s \vert$ marked points on its boundary it associates a map: 
\[
\mathcal{P}_D: \mathcal{T}(s_1) \times ... \times \mathcal{T}(s_d) \longrightarrow \mathcal{T}(s);
\]
\item these operations contain the identity operations on $\mathcal{T}(s_i)$ and are compatible with tangle composition.
\end{itemize}
\end{definition}

\begin{example} There are a few examples we have already seen:
\begin{enumerate}
\item First examples of oriented planar algebras are $\mathcal{F}(s)$, for any $\vert s \vert$.
\item Another is the `flat' (no crossings) sub planar algebra of $\mathcal{F}(s)$; that is, the collection Obj($\textit{Foams}_{/\ell}$) of objects of the category $\textit{Foams}_{/\ell}$.
\item Moreover, the collection Mor($\textit{Foams}_{/\ell}$) of morphisms of $\textit{Foams}_{/\ell}$ is another interesting example.
\end{enumerate}
\end{example}

\begin{remark}
If we forget the orientation of the arcs in a d-input oriented planar arc diagram we have, what is called, an \textit{unoriented planar arc diagram}. Similarly, there is the notion of \textit{unoriented planar algebra}, which is a collection of sets ($\mathcal{P}(\vert s\vert)$) with the same properties as above.
\end{remark}

A morphism of oriented planar algebras ($\mathcal{P}_1(s)$) and ($\mathcal{P}_2(s)$) is a collection of maps $\Psi: \mathcal{P}_1(s) \rightarrow \mathcal{P}_2(s)$ such that $\Psi \circ \mathcal{P}_D = \mathcal{P}_D \circ (\Psi \times ... \times \Psi)$, for every $D$.

Let $\textit{Kof}(s):=$ Kom(Mat($\textit{Foams}_{/\ell}(B_{\vert s \vert})$)), where $B_{\vert s \vert}$ is some finite set of points (of cardinality $\vert s \vert$) on the boundary of a based disk. 

Likewise, let $\textit{Kof}_{/h}(s):= $ Kom$_{/h}$(Mat($\textit{Foams}_{/\ell}(B_{\vert s \vert})$)). These are other examples of planar algebras.

\begin{proposition}
Both collections  $\textit{Kof}(s)$ and $\textit{Kof}_{/h}(s)$  have a natural structure of an oriented planar algebras.
\end{proposition}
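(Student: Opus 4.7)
The plan is to imitate Bar-Natan's argument from~\cite{BN1}, section 5, building the planar algebra structure in three layers: first on webs, then on matrices of foams (a ``canopoly''), and finally on complexes. At each layer one checks that the operations prescribed by an oriented planar arc diagram $D$ satisfy the associativity/compatibility required of a planar algebra, and at the last layer one must also check behavior under homotopy.

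First I would handle the underlying object and morphism planar algebras. Given a $d$-input oriented planar arc diagram $D$ with boundary specification $s$ and input specifications $s_1,\dots,s_d$, and webs $\Gamma_i$ with boundary matching $s_i$, define $\mathcal{P}_D(\Gamma_1,\dots,\Gamma_d)$ by placing each $\Gamma_i$ in the $i$-th input disk of $D$ and connecting boundary points along the oriented arcs of $D$; the result is a web with boundary matching $s$. For foams $S_i:\Gamma_i\to\Gamma_i'$ in $\textit{Foams}_{/\ell}$, define $\mathcal{P}_D(S_1,\dots,S_d)$ by taking the identity foam on $D$ (i.e.\ the thickening $D\times[0,1]$ of the arcs of $D$) and gluing $S_i\subset \mathcal{D}_i^2\times[0,1]$ into the $i$-th input cylinder. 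The outcome is a foam with boundary a matching pair of webs obtained from the object operation, and one verifies that this assignment is compatible with composition of foams, respects the local relations $\ell$, and is associative in the sense required by planar composition of arc diagrams. This makes $\mathrm{Obj}(\textit{Foams}_{/\ell})$ and $\mathrm{Mor}(\textit{Foams}_{/\ell})$ into an oriented canopoly, extending entrywise to $\mathrm{Mat}(\textit{Foams}_{/\ell})$ by the usual matrix/direct-sum extension.

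Next I would extend $\mathcal{P}_D$ to complexes. Given complexes $C_i^\bullet\in\textit{Kof}(s_i)$ with differentials $d_i$, define
\[
\mathcal{P}_D(C_1^\bullet,\dots,C_d^\bullet)^n \;=\; \bigoplus_{n_1+\cdots+n_d=n}\mathcal{P}_D\bigl(C_1^{n_1},\dots,C_d^{n_d}\bigr),
\]
with differential given on the summand indexed by $(n_1,\dots,n_d)$ by the signed sum $\sum_{i=1}^d(-1)^{n_1+\cdots+n_{i-1}}\mathcal{P}_D(\mathrm{id},\dots,d_i,\dots,\mathrm{id})$. This is the standard total-complex construction of a multi-tensor product; the sign convention guarantees $d^2=0$ because spatially separated foams time-commute (so the mixed terms $\mathcal{P}_D(\ldots,d_i,\ldots,d_j,\ldots)$ cancel in pairs) and because $d_i^2=0$ for each $i$. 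Functoriality of $\mathcal{P}_D$ on chain maps is straightforward: a tuple of chain maps $(f_1,\dots,f_d)$ assembles via $\mathcal{P}_D$ into a chain map, with associativity/compatibility inherited from the canopoly level. This defines the planar algebra structure on $\textit{Kof}(s)$.

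Finally, to descend to $\textit{Kof}_{/h}(s)$ I must check that planar operations respect homotopy. If $h_i$ is a homotopy witnessing $f_i\sim g_i$ as chain maps $C_i^\bullet\to C_i'^\bullet$, then the ``tensor product'' homotopies
\[
H_i \;=\; \mathcal{P}_D(g_1,\dots,g_{i-1},h_i,f_{i+1},\dots,f_d)
\]
(with appropriate Koszul signs) assemble into a homotopy between $\mathcal{P}_D(f_1,\dots,f_d)$ and $\mathcal{P}_D(g_1,\dots,g_d)$; this is the same formal calculation as in Bar-Natan's canopoly argument. Hence $\mathcal{P}_D$ descends to a well-defined operation on $\textit{Kof}_{/h}$. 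The main obstacle is purely bookkeeping: pinning down the sign conventions so that $d^2=0$, that $\mathcal{P}_D$ is a chain map in each argument separately, and that the homotopies $H_i$ indeed telescope to the required difference. Once the signs are fixed consistently, compatibility with the canopoly structure from the previous layer gives associativity of planar composition automatically, completing the proof.
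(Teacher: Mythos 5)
Your proof is correct and follows essentially the same route as the paper: both define $\mathcal{P}_D$ on complexes via the multiple tensor product (total complex) construction with the Koszul sign $(-1)^{r_1+\cdots+r_{i-1}}$ on the $i$-th differential, and both observe that homotopies on tensor factors induce homotopies on the assembled complex to pass to $\textit{Kof}_{/h}$. (Your explicit telescoping homotopy and canopoly preamble are somewhat more detailed than the paper's sketch, but the underlying argument is identical, and both defer to Bar-Natan's Theorem 2.)
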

\begin{proof} Even though the proof is exactly as the one in~\cite{BN1}, Theorem 2 (part 1 and 2), we sketch it here to have a self contained paper. The main idea of the proof is that of defining the operations $\mathcal{P}_D$ on \textit{Kof} in analogy with the standard way of taking the multiple tensor product of complexes. In other words, we think of these operations as multiple `formal tensor products' of a number of formal chain complexes.
Let $D$ be a $d$- input oriented planar arc diagram with $\vert s_i \vert$ arcs ending on the $i'$s input disk and $\vert s \vert$ arcs ending on the boundary of the output disk. Let $(X_i, d_i)$ be some complexes in $\textit{Kof}(s)$, and define a new complex $(X,d) = \mathcal{P}_D(X_1,X_2,...X_d)$ by
\[
X^r: = \bigoplus_{r = r_1 + r_2 + ...+ r_d} \mathcal{P}_D(X_1^{r_1},X_2^{r_2},...,X_d^{r_d})
\]
\[
d\vert_{\mathcal{P}_D(X_1^{r_1},X_2^{r_2},...,X_d^{r_d})} := \sum_{i=1}^d(-1)^{\sum_{j < i}r_i} \mathcal{P}_D(I_{X_1^{r_1}},...,d_i,...,I_{X_d^{r_d}}).
\]
Therefore, $\textit{Kof}(s)$ is also an example of planar algebra. 
Homotopies at the level of tensor factors induce homotopies at the level of tensor products; that is, operations $\mathcal{P}_D$ on $\textit{Kof}(s)$ send homotopy equivalent complexes to homotopy equivalent complexes. In conclusion, the collection $\textit{Kof}_{/h}(s)$ also has a natural structure of a planar algebra.
\end{proof}

Now we can say what our invariant $[\cdot]$ is. The following theorem together with the results  of this section complete the proof of theorem~\ref{thm:invariance}.
\begin{theorem}
$[\cdot]$ descends to an oriented planar algebra morphism $[\cdot] : \mathcal{T}(s) \rightarrow \textit{Kof}_{/h}(s)$, and all  planar algebra operations are of  degree $0$.
\end{theorem}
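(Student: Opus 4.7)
The plan is to establish two things: first, that $[\cdot]$ is well-defined as a map from $\mathcal{T}(s)$ rather than just from $\mathcal{T}^{0}(s)$; second, that it intertwines the planar algebra operations on the two sides. The first assertion is exactly Theorem~\ref{thm:invariance}: any two diagrams representing the same tangle differ by a finite sequence of Reidemeister moves inside a disk, and invariance under $R1$, $R2$, $R3$ provides chain homotopy equivalences of the associated complexes, so $[\cdot]$ descends to $\textit{Kof}_{/h}(s)$.

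For the morphism property, I would fix an oriented planar arc diagram $D$ with $d$ input disks and tangle diagrams $T_{1},\ldots,T_{d}$ slotted into them. Since $D$ itself has no crossings, the crossing set of the composite $T=\mathcal{P}_{D}(T_{1},\ldots,T_{d})$ is the disjoint union $I=I_{1}\sqcup\cdots\sqcup I_{d}$, and each subset $J\subset I$ decomposes uniquely as $J=J_{1}\sqcup\cdots\sqcup J_{d}$. The corresponding resolution $\Gamma_{J}$ of $T$ is exactly the web $\mathcal{P}_{D}(\Gamma_{J_{1}},\ldots,\Gamma_{J_{d}})$ obtained by inserting the resolutions of the $T_{i}$ into the input disks of $D$. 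Similarly, an edge of the $I$-cube corresponds to a saddle at a single crossing, which lives inside exactly one $T_{i}$ and is extended by an identity foam in the remaining $D$-region. This identifies the cube of resolutions of $T$ with the multi-input tensor product of the cubes of the $T_{i}$ under $\mathcal{P}_{D}$, and matches the differential formula displayed in the proof of the previous proposition.

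For the degree statement, I would invoke additivity of all the relevant quantities: the sign counts satisfy $n_{+}(T)=\sum_{i}n_{+}(T_{i})$ and $n_{-}(T)=\sum_{i}n_{-}(T_{i})$, the cohomological degree splits as $|J|=\sum_{i}|J_{i}|$, and the boundary cardinality $|B|$ appearing in the grading formula $\text{deg}(S)=-\chi(S)+\tfrac{1}{2}|B|+2d$ is additive under insertion along $D$. Combined with the fact that saddle foams have degree $1$ and the identity foam supported on the $D$-part of the composite contributes degree $0$ on webs of matching boundary, this shows both that $[T]$ has the same chain bigrading as $\mathcal{P}_{D}([T_{1}],\ldots,[T_{d}])$ and that $\mathcal{P}_{D}$ itself is a degree $0$ operation on $\textit{Kof}_{/h}(s)$.

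The main obstacle I expect is the bookkeeping of edge signs in the cube: Bar-Natan's sign assignment from Section 2.7 of~\cite{BN1} is only canonical up to chain isomorphism, so one has to choose the signs for $T$ and for $\mathcal{P}_{D}(T_{1},\ldots,T_{d})$ coherently---for example, by fixing total orders on each $I_{i}$ and taking the lexicographic product order on $I_{1}\sqcup\cdots\sqcup I_{d}$---so that the resulting complexes coincide on the nose with the Koszul signs appearing in the formula for the tensor product differential, rather than only agreeing up to an automorphism of the cube. Once this bookkeeping is set, the rest of the statement is formal, and the compatibility with $\mathcal{P}_{D}$ together with Theorem~\ref{thm:invariance} is exactly what is needed to promote $[\cdot]$ to a planar algebra morphism.
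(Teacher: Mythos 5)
Your proposal is correct and reaches the same conclusion, but it takes a more direct route than the paper. The paper proves the morphism property only for the special case where each input disk contains a \emph{single} crossing: delete a small disk neighborhood of each crossing of $T$ to obtain a planar arc diagram $D$, observe that tautologically $T = \mathcal{P}_D(X_1,\dots,X_d)$, and then check that the complex $[T]$ as built from the cube coincides with $\mathcal{P}_D([X_1],\dots,[X_d])$ by the very formula for the multi-tensor differential; the general case is then dispatched by citing associativity of the planar algebra operations. You instead attack the general case directly, decomposing $I = I_1 \sqcup \cdots \sqcup I_d$ and identifying the cube of $T$ with the $\mathcal{P}_D$-tensor product of the cubes of the $T_i$. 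Both are valid; the paper's reduction makes the identification of complexes essentially definitional (each $[X_i]$ is a two-term complex, and the Koszul-sign formula for $\mathcal{P}_D$ on $\textit{Kof}$ is designed exactly to reproduce the cube differential), while your version avoids invoking associativity at the cost of a bit more bookkeeping. You rightly flag that the sign assignment on the cube edges is only canonical up to isomorphism and must be chosen compatibly; the paper elides this by referring back to the sign conventions in section 2.7 of Bar-Natan and to the previous proposition, but your remark about fixing total orders and taking the lexicographic product order is the correct way to make this coherence precise. The degree argument via additivity of $n_\pm$, $|J|$, and $|B|$ also matches the paper's one-line justification.
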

\begin{proof} Again we closely follow the last part of the proof of Theorem 2 in~\cite{BN1}; we just have to keep track of our construction of [T]. 
We first proof the theorem for the particular case in which all inputs are single crossings. The general case will follow from this case and the associativity of the planar algebras we work with in this theorem.

Start with a singular tangle diagram $T$ with $d$ crossings, delete a disk neighborhood of each crossing of $T$, and let $D$ be the $d$-input oriented planar arc diagram obtained. Denote the crossings by $X_i$. Recall that 
\[[\,\raisebox{-5pt}{\includegraphics[height=0.2in]{poscrossing.pdf}}\,] = (\,\raisebox{-5pt}{\includegraphics[height=0.2in]{creation-ann.pdf}}\, \longrightarrow \, \underline{\raisebox{-5pt}{\includegraphics[height=0.2in]{orienres.pdf}}}\,) \qquad \text{and} \qquad [\,\raisebox{-5pt}{\includegraphics[height=0.2in]{negcrossing.pdf}}\,] = (\,\underline{\raisebox{-5pt}{\includegraphics[height=0.2in]{orienres.pdf}}} \longrightarrow \, \raisebox{-5pt}{\includegraphics[height=0.2in]{creation-ann.pdf}}\,)[-1].\]

Clearly $\mathcal{P}_D(X_1,X_2,..., X_d) = T$, therefore $[ \mathcal{P}_D(X_1,X_2,..., X_d)] = [T]$.
The definition of [$T$] and of operations $\mathcal{P}_D$ on \textit{Kof} allow us to conclude that
\[
[ \mathcal{P}_D(X_1,X_2,..., X_d)] = [T] =  \mathcal{P}_D([X_1],[X_2],...[X_D]).
\]

The last assertion follows from the degree shifts in the chains associated to a crossing and from the additivity of these under planar algebra operations.
\end{proof}

A \textit{canopoly}, concept introduced by Bar-Natan~\cite{BN1}, is both a planar algebra and a category, so that the set associated to a disk is a category whose set of morphisms is a planar algebra, and so that the planar algebra operations commute with various category composition. A morphisms of canopolies is a collection of functors which respect all the planar algebra operations of the coresponding canopolies. The categories $\textit{Foams}$ and $\textit{Foams}_{/\ell}$ are examples of (oriented) canopolies.

$\textit{Cob}^4(B)$ is the category whose objects are oriented tangle diagrams (in a disk $D$) with boundary points $B$, and whose morphisms are 2-dimensional cobordisms between such tangle diagrams. Besides the top and the bottom, these cobordisms have as boundary some vertical lines $B \times (-\epsilon, \epsilon) \times [0,1]$.

$\textit{Cob}^4 = \cup_{k, k = \vert B \vert} \textit{Cob}^4(B)$, where the union is over all non-negative integers $k$, is another example of a canopoly over the planar algebra of oriented tangle diagrams. Furthermore, collections $\textit{Kof} = \cup_{\vert s \vert} \textit{Kof}(s)$ and $\textit{Kof}_{/h} = \textit{Kof}/(\text{homotopy})$ can also be regarded as canopolies. 

 We note that the 2-dimensional cobordisms between oriented tangle diagrams are still oriented (instead of piecewise oriented), and that  singular tangles appear only at the level of chain complexes. We also remark that the above examples of canopolies are graded (we grade the cans and not the planar algebras of the tops and bottoms), with the grading induced from the grading of $\textit{Foams}$.

We want to define functors $\mathcal{L} : \textit{Cob}^4(B) \rightarrow \textit{Kof}(B)$ for any general $k$ element boundary $B$, and to put them together to obtain a canopoly morphisms $\mathcal{L} : \textit{Cob}^4 \rightarrow \textit{Kof}$ from the canopoly of movie presentations of four dimensional cobordisms between oriented tangle diagrams to the canopoly of formal complexes and morphisms between them. 
  

\section{\textbf{Functoriality}}\label{sec:functoriality}

The category $\textit{Cob}^4$ is generated by the cobordisms corresponding to the Reidemeister moves and by the Morse moves: birth or death of an oriented circle, and oriented saddles (regarded as sitting in $4D$).

\begin{theorem}\label{thm:functoriality}
There is a degree preserving canopoly morphisms $\mathcal{L}: \textit{Cob}^4_{/i} \rightarrow \textit{Kof}_{/h}$ from the canopoly of  up to isotopy four dimensional cobordisms between oriented tangle diagrams to the canopoly of formal complexes between them, up to homotopy. 
\end{theorem}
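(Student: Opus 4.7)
The plan is to define $\mathcal{L}$ on the generators of $\textit{Cob}^4$ and then verify that it descends to the quotient by isotopy, i.e.\ that it respects the movie moves. The category $\textit{Cob}^4$ is generated by the four-dimensional cobordisms associated to Reidemeister moves $R1a$, $R1b$, $R2a$, $R2b$, $R3$ and by the Morse generators (births and deaths of oriented circles, oriented saddles). On a Reidemeister generator I define $\mathcal{L}$ to be the chain homotopy equivalence already constructed in the proof of Theorem~\ref{thm:invariance} (and in the sequence of lemmas used there to reduce $R3$ to the Kauffman trick). On a saddle I define $\mathcal{L}$ to be the corresponding saddle foam, viewed as a morphism of one-term complexes concentrated in the right cohomological degree; on a birth/death I define $\mathcal{L}$ to be the cup/cap foam. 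Degree-preservation on each generator is immediate from the formula $\deg(S) = -\chi(S) + \tfrac12|B| + 2d$ of Section~\ref{relations l} together with the normalizations already present in the definition of $[T]$.

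Having defined $\mathcal{L}$ on the generators, the essential content of the theorem is that it is well-defined on isotopy classes of movie presentations. By the Carter--Rieger--Saito theorem it suffices to check the fifteen movie moves MM1--MM15, together with their relevant orientation and type variants. For each such move one has to verify that the two chain maps $[T_0] \to [T_1]$ obtained by composing the chain maps attached to the frames of the two movies are chain homotopic in $\textit{Kof}_{/h}$. Following Bar-Natan, I will break each movie-move verification into two steps: first show that the two maps agree up to multiplication by a unit in $\mathbb{Z}[i][a]$, and then show that the unit is in fact $1$.

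The first step is essentially formal and rests on a Hom-space dimension count analogous to the one carried out in~\cite{BN1}: in each movie move, both composites are degree-zero morphisms between complexes whose relevant Hom-group (in $\textit{Kof}_{/h}$) contains, up to homotopy, only scalar multiples of a single map. The second step, pinning the scalar to $1$, is the delicate one and is where I borrow the ``homotopically isolated object'' technique from~\cite{MW}. Concretely, for each movie I locate a summand of some intermediate complex on which all neighboring differentials act trivially, so that a chain map is determined on that summand by a single scalar; reading off this scalar from the explicit component maps constructed for $R1$, $R2$, $R3$ in Section~\ref{invariance} then identifies the unit. The key inputs here are the precise formulas recorded in Corollary~\ref{cor:needed for movie moves-1} and Corollary~\ref{cor:needed for movie moves-4} (and the analogous calculations displayed in the three-negative and three-positive crossing cases at the end of Section~\ref{invariance}), together with Lemma~\ref{handy relations}, which lets one evaluate the factors $\pm i$ produced by singular circles.

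The main obstacle I expect is the explicit bookkeeping for the movie moves involving $R3$, in particular MM6, MM8 and MM10, where the two composites run through many intermediate resolutions and the scalar on the isolated summand is an honest product of factors $\pm i$ coming from singular circles created and cancelled along the movie. Here the computation of the completely oriented resolution of both sides of the $R3$ move --- the one I carried out at the end of Section~\ref{invariance}, where I obtained $-i^2 \cdot \mathrm{Id} = \mathrm{Id}$ --- is exactly what makes the scalar come out to $1$ rather than to some other unit. Once all of MM1--MM15 are verified, compatibility with the planar algebra operations is automatic, since $\mathcal{L}$ is defined locally on each generator; this packages the collection of functors into a single canopoly morphism $\mathcal{L} : \textit{Cob}^4_{/i} \to \textit{Kof}_{/h}$.
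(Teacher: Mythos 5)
Your proposal is correct and follows essentially the same route as the paper: define $\mathcal{L}$ on Reidemeister and Morse generators via the homotopy equivalences of Theorem~\ref{thm:invariance} and the corresponding foams, then check MM1--MM15, using Bar-Natan's Hom-space argument (pairings are $T$-simple, plus the crossing-addition lemma) to reduce each circular clip to a scalar $i^k$, and the homotopically isolated object technique from~\cite{MW} together with the explicit $R3$ resolution computations to pin that scalar to $1$. The only small organizational difference is that the paper applies the two-step ``up to a unit, then pin the unit'' strategy specifically to the Type~II (reversible circular) clips MM6--MM10, handling Type~I clips directly from invariance and Type~III (non-reversible) clips MM11--MM15 by a direct by-hand comparison of the two composite foams, whereas you propose the two-step strategy uniformly; both work, but the paper's direct check for Type~III avoids needing a Hom-space dimension count between complexes of two different tangles.
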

\begin{proof} We define a functor $\mathcal{L}: \textit{Cob}^4 \rightarrow \textit{Kof}$ as follows:
\begin{enumerate}
\item On objects, we define $\mathcal{L}$ as the formal chain complex associated to the given tangle diagram.
\item On the generating morphisms of the source category $\textit{Cob}^4$ we define $\mathcal{L}$ as follows: 
\begin{itemize}
\item To Reidemeister moves we associate the chain morphisms inducing the homotopy equivalences between the complexes associated to the initial and final frames of the moves, as constructed in the proof of theorem~\ref{thm:invariance}.
\item Clearly, the Morse moves induce mophisms between the one step corresponding formal complexes, interpreted in a skein-theoretic sense, where each symbol represents a small neighborhood within a larger context.
\end{itemize}
\end{enumerate} 

We remark that $\mathcal{L}$ is degree preserving. 

$\mathcal{L}$ descends to a functor (denoted by the same symbol) $\mathcal{L}: \textit{Cob}^4_{/i} \rightarrow \textit{Kof}_{/h}$.
For this, we need to show that $\mathcal{L}$ respects the relations in the kernel of the map $\textit{Cob}^4 \rightarrow \textit{Cob}^4_{/i}$. These relations are the \textit{movie moves} of Carter and Saito~\cite{CS}, obtained from Roseman's moves~\cite{Ros} (Roseman have found a complete set of moves for surfaces in four-dimensional space, generalizing the Reidemeister moves). We can think of any 4-dimensional cobordism as a movie whose individual frames are tangle diagrams, with at most finitely many singular exceptions. Carter and Saito have proved that two movies represent isotopic tangle cobordisms if they are related by a finite sequence of movie moves. Thus, to show that our theory is functorial under tangle cobordisms, we need to verify that the morphisms of complexes corresponding to each clip in those figures are homotopic to identity morphisms (for the first two types of movie moves) or to each other (in the third type of move moves). 

The movie moves are as follows:

\subsection*{ \textbf{Type I}: Reidemeister and inverses} 

\[\raisebox{-15pt}{\includegraphics[height=1.5in]{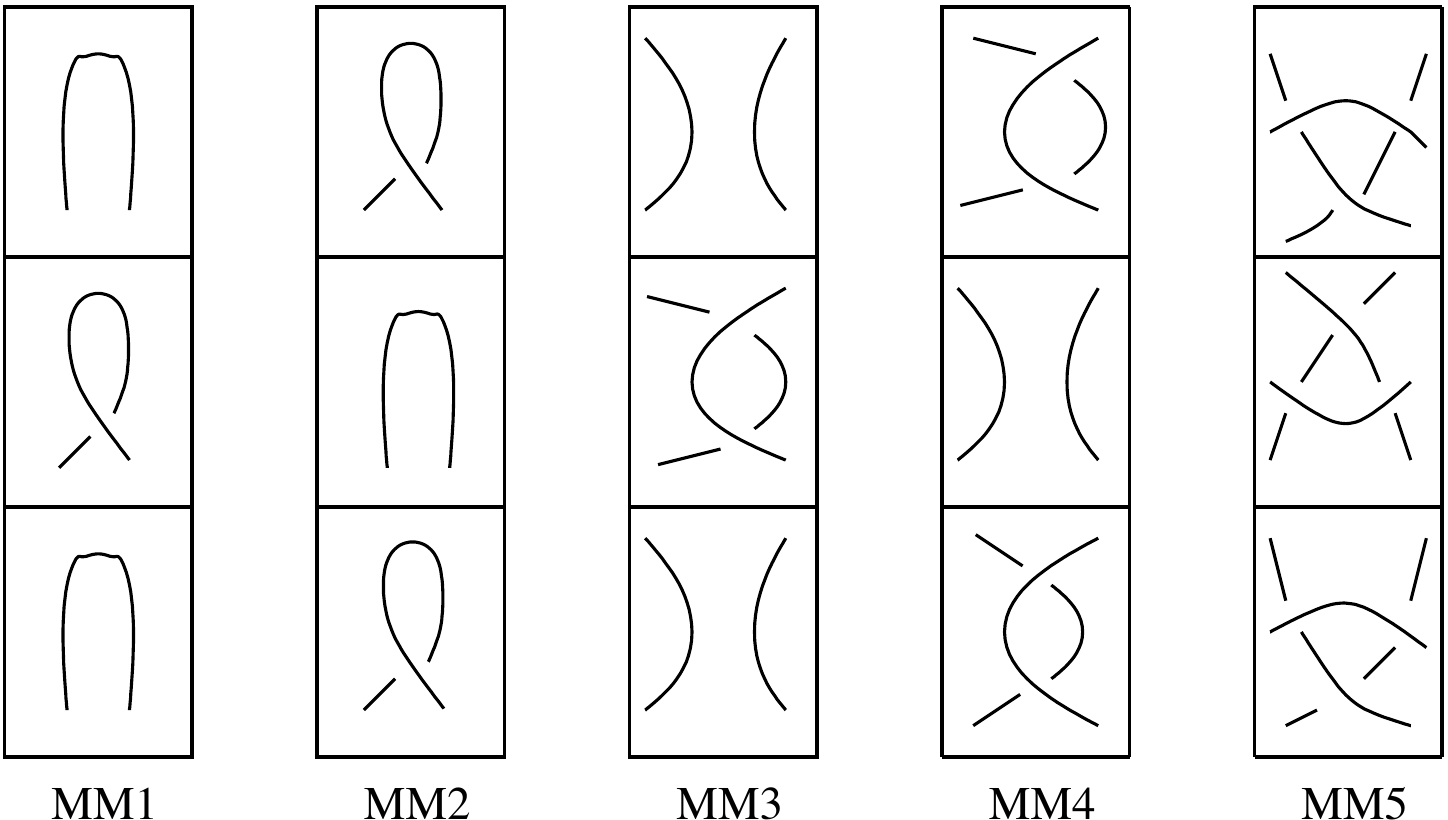}}\]

These are equivalent to identity clips. The morphisms obtained by applying $\mathcal{L}$ are homotopic to the identity (it follows from theorem~\ref{thm:invariance}), since the induced maps between two successive frames are a homotopy equivalence and its inverse.

\subsection*{\textbf{Type II}: Reversible circular clips}

\[\raisebox{-15pt}{\includegraphics[height=1.4in]{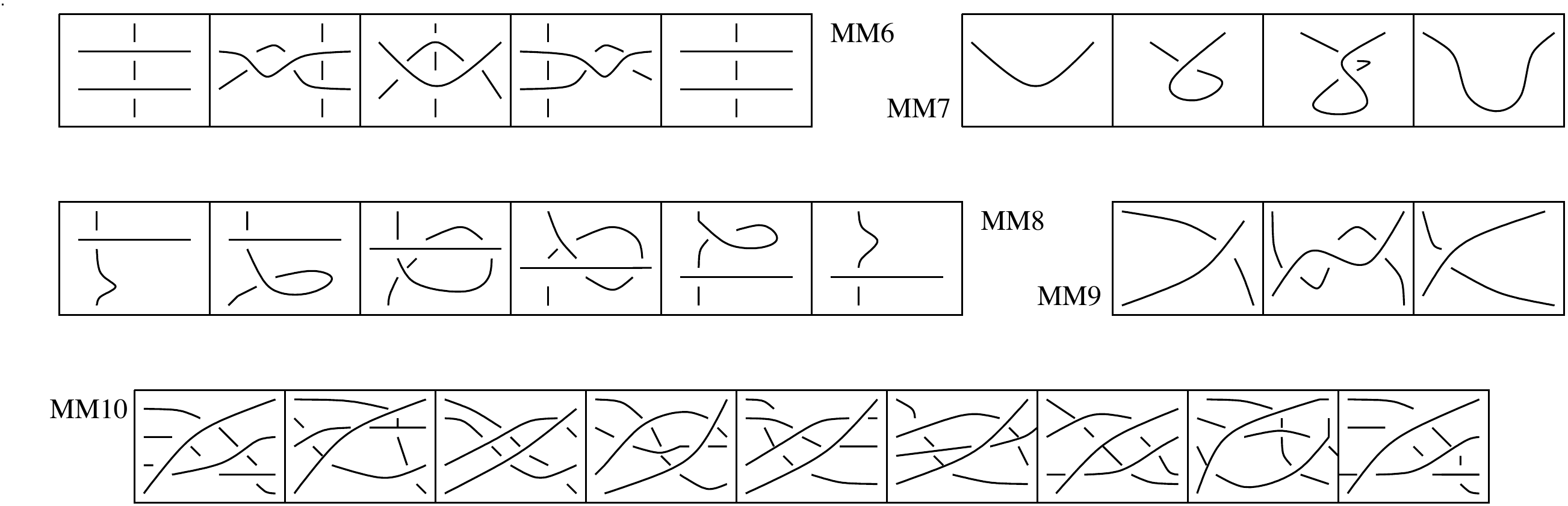}}\]

These circular clips have the same initial and final frames and are equivalent to identity; our goal is to show that at the level of chain complexes the associated morphisms are homotopy equivalent to identity morphisms. To do this we first show, by relying heavily on Bar-Natan's argument, that the space of degree $0$ automorphisms of the complexes corresponding to the tangles appearing in this type of movie moves is $1$-dimensional (an automorphism here is a homotopy equivalence of [$T$] with itself); in other words, for each particular clip the associated chain map is homotopic to an $i^k$ multiple of the identity, where $k \in \{0,1,2,3\}$. Then we show that, in fact, the associated morphism is homotopy equivalent to the identity morphism. For this, we choose a direct summand in the chain complex associated to the first frame of the clip, with the property that has no homotopies in or out, and we observe its image under the clip. This is the method used by Morrison and Walker in~\cite{MW}.

\begin{definition}
A tangle diagram $T$ is called [$T$]-\textit{simple} if every degree $0$ automorphism of [$T$] is homotopic to a $i^k I$, where $k \in \{0,1,2,3\}$.  
\end{definition}

Our goal is to show that tangle diagrams $T$ beginning and ending the clips of the second type are [$T$]-simple. Let's consider a tangle that has no crossings and no closed components (a planar pairing of the boundary points of the tangle); such a tangle is called \textit{pairing} in~\cite{BN1}.
Here is an example for our theory: \raisebox{-8pt}{\includegraphics[height=0.3in]{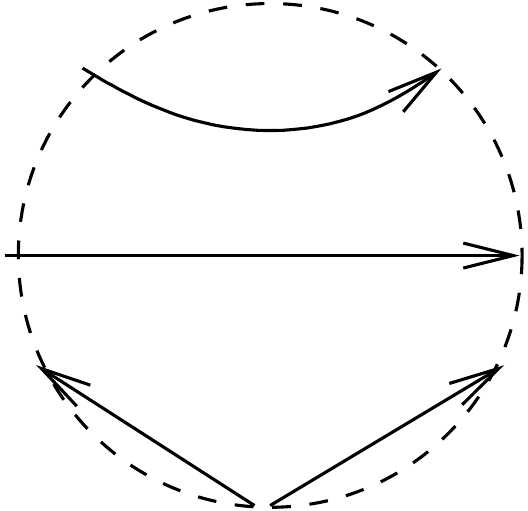}}.

\begin{lemma}
Pairings are $T$-simple.
\end{lemma}
\begin{proof}
Let us consider any pairing $T$. There are no crossings in $T$, thus [$T$] is the one step complex containing $T$ at height $0$ and trivial differentials. A degree $0$ automorphisms of [$T$] is a formal $\mathbb{Z}[a,i]$-linear combination of degree $0$ abstract cobordisms  (foams) from $T$ to itself. Such cobordism $F$ must have Euler characteristic equal to half the number of boundary points of $T$ (which is the same as the number of components of $T$) plus twice the number of dots that $F$ contains. This tells us that if $F$ does not contain closed connected components it cannot have dots, as well (as every dot increases the degree by two), and must be a union of  `curtains':  \raisebox{-8pt}{\includegraphics[height=0.3in]{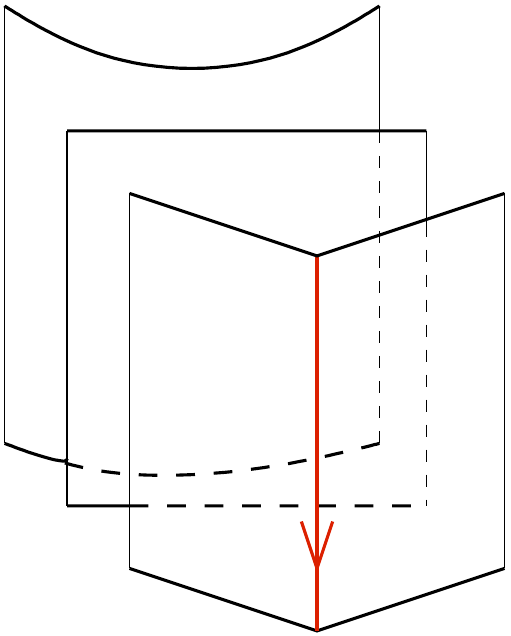}}. If $F$ does contain closed connected components with or without dots, they can be reduced using the local relations $\ell$ and replaced by $0$, $\pm 1$ or $\pm i$; notice that the variable $a$ cannot appear now, as it has degree $4$.

Therefore $F$ is a $\pm 1$ or $\pm i$ multiple of the identity, and this forces any degree zero automorphism of [$T$] to be a multiple of the identity. But being invertible it must be $i^k I$, where $k \in \{0,1,2,3\}$, and this completes the proof.
\end{proof}

The following two lemmas are proved similarly as in~\cite{BN1}, but use our homotopies constructed in the proof of the invariance theorem; thus we just state them.

\begin{lemma}
If a tangle diagram $T$ is [$T$]-simple, then so is any isotopic tangle diagram $T'$.
\end{lemma}
Let $T$ be a tangle diagram and let $TX$ be a tangle obtained from $T$ by adding an extra crossing $X$ along the boundary of $T$, so that two adjacent ends of $X$ are connected to $T$ and two remain free.
\begin{lemma}\label{lemma:for type 2 movie moves}
T is [$T$]-simple if and only if [$TX$] is [$T$]-simple.
\end{lemma}

For each $6 \leq  i \leq 10$, $\mathcal{L}(MM_i)$ is an automorphism of [$T$], where $T$ is the tangle beginning and ending the clip $MM_i$. Using the previous lemma, we can get rid of the crossings of $T$ one at a time, until we are left with a tangle with no crossings which is a pairing, and thus is [$T$]-simple. Hence $T$ is also [$T$]-simple and $\mathcal{L}(MM_i)$ is homotopy equivalent to $i^k I$, where $k \in \{0,1,2,3\}$.

Next step is to show that actually $\mathcal{L}(MM_i)$ is homotopy equivalent to identity. For this, we will need a definition and two useful lemmas from~\cite{MW}. The proof of the first lemma can be applied for our case too, with a small change that arrives because the internal shifts in the complex associated to a tangle diagram are going up in~\cite{MW}, as opposed to going down in our construction; then also the definition of the degree of a morphism is different. The proof of the second lemma is exactly the same, but since it's short, we prefer to give it here, for the convenience of the reader.

\begin{definition}
Given a complex $C^*$ in an additive category, and a direct summand $A$ of some object $C^i$, we say that $A$ is homotopically isolated if for any homotopy $h: C^* \rightarrow C^{*-1}$, the restriction of $dh+hd$ to $A$ is zero.
\end{definition}

\begin{lemma}
Let $C^* \in \textit{Kof}$ be a complex associated to some tangle diagram $T$, and $A$ a resolution of $T$, that is, a direct summand of some $i$-th height of the complex. If $A$ does not contain closed webs and is not connected by differentials to resolutions containing closed webs, then $A$ is homotopically isolated.
\end{lemma}
\begin{proof}
From our definition of the complex [$T$] we know that there are internal shifts that are going down by one, for each pair of resolutions $B$ and $C$ of $T$ connected by a differential $d: B\{r\} \rightarrow C\{r-1\}$. Thus a homotopy $h: C\{r-1\} \rightarrow B\{r\}$ must have degree $-1$, but there are no negative degree cobordisms between web diagrams that have no closed webs (by our definition of degree of a foam). Hence, the relation in the previous definition is trivially satisfied.
\end{proof}

We remark that in each movie move from $MM6$ through $MM8$, every resolution in the complex associated to the initial frame is homotopically isolated, as there are no closed webs in the associated complex.
\begin{lemma}
Let [$T_1$] and [$T_2$] be two complexes and $f$ and $g$ chain maps between them, so that $ f \sim c g$ for some constant $c$. If $f$ and $g$ agree on some homotopically isolated object then $f \sim g$.
\end{lemma}
\begin{proof}
Suppose that $f$ and $g$ agree on some homotopically isolated resolution $A$. By definition, $f-cg = dh+hd = 0$ on $A$; thus $g= f = cg$. Assuming that $g$ is not zero (if $g$ is zero $f$ is zero, as well, and then $f$ and $g$ are trivially homotopic) then $c =1$, so $f \sim g$.
\end{proof}
With the help of the previous lemma, we are ready now to show that $\mathcal{L}(MM_i) \sim I$, for all type II movie moves. 
\subsection*{\textbf{MM6}}

Let us have a look at the oriented representative of the movie move given below. We consider the height zero resolution of the complex associated to the first frame of the clip, and observe its image under the corresponding Reidemeister moves.
$$\includegraphics[width=4.8in]{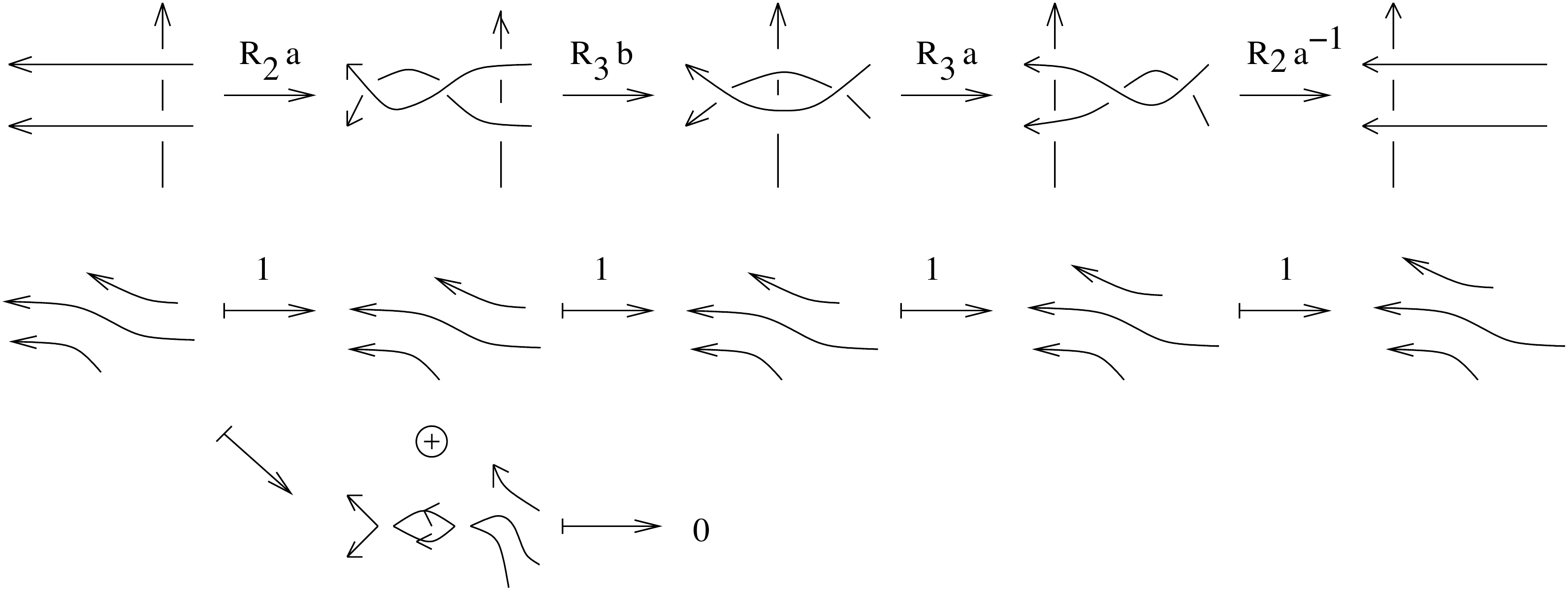}$$
Its image under the second Reidemeister move is the direct sum appearing in the second term of the lower diagram. By the mapping cone construction (see R3 with three and two negative crossings) we know that the second map in the lower row is zero, and that the second and third maps in the top row are the identity. 

Keeping the orientation of the horizontal strings as in the previous case but tucking the top string under the lower one,  we first encounter a third Reidemeister move with two negative crossings and then one with all three crossings being negative. The chain maps are the same as in the previous case. Moreover, if the vertical string is over those that are horizontal, we will have R3 moves with two and three positive crossings, in the order that depends on how we tuck the two horizontal strings. Using the results from invariance under the third and second Reidemeister moves, we obtain again that the map between the height zero resolutions associated to the initial and final frame of the particular oriented representative of the clip is the identity.    

Now let's have a look at the following oriented representative of MM6:

$$\raisebox{-8pt}{\includegraphics[width=4.8in]{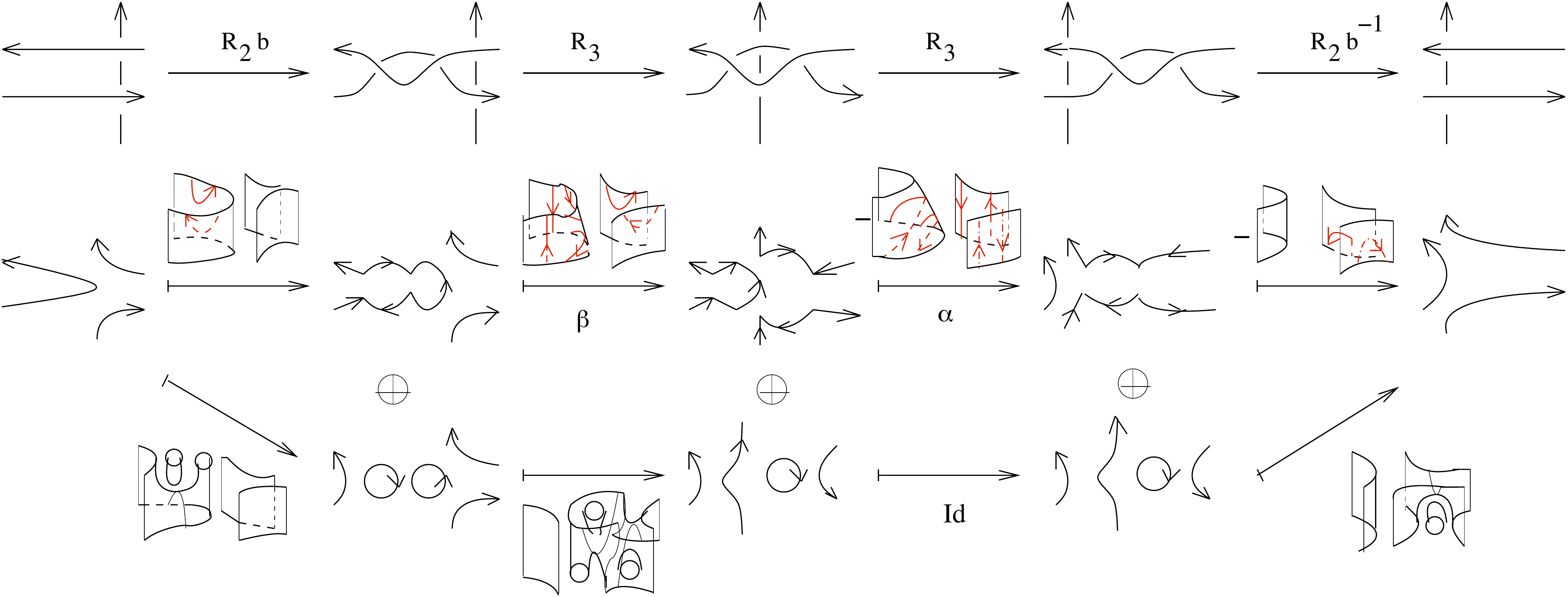}}$$
We started again at the height zero object of the complex associated to the first (which is also the last) tangle of the clip. Composing the foams above, we obtain in the first row $\raisebox{-8pt}{\includegraphics[width=.5in]{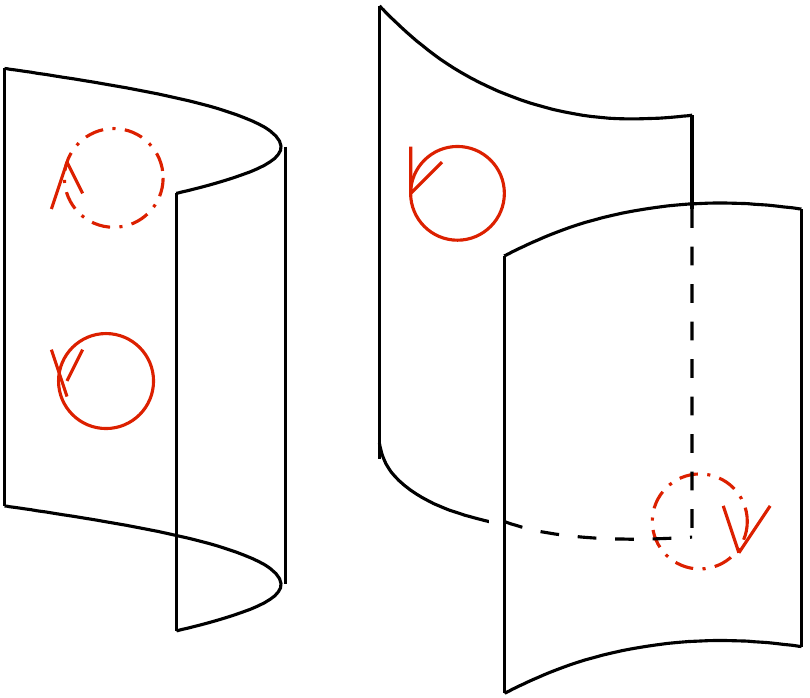}} = (-i)^4 Id = Id$ (by relations from lemma ~\ref{handy relations}) and in the second row $\raisebox{-8pt}{\includegraphics[width=.5in]{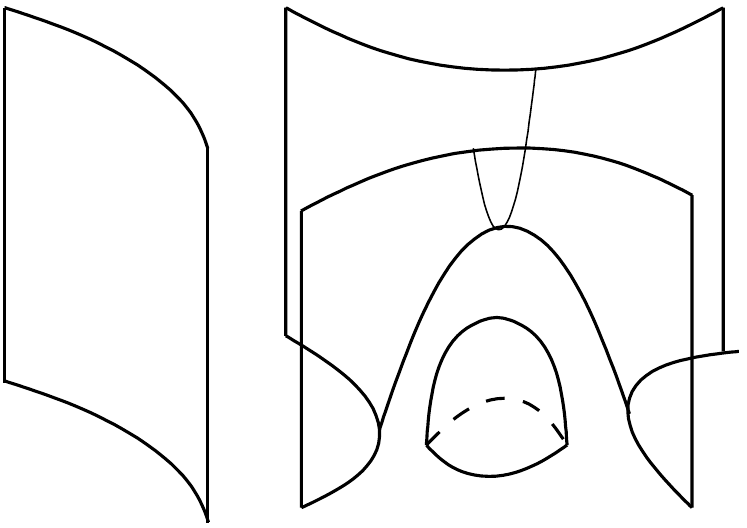}} \circ \raisebox{-8pt}{\includegraphics[width=.5in]{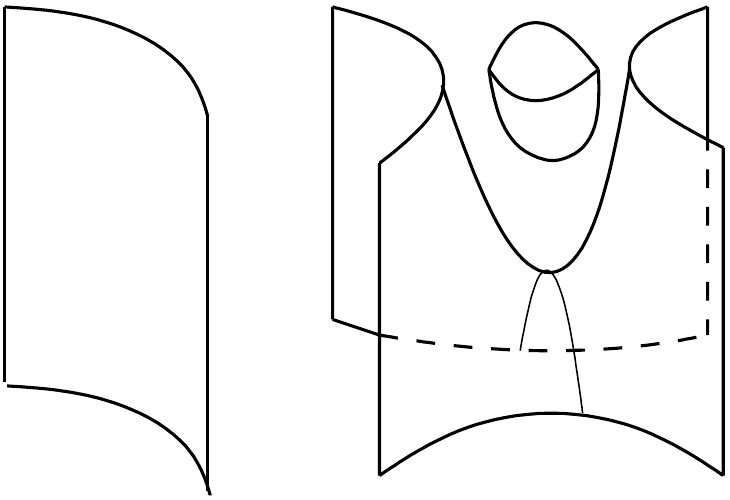}} = 0$ (by the (S) relations). Therefore the induced chain map is the identity.

The same chain maps we obtain if we keep the orientation of the horizontal strings the same as in the previous case, but we tuck them differently and/or if the vertical string is over the horizontal two.

\subsection*{\textbf{MM7}}

We consider first the case with a negative crossing in the second frame.
$$ \raisebox{-8pt}{\includegraphics[width=4in]{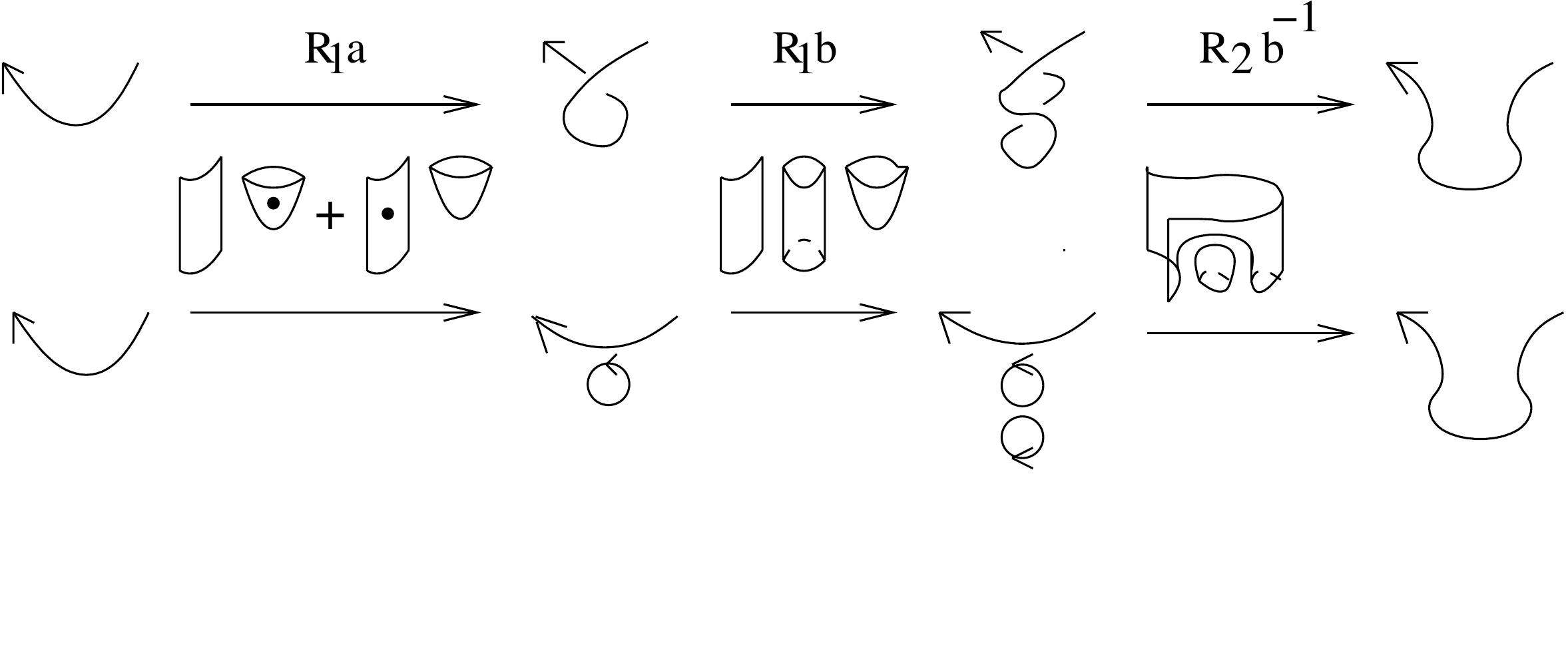}}$$
Composing the morphisms and applying an isotopy and the (S) relations, we see that the corresponding foam is a curtain, thus the morphism is the identity. 
\noindent In the case of a positive crossing, the composition of morphisms is again the identity, as we can see from the diagram below.
$$\raisebox{-8pt}{\includegraphics[width=4in]{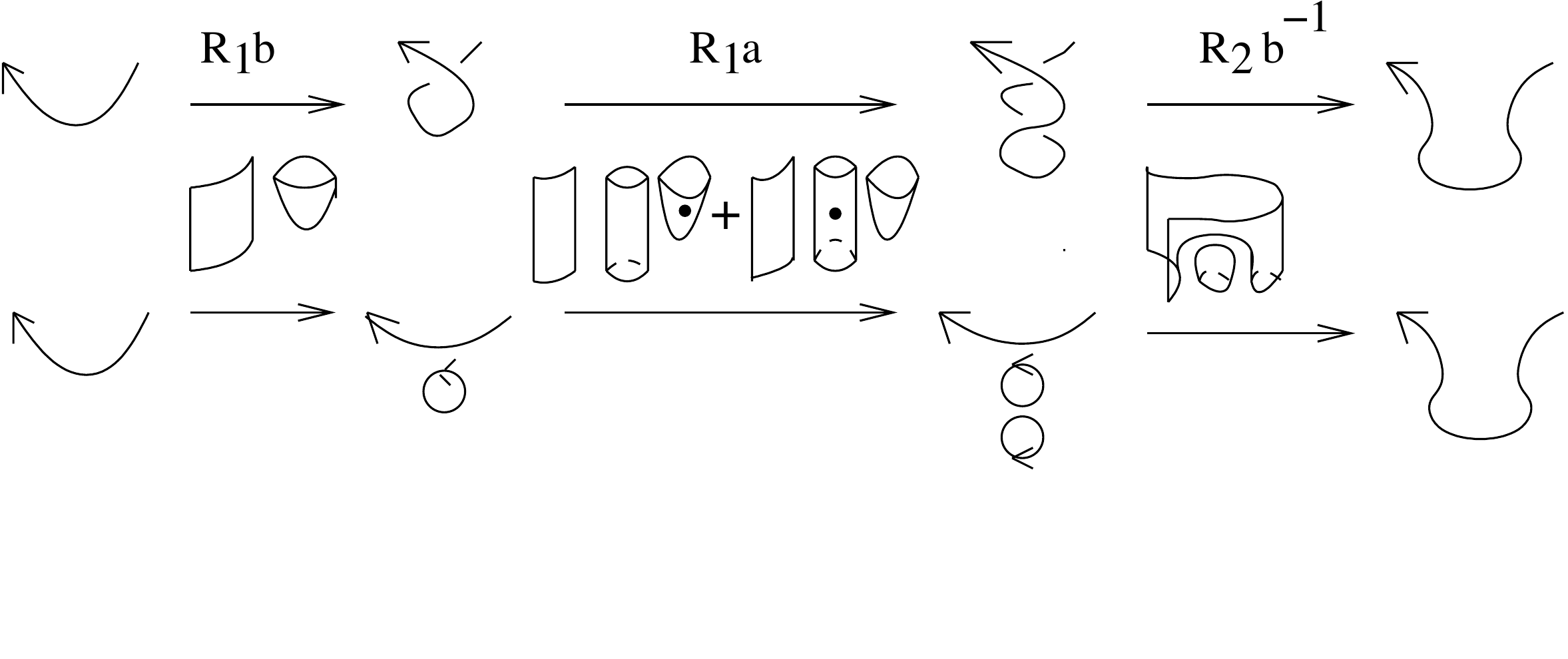}}$$
Reversing the orientation of the strings does not change the result.

\subsection*{\textbf{MM8}}

We look first at the case  when the first Reidemeister move introduces a negative crossing. Then we will encounter a third Reidemeister move with $3$ negative crossings:
$$\includegraphics[height=1.3in]{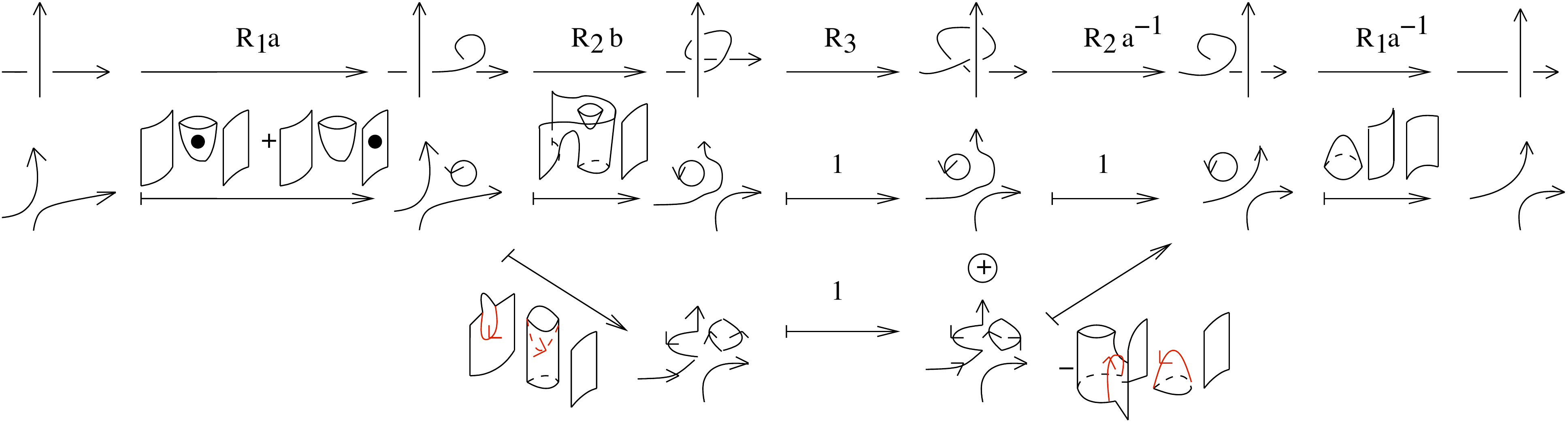}$$
Composing and applying isotopies, we get in the first row:
\[ \left(\,\raisebox{-8pt}{\includegraphics[height=0.4in]{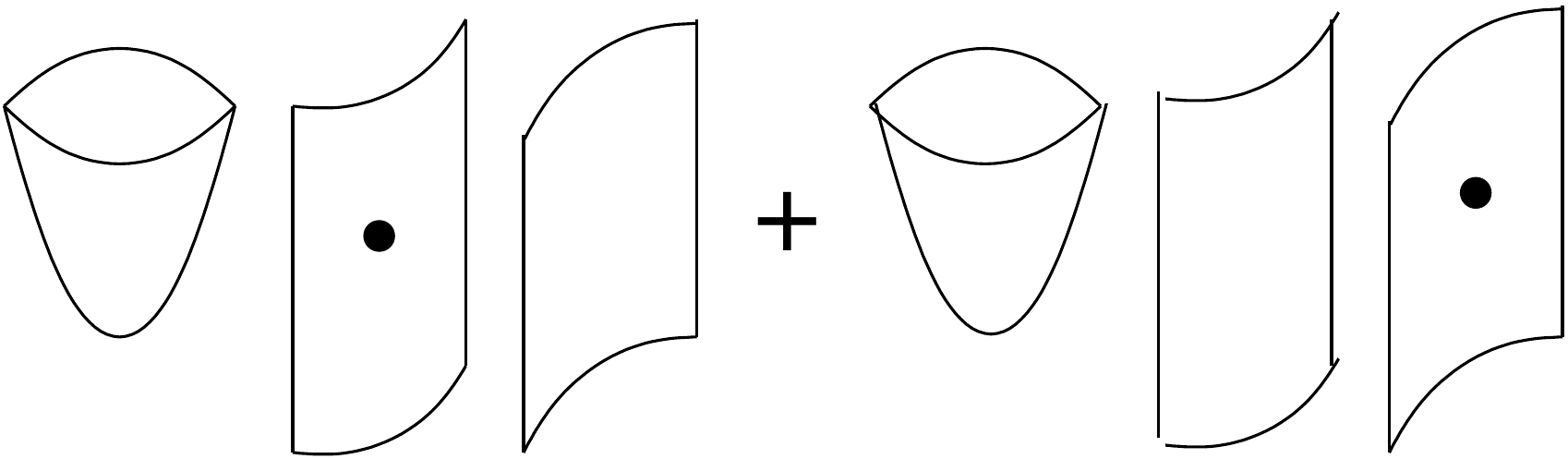}}\,\right) \circ \raisebox{-8pt}{\includegraphics[height=0.4in]{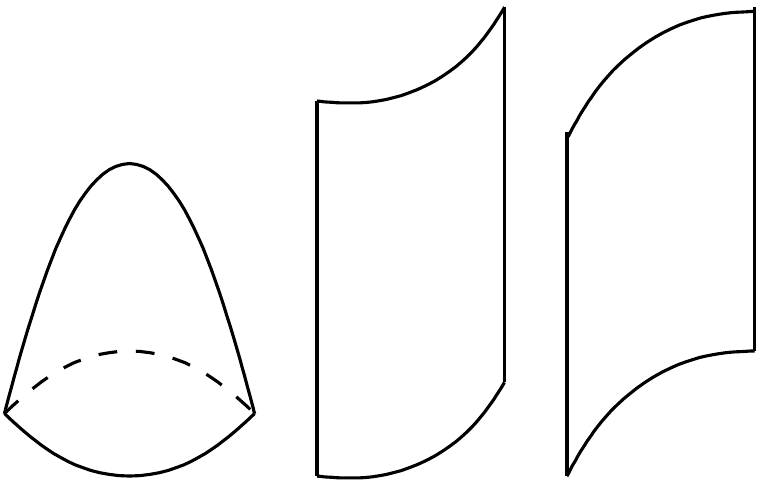}} = 0,\]
which follows from the first (S) relation,
 and in the second row:
\[\left (\,\raisebox{-8pt}{\includegraphics[height=0.4in]{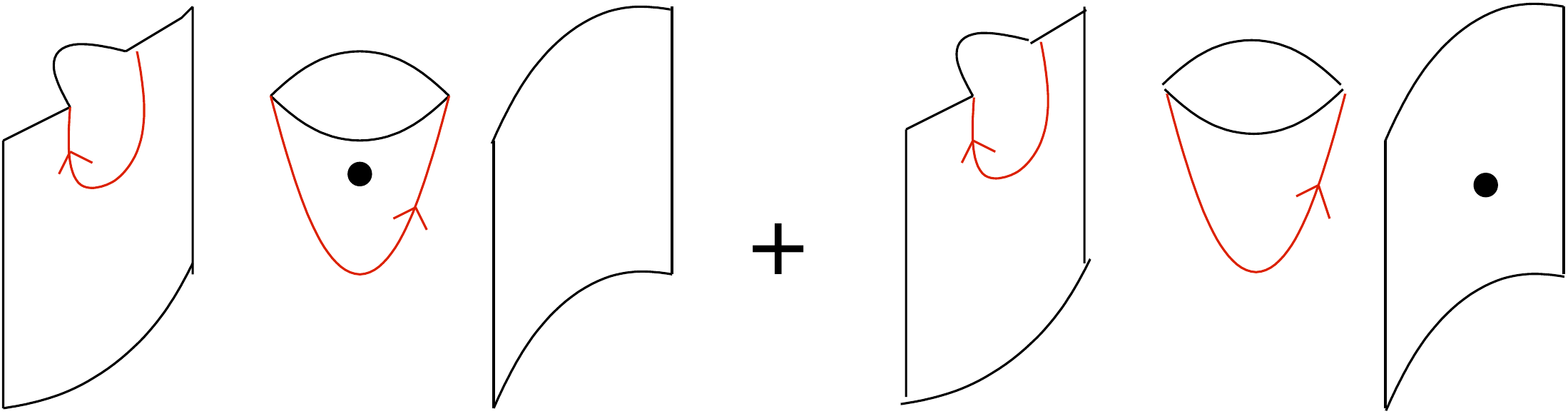}}\,\right) \circ \left(\,\raisebox{-8pt}{\includegraphics[height=0.4in]{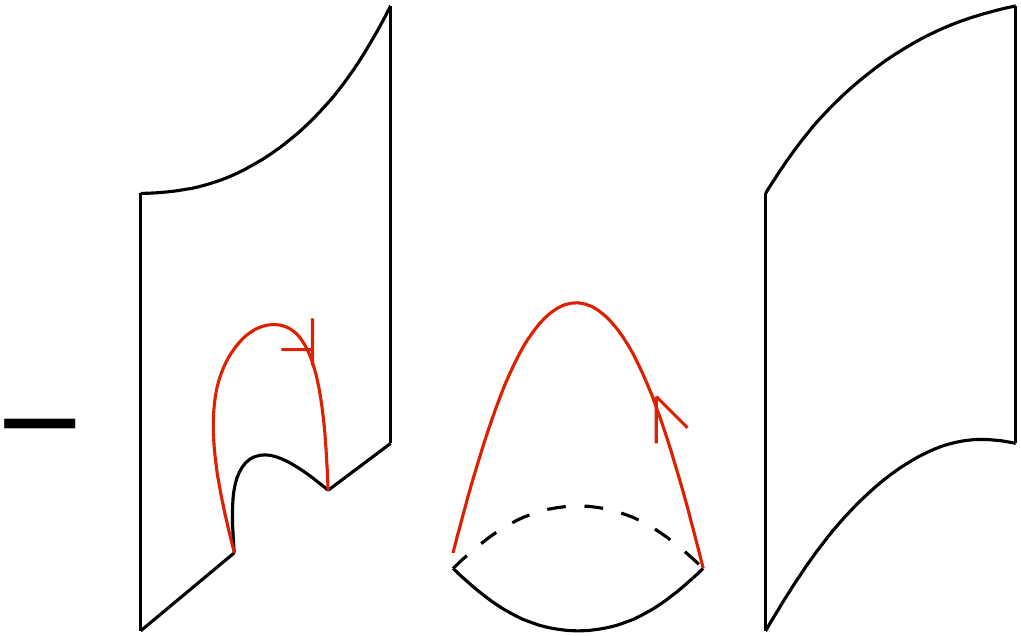}}\,\right) = \raisebox{-13pt}{\includegraphics[height=0.4in]{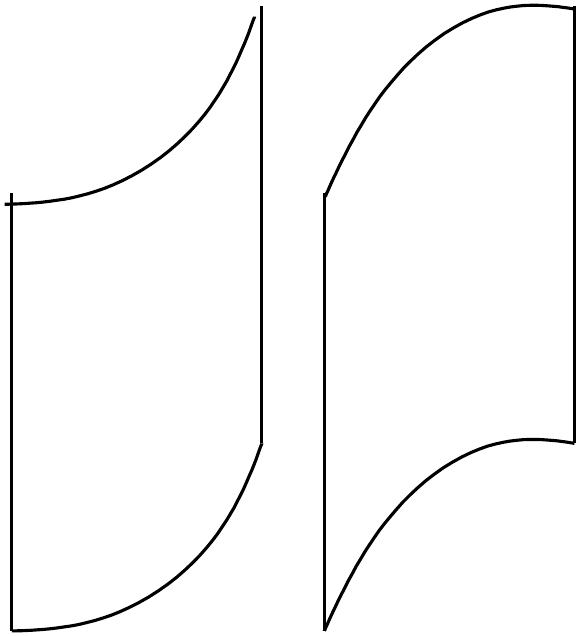}}\,,\]
which is obtained from the (S) and (UFO) relations and lemma ~\ref{handy relations}.

Now let's consider the case when the Reidemeister move R1 introduces a positive crossing:
$$\includegraphics[height=1.35in]{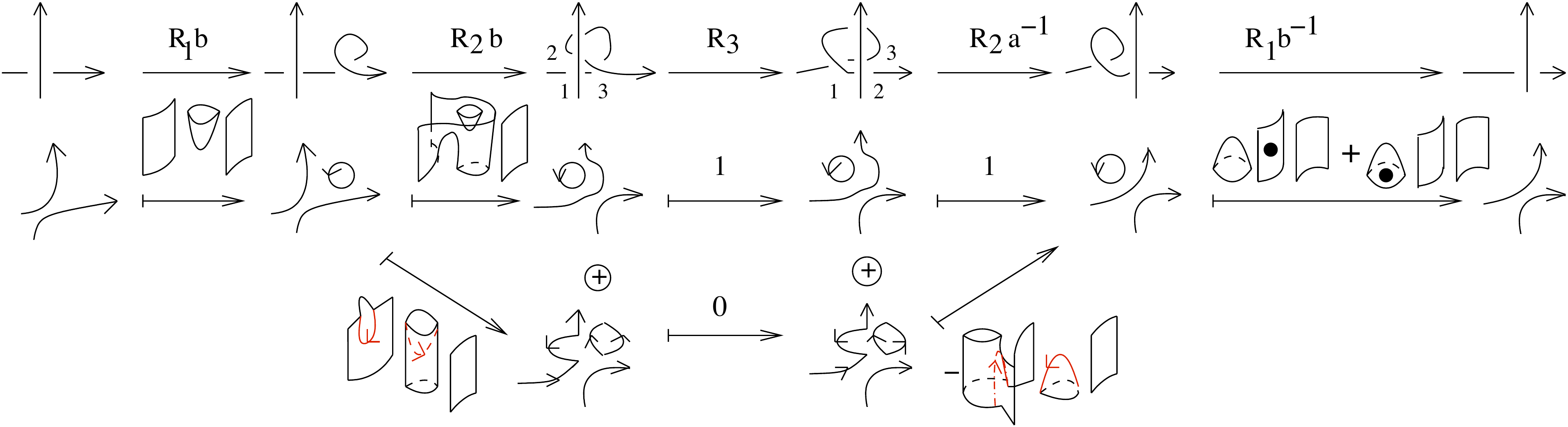}$$

Here we have a third Reidemeister move with two negative crossings, and from the proof of invariance under this move we know that the map between the completely oriented resolutions corresponding to the complexes associated to the two sides of R3 move is the identity, and the map between the objects in which both crossings labeled $2$ are given the piecewise oriented resolution while the other crossings the oriented resolution is the zero map. Therefore, the map in the lower row above is zero map, 
while in the upper row we obtain:
\[\left (\,\raisebox{-8pt}{\includegraphics[height=0.4in]{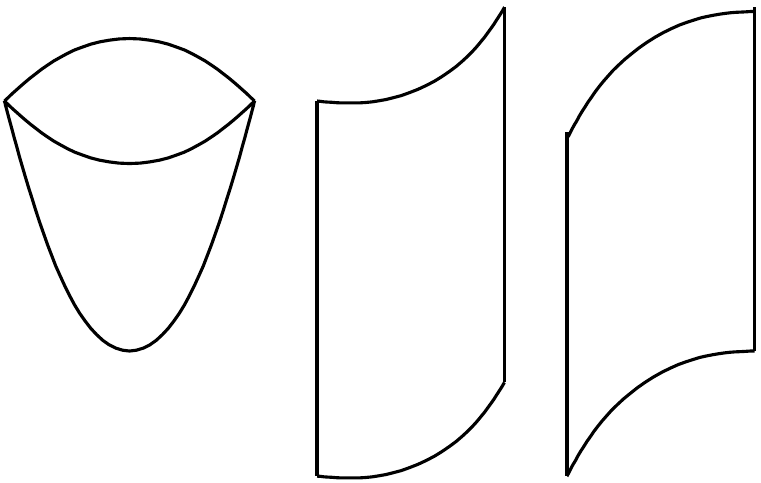}}\,\right) \circ \left(\,\raisebox{-8pt}{\includegraphics[height=0.4in]{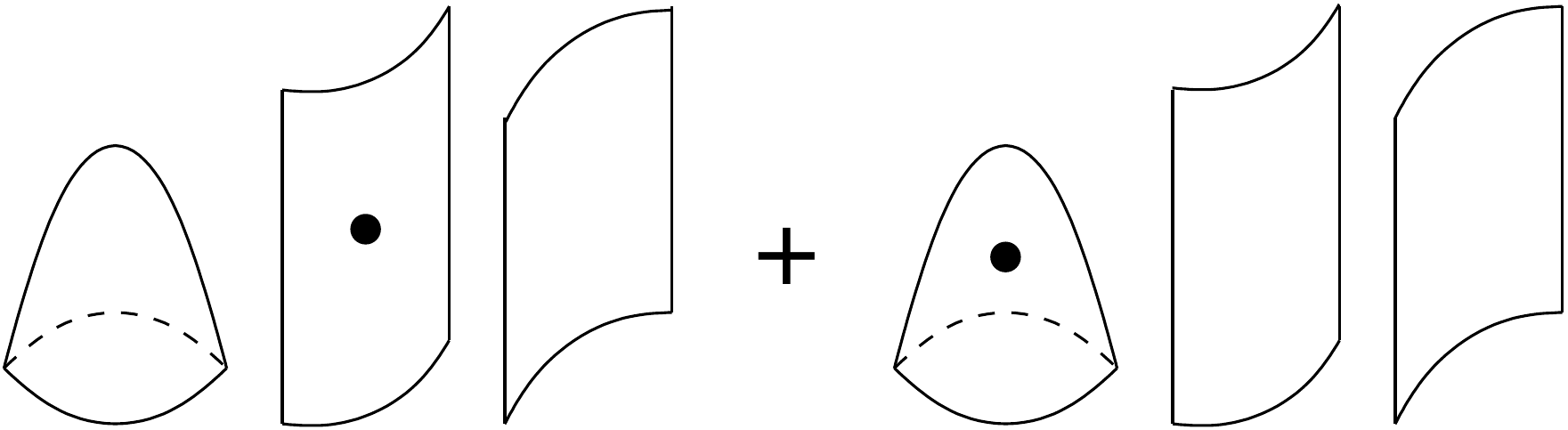}}\,\right) = \raisebox{-13pt}{\includegraphics[height=0.4in]{MM8-maps5.pdf}}\,.\]
 We see that, in both considered cases, the induced chain map is homotopic to identity.

\subsection*{ \textbf{MM9}}

\[\raisebox{-15pt}{\includegraphics[height=0.4in]{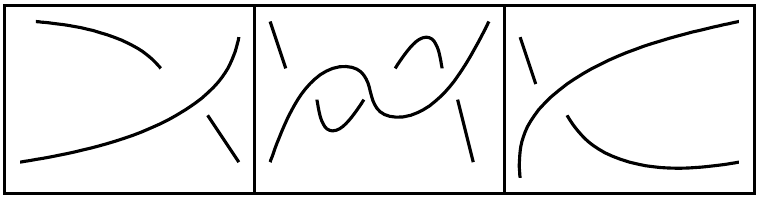}}\]

Consider first the case of a possitive crossing:
\[\raisebox{-15pt}{\includegraphics[height=1.5in]{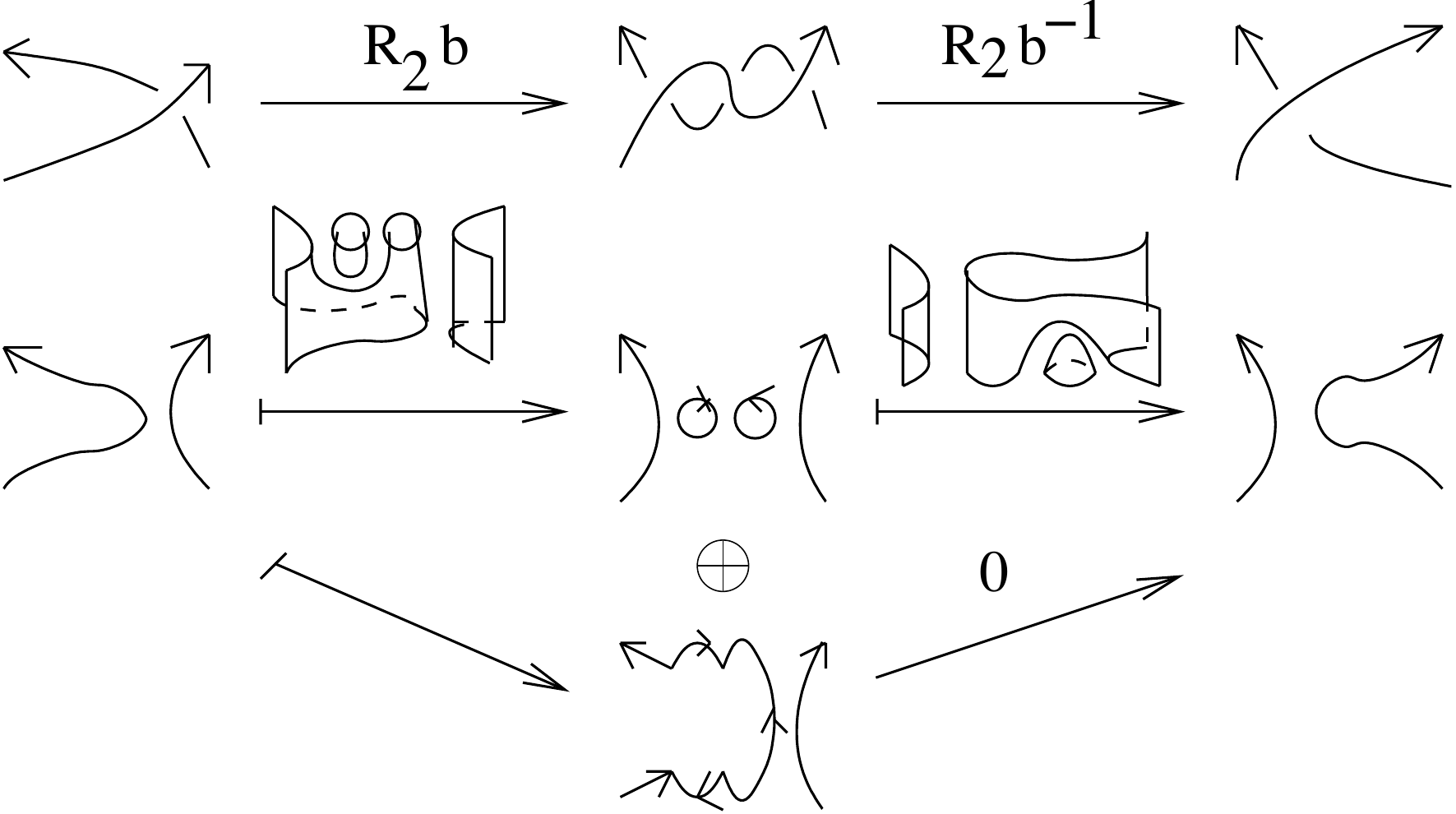}}\]
Composing and applying  isotopies, we obtain the identity map.

For a negative crossing, we arrive at:
\[\raisebox{-15pt}{\includegraphics[height=1.5in]{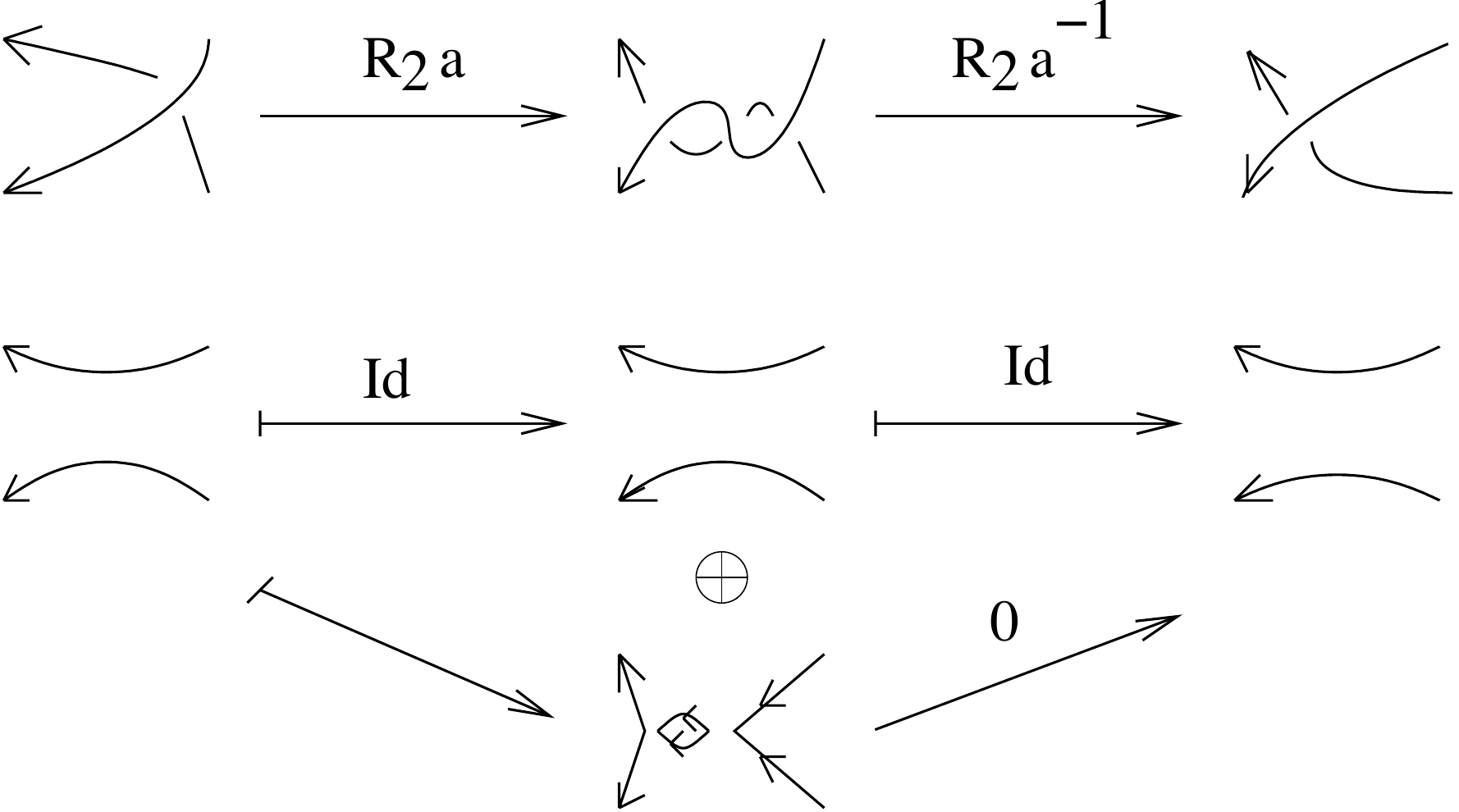}}\]
The composition is again the identity. There are two other variants of this movie move, but the maps are the same as in the cases we just considered.

\subsection*{\textbf{MM10}}

There are many oriented representatives of this movie move; as at each stage a third Reidemeister move is involved, it only depends on the homotopy equivalence constructed within the proof of this move. We pick a homotopically isolated representative of the chain complex associated to the first (and last) frame of a particular oriented representative of the movie (we note that for each representative we pick a different homotopically isolated object), and observe its image under the movie. 

For example, if we orient all strings from right to left, each crossing is negative; then we pick the complete oriented resolution (that is, each crossing was given the oriented resolution) and  at each step the map from this resolution to the similar one in the next complex is the identity (see below); moreover, at each stage, there are no other maps from other resolutions going into this oriented one. Therefore, this representative of MM10 movie move induces the identity morphism at the chain level. 
\[\includegraphics[height=1.5in]{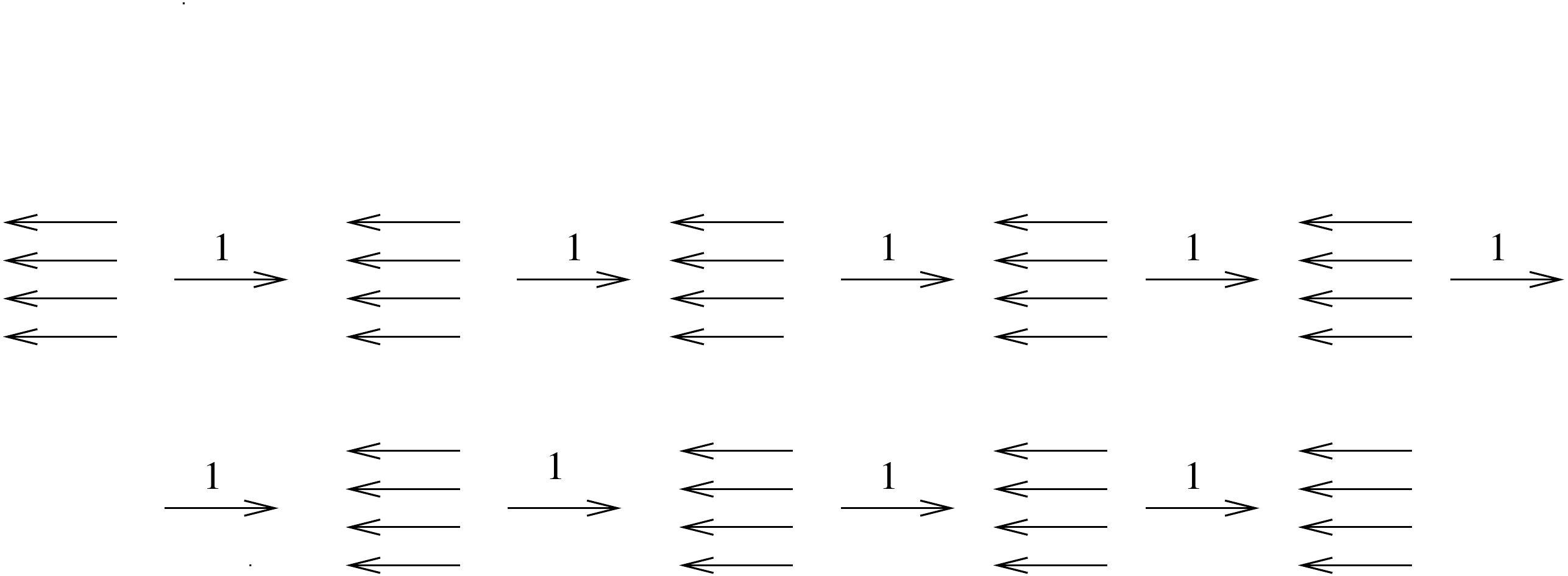}\]

Similar result we have if the orientation of strings is reversed, that is, from left to right; every crossing is again negative, thus by considering the complete oriented resolution, the induced map is the identity, as before. Let's look now, for example, at the following oriented representative. Notice that there are three positive and three negative crossings. At each frame, we drew a dotted circle to put in evidence the crossings where the third Reidemeister move takes place between that particular tangle diagram and the next one.

\[\raisebox{-15pt}{\includegraphics[height=1in]{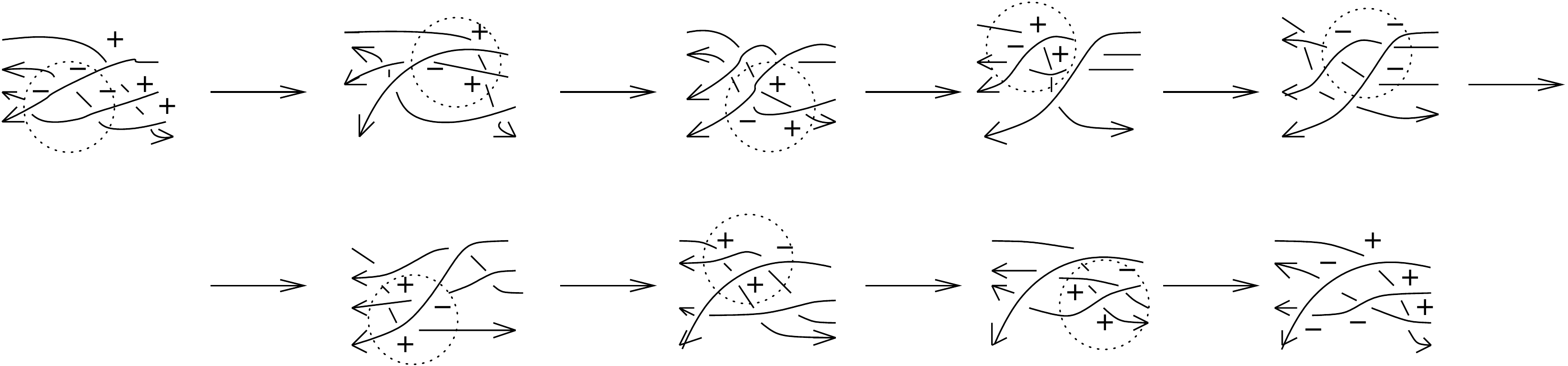}}\]

We consider, for this one, the homotopically isolated object in which each positive crossing is given the piecewise oriented resolution and each negative crossing the oriented resolution. We remark that the R3 moves involved are 3 with all crossings being negative and 6 with two positive crossings. From the proof of invariance of our construction under these type of third Reidemeister moves, we have:

\[\raisebox{-15pt}{\includegraphics[height=1in]{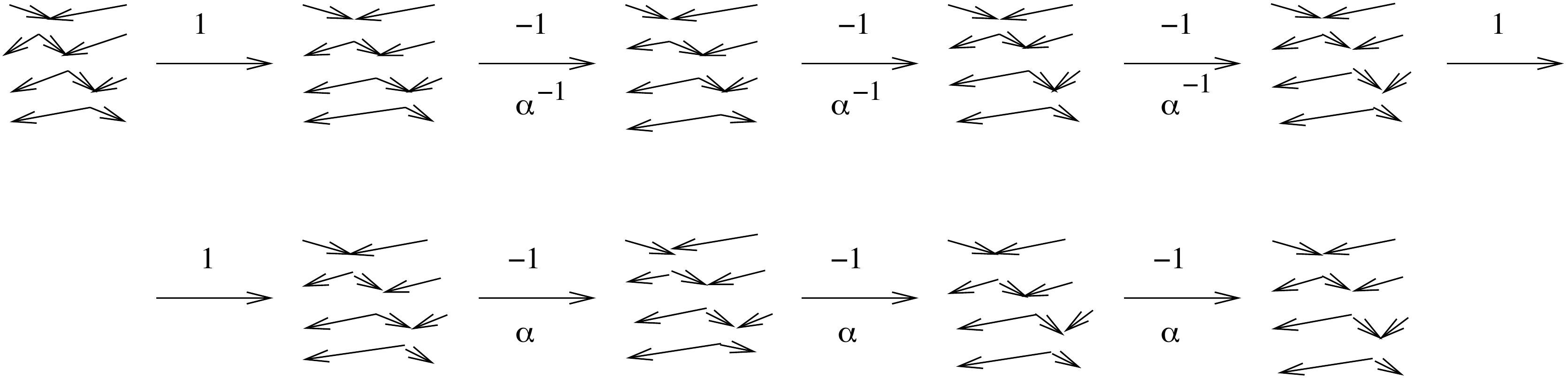}}\]
 
 Composing, we obtain the identity map.

In the previous two examples we had at each stage the same resolution; this was possible because the R3 moves appearing in the movie move  were of the type where R2a moves were involved in the `categorified Kauffmann trick'. Now we will consider an oriented representative of MM10 in which R2b moves are involved in the proof of the invariance of the corresponding R3 moves of MM10. Let's suppose we want to check the representative of MM10 with first and last frame \raisebox{-5pt}{\includegraphics[height=.2in]{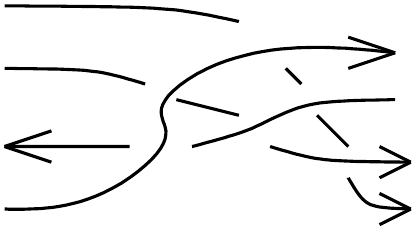}} (and we let to the interested reader to draw the tangles of each frame of the clip). Then, as we can see below, we have different resolutions at some of the frames. 

\[\raisebox{-15pt}{\includegraphics[height=1in]{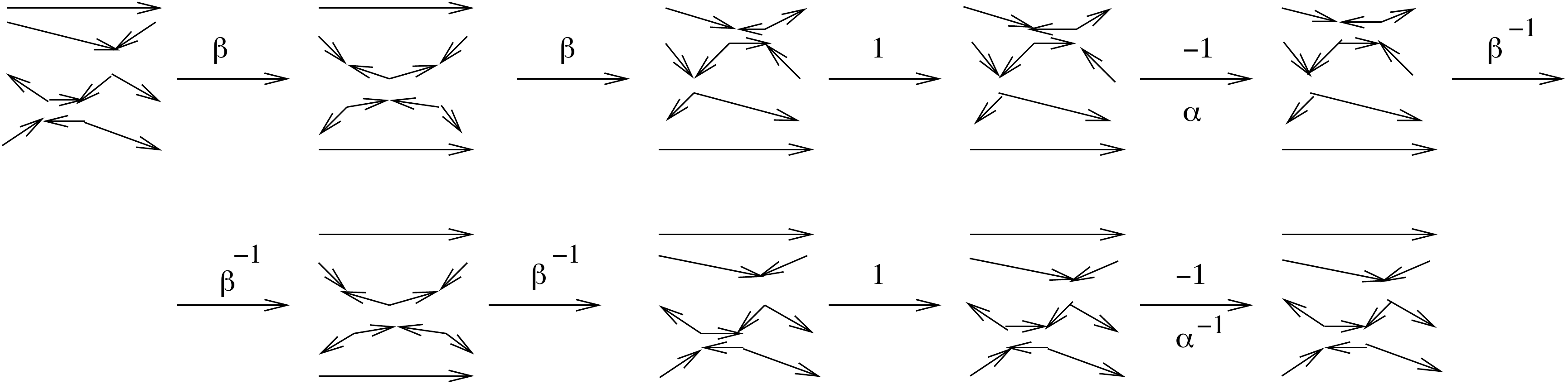}}\]

This time, we resolved each positive crossing in the piecewise oriented way while each negative crossing in the oriented way. To see the maps for each string, we just need to look at the definition of isomorphisms $\beta$ and $\alpha$; more precisely, we need to consider the maps at height $-1$. We remark that, by composing, we get the sign plus in front of the corresponding `curtains' (morphisms). Therefore, we need to check that, after using the (CI) identities and those from lemma~\ref{handy relations}, we end up with identity morphisms.

The map corresponding to the lower string is:
\[ \raisebox{-8pt}{\includegraphics[height=.4in]{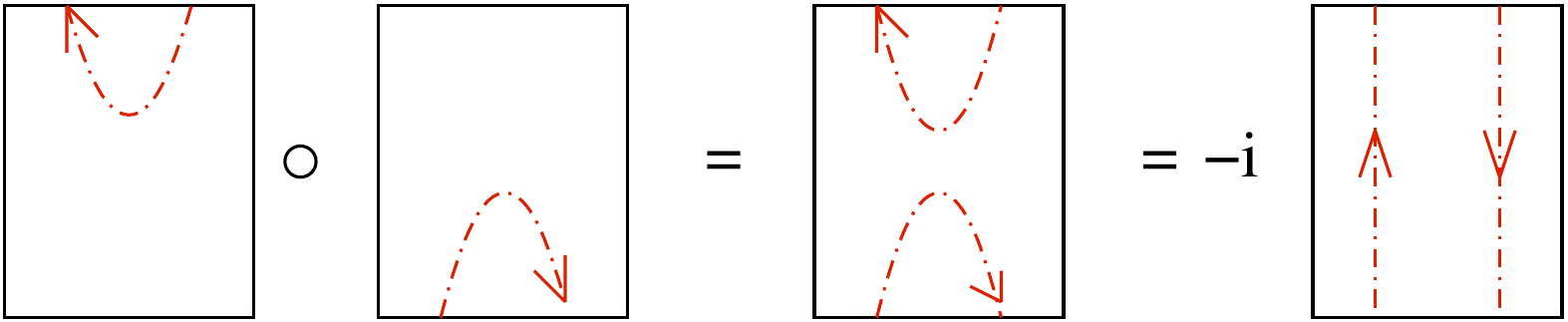}}\,.\]
For the second string (from bottom) we have: 
\[\raisebox{-13pt}{\includegraphics[height=.77in]{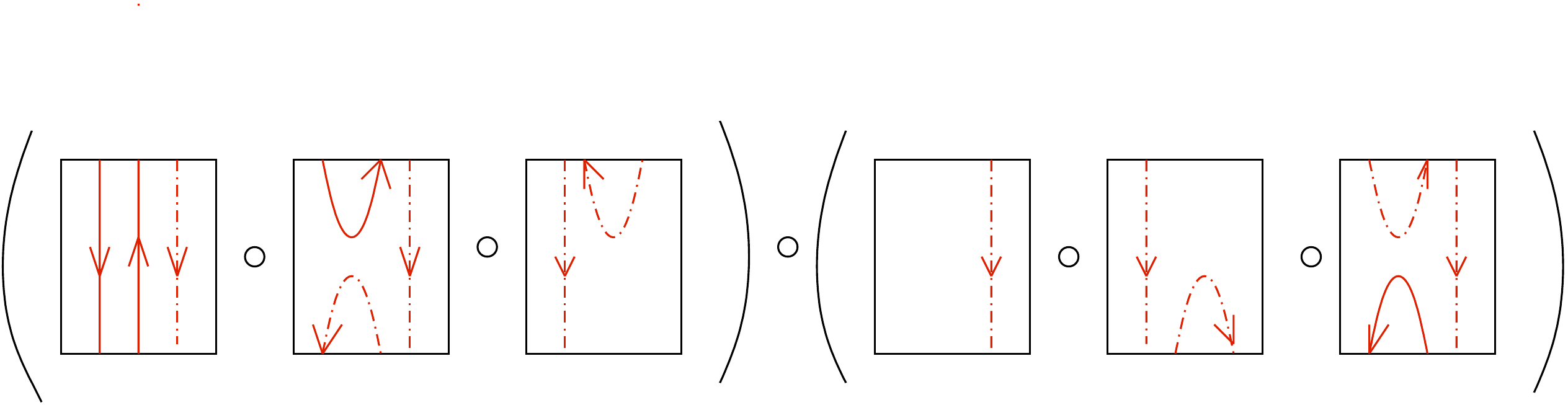}} = \raisebox{-8pt}{\includegraphics[height=.4in]{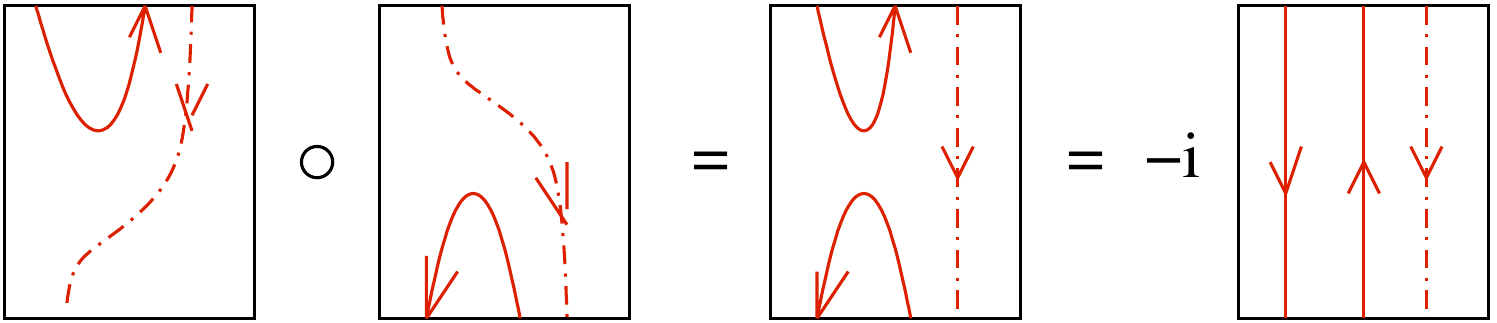}}\,.\] 

The corresponding map for the third string from the bottom is:
\[\raisebox{-15pt}{\includegraphics[height=.77in]{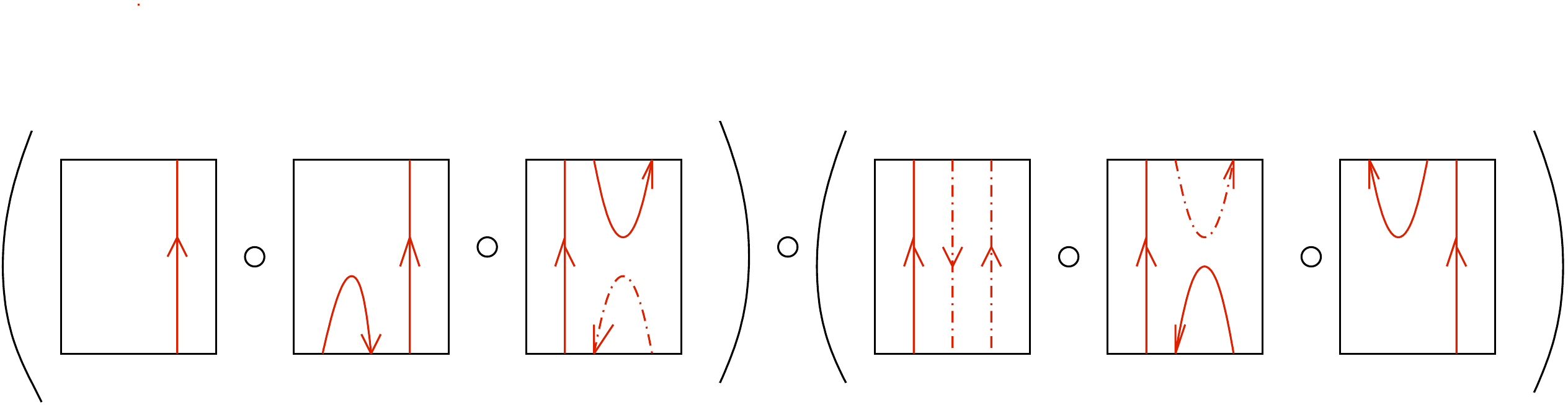}} = \raisebox{-8pt}{\includegraphics[height=.4in]{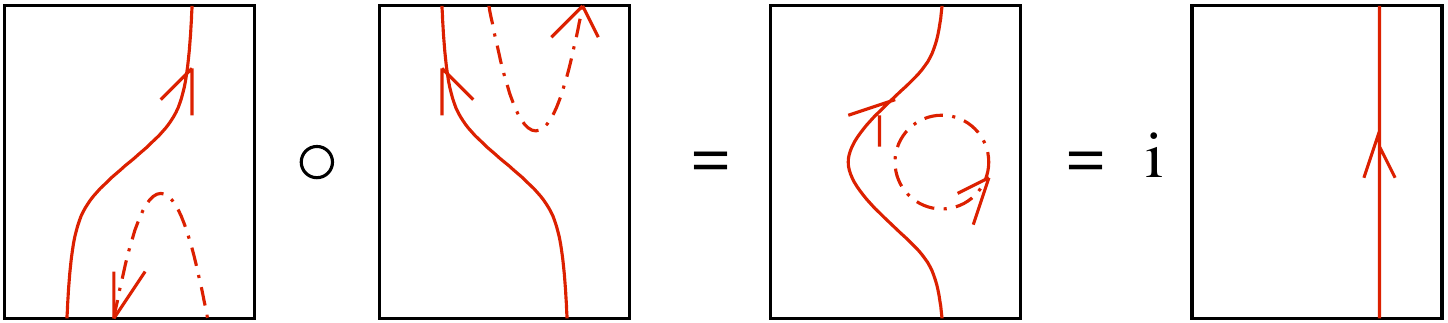}}\,.\] 

Finally, for the string on the top, we obtain;
\[ \raisebox{-8pt}{\includegraphics[height=.4in]{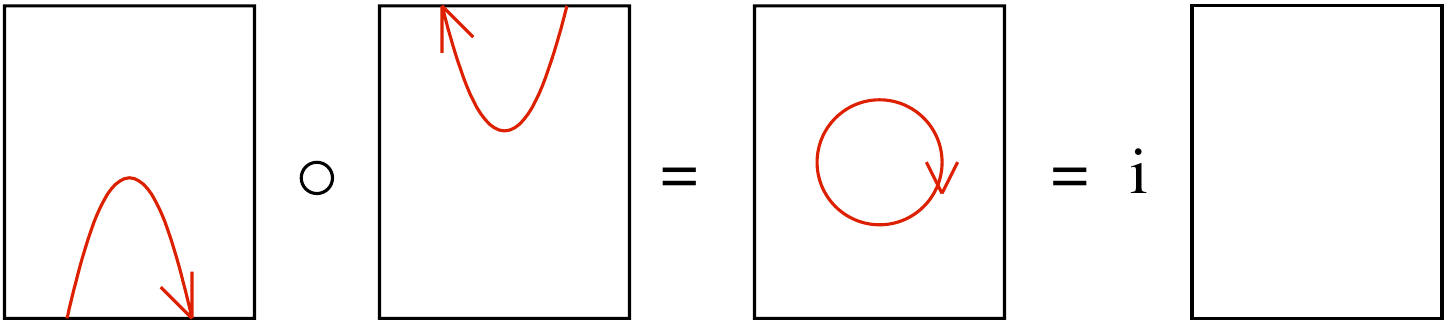}}\,.\]
Combining the results we have $(-i)^2 i^2 Id = Id$. Thus, the induced map is again the identity.

A careful reader have probably observed what is going on with this movie move. Let's label the frames of the movie move with numbers from $1$ to $9$ and call the ``i-th Reidemeister 3''  the map from the $i$-th to the $(i+1)$-frame. We remark that the $(i+4)$-th  Reidemeister 3 is the inverse of the $i$-th Reidemeister 3, where $i \in \{1,2,3,4\}$; in other words, the last four moves are the inverses of the first four, and the string that is not involved in the move is on the other side of the three crossings where the particular R3 move takes place. By considering the resolutions as describe above, at each R3 move there is one of the isomorphisms $\alpha, \beta, \alpha^{-1}$, and $\beta^{-1}$ followed, at some moment later, by its inverse.Therefore, there is no other way than arriving, after composing all maps, at the identity map. 

We have seen that each Type II movie move induces chain maps that are homotopy equivalent to identity morphisms.

\subsection*{ \textbf{Type III}: Non-reversible clips}

\[\raisebox{-15pt}{\includegraphics[height=1in]{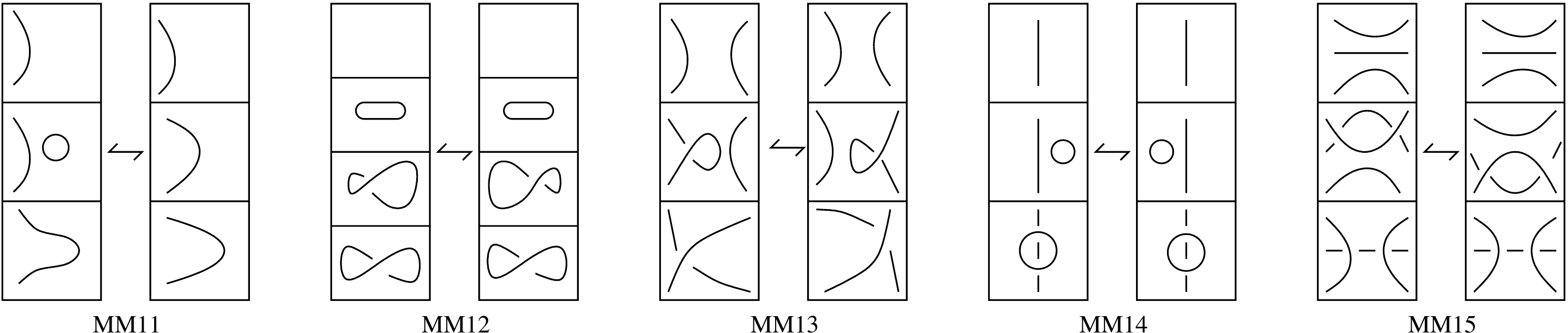}}\]

Each pair of a type III clip should produce the same morphisms when read from top to bottom or from bottom to top; these ones can be checked by hand.

\subsection*{ \textbf{MM11}}

\[\includegraphics[height=1.2in]{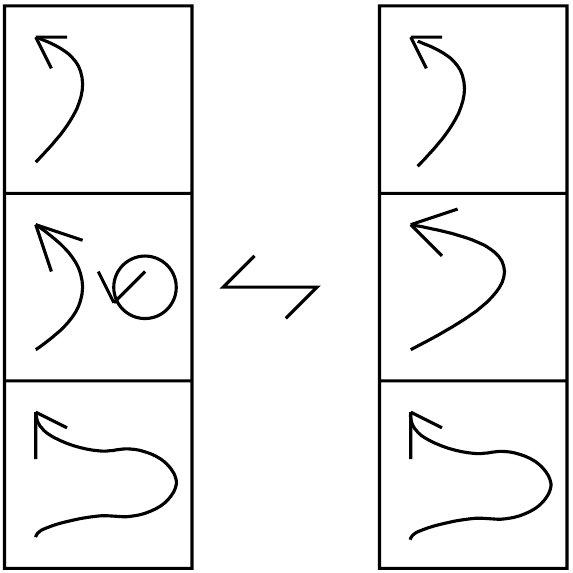}\]
\[\includegraphics[height=0.7in]{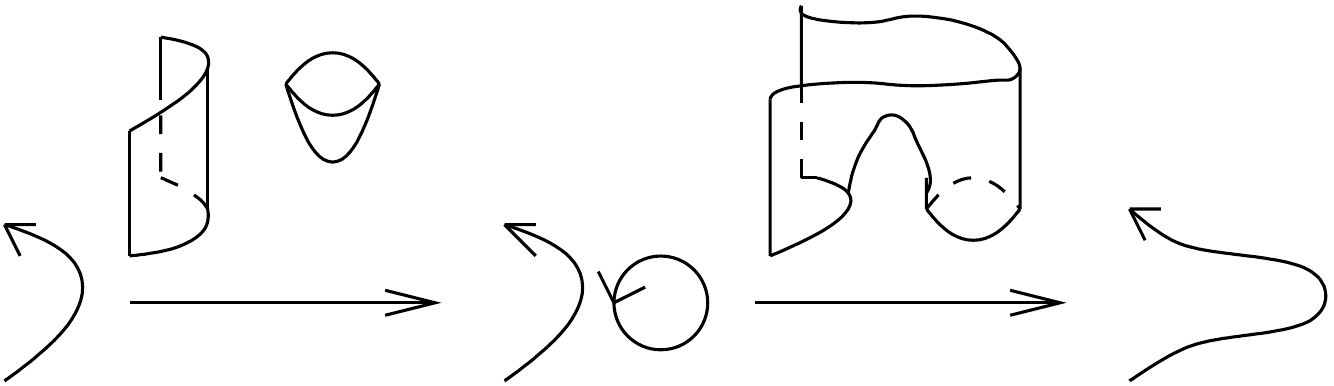}\]
Going down the left side of MM11 we get the morphism which is the composition of the two maps in the above row; but this one is isotopic to the cobordism obtained by going down along the right side of MM11. Going up along the clip, we just need to turn all these cobordisms upside down.

Reversing the orientation of the string, the induced maps are the same as those we just obtained.

\subsection*{ \textbf{MM12}}
\[ \raisebox{-15pt}{\includegraphics[height=1.6in]{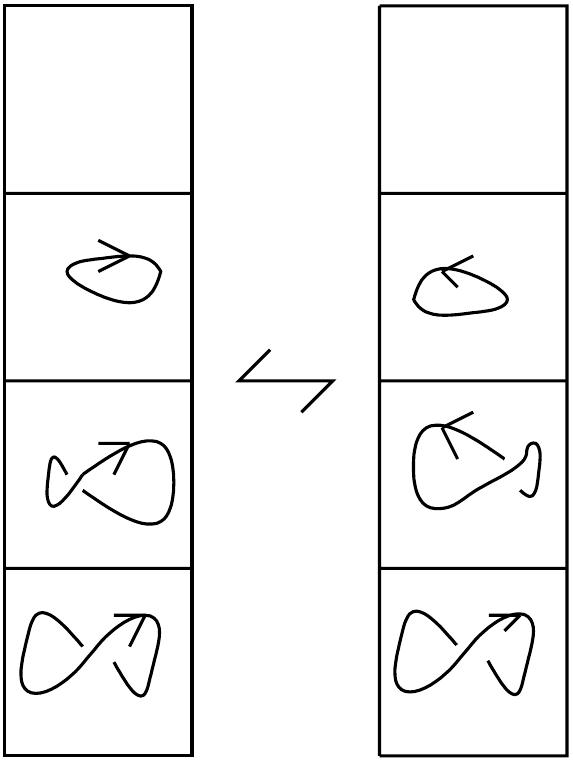}}\]
Going down the left side of MM12 we get a morphism $\emptyset \rightarrow \raisebox{-5pt}{\includegraphics[height=0.2in]{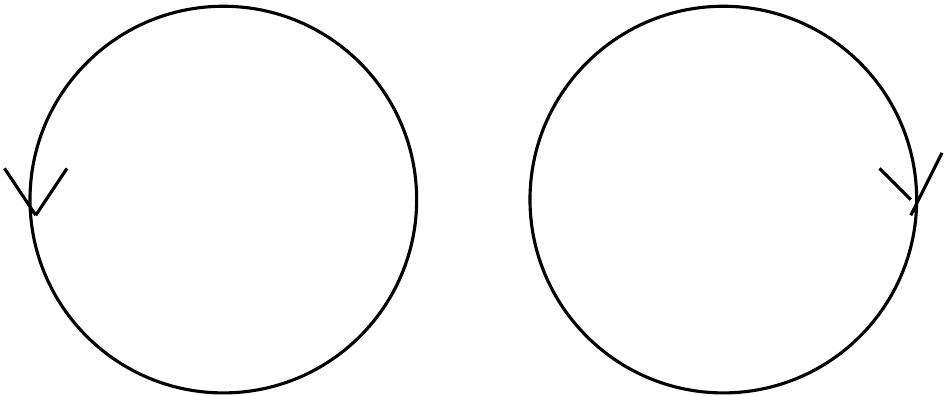}}$, which from the proof of invariance under Reidemeister 1 move is $\raisebox{-8pt}{\includegraphics[height=0.3in]{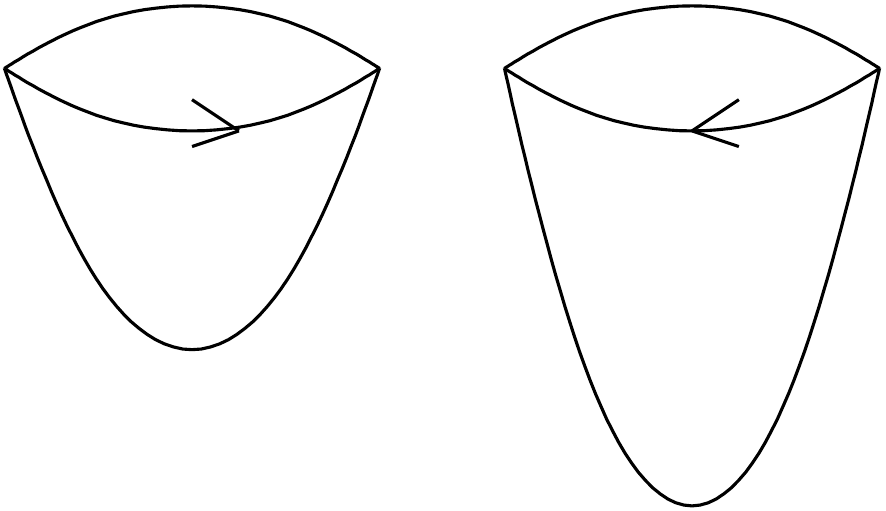}}$. Similarly, going down the right side we get the morphisms $\raisebox{-8pt}{\includegraphics[height=0.3in]{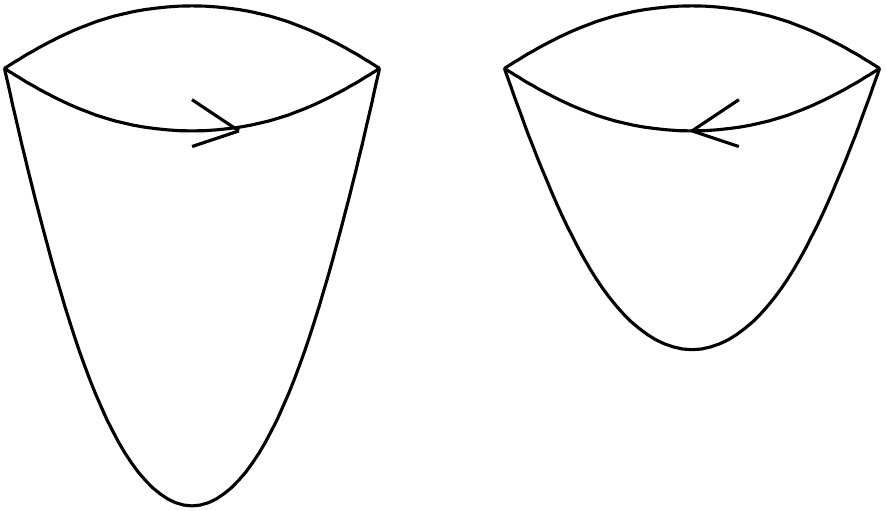}}$. But these two morphisms are isotopic.

Going up along the left side of MM12 we get the morphism
$ \left(
\begin{array}{c}
\raisebox{-3pt}{\includegraphics[height=0.15in]{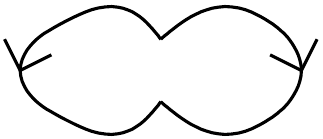}} \\ 
\raisebox{-3pt}{\includegraphics[height=0.15in]{2circlesop.pdf}}
\end{array}
\right) \longrightarrow \emptyset$, which on the first component is the zero map, and on the second one is: $$\left(\,\raisebox{-8pt}{\includegraphics[height=0.4in]{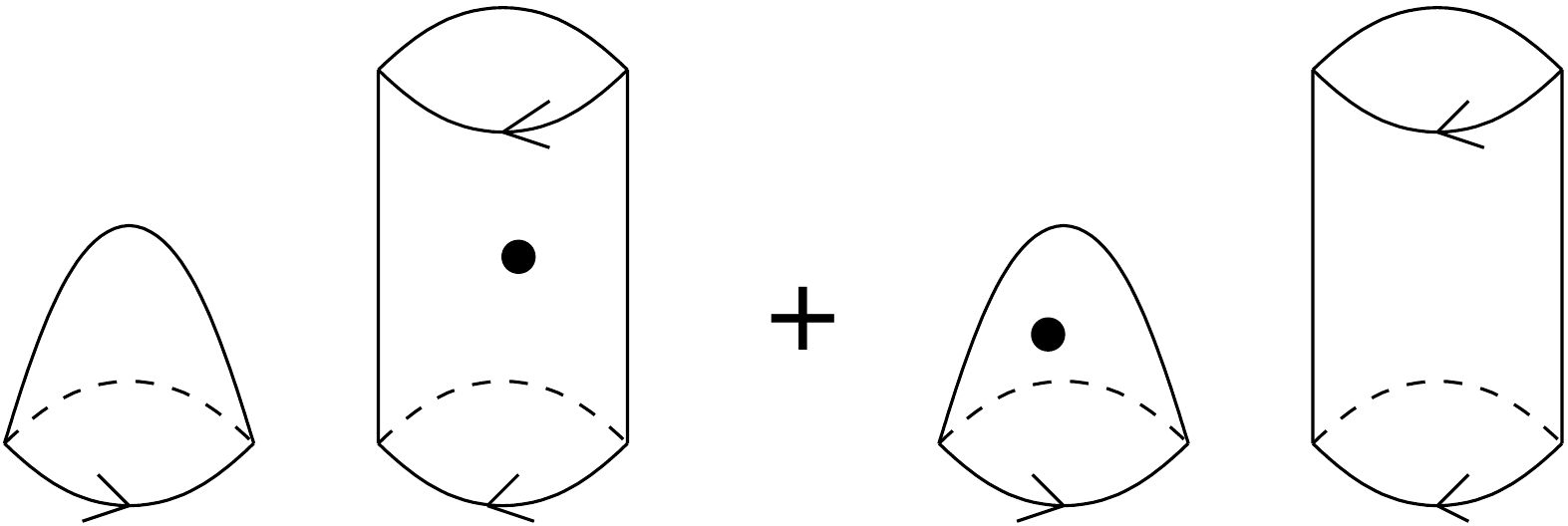}}\,\right) \circ \raisebox{-8pt}{\includegraphics[height=0.2in]{caplo.pdf}}\,.$$
 Likewise, going up along the right side of MM12 we obtain:
  $$\left(\,\raisebox{-8pt}{\includegraphics[height=0.4in]{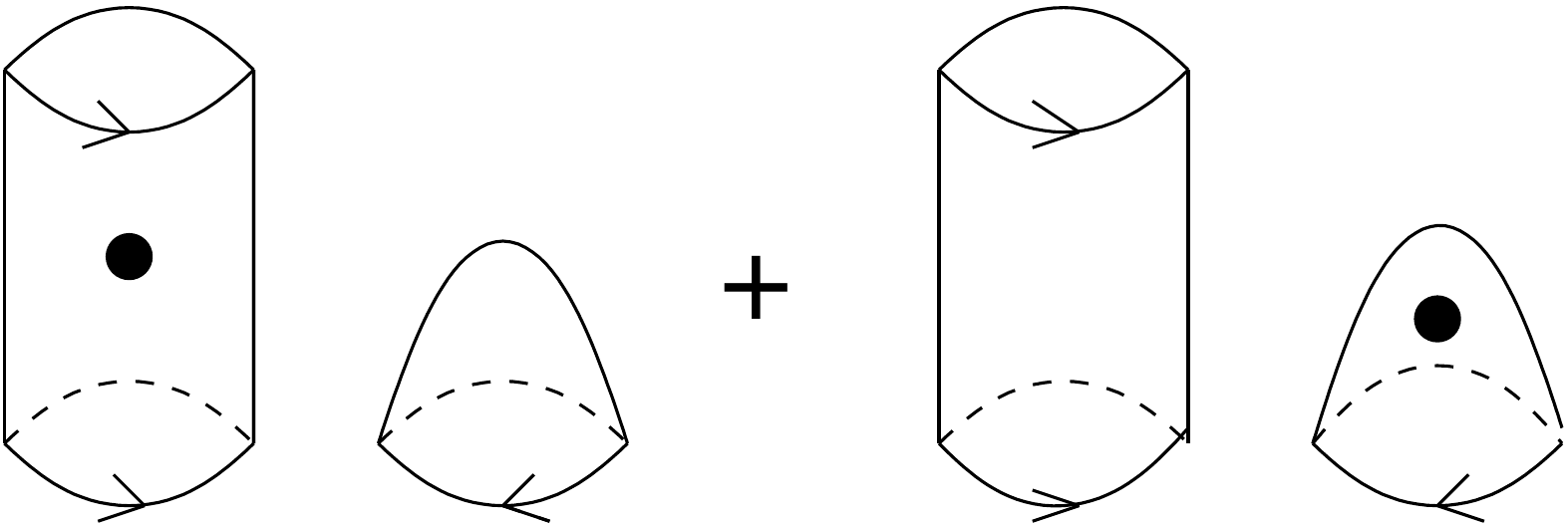}}\,\right) \circ \raisebox{-8pt}{\includegraphics[height=0.2in]{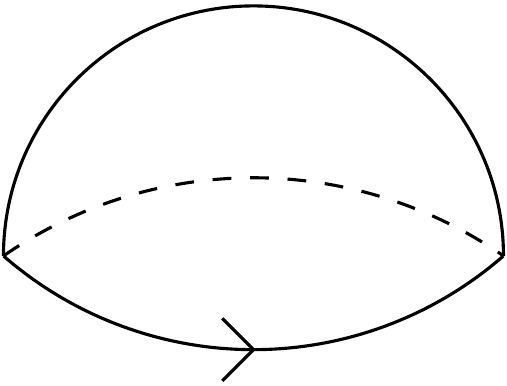}}\,.$$
  
Up to isotopy, these foams are just:  \raisebox{-15pt}{\includegraphics[height=0.5in]{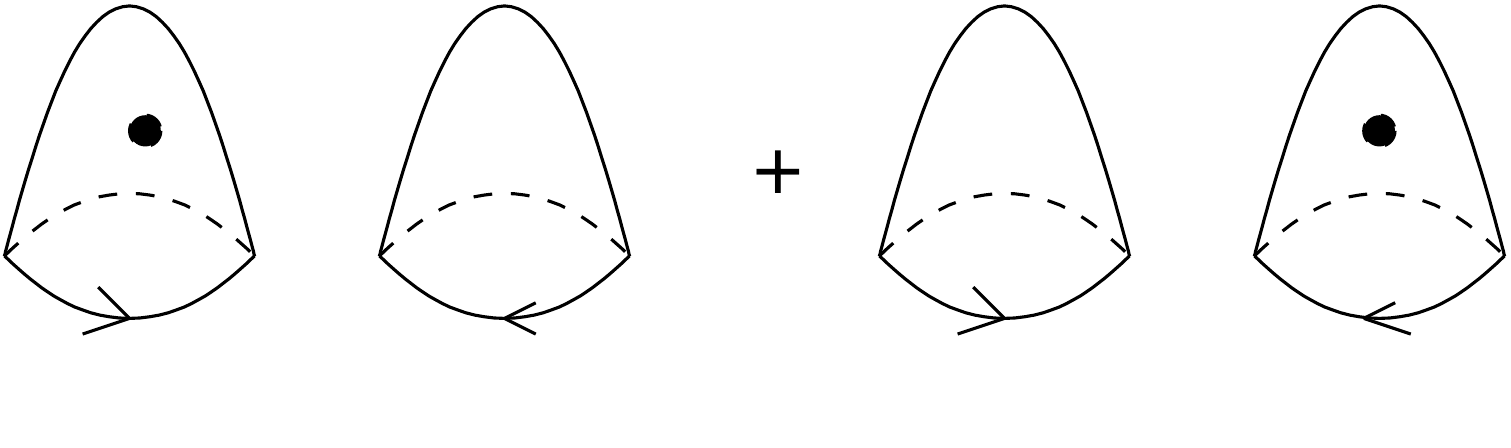}}\,.

The calculations for the mirror image are similar. Going up, both maps are a disjoint union of cups on the oriented resolution, and zero on the other one. Going down, we get on both sides morphisms that are isotopic to \raisebox{-8pt}{\includegraphics[height=0.3in]{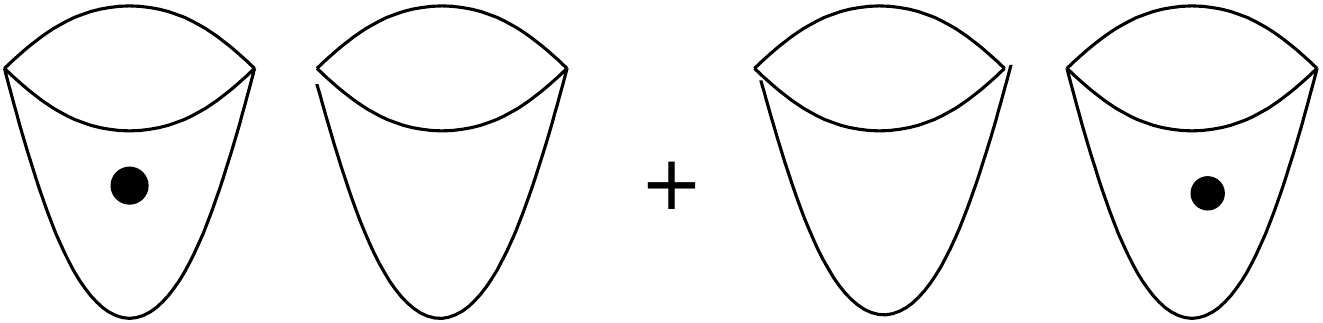}}\,. 

\subsection*{ \textbf{MM13}}

\[ \raisebox{-15pt}{\includegraphics[height=1.2in]{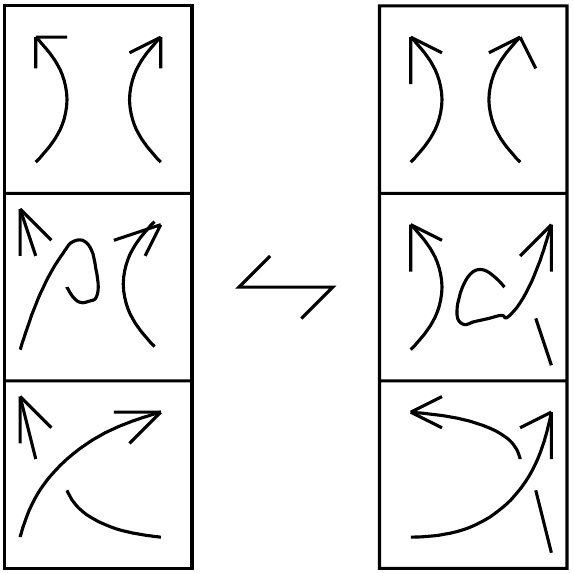}}\]
Going down we have on the left:
\[ \raisebox{-10pt}{\includegraphics[height=0.6in]{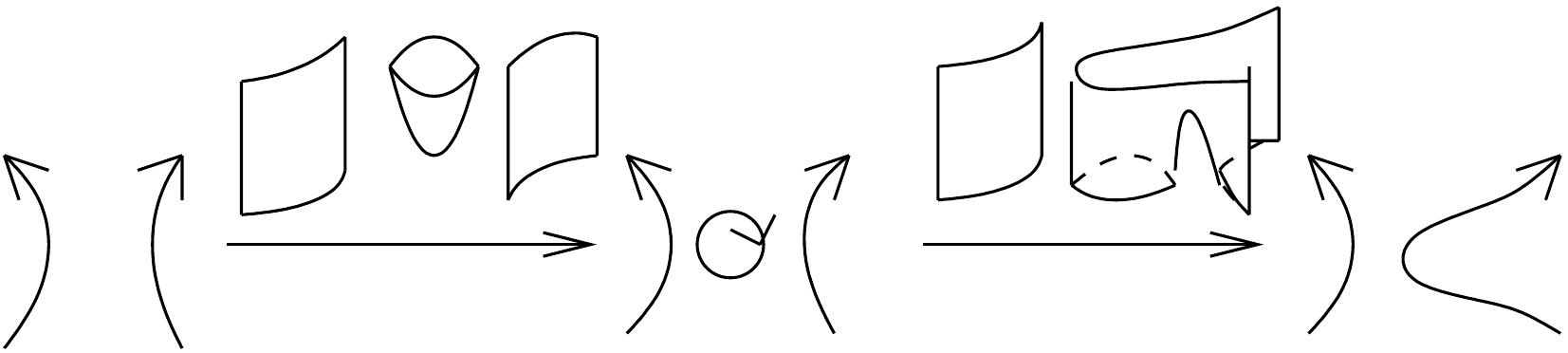}},\]
and on the right:
\[ \raisebox{-10pt}{\includegraphics[height=0.6in]{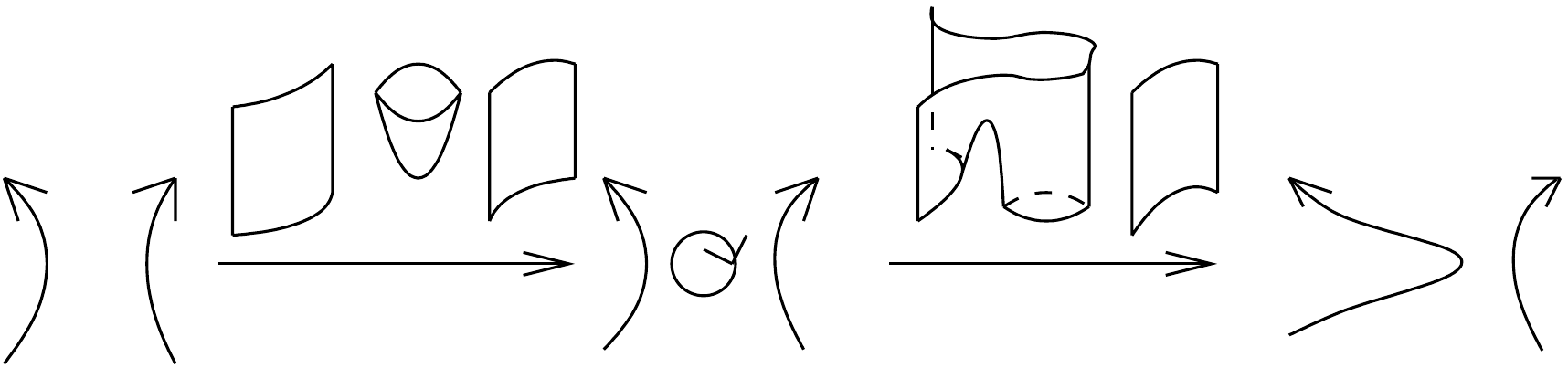}}.\]
Composing, we get  $ \raisebox{-15pt}{\includegraphics[height=0.5in]{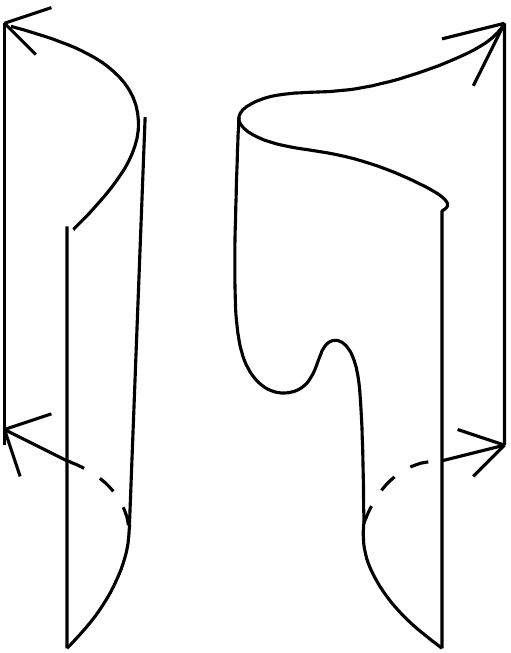}} \quad \text{and} \quad \raisebox{-15pt}{\includegraphics[height=0.5in]{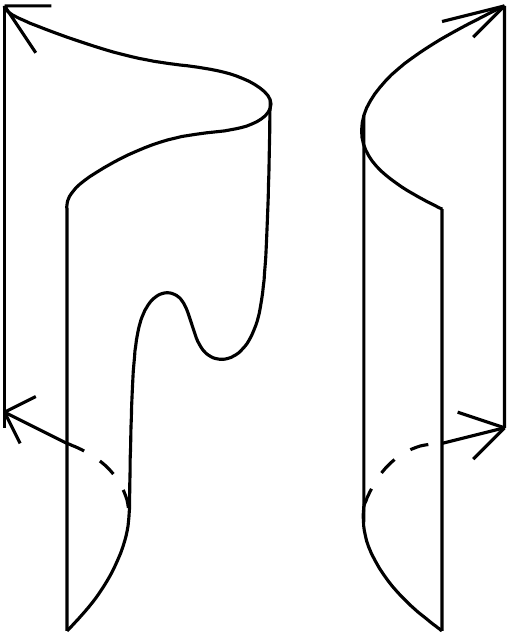}}\,$, which are the same, up to isotopy.
  
Going up, both maps are zero on the resolution containing singular points (as the chain map corresponding to the Reidemeister 1 move is zero on the piecewise oriented resolution), and $\raisebox{-15pt}{\includegraphics[height=0.5in]{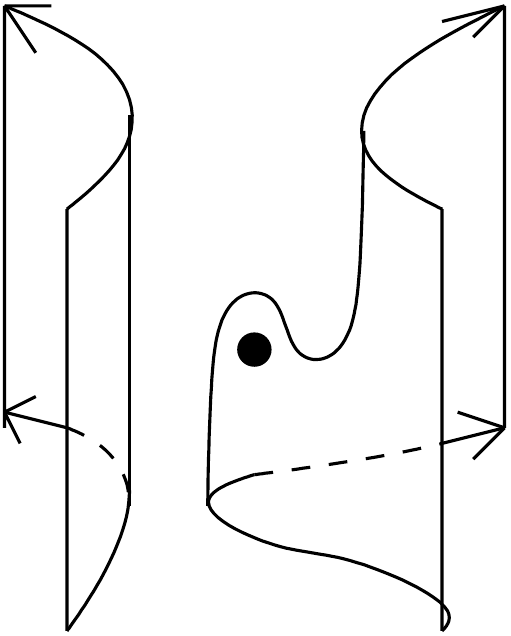}} + \raisebox{-15pt}{\includegraphics[height=0.5in]{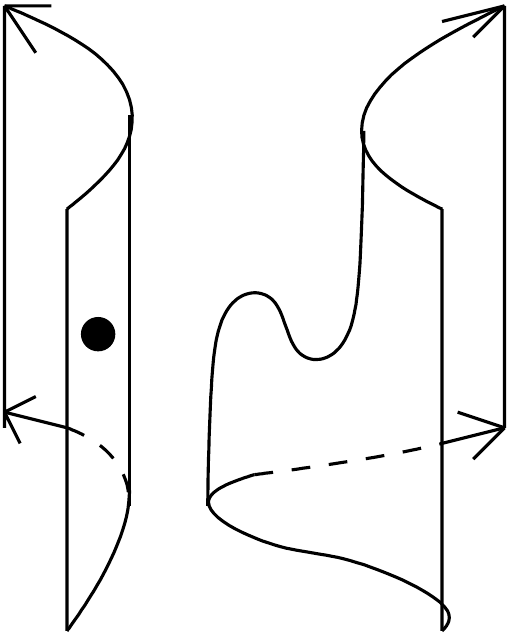}}$ on the left side, and $\raisebox{-15pt}{\includegraphics[height=0.5in]{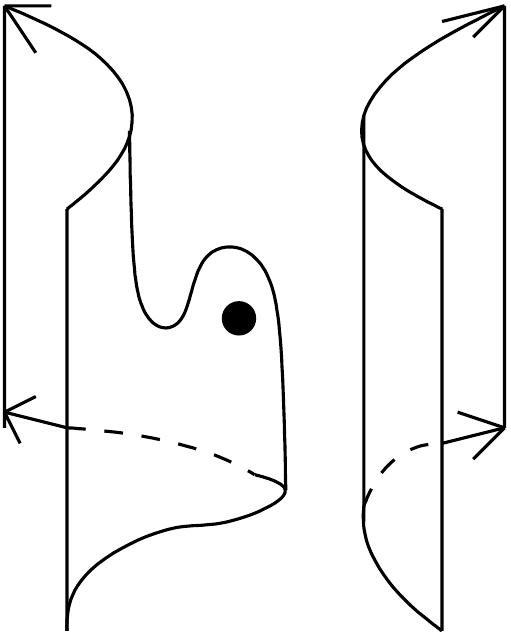}} + \raisebox{-15pt}{\includegraphics[height=0.5in]{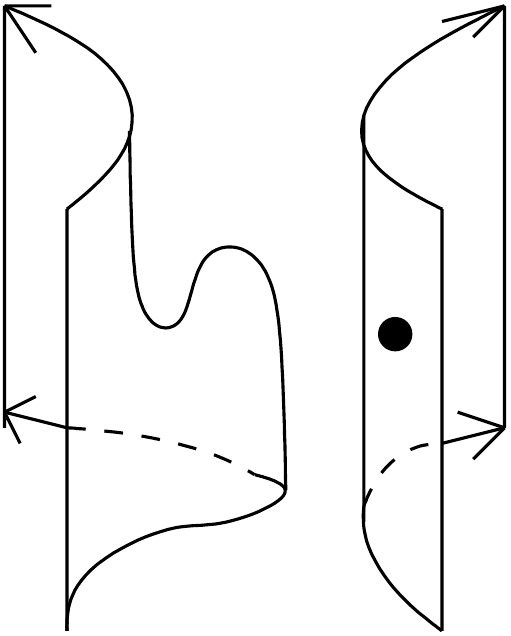}}$ on the right side on the oriented resolution; up to isotopy, these linear combinations are the same.

\[ \raisebox{-15pt}{\includegraphics[height=1.2in]{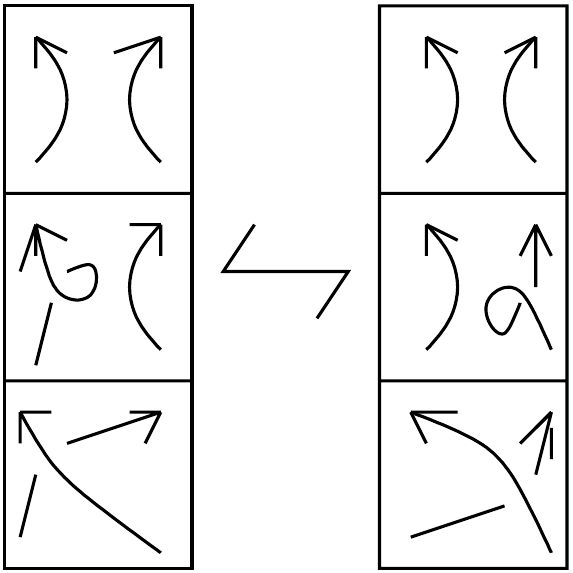}}\]

Considering the mirror image case, the only difference is that the linear combination of foams appears when reading down, while reading up, both maps are zero on the resolution with singular points, and isotopic to two `vertical curtains' on the oriented resolution.  

We remark that there are no other orientations to discuss for this movie move, since the two strings in the initial frame must be both oriented either upwards or downwards (and those give the same movies), for the Morse move to be well defined. 

\subsection*{ \textbf{MM14}}

Consider the following oriented representative for MM14:

\[ \raisebox{-8pt}{\includegraphics[height=1.2in]{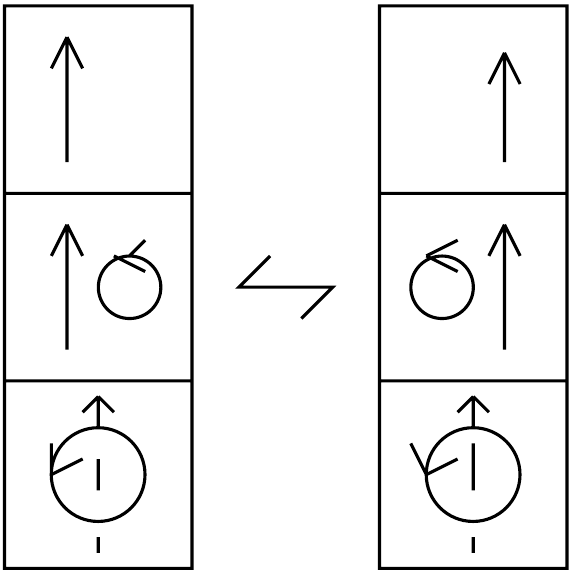}}\]

From the proof of invariance under Reidemeister 2 move, we know that going down the left side, the induced morphism at the chain level can be obtained by composing the maps in the diagram:
\[ \raisebox{-15pt}{\includegraphics[height=1.3in]{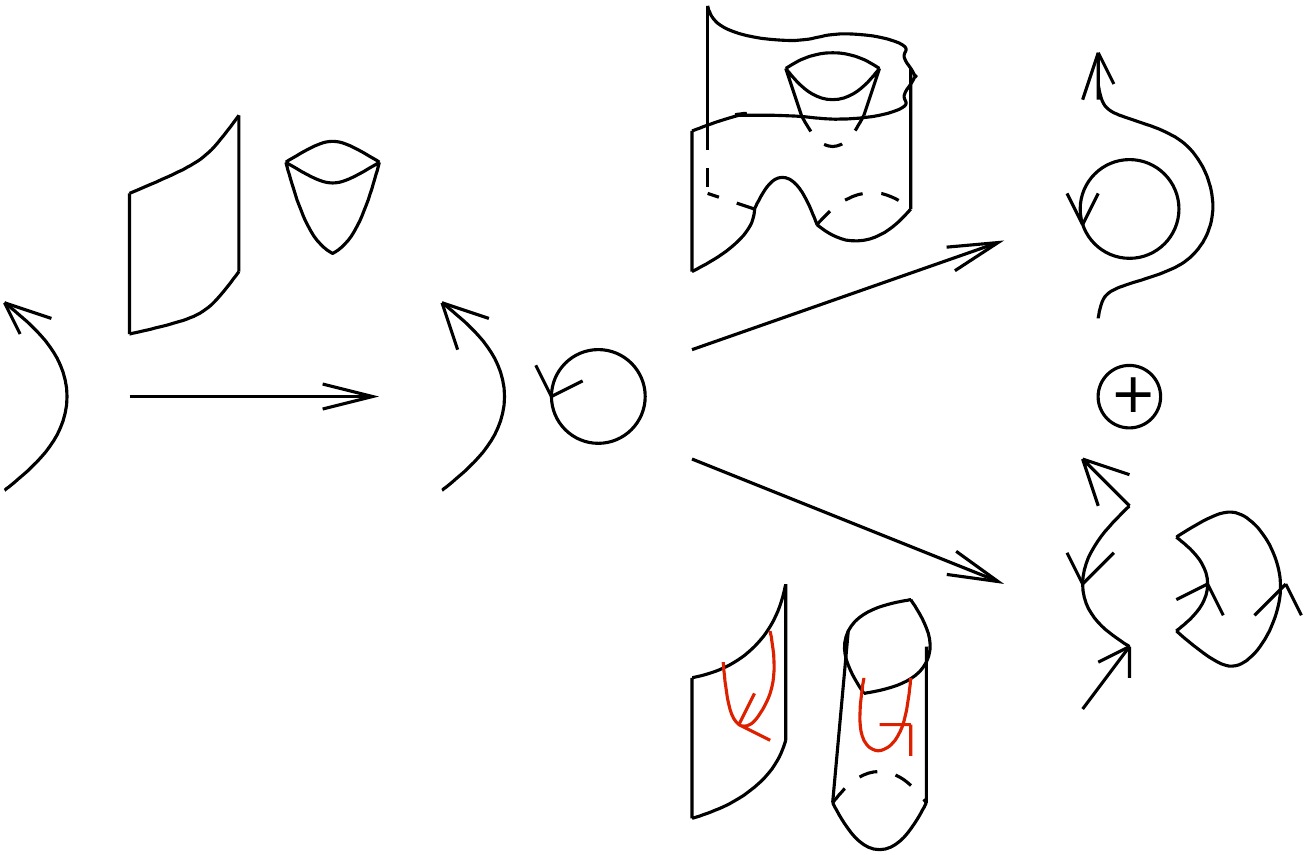}}\] 
Likewise, going down the right side, we have:
\[ \raisebox{-15pt}{\includegraphics[height=1.3in]{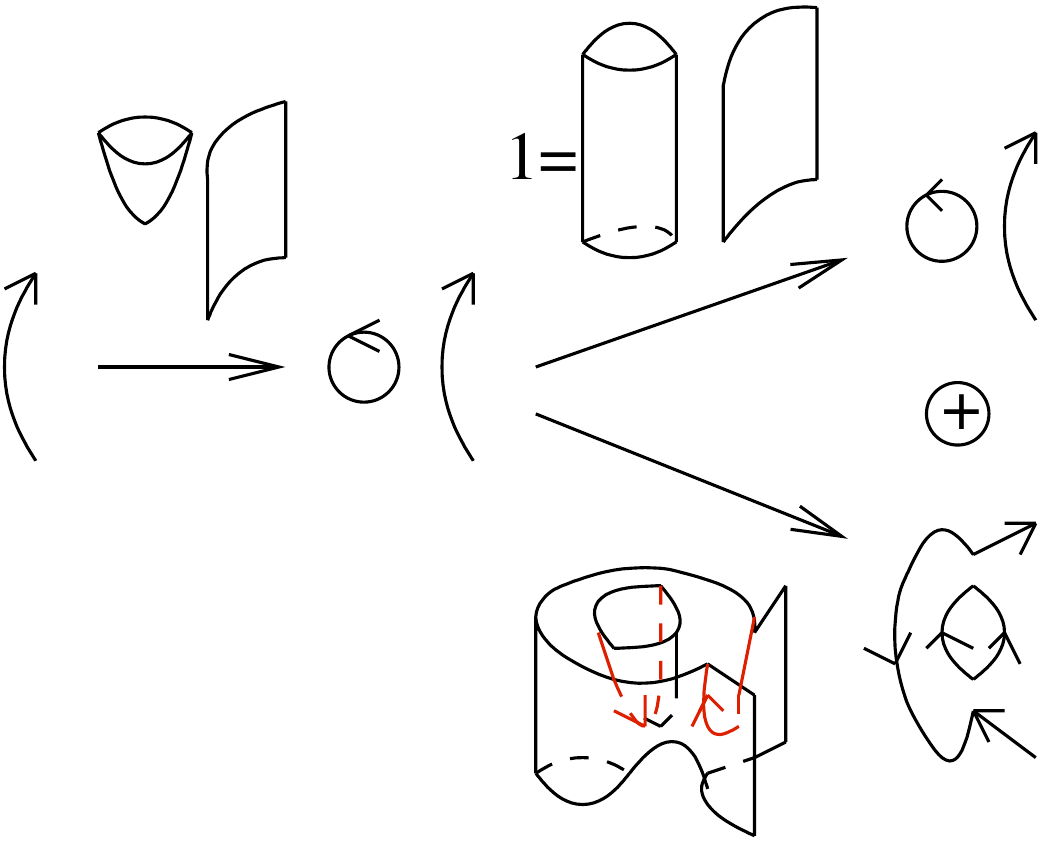}}\]
 But up to isotopy, these two chain maps are $\left(
 \begin{array}{rr}
 \raisebox{-8pt}{\includegraphics[height=0.3in]{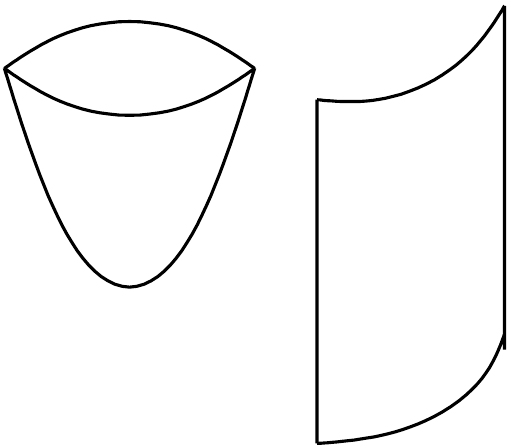}}\\
 \raisebox{-8pt}{\includegraphics[height=0.3in]{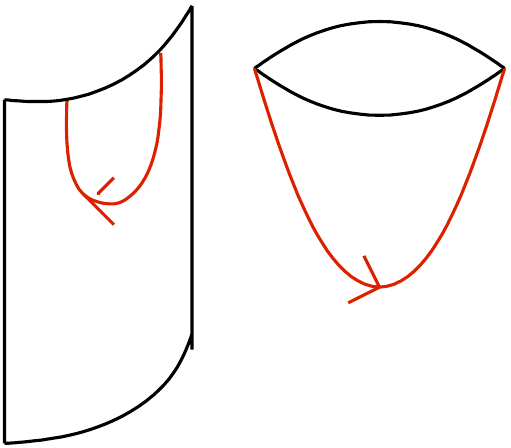}}
 \end{array}
 \right)$, therefore they are the same.
 
Going up along the left and right side of MM14 we obtain morphisms that are again the same, up to isotopy: $\left(
 \begin{array}{rr}
 \raisebox{-8pt}{\includegraphics[height=0.3in]{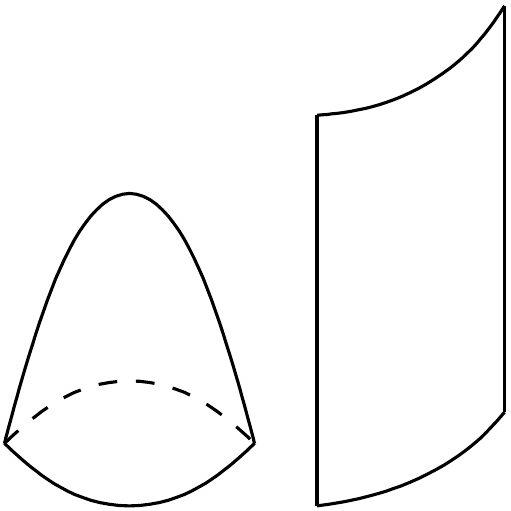}}\\
- \raisebox{-8pt}{\includegraphics[height=0.3in]{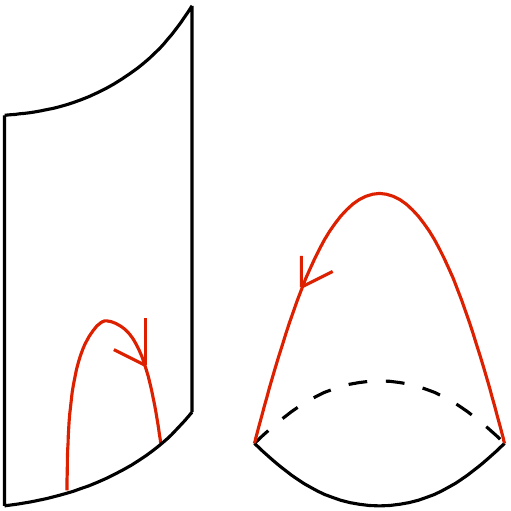}}
 \end{array}
 \right)$.
 
A similar calculation is obtained by reversing the orientation of the loop or arc (or if the loop lies under the arc). 

\subsection*{\textbf{MM15}}
 We remark that in order to have a valid oriented representative for this movie move, the lowest and highest strands must be oriented oppositely. The middle strand may be oriented either way and we tuck it either under or over the other strands.

We pick the oriented representative of MM15 in which the lower two strands are oriented to the right, and the upper strand is oriented to the left; then we tuck the strand in the middle under the other two. 

Going down on the left, we have:
\[ \raisebox{-15pt}{\includegraphics[height=2in]{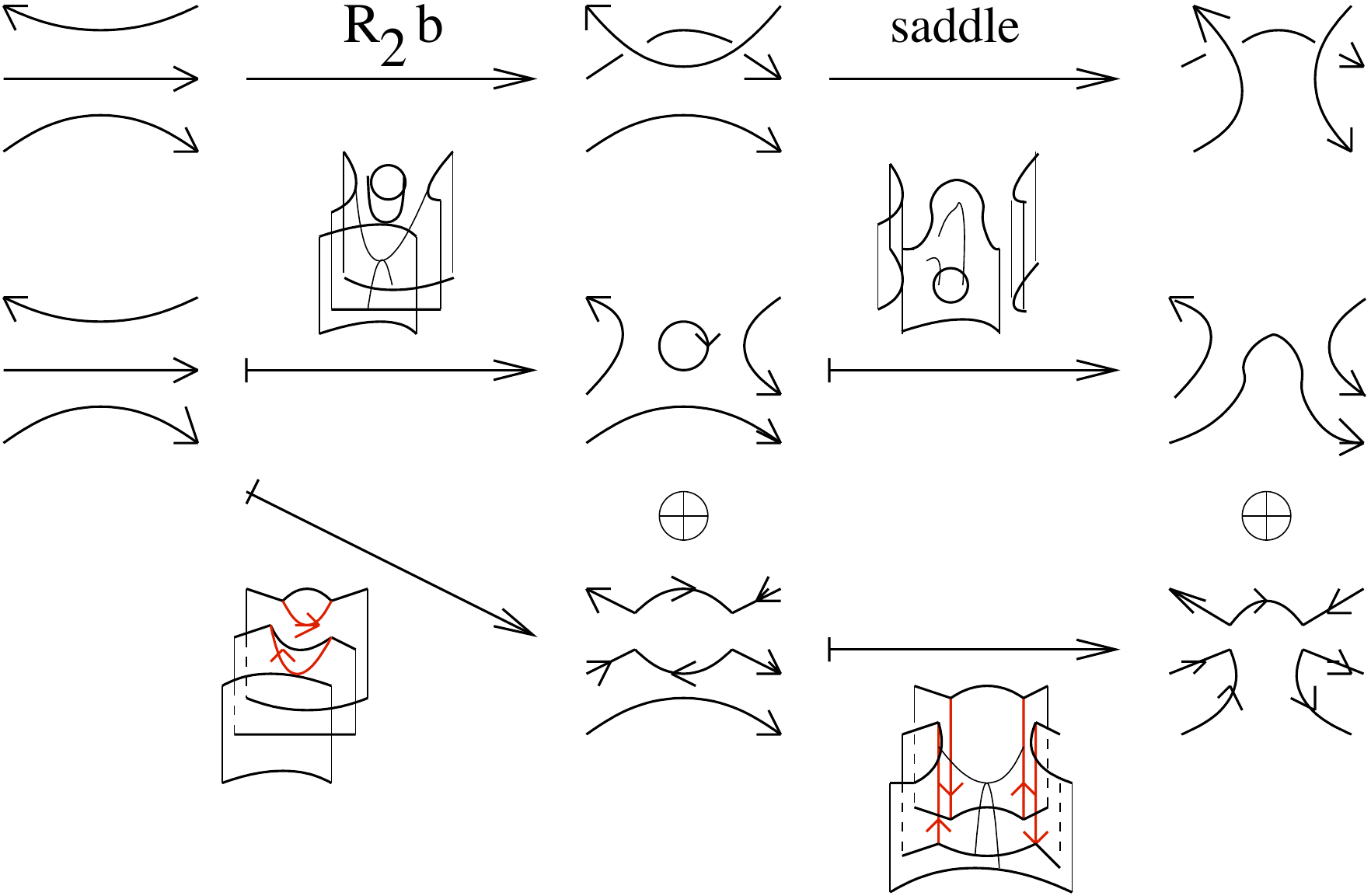}}\]
while on the right we obtain:
\[ \raisebox{-15pt}{\includegraphics[height=2in]{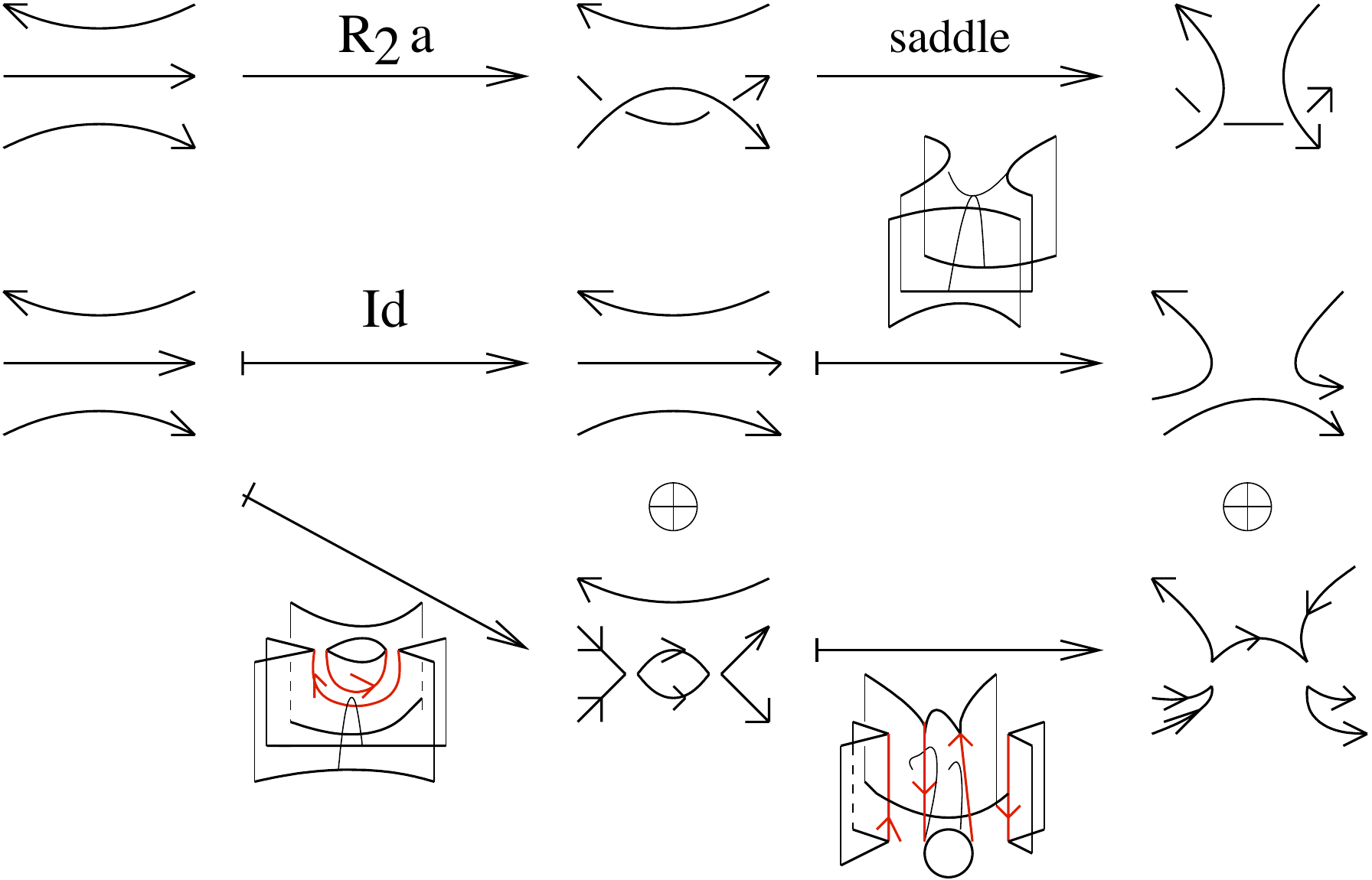}}\]
Composing the foams in both diagrams, we see that the maps on both sides of the movie are:
\begin {itemize}
\item the first component is a saddle involving the two upper strands;
\item the second component is a saddle with singular points, involving the lower two strands.
\end{itemize}
Now going up, we have the following on the left:
\[ \raisebox{-15pt}{\includegraphics[height=2in]{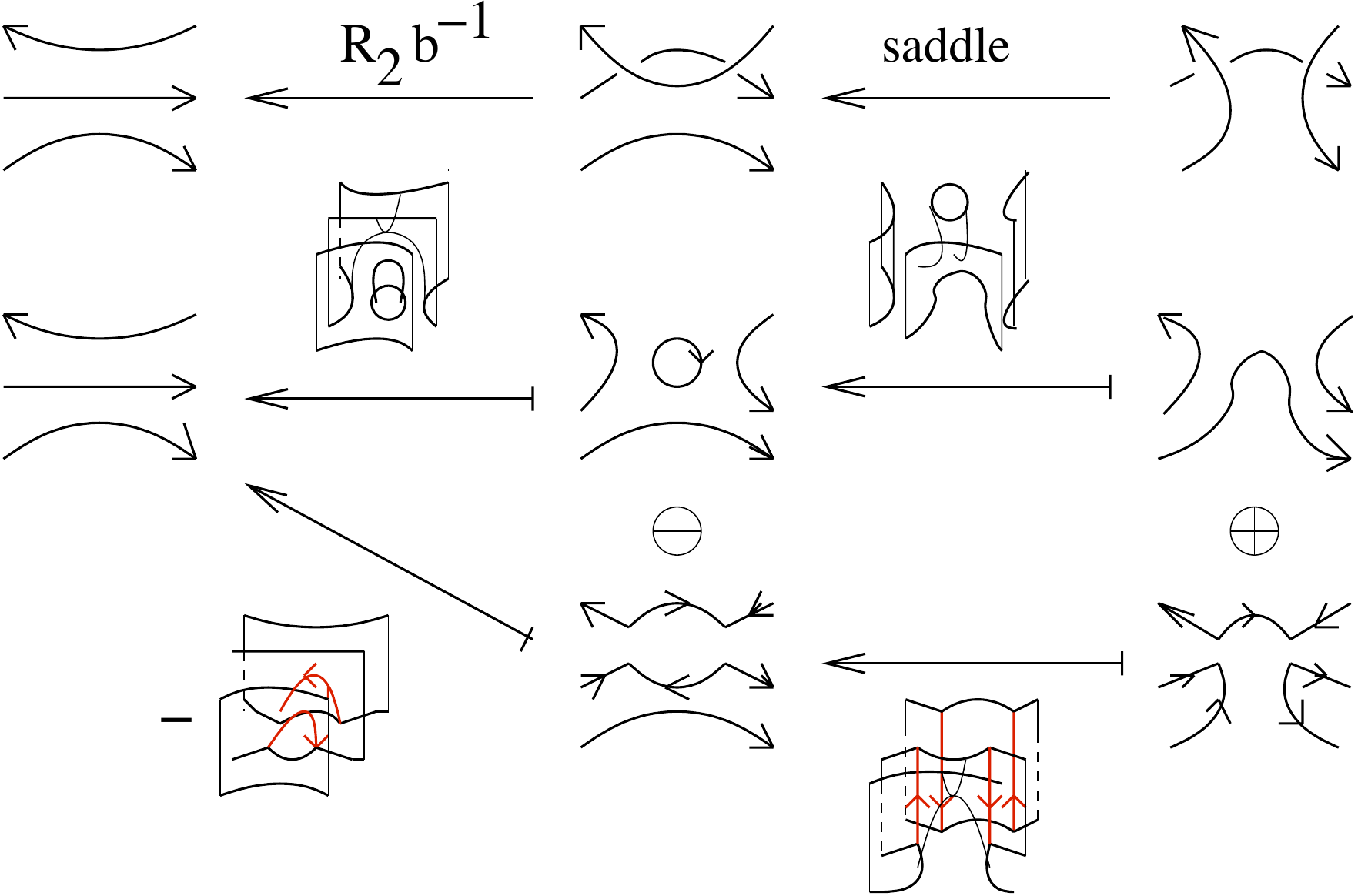}}\]

and the next diagram on the right side of the clip:
\[ \raisebox{-15pt}{\includegraphics[height=2in]{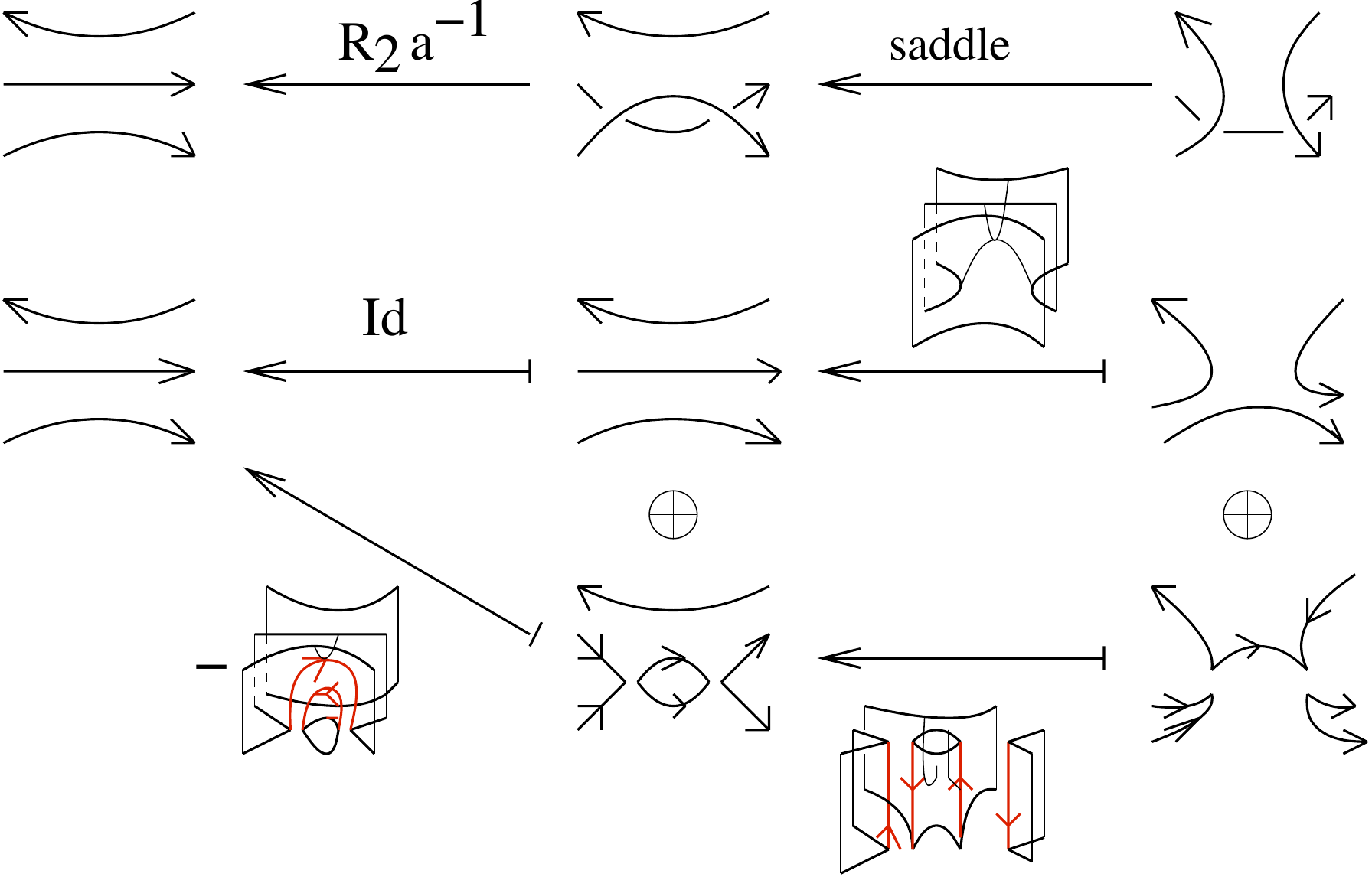}}\]
As before, the corresponding maps agree; they are the previous saddles turned upside-down, with  a minus sign on the second component corresponding to both movies.
\bigskip

When looking at the other variations we obtain similar results. Changing the way the middle strand is tucked does not bring anything new. Changing the orientations of the highest and lowest strand (assuming that the middle one is oriented as in the case we checked) interchanges R2a and R2b. The two component maps are  now a saddle involving the lower two strands and a saddle (with singular points) involving the higher two strands. These saddles come with the same sign on both left and right movie moves, as the non-trivial morphisms  in R2a, R2b and R2a$^{-1}$, R2b$^{-1}$ have the same coefficient on the oriented (or unoriented) resolutions.

This concludes the proof of theorem~\ref{thm:functoriality}. 
 \end{proof}


\section{\textbf{Web homology}}\label{sec:webhom}

In this section we define a functor from the topological category  $\textit{Foams}_{/\ell}$ to the algebraic category $\mathbb{Z}[i][a]$-Mod of $\mathbb{Z}[i][a]$-modules and module homomorphisms, which extends  to  $\textit{Kof}$ =  Kom(Mat($\textit{Foams}_{/\ell}$)).

\begin{definition}\label{def:web homology}
Let $\Gamma_0$ be a web in $\textit{Foams}_{/\ell}(B)$. Define a functor $ \mathcal{F}_{\Gamma_0} : \textit{Foams}_{/\ell}(B) \rightarrow \mathbb{Z}[i][a]$-Mod as follows: 
 \begin{itemize}
 \item on objects: if $\Gamma \in \textit{Foams}_{/\ell}(B)$ we define the `homology' of $\Gamma$ by
 \[
 \mathcal{F}_{\Gamma_0}(\Gamma):= \Hom_{\textit{Foam}_{/\ell}(B)}(\Gamma_0, \Gamma)
 \]
\item on morphisms:  to a foam $S \in \Hom_{\textit{Foam}_{/\ell}(B)}(\Gamma', \Gamma'')$ there is associated a $\mathbb{Z}[i][a]$-linear map 
 \[
 \mathcal{F}_{\Gamma_0}(S): \Hom_{\textit{Foam}_{/l}(B)}(\Gamma_0, \Gamma') \rightarrow \Hom_{\textit{Foam}_{/\ell}(B)}(\Gamma_0, \Gamma'')
 \]
 that maps $U \in \Hom_{\textit{Foam}_{/\ell}(B)}(\Gamma_0, \Gamma')$ to $S \circ U \in \Hom_{\textit{Foam}_{/\ell}(B)}(\Gamma_0, \Gamma'')$. This homomorphism has degree equal with deg($S$).
\end{itemize}
\end{definition}

The web $\Gamma_0$ in definition~\ref{def:web homology} is a web with boundary $B$. If $B=\emptyset$, we will choose $\Gamma_0$ to be the empty smoothing. From the grading formula for foams, it follows that $\mathcal{F}_{\Gamma_0}(\Gamma)$ is naturally graded.

\begin{example}
By definition, this functor associates to the empty web the ground ring $\mathbb{Z}[i][a]$.
\end{example}


\subsection{\textbf{Web homology skein relations}}\label{ssec:webhomskeins}

In this subsection we show that $\mathcal{F}_{\emptyset}(\Gamma)$ is a free graded abelian group of graded rank $\brak{\Gamma}$.

Consider $B = \emptyset$ and $\Gamma_0 = \emptyset$. Given all foams in $\textit{Foam}_{/\ell}(\emptyset)$ we repeatedly cut tubes by applying the (SF) and (CN) relations. Then, one can see that the group $\Hom_{\textit{Foam}_{/\ell}(\emptyset)}(\emptyset, \Gamma)$ is generated by foams in which every connected component has at most one boundary closed web. After this operation we are reducing further using the (S), (UFO) and (2D) relations, to get to foams in which every connected component has exactly one boundary component which is a closed web, and at most one dot. So, if $\Gamma =  \raisebox{-5pt}{\includegraphics[height=0.2in]{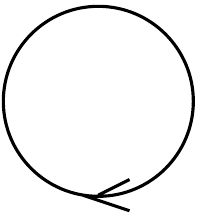}}$ then $\mathcal{F}_\emptyset( \raisebox{-5pt}{\includegraphics[height=0.2in]{unknot-clockwise.pdf}}) = V$, where $V$ is the $\mathbb{Z}[i][a]$-module generated by
$$\xymatrix@R=2mm{
v_-:= \raisebox{-5pt}{\includegraphics[height=0.25in]{cuplo.pdf}}& \text{and} &
v_+:= \raisebox{-5pt}{\includegraphics[height=0.25in]{cuplod.pdf}}.
}$$
Likewise, if $\Gamma' =  \raisebox{-5pt}{\includegraphics[height=0.2in]{circle2sv.pdf}}$ then $\mathcal{F}_\emptyset( \raisebox{-5pt}{\includegraphics[height=0.2in]{circle2sv.pdf}}) = V'$, where $V'$ is the $\mathbb{Z}[i][a]$-module generated by 
$$\xymatrix@R=2mm{
v'_-:= \raisebox{-5pt}{\includegraphics[height=0.25in]{cupsa.pdf}}& \text{and} &
v'_+:= \raisebox{-5pt}{\includegraphics[height=0.25in]{cupsad.pdf}},
}$$
where we fix the dot (once and for all) on the back facet, say.
\noindent Since deg($v_-$) = deg($v'_-$) = -1 and deg($v_+$) = deg($v'_+$) = 1, $V$ and $V'$ are free $\mathbb{Z}[i][a]$-modules with rank $2$. Their graded rank is
\[
q\,\rk(V) = q\,\rk(V') = q + q^{-1}.
\]
Moreover, there is a canonical isomorphism of $\mathbb{Z}[i][a]$-modules, $V \cong V'$, that mapes $v_-$ to $v'_-$ and $v_+$ to $v'_+$.

Recall that $\mathcal{A} =\brak{1, X}_{\mathbb{Z}[i][a]}$ is a $\mathbb{Z}[i][a]$-module of rank $2$, with generators $1$ and $X$ and graded rank $q\,\rk(\mathcal{A}) = q + q^{-1}$. Note that we can identify $V$ (hence $V'$) with $\mathcal{A}$ via the canonical isomorphism $V \cong \mathcal{A}$, $v_- \rightarrow 1, v_+ \rightarrow X$ (or $V' \cong \mathcal{A}$, $v'_- \rightarrow 1, v'_+ X$). We summarize this as follows.

\begin{proposition}\label{prop:isom from A to V and V'} There are canonical isomorphisms
$$\xymatrix@R=2mm{
\mathcal{F}_{\emptyset}(\raisebox{-5pt}{\includegraphics[width=0.2in]{unknot-clockwise.pdf}}) \cong \mathcal{A} \cong 
\mathcal{F}_{\emptyset}(\raisebox{-3pt}{\includegraphics[height=0.18in]{circle2sv.pdf}})}$$ 
and the following identity holds:
 $q\,\rk (\mathcal{F}_{\emptyset}(\raisebox{-5pt}{\includegraphics[height=0.2in]{unknot-clockwise.pdf}})) = [2] = q\, \rk(\mathcal{F}_{\emptyset}(\raisebox{-3pt}{\includegraphics[height=0.18in]{circle2sv.pdf}})).$ 
\end{proposition}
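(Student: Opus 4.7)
The plan is to formalize the reduction sketched just before the proposition: show that $V := \mathcal{F}_\emptyset(\raisebox{-3pt}{\includegraphics[height=0.15in]{unknot-clockwise.pdf}})$ and $V' := \mathcal{F}_\emptyset(\raisebox{-3pt}{\includegraphics[height=0.15in]{circle2sv.pdf}})$ are each free $\mathbb{Z}[i][a]$-modules of rank two on the stated generators, identify them canonically with $\mathcal{A}$, and then read off the graded rank from the degree formula.

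First I would prove that $\{v_-, v_+\}$ spans $V$. Given any foam $F \in \Hom(\emptyset, \raisebox{-3pt}{\includegraphics[height=0.15in]{unknot-clockwise.pdf}})$ in $\textit{Foams}_{/\ell}(\emptyset)$, apply the surgery relation (SF) repeatedly to interior circles of $F$ until every connected component meets the boundary in at most one boundary web; by the genus reduction of Figure~\ref{fig:genusreduction} and (2D), each facet can be assumed to carry at most one dot. Closed components without dots are killed, and those with a dot are replaced by a scalar in $\mathbb{Z}[i][a]$ via (S) and the torus and genus evaluations. What remains is a disk with zero or one dot, i.e.\ a $\mathbb{Z}[i][a]$-combination of $v_-$ and $v_+$. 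The same scheme handles $V'$, but uses in addition the exchange-of-dots relations of Figure~\ref{fig:exchanging dots} to push every dot onto the preferred (back) facet, the (UFO), (CN), and (RSC) identities to dispatch auxiliary ufo-components produced by surgery, and Lemma~\ref{handy relations} to convert stray singular circles into factors of $\pm i$.

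Second, I would verify linear independence via the pairing with caps encoded in Definition~\ref{def:quotient category}: a relation $c_- v_- + c_+ v_+ = 0$ in $V$ is equivalent to $c_- \mathcal{F}(Uv_-) + c_+ \mathcal{F}(Uv_+) = 0$ for every cap $U \in \Hom(\raisebox{-3pt}{\includegraphics[height=0.15in]{unknot-clockwise.pdf}}, \emptyset)$. Pairing against the undotted cap produces a sphere with at most one dot, whose evaluation by (S) isolates the coefficient of $v_+$; pairing against the dotted cap isolates the coefficient of $v_-$. The argument for $V'$ is identical after Lemma~\ref{handy relations} rewrites each singular cap as $\pm i$ times an ordinary cap, and the same computation exhibits the map $v_\pm \mapsto v'_\pm$ as a canonical $\mathbb{Z}[i][a]$-isomorphism $V \cong V'$. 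Sending $v_- \mapsto 1$, $v_+ \mapsto X$ then gives the asserted iso to $\mathcal{A}$ on both sides.

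Finally, the grading formula $\text{deg}(S) = -\chi(S) + \frac{1}{2}\vert B\vert + 2d$ applied to a disk (or singular cup) yields $\text{deg}(v_\mp) = \text{deg}(v'_\mp) = \mp 1$, whence $q\,\rk V = q^{-1} + q = [2] = q\,\rk V'$. The main obstacle is the first step in the $\Gamma'$ case: tracking preferred-facet orderings under repeated surgery one must verify that the reduction really does land on a single canonical $\mathbb{Z}[i][a]$-combination of $v'_\pm$, rather than one that is ambiguous up to a power of $i$ coming from reversing the ordering of facets at a singular circle. The consistency result for closed-foam evaluations established earlier is exactly what removes this ambiguity, making the pairing test well-defined and the identification $V' \cong \mathcal{A}$ truly canonical.
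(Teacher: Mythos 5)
Your argument follows the same reduction strategy the paper uses (the paper presents the proposition as a summary of the discussion immediately preceding it): surgeries via (SF) and (CN) reduce any foam in $\Hom_{\textit{Foams}_{/\ell}(\emptyset)}(\emptyset,\Gamma)$ to a $\mathbb{Z}[i][a]$-combination of the undotted and once-dotted (possibly singular) cup, and the degree formula then gives the graded rank $q+q^{-1}$. What you add, which the paper leaves implicit when it simply asserts that $V$ and $V'$ are free of rank two, is a verification of linear independence by pairing against caps through Definition~\ref{def:quotient category}; this is a genuine and welcome tightening of the written argument, and the appeal to the consistency of closed-foam evaluations to make the $V'$ pairing well-defined is exactly the right supporting fact. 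Two small imprecisions worth flagging, neither of which is a gap: (1) it is not true that closed components without dots are all killed — a dotless torus evaluates to $2$ and a dotless genus-three surface to $8a$; after genus reduction and (2D) the correct statement is simply that each closed component (sphere, ufo, or their surgered combinations) evaluates to some scalar in $\mathbb{Z}[i][a]$, which is all the spanning argument needs. (2) Pairing $c_-v_-+c_+v_+$ against the dotted cap yields $c_-+a\,c_+$, not $c_-$ alone, so the dotted-cap pairing isolates $c_-$ only after first using the undotted-cap pairing to kill $c_+$; the two together still give freeness, so the conclusion stands.
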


\begin{proposition}\label{prop:webhomskein1}
If $\Gamma_2 = \Gamma_1 \cup  \raisebox{-5pt}{\includegraphics[height=0.2in]{unknot-clockwise.pdf}}$ or $\Gamma_2 = \Gamma_1 \cup  \raisebox{-3pt}{\includegraphics[height=0.18in]{circle2sv.pdf}}$, then
\begin{equation}\label{eqn:circleremoval , links}
\mathcal{F}_{\emptyset}(\Gamma_2) \cong \mathcal{F}_{\emptyset}(\Gamma_1) \otimes_{\mathbb{Z}[i][a]} \mathcal{A}.
\end{equation}
\end{proposition}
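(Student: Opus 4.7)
The plan is to construct mutually inverse $\mathbb{Z}[i][a]$-module homomorphisms between $\mathcal{F}_\emptyset(\Gamma_1) \otimes_{\mathbb{Z}[i][a]} \mathcal{A}$ and $\mathcal{F}_\emptyset(\Gamma_2)$. By Corollary~\ref{replacing basic closed webs}, the oriented loop and the closed web with two singular vertices are isomorphic in $\textit{Foams}_{/\ell}$, so it suffices to treat the case in which the extra component $C$ is an oriented loop; the other case will then follow by composing with the given isomorphism. By Proposition~\ref{prop:isom from A to V and V'}, $\mathcal{F}_\emptyset(C) \cong \mathcal{A}$ with basis $\{v_-, v_+\}$ corresponding to $\{1, X\}$.

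First I would define $\phi : \mathcal{F}_\emptyset(\Gamma_1) \otimes \mathcal{A} \to \mathcal{F}_\emptyset(\Gamma_2)$ by disjoint union: $\phi(U \otimes 1) = U \sqcup v_-$ and $\phi(U \otimes X) = U \sqcup v_+$, extended $\mathbb{Z}[i][a]$-linearly. This is clearly well defined and degree-preserving, since the grading of a foam adds under disjoint union. For the inverse $\psi$, I would use the Frobenius pairing on $\mathcal{A}$: given $S \in \Hom_{\textit{Foams}_{/\ell}}(\emptyset, \Gamma_2)$, let $S_X$ denote the foam in $\mathcal{F}_\emptyset(\Gamma_1)$ obtained by composing $S$ with a \emph{dotted} cap on the $C$-component (and the identity on $\Gamma_1$), and let $S_1$ be the analogous foam obtained by composing with an undotted cap. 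Set $\psi(S) := S_X \otimes 1 + S_1 \otimes X$. The choice of dual basis is forced by the Frobenius pairing values $\epsilon(1 \cdot 1) = 0$, $\epsilon(1 \cdot X) = 1$, $\epsilon(X \cdot X) = a\,\epsilon(1) = 0$, which make $\{X, 1\}$ dual to $\{1, X\}$.

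The identity $\psi \circ \phi = \id$ should be a direct computation: applying dotted and undotted caps to $v_\pm$ produces spheres carrying $0$, $1$, or $2$ dots whose evaluations under (S) and (2D) reproduce the Frobenius pairing, yielding back the chosen basis element. The identity $\phi \circ \psi = \id$ is the essential step, and the key tool is the surgery formula (SF). Given $S : \emptyset \to \Gamma_2$, the top boundary contains a collar neighbourhood $C \times [1-\varepsilon, 1]$ sitting inside a single facet of $S$, and (SF) applied to this collar tube yields $S = S^{(1)} + S^{(2)}$, where $S^{(1)} = S_X \sqcup v_-$ (dotted cap below the cut, undotted cup above) and $S^{(2)} = S_1 \sqcup v_+$; hence $S = \phi(\psi(S))$.

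The main obstacle I anticipate is keeping the surgery argument in $\phi \circ \psi = \id$ clean: one must verify that the (SF)-decomposition of the collar genuinely splits $S$ as a disjoint union, and must match the caps produced by surgery with the definitions of $S_X$ and $S_1$. Once phrased locally in a neighbourhood of $C$, this reduces to the topological incarnation of the Frobenius identity $\id_{\mathcal{A}} = m(X)\iota\epsilon + \iota\epsilon\, m(X)$ that was already verified in the lemma identifying $\mathcal{F}$ with $\mathsf{F}$.
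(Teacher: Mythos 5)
Your proof is correct and takes essentially the same approach as the paper: the forward map is disjoint union with an (un)dotted cup, and the inverse is given by the neck-cutting formula (SF) applied to a collar of $C$, which is exactly how the paper defines and inverts $f$ and $g$. The only minor difference is that you reduce the basic-closed-web case to the oriented-loop case via Corollary~\ref{replacing basic closed webs}, whereas the paper handles that case directly by applying (SF) or (CN) depending on the singular arc structure; your shortcut is legitimate since $\mathcal{F}_\emptyset = \Hom_{\textit{Foams}_{/\ell}}(\emptyset,-)$ sends isomorphisms to isomorphisms.
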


\begin{proof} Let $\Gamma_2 = \Gamma_1 \cup  \raisebox{-5pt}{\includegraphics[height=0.2in]{unknot-clockwise.pdf}}$ and consider the following maps:
\[
f: \mathcal{F}_{\emptyset}(\Gamma_2) \rightarrow \mathcal{F}_{\emptyset}(\Gamma_1) \otimes \mathcal{A}, \quad  g : \mathcal{F}_{\emptyset}(\Gamma_1) \otimes \mathcal{A} \rightarrow \mathcal{F}_{\emptyset}(\Gamma_2).
\]

To define $f$, start with a foam $S$ from the empty web to $\Gamma_2 = \Gamma_1 \cup  \raisebox{-5pt}{\includegraphics[height=0.2in]{unknot-clockwise.pdf}}$ and do a surgery near the circle. Each term in the resulting sum is a disjoint union of some foam from the empty web to $\Gamma_1$ and a cup (= $v_-$) or a dotted cup (= $v_+$), respectively. Convert the cup and dotted cup to basis elements of $\mathcal{A}$ (the cup to $1$ and  the cup with one dot to $X$, that is, use the isomorphism from proposition~\ref{prop:isom from A to V and V'}). The sum becomes an element of $\mathcal{F}_{\emptyset}(\Gamma_1) \otimes \mathcal{A}$.

Consider now an element $\sum_i c_i (S_i \otimes 1) + \sum_j d_j(S'_j \otimes X)$ in $\mathcal{F}_{\emptyset}(\Gamma_1) \otimes \mathcal{A}$, where $c_i, d_j \in \mathbb{Z}[i][a]$, and convert $1$ to the cup and $X$ to the cup with one dot. Each term in the above sum is a disjoint union of a foam from the empty web to $\Gamma_1$ and a cup or a one dotted cup, hence an element of $\mathcal{F}_{\emptyset}(\Gamma_2)$. Thus the entire sum is an element of $\mathcal{F}_{\emptyset}(\Gamma_2)$. 
Moreover $f$ is well-defined, and is a two-sided inverse of $g$. The first assertion follows.

The case $\Gamma_2 = \Gamma_1 \cup  \raisebox{-3pt}{\includegraphics[height=0.18in]{circle2sv.pdf}}$ is proved similarly, and we let it to the reader. (Start with a foam $S$ from the empty web to $\Gamma_2 = \Gamma_1 \cup  \raisebox{-3pt}{\includegraphics[height=0.18in]{circle2sv.pdf}}$. If the two vertices in  \raisebox{-5pt}{\includegraphics[height=0.18in]{circle2sv.pdf}} belong to the same singular arc, do a surgery near it and apply the isomorphism $V' \cong \mathcal{A}$ from proposition~\ref{prop:isom from A to V and V'}. If the vertices belong to different singular arcs in $S$, apply relation (CN) and again the isomorphism $V' \cong \mathcal{A}$).
\end{proof}

The previous proof works for the category $\textit{Foams}_{/\ell}(B)$ as well, so that, we state the result with any other details.

\begin{proposition}
If $\Gamma_2 = \Gamma_1 \cup  \raisebox{-5pt}{\includegraphics[height=0.2in]{unknot-clockwise.pdf}}$  or $\Gamma_2 = \Gamma_1 \cup  \raisebox{-3pt}{\includegraphics[height=0.18in]{circle2sv.pdf}}$, then
\begin{equation}\label{eqn:circleremoval, tangles}
\mathcal{F}_{\Gamma_0}(\Gamma_2) \cong \mathcal{F}_{\Gamma_0}(\Gamma_1) \otimes_{\mathbb{Z}[i][a]} \mathcal{A}.
\end{equation}
\end{proposition}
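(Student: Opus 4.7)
The plan is to mirror the proof of Proposition \ref{prop:webhomskein1} verbatim, with the empty web replaced by the general web $\Gamma_0$ of boundary $B$. The essential observation is that every step of that argument takes place inside a small cylindrical neighborhood of the added circle on the \emph{top} boundary of the foam, so it is unaffected by how the rest of the foam reaches the bottom boundary $\Gamma_0$.

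Specifically, I would define mutually inverse $\mathbb{Z}[i][a]$-linear maps
\[
f: \mathcal{F}_{\Gamma_0}(\Gamma_2) \longrightarrow \mathcal{F}_{\Gamma_0}(\Gamma_1) \otimes_{\mathbb{Z}[i][a]} \mathcal{A}, \qquad g: \mathcal{F}_{\Gamma_0}(\Gamma_1) \otimes_{\mathbb{Z}[i][a]} \mathcal{A} \longrightarrow \mathcal{F}_{\Gamma_0}(\Gamma_2).
\]
In the first case, given a foam $S : \Gamma_0 \to \Gamma_1 \cup \raisebox{-5pt}{\includegraphics[height=0.2in]{unknot-clockwise.pdf}}$, apply the (SF) surgery relation to a small horizontal circle cutting off a collar of the added top loop; after reducing any extra dots by (2D), this expresses $S$ as a sum $S_1 \sqcup v_- + S_2 \sqcup v_+$ with $S_1, S_2 : \Gamma_0 \to \Gamma_1$. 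Under the identification $v_- \mapsto 1,\ v_+ \mapsto X$ from Proposition \ref{prop:isom from A to V and V'}, set $f(S) := S_1 \otimes 1 + S_2 \otimes X$, and let $g$ be the obvious disjoint union with $v_-$ or $v_+$. For the second case $\Gamma_2 = \Gamma_1 \cup \raisebox{-3pt}{\includegraphics[height=0.18in]{circle2sv.pdf}}$, the same strategy works: if the two singular arcs meeting the added two-vertex circle join into a single singular annulus inside $S$, perform surgery across that annulus; otherwise, first apply the (CN) relation in a neighborhood of the added web to reduce to the previous situation, and then proceed identically.

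The one step that requires care, and which I expect to be the only (mild) obstacle, is the verification of well-definedness of $f$ in the second case: a priori the answer might depend on the choice of surgery circle and on whether one invokes (CN) first. However, since the relations (SF), (2D), (S), (UFO), and (CN) are all local and are applied only inside the collar near the added circle --- which is disjoint from $\Gamma_0$ and from $\Gamma_1$ --- the argument from Proposition \ref{prop:webhomskein1} carries over unchanged, and mutual invertibility of $f$ and $g$ is then immediate from Proposition \ref{prop:isom from A to V and V'}.
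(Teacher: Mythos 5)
Your proposal is correct and takes essentially the same route as the paper, which simply states that the proof of Proposition~\ref{prop:webhomskein1} carries over verbatim to the relative case. You have usefully spelled out the reason it carries over — locality of the relations (SF), (2D), (S), (UFO), (CN) near the added circle, away from $\Gamma_0$ — but this is an elaboration of, not a departure from, the paper's argument.
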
 

\begin{proposition}

There are natural isomorphism of graded abelian groups
\begin{equation}\label{eqn:verticesremoval}
\mathcal{F}_{\Gamma_0}(\raisebox{-5pt}{\includegraphics[height=0.15in]{2vertweb.pdf}}) \cong \mathcal{F}_{\Gamma_0}(\raisebox{-5pt}{\includegraphics[height=0.15in]{arcro.pdf}}) \quad \text{and} \quad
\mathcal{F}_{\Gamma_0}(\raisebox{-5pt}{\includegraphics[height=0.15in]{2vertwebleft.pdf}} ) \cong \mathcal{F}_{\Gamma_0}(\raisebox{-5pt}{\includegraphics[height=0.15in]{arclo.pdf}}).
\end{equation}
\end{proposition}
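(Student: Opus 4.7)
The plan is to derive these isomorphisms by applying the representable Hom-functor $\mathcal{F}_{\Gamma_0} = \Hom_{\textit{Foams}_{/\ell}(B)}(\Gamma_0,-)$ to geometric isomorphisms of webs that have already been established in the preceding section. Concretely, Corollary~\ref{removing singular points in pairs} (itself built from the proof of Lemma~\ref{nice relations}) produces mutually inverse foams $\alpha_1,\beta_1$ realizing an isomorphism
\[
\raisebox{-3pt}{\includegraphics[height=0.15in]{2vertweb.pdf}} \;\cong\; \raisebox{-3pt}{\includegraphics[height=0.15in]{arcro.pdf}}
\]
in the quotient category $\textit{Foams}_{/\ell}$, together with a parallel pair $\alpha_2,\beta_2$ for the left-oriented variant $\raisebox{-3pt}{\includegraphics[height=0.15in]{2vertwebleft.pdf}} \cong \raisebox{-3pt}{\includegraphics[height=0.15in]{arclo.pdf}}$.

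First I would invoke the standard categorical fact that any covariant functor sends isomorphisms to isomorphisms. Since $\mathcal{F}_{\Gamma_0}$ is, by Definition~\ref{def:web homology}, a covariant functor into $\mathbb{Z}[i][a]$-Mod acting on morphisms by post-composition, the isomorphism $\alpha_1$ gives rise to a $\mathbb{Z}[i][a]$-linear isomorphism
\[
(\alpha_1)_{*}\colon \mathcal{F}_{\Gamma_0}\!\left(\raisebox{-3pt}{\includegraphics[height=0.15in]{arcro.pdf}}\right) \longrightarrow \mathcal{F}_{\Gamma_0}\!\left(\raisebox{-3pt}{\includegraphics[height=0.15in]{2vertweb.pdf}}\right), \qquad U \longmapsto \alpha_1 \circ U,
\]
with two-sided inverse $(\beta_1)_{*}$. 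The same construction with $(\alpha_2,\beta_2)$ yields the second isomorphism of \eqref{eqn:verticesremoval}.

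Next I would verify that these module isomorphisms respect the grading. By the definition of the grading of foams and the additivity $\deg(S_1\circ S_2)=\deg(S_1)+\deg(S_2)$, the induced map $(\alpha_1)_{*}$ has degree $\deg(\alpha_1)$. A direct application of the formula $\deg(S)=-\chi(S)+\tfrac{1}{2}|B|+2d$ to the cobordisms drawn in Figures~\ref{fig:cobordisms in CI} and~\ref{fig:cobordisms in CI 2} shows that each of $\alpha_1,\beta_1,\alpha_2,\beta_2$ is a dotless degree-zero foam; the scalar prefactors $\pm i$ do not affect degree. Hence the $(\alpha_i)_{*}$ and $(\beta_i)_{*}$ are degree-zero isomorphisms of graded $\mathbb{Z}[i][a]$-modules, as required.

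There is essentially no substantive obstacle, since the conceptual content—namely the verification $\alpha_i\beta_i=\mathrm{Id}$ and $\beta_i\alpha_i=\mathrm{Id}$ in the quotient category—has already been carried out in the proof of Lemma~\ref{nice relations} using the (CI) relations and Lemma~\ref{handy relations}. The only aspect requiring attention is the bookkeeping of the $\pm i$ coefficients through the compositions so that the induced Hom-module maps form a genuine inverse pair, and this is automatic once the underlying web-level isomorphism is in hand.
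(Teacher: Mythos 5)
Your proposal is correct and follows essentially the same route as the paper: both take the foam-level isomorphisms $\raisebox{0pt}{2vertweb}\cong\raisebox{0pt}{arcro}$ and $\raisebox{0pt}{2vertwebleft}\cong\raisebox{0pt}{arclo}$ established in Corollary~\ref{removing singular points in pairs}, apply the covariant Hom-functor $\mathcal{F}_{\Gamma_0}$, and note that the foams involved are degree-zero so the induced module isomorphisms preserve the grading.
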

\begin{proof} Recall that in the category $\textit{Foam}_{/\ell}$, there are the following isomorphisms

 \[ \raisebox{-5pt}{\includegraphics[height=0.15in]{2vertweb.pdf}}\stackrel{\raisebox{-5pt}{\includegraphics[height=.25in]{curtainsadown.pdf}}}{\longrightarrow}\raisebox{-5pt}{\includegraphics[height=0.15in]{arcro.pdf}} \quad \text{and}\quad  
 \raisebox{-5pt}{\includegraphics[height=0.15in]{2vertwebleft.pdf}}\stackrel{\raisebox{-5pt}{\includegraphics[height=.25in]{curtainsadownleft.pdf}}}{\longrightarrow}\raisebox{-5pt}{\includegraphics[height=0.15in]{arclo.pdf}}.\] 
 
Moreover, these isomorphisms are degree-preserving. Therefore, they induce grading-preserving isomorphisms:
 
\[
\mathcal{F}_{\Gamma_0}(\raisebox{-5pt}{\includegraphics[height=0.13in]{2vertweb.pdf}})\stackrel{\mathcal{F}_{\Gamma_0}(\raisebox{-5pt}{\includegraphics[height=.25in]{curtainsadown.pdf}})}{\longrightarrow}
\mathcal{F}_{\Gamma_0}(\raisebox{-5pt}{\includegraphics[height=0.13in]{arcro.pdf}}) \, \text{and}\, 
\mathcal{F}_{\Gamma_0}(\raisebox{-5pt}{\includegraphics[height=0.13in]{2vertwebleft.pdf}})\stackrel{\mathcal{F}_{\Gamma_0}(\raisebox{-5pt}{\includegraphics[height=.25in]{curtainsadownleft.pdf}})}{\longrightarrow} \mathcal{F}_{\Gamma_0}(\raisebox{-5pt}{\includegraphics[height=0.13in]{arclo.pdf}}),
\]
\noindent which proves the proposition.
\end{proof}

\begin{corollary}
$\mathcal{F}_{\emptyset}(\Gamma)$ is a free $\mathbb{Z}[i][a]$-module of graded rank $\brak{\Gamma}$.
\end{corollary}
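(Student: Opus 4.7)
The plan is to argue by induction on a suitable complexity of the closed web $\Gamma$, using the two skein-level isomorphisms established in this subsection as the categorical liftings of the defining rules (1)--(3) for the bracket $\brak{\cdot}$ in figure~\ref{fig:web skein relations}. I would order closed webs lexicographically by the pair (total number of bivalent vertices, number of connected components). The base case is $\Gamma = \emptyset$: here $\mathcal{F}_\emptyset(\emptyset) = \Hom_{\textit{Foams}_{/\ell}(\emptyset)}(\emptyset, \emptyset) = \mathbb{Z}[i][a]$ is free of graded rank $1 = \brak{\emptyset}$.

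For the inductive step I would exploit the structural fact that a closed web is a bivalent oriented graph each of whose connected components with at least one vertex is an even cycle alternating sources and sinks (so the number of vertices in any such component is $2, 4, 6, \ldots$). If $\Gamma$ contains either an oriented loop component or a basic two-vertex digon \raisebox{-3pt}{\includegraphics[height=0.15in]{circle2sv.pdf}}, I would split it off via proposition~\ref{prop:webhomskein1} to obtain
\[
\mathcal{F}_\emptyset(\Gamma) \;\cong\; \mathcal{F}_\emptyset(\Gamma') \otimes_{\mathbb{Z}[i][a]} \mathcal{A},
\]
where $\Gamma'$ is $\Gamma$ with that component deleted. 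The inductive hypothesis makes $\mathcal{F}_\emptyset(\Gamma')$ free of graded rank $\brak{\Gamma'}$; tensoring with $\mathcal{A}$, which is itself free of graded rank $[2]$, yields a free module of graded rank $[2]\brak{\Gamma'} = \brak{\Gamma}$, the last equality coming from rules (1)--(2) of the skein relations.

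Otherwise every component of $\Gamma$ with vertices is a cycle of length at least four. In that case I would pick an edge joining two distinct bivalent vertices inside such a component, view $\Gamma$ as a planar-algebra insertion of the local piece \raisebox{-3pt}{\includegraphics[height=0.13in]{2vertweb.pdf}} into the complementary context, and apply the contraction isomorphism~(\ref{eqn:verticesremoval}) to replace that piece by \raisebox{-3pt}{\includegraphics[height=0.13in]{arcro.pdf}}. This produces a closed web $\Gamma''$ with two fewer vertices and a degree-preserving isomorphism $\mathcal{F}_\emptyset(\Gamma) \cong \mathcal{F}_\emptyset(\Gamma'')$. Rule (3) of the skein relations gives $\brak{\Gamma} = \brak{\Gamma''}$, and the inductive hypothesis closes the step.

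The main obstacle is a bookkeeping one: justifying that the local isomorphism~(\ref{eqn:verticesremoval}), which is stated inside $\textit{Foams}_{/\ell}(B)$ for a fixed boundary $B$, lifts to an isomorphism of hom-groups out of $\emptyset$ when the local piece sits inside a larger closed web. This is handled by the planar-algebra structure of $\textit{Foams}_{/\ell}$ set up in section~\ref{sec:planar algebras}: a boundary-preserving isomorphism of two webs induces, via any planar-algebra insertion, an isomorphism of the resulting webs, and this induced isomorphism is respected by the hom-functor $\mathcal{F}_\emptyset$. Termination of the induction is then automatic, since each step strictly decreases the lexicographic complexity.
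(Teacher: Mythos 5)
Your proof is correct and is essentially the induction the paper leaves implicit: it combines proposition~\ref{prop:webhomskein1} (splitting off a loop or digon component, tensoring with $\mathcal{A}$) with the vertex-removal isomorphism~(\ref{eqn:verticesremoval}), and your observation that the local isomorphism extends via the planar-algebra structure (or, equivalently, by completing the isomorphism of corollary~\ref{removing singular points in pairs} with an identity foam outside the disk) is exactly the right justification. The paper presents this as an immediate corollary of the preceding propositions without writing out the induction, so your argument matches the intended route.
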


\begin{corollary}\label{prop:webhomskein2}
There is a natural isomorphism
\begin{equation}\label{eqn:homology for disjoint union}
\mathcal{F}_{\Gamma_0}(\Gamma_1 \cup \Gamma_2) \cong \mathcal{F}_{\Gamma_0}(\Gamma_1) \otimes_{\mathbb{Z}[i][a]} \mathcal{F}_{\Gamma_0}(\Gamma_2),
\end{equation}
where $\Gamma_1 \cup \Gamma_2$ is the disjoint union of webs $\Gamma_1, \Gamma_2$.
\end{corollary}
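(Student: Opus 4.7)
The plan is to bootstrap from the already-established identifications (\ref{eqn:circleremoval, tangles}) and (\ref{eqn:verticesremoval}) by induction on the complexity of $\Gamma_2$. For the disjoint union $\Gamma_1 \cup \Gamma_2$ to live in $\textit{Foams}_{/\ell}(B)$ one must interpret $\Gamma_2$ as a closed web, in which case the natural right-hand tensor factor is $\mathcal{F}_{\emptyset}(\Gamma_2)$ (this is the meaning to give to $\mathcal{F}_{\Gamma_0}(\Gamma_2)$ in the statement when $B \neq \emptyset$; when $B = \emptyset$ and $\Gamma_0 = \emptyset$ the notation coincides).

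First, I would construct the candidate natural map. Disjoint juxtaposition of foams sends a pair $(U_1, U_2) \in \Hom_{\textit{Foams}_{/\ell}(B)}(\Gamma_0, \Gamma_1) \times \Hom_{\textit{Foams}_{/\ell}(\emptyset)}(\emptyset, \Gamma_2)$ to a foam $U_1 \sqcup U_2 \in \Hom_{\textit{Foams}_{/\ell}(B)}(\Gamma_0, \Gamma_1 \cup \Gamma_2)$. This assignment is $\mathbb{Z}[i][a]$-bilinear, so by the universal property of the tensor product it factors through a $\mathbb{Z}[i][a]$-linear map
\[
\phi \colon \mathcal{F}_{\Gamma_0}(\Gamma_1) \otimes_{\mathbb{Z}[i][a]} \mathcal{F}_{\emptyset}(\Gamma_2) \longrightarrow \mathcal{F}_{\Gamma_0}(\Gamma_1 \cup \Gamma_2).
\]
Naturality of $\phi$ in $\Gamma_1$ is immediate, since post-composing with any foam $S \colon \Gamma_1 \to \Gamma_1'$ commutes with placing $U_2$ disjointly off to the side.

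I would then show that $\phi$ is an isomorphism by induction on the pair (number of singular points of $\Gamma_2$, number of connected components of $\Gamma_2$), ordered lexicographically. The base case $\Gamma_2 = \emptyset$ reduces $\phi$ to the canonical identification $\mathcal{F}_{\Gamma_0}(\Gamma_1) \otimes \mathbb{Z}[i][a] \cong \mathcal{F}_{\Gamma_0}(\Gamma_1)$. If $\Gamma_2$ contains a pair of adjacent singular points of the same type, Corollary \ref{removing singular points in pairs} provides a foam, supported in a small disk disjoint from $\Gamma_1$, that realizes an isomorphism $\Gamma_2 \cong \Gamma_2'$ with two fewer vertices; the same foam juxtaposed with the identity on $\Gamma_1$ realizes (\ref{eqn:verticesremoval}) for $\Gamma_1 \cup \Gamma_2 \cong \Gamma_1 \cup \Gamma_2'$, and the two isomorphisms fit into a square with $\phi$ and $\phi'$ that commutes by locality. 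If $\Gamma_2$ has no vertices it is a disjoint union of oriented loops; assuming at least one component, I peel off one loop: on the right-hand factor use the isomorphism from Proposition \ref{prop:webhomskein1} to write $\mathcal{F}_{\emptyset}(\Gamma_2) \cong \mathcal{F}_{\emptyset}(\Gamma_2') \otimes \mathcal{A}$, and on the target apply (\ref{eqn:circleremoval, tangles}) to $\Gamma_1 \cup \Gamma_2 \cong (\Gamma_1 \cup \Gamma_2') \cup \mathrm{loop}$. Both identifications are expressed in terms of the same cup/dotted-cup basis via Proposition \ref{prop:isom from A to V and V'}, so compatibility with $\phi$ is automatic and the inductive hypothesis closes the step.

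The main obstacle I expect is verifying that the commuting squares at each inductive step really do commute on the nose. This ultimately rests on the local character of all the reduction moves: the surgeries of Proposition \ref{prop:webhomskein1}, the curtain-style isomorphism foams of Corollary \ref{removing singular points in pairs}, and the identifications in (\ref{eqn:verticesremoval}) and (\ref{eqn:circleremoval, tangles}) are all supported in small disks inside the $\Gamma_2$ region of the ambient disk, so they commute with the identity foam on the $\Gamma_1$ region that appears in the bilinear pairing defining $\phi$. Once that locality is made precise, the induction runs without further incident.
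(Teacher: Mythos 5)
The paper states this result as a Corollary with no written proof; the intended argument is precisely the one you reconstruct: reduce $\Gamma_2$ to a union of oriented loops via the web isomorphisms of section~4, peel off loops one at a time with the circle-removal proposition, and note that both sides of the isomorphism you are after become $\mathcal{F}_{\Gamma_0}(\Gamma_1) \otimes \mathcal{A}^{\otimes k}$. Your version is more careful in two respects that are worth keeping: you name the candidate natural map (disjoint juxtaposition of foams) rather than just matching ranks, which is what actually yields \emph{naturality} and not merely abstract isomorphism; and you correctly flag the interpretive point that when $B = \partial\Gamma_0 \neq \emptyset$ one must read $\mathcal{F}_{\Gamma_0}(\Gamma_2)$ as $\mathcal{F}_{\emptyset}(\Gamma_2)$ for the statement to type-check, a subtlety the paper's notation glosses over.

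One small point to tighten in your induction. You reduce vertices only through the ``pair of adjacent singular points of the same type'' move of Corollary~\ref{removing singular points in pairs}, and you fall back to loop-peeling only once $\Gamma_2$ is vertex-free. You should say why a closed-web component with vertices always contains such a pair. In fact it does: on a circle with $2k$ vertices the arc orientations alternate, and unwinding the definition of preferred arc shows that the ``preferred-arc-is-the-next-arc'' indicator alternates around the circle as well, so every adjacent pair is of the same type. Alternatively, you can sidestep the issue entirely by also allowing yourself to peel off a basic two-vertex web $\raisebox{-3pt}{\includegraphics[height=0.18in]{circle2sv.pdf}}$ directly via the second clause of Proposition~\ref{prop:webhomskein1} (together with Corollary~\ref{Isomorphisms 1 and 2} to normalize orientations), which is closer to how the paper itself organizes the reduction in section~\ref{sec:relationship with Kh}. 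Either fix closes the induction cleanly; the rest of the argument, in particular the locality reasoning for commutativity of the squares, is sound.
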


$\mathbf{Conclusion.}$ Starting with a link diagram ($B = \emptyset$, and $\Gamma_0 = \emptyset$) and resolving all its crossings in the way  we have explained previously, we obtain resolutions which are disjoint unions of closed webs with an even number of vertices (singular points) or no vertices at all. The `homology' of each connected component of a resolution is isomorphic to $\mathcal{A}$, independent on the number of vertices of that component. In other words, the functor $\mathcal{F}$ `does not see' the singular vertices on a closed web, and the `homology' associated to each resolution is $\mathcal{A}^{\otimes k}$, where $k$ is the number of connected components of that resolution. 

\begin{corollary}
The functor $\mathcal{F}_{\emptyset}$ is the same as the functor $\mathsf{F}$ defined in subsection ~\ref{ssec:TQFT}.
\end{corollary}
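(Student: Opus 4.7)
The plan is to verify agreement of $\mathcal{F}_{\emptyset}$ and $\mathsf{F}$ on objects and morphisms separately, leaning on the infrastructure assembled in the preceding sections.

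On objects, the agreement is essentially already packaged. Proposition~\ref{prop:isom from A to V and V'} supplies canonical isomorphisms $\mathcal{F}_{\emptyset}(\raisebox{-3pt}{\includegraphics[height=0.15in]{unknot-clockwise.pdf}})\cong \mathcal{A}\cong \mathcal{F}_{\emptyset}(\raisebox{-3pt}{\includegraphics[height=0.15in]{circle2sv.pdf}})$, matching the assignments $\mathsf{F}$ makes from its TQFT definition. Corollary~\ref{prop:webhomskein2} shows that $\mathcal{F}_{\emptyset}$ is multiplicative under disjoint union of webs, the same tensor-product behaviour that $\mathsf{F}$ satisfies by construction. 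Finally, Corollary~\ref{removing singular points in pairs} gives isomorphisms in $\textit{Foams}_{/\ell}$ that remove pairs of adjacent singular points, so every closed web is isomorphic to a disjoint union of oriented loops. Both functors therefore assign $\mathcal{A}^{\otimes k}$ to a $k$-component closed web via the \emph{same} canonical natural transformation $\eta\colon \mathsf{F}\Rightarrow\mathcal{F}_{\emptyset}$, fixed once and for all by the two bases $\{v_{\pm}\}$ and $\{v'_{\pm}\}$.

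For morphisms, I would invoke the Lemma of Section~\ref{relations l} identifying the evaluation functor $\mathcal{F}$ on closed foams with $\mathsf{F}$. Given $S\in \Hom_{\textit{Foams}_{/\ell}}(\Gamma_1,\Gamma_2)$, the homomorphism $\mathcal{F}_{\emptyset}(S)$ is $U\mapsto S\circ U$, and by Definition~\ref{def:quotient category} it is entirely determined by the scalars $\mathcal{F}(V\circ S\circ U)\in\mathbb{Z}[i][a]$ as $U\in \Hom(\emptyset,\Gamma_1)$ and $V\in \Hom(\Gamma_2,\emptyset)$ vary. By the Lemma together with functoriality of $\mathsf{F}$, each such scalar equals $\mathsf{F}(V)\circ \mathsf{F}(S)\circ \mathsf{F}(U)$ applied to $1$. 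The nondegeneracy of the composition pairing $\Hom(\emptyset,\Gamma)\otimes \Hom(\Gamma,\emptyset)\to \mathbb{Z}[i][a]$, inherited from the Frobenius structure on $\mathcal{A}$ (checked on a circle via $\epsilon\circ m$ and extended through disjoint unions and the reduction to loops), then forces $\eta_{\Gamma_2}\circ \mathsf{F}(S)=\mathcal{F}_{\emptyset}(S)\circ \eta_{\Gamma_1}$, i.e.\ commutativity of the naturality square.

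The main obstacle in executing this cleanly is bookkeeping for the $\pm i$ factors produced by seamed cups, caps and cylinders. These, however, are exactly controlled by the Lemma just before Definition~\ref{def:quotient category} and by Lemma~\ref{handy relations}, which express each singular cup/cap as $\pm i$ times its ordinary counterpart, matching the formulas listed for $\mathsf{F}$ in Subsection~\ref{ssec:TQFT}. Consequently, no new computation is required: once the sign conventions have been aligned, checking the corollary reduces to verifying the claim on the generating (ordinary and seamed) cups, caps, pairs-of-pants and co-pants, which is mechanical.
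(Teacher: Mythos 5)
Your proposal is correct and follows essentially the same route the paper takes implicitly: the paper states this corollary without a separate proof, relying on Proposition~\ref{prop:isom from A to V and V'}, the disjoint-union compatibility, the vertex-pair removal isomorphisms, and the Lemma of Section~\ref{relations l} identifying $\mathcal{F}$ with $\mathsf{F}$ on closed foams — exactly the ingredients you assemble. Your added nondegeneracy argument for the morphism-level check via Definition~\ref{def:quotient category} is a reasonable way to make explicit what the paper leaves to the reader, and it rests on the same Frobenius-pairing structure already built into $\mathcal{A}$ and the established isomorphisms $\mathcal{F}_{\emptyset}(\Gamma)\cong\mathcal{A}^{\otimes k}$.
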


The functor $\mathcal{F}$ extends to a functor  $\mathcal{F}:$ Mat($\textit{Foams}_{/\ell}) \rightarrow \mathbb{Z}[i][a]$-Mod by taking formal direct sums into honest direct sums, and thus to a functor $\mathcal{F}: \textit{Kof} \rightarrow \mathbb{Z}[i][a]$-Mod. For any tangle diagram $T$, $\mathcal{F}([T])$ is an ordinary complex, and applying the functor to all homotopies we obtain that $\mathcal{F}([T])$ is an invariant of the tangle $T$, up to homotopy. Hence the isomorphism class of the homology $H(\mathcal{F}([T]))$ is an invariant of $T$. Since $\mathcal{F}$ is degree-preserving, the homology $H(\mathcal{F}([T]))$ is a bigraded invariant of $T$, denoted by $\mathcal{H}(L)$.

If $T$ a link diagram $L$, the graded Euler characteristic of the complex $\mathcal{F}([L])$
is  well defined as a Laurent series in $q$ and equals the quantum $sl(2)$ polynomial of $L$ (which, up to normalization and change of variable, is the same as the Jones polynomial of $L$). In other words, 
\[P_2(L) = \sum_{i,j \in \mathbb{Z}}(-1)^i q^j \rk (\mathcal{H}^{i,j}(L)).\]


\section{\textbf{Relationship with Khovanov's invariant}}\label{sec:relationship with Kh}

In this section we show that adding the relation $a=0$ and considering closed tangles, thus  knots and links, our invariant is isomorphic to the original Khovanov homology theory, after the latter is tensored with $\mathbb{Z}[i]$. In particular, we obtain a version of the Khovanov homology that satisfies fuctoriality. Notice that, as it was pointed out in the introduction, the same result is obtained by Morrison and Walker in~\cite{MW}.

Moreover, for $a=1$, our invariant is equivalent to Lee's modification of Khovanov's $sl(2)$ theory (see~\cite{L}), with the same extension of the ground ring as in the case of $a=0$. The specialization $a=1$ collapses the grading.

Let us consider a link diagram $L$ and its corresponding formal complex [$L$]. Each resolution of $L$ is a collection of webs (with an even number of vertices) and oriented loops. Applying the isomorphisms from the end of subsection~\ref{relations l}, we can `erase' pairs of adjacent singular points of the same type and `change' the type of the remaining pair, if necessary, so that each resolution is replaced (via an isomorphism) by a disjoint union of basic closed webs with two vertices (as \raisebox{-5pt}{\includegraphics[height=.2in]{circle2sv.pdf}}) and oriented loops. Moreover, applying the isomorphisms from corollary~\ref{replacing basic closed webs}, we can replace each basic web by an oriented loop, in such a way that starting from outside, the orientation of the loop is (say) clockwise, and as we go inside of a nesting set of loops the orientations alternate.

Hence, our formal complex associated to $L$ is now isomorphic to a formal complex that has as objects column matrices of nested oriented loops, so that the outermost loop is oriented (say) clockwise and then the orientations alternate. 

We consider now the Khovanov formal chain complex associated to $L$ with its unoriented objects, and orient them such that we end up with the same chain complex described above. Notice that this way of orienting the circles converts unorinted cobordisms into well-defined oriented ones.

Finally, recalling how our TQFT is defined for $a=0$ and $a=1$, and that is the same as the functor $\mathcal{F}_{\emptyset}$, we reach our goal.  


\section{\textbf{Fast computations}}\label{sec:fast computations}

In this section we will adopt and apply to our setting Bar-Natan's ``divide and conquer'' approach to computations (see~\cite{BN2}), to obtain a potentially fast way for calculating the homology groups $\mathcal{H}^{i,j}(L)$ associated to a certain link diagram $L$, that otherwise would have taken a quite amount of time to evaluate. The key is to work locally, that is, to cut the link in smaller tangles, compute the invariant for each tangle and finally assembly the obtained invariants, using the horizontal composition techniques we have seen in section~\ref{sec:planar algebras}, into the invariant of $L$. Before assembling (that is, before taking the tensor product) we will simplify the complexes over the category $\textit{Foams}_{/\ell}$ using a few tools: ``delooping'' and ``Gaussian elimination'' (terms borrowed from~\cite{BN2}), and our isomorphisms given at the end of section~\ref{relations l}.

\subsection {The tools and method}

The following result is similar to Lemma 4.1 in~\cite{BN2}, with the difference that it is proved here using our local relations.

\begin{lemma}\label{lemma:delooping}
(Delooping) Given an object of the form $S \cup \Gamma$ in $\textit{Foams}_{/\ell}$, where $\Gamma =  \raisebox{-4pt}{\includegraphics[height=0.2in]{unknot-clockwise.pdf}}$ or $\Gamma = \raisebox{-3pt}{\includegraphics[height=0.18in]{circle2sv.pdf}}$, it is isomorphic in Mat($\textit{Foams}_{/\ell}$) to the direct sum $S\{-1\} \oplus S\{+1\}$ in which $\Gamma$ is removed. This can be written symbolically as $\raisebox{-4pt}{\includegraphics[height=0.2in]{unknot-clockwise.pdf}}\cong \emptyset \{-1\} \oplus \emptyset \{+1\}$, or $\raisebox{-3pt}{\includegraphics[height=0.18in]{circle2sv.pdf}} \cong \emptyset \{-1\} \oplus \emptyset \{+1\}$.
\end{lemma}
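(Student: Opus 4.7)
The strategy is to write down explicit, mutually inverse morphisms realizing the claimed decomposition in $\text{Mat}(\textit{Foams}_{/\ell})$; the piece $S$ plays no role beyond being carried along as an identity foam, so every morphism I describe below is understood as the disjoint union of $\mathrm{Id}_S$ with an elementary foam on the $\Gamma$ component, and the verification reduces to the local case $S = \emptyset$.

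For the oriented loop $\Gamma = \raisebox{-4pt}{\includegraphics[height=0.2in]{unknot-clockwise.pdf}}$ I would take $\phi \colon \Gamma \to \emptyset\{-1\} \oplus \emptyset\{+1\}$ to be the column whose entries are $\phi_1 =$ cap and $\phi_2 =$ dotted cap, and $\psi$ going the other way to be the row whose entries are $\psi_1 =$ dotted cup and $\psi_2 =$ cup. Their degrees ($-1, +1, +1, -1$ respectively) are precisely what is needed to make each entry a degree-zero morphism once the shifts on source and target are taken into account. The equality $\phi\psi = \mathrm{Id}$ reduces to evaluating four spheres: the two diagonal compositions are each a once-dotted sphere and equal $1$ by (S), while the off-diagonal compositions are a dotless sphere and a twice-dotted sphere, both zero by (S) together with (2D). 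The dual equality $\psi\phi = \mathrm{Id}_\Gamma$ reads ``(dotted cup) $\circ$ cap $+$ cup $\circ$ (dotted cap) $=$ identity cylinder'', which is precisely the surgery relation (SF).

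For $\Gamma = \raisebox{-3pt}{\includegraphics[height=0.18in]{circle2sv.pdf}}$ the quickest route is to invoke Corollary~\ref{replacing basic closed webs}, which gives a canonical degree-preserving isomorphism between this basic closed web and an oriented loop in $\textit{Foams}_{/\ell}$; composing with the delooping already established reduces this case to the previous one. A direct argument is equally short: set $\phi_1 = -i\,\mu_1$, $\phi_2 = -i\,\mu_2$, $\psi_1 = \nu_1$, $\psi_2 = \nu_2$, where $\nu_1, \nu_2, \mu_1, \mu_2$ are the singular cups and caps introduced in the proof of Lemma~\ref{nice relations}. The identity $\psi_1\phi_1 + \psi_2\phi_2 = \mathrm{Id}_\Gamma$ is then literally the neck-cutting relation (CN), while the four compositions $\phi_i\psi_j$ are ufo-foams whose evaluations come from (UFO): the diagonal ones yield $-i \cdot i = 1$ and the off-diagonal ones are a dotless ufo-foam and a ufo-foam with one dot on each facet, each vanishing.

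None of the steps is an obstacle in any substantive sense; the only place that requires care is the bookkeeping of the degree shifts and of the scalar $-i$ in the singular case, and both are forced on us by the coefficients appearing in (UFO) and (CN).
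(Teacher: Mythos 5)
Your proof matches the paper's essentially verbatim: the paper's Figure~\ref{delooping} gives exactly the morphisms you describe, with $(\text{cap},\text{dotted cap})^t$ inverse to $(\text{dotted cup},\text{cup})$ for the oriented loop via (S) and (SF), and $(\mu_1,\mu_2)^t$ inverse to $(-i\nu_1,-i\nu_2)$ for the singular circle via (UFO) and (CN). One small slip in the singular case: the off-diagonal composite $\mu_2\nu_1$ carries both dots on the \emph{same} (preferred) facet of the ufo, not one on each, since both $\nu_1$ and $\mu_2$ have their dots on the back facets which glue to a single hemisphere; this vanishes via (2D) and the dotless (UFO) relation rather than by the two-dot (UFO) relation directly, but of course the conclusion is unchanged.
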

\begin{proof}
The desired isomorphisms are given in figure~\ref{delooping}.
\begin{figure}
\includegraphics[height=1in]{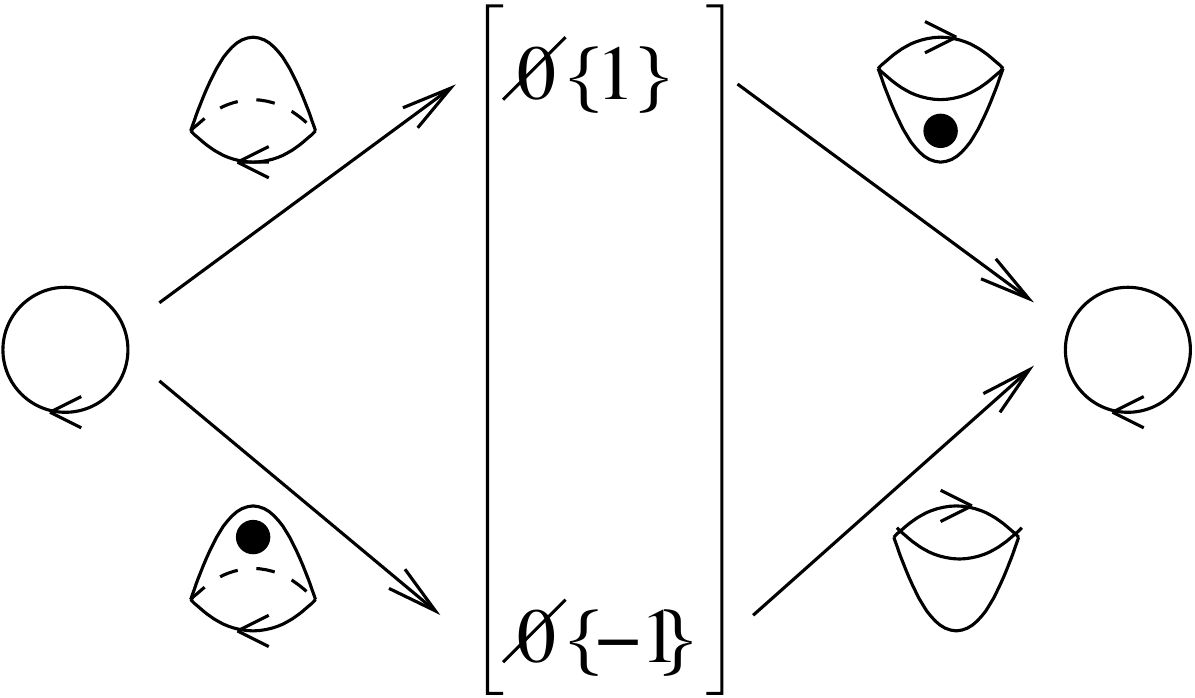} \qquad \includegraphics[height=1in]{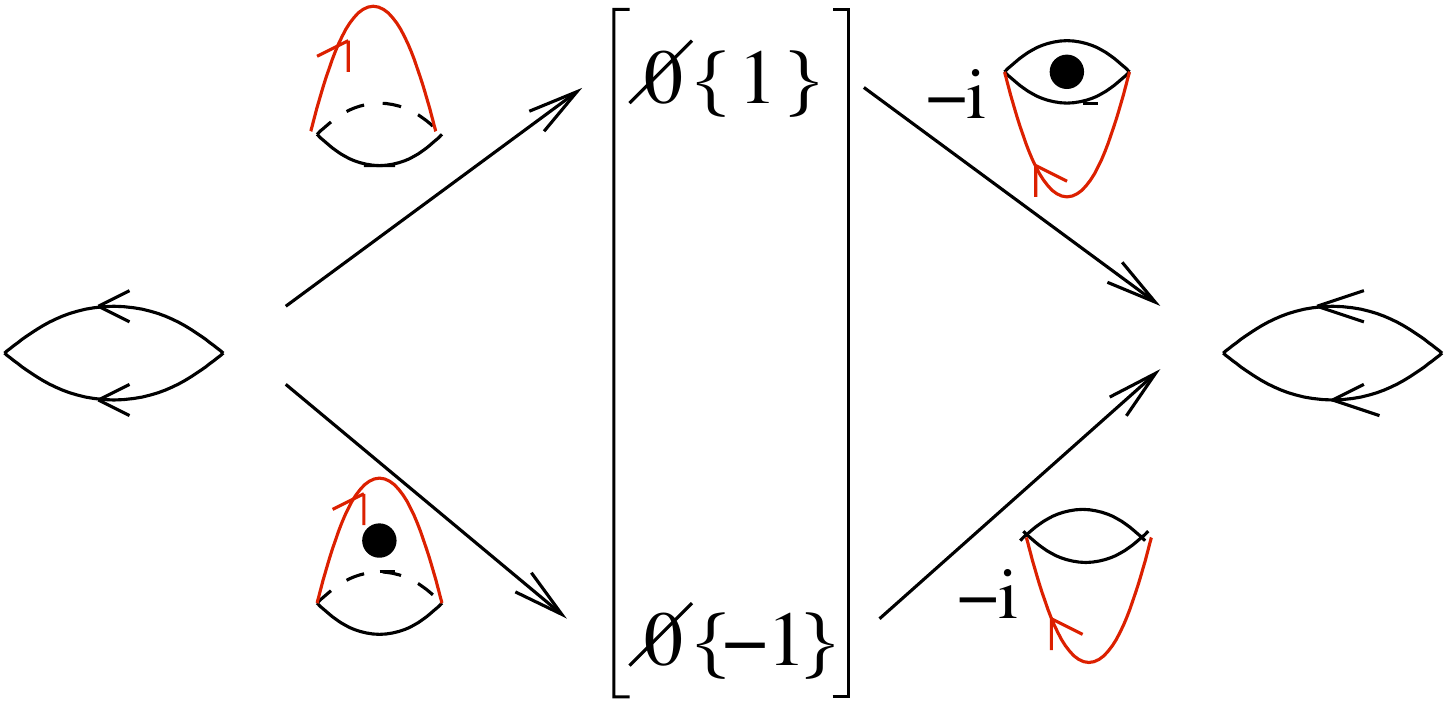}
\caption{Delooping} \label{delooping}
\end{figure}

Using the (S) and (SF) relations, it is easy to see that $(\,\raisebox{-5pt}{\includegraphics[height=.25in]{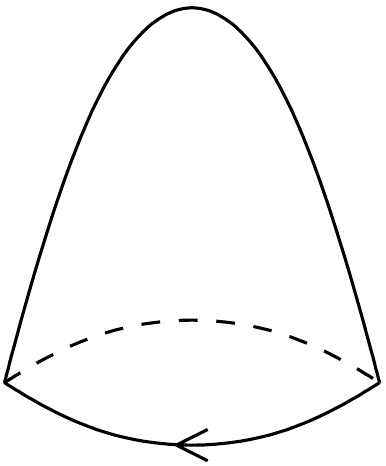}},\raisebox{-5pt}{\includegraphics[height=.25in]{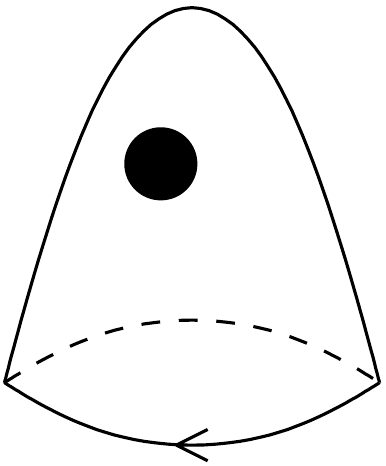}}\,)^t$ and $(\,\raisebox{-5pt}{\includegraphics[height=.25in]{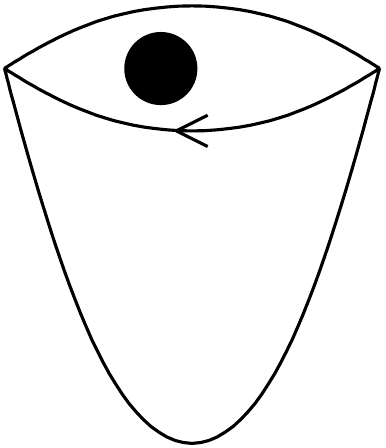}}, \raisebox{-5pt}{\includegraphics[height=.25in]{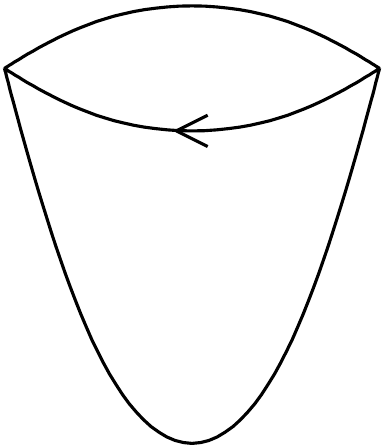}}\,)$ are mutually inverse isomorphisms. Similarly, one can verify that $(\,\raisebox{-5pt}{\includegraphics[height=.25in]{capsa.pdf}}, \raisebox{-5pt}{\includegraphics[height=.25in]{capsad.pdf}}\,)^t$ and $(\,-i\, \raisebox{-5pt}{\includegraphics[height=.25in]{cupsad.pdf}}, -i\, \raisebox{-5pt}{\includegraphics[height=.25in]{cupsa.pdf}}\,)$ are mutually inverse isomorphisms as well, where for this one, the (UFO) and (CN) relations are needed.
\end{proof}

Now we recall the following lemma from~\cite{BN2}, omitting its proof.
\begin{lemma} (Gaussian elimination, in an abstract form) \label{Gaussian elimination}
If $\phi : b_1 \rightarrow b_2$ is an isomorphism in some additive category $\mathcal{C}$, then the complex segment in Mat($\mathcal{C}$) 
\[... \left [C \right ] \stackrel{\left (\begin{array}{c} \alpha \\ \beta \end{array}\right)}{\longrightarrow} \left [ \begin{array}{c} b_1 \\ D \end{array} \right ] \stackrel{ \left (\begin{array}{cc} \phi &\delta \\ \gamma &\epsilon \end{array} \right)}{\longrightarrow} \left [ \begin{array}{c} b_2 \\ E \end{array} \right ] \stackrel{\left( \begin{array}{cc} \mu & \nu \end{array}\right )}{\longrightarrow} \left [ F \right ]...\]
is isomorphic to the complex segment 
\[... \left [C \right ] \stackrel{\left (\begin{array}{c} 0 \\ \beta \end{array}\right)}{\longrightarrow} \left [ \begin{array}{c} b_1 \\ D \end{array} \right ] \stackrel{ \left (\begin{array}{cc} \phi & 0 \\ 0 &\epsilon - \gamma \phi ^{-1} \delta \end{array} \right)}{\longrightarrow} \left [ \begin{array}{c} b_2 \\ E \end{array} \right ] \stackrel{\left( \begin{array}{cc} 0 & \nu \end{array}\right )}{\longrightarrow} \left [ F \right ]...\]
This one is the direct sum of the contractible (acyclic) complex
\[ 0 \longrightarrow b_1 \stackrel{\phi}{\longrightarrow} b_2 \longrightarrow 0 \]
and the complex segment 
\[... \left [C \right ] \stackrel{\left (\beta \right)}{\longrightarrow} \left [D \right] \stackrel{\left (\epsilon - \gamma \phi ^{-1} \delta \right)}{\longrightarrow} \left [E \right ] \stackrel{\left(\nu \right )}{\longrightarrow} \left [ F \right ]... .\]
Therefore, the first and last complex segment are homotopy equivalent.
\end{lemma}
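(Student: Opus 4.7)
The plan is to exhibit an explicit chain isomorphism between the two displayed complex segments, using elementary row and column operations on the differential matrix together with the relations imposed by $d^2 = 0$, and then to observe that the resulting block-diagonal form splits off a contractible summand.

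First I would define a change-of-basis isomorphism between $\begin{bmatrix} b_1 \\ D \end{bmatrix}$ and itself by the invertible matrix $P = \left(\begin{smallmatrix} 1 & -\phi^{-1}\delta \\ 0 & 1 \end{smallmatrix}\right)$, and a similar change of basis on $\begin{bmatrix} b_2 \\ E \end{bmatrix}$ by $Q = \left(\begin{smallmatrix} 1 & 0 \\ -\gamma\phi^{-1} & 1 \end{smallmatrix}\right)$; both are honest isomorphisms in $\mathrm{Mat}(\mathcal{C})$ because $\phi$ is an isomorphism, with inverses obtained by flipping signs. A direct computation gives
\[
Q\begin{pmatrix} \phi & \delta \\ \gamma & \epsilon \end{pmatrix}P \;=\; \begin{pmatrix} \phi & 0 \\ 0 & \epsilon - \gamma\phi^{-1}\delta \end{pmatrix},
\]
which is the middle differential of the second segment. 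The incoming and outgoing maps are transformed accordingly, and the key point is that the consequences of $d^2=0$ in the original complex — namely $\phi\alpha + \delta\beta = 0$ and $\mu\delta + \nu\epsilon = 0$ (plus the analogous relations on the other side) — force the transformed maps $P^{-1}\left(\begin{smallmatrix}\alpha\\ \beta\end{smallmatrix}\right)$ and $(\mu\ \nu)Q^{-1}$ to acquire zeros in their $b_1$- and $b_2$-entries, yielding exactly $\left(\begin{smallmatrix}0\\ \beta\end{smallmatrix}\right)$ and $(0\ \nu)$. Together with identities on the rest of the complex (on $C$, $F$, and on all chain groups outside the displayed window), these assemble into a chain isomorphism between the two complex segments.

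Next I would observe that the block-diagonal form displays the middle of the complex as a literal direct sum of two subcomplexes: the two-term complex $0 \to b_1 \stackrel{\phi}{\to} b_2 \to 0$, which is disconnected from the rest since the incoming map to $b_1$ and the outgoing map from $b_2$ both vanish, and the reduced segment involving $C$, $D$, $E$, $F$ with differential $\epsilon - \gamma\phi^{-1}\delta$. Since $\phi$ is an isomorphism, the two-term complex is contractible: an explicit null-homotopy is given by $\phi^{-1}: b_2 \to b_1$, and verifying $\phi^{-1}\phi = \mathrm{id}_{b_1}$ and $\phi\phi^{-1} = \mathrm{id}_{b_2}$ is immediate.

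Finally, the fact that a direct summand which is contractible can be removed up to homotopy equivalence is a standard piece of homological algebra: if $A^* = B^* \oplus C^*$ with $C^*$ contractible (say $\mathrm{id}_{C^*} = d_C h + h d_C$ for some $h$), then the projection $A^* \to B^*$ and inclusion $B^* \to A^*$ are mutually inverse homotopy equivalences, with homotopy $0 \oplus h$. Applying this to the block-diagonal middle segment completes the chain of equivalences: the original segment is isomorphic to the block-diagonal one, which is homotopy equivalent to the reduced segment. I do not expect any real obstacle here — the proof is entirely bookkeeping with $2\times 2$ matrices — the only thing to be careful about is keeping track of signs and of the convention (columns vs.\ rows) under which matrices are composed, so that the transformed incoming and outgoing maps indeed pick up the right vanishing entries from $d^2=0$.
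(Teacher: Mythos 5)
Your proof is correct, and I've checked the key computations: with $P = \left(\begin{smallmatrix}1&-\phi^{-1}\delta\\0&1\end{smallmatrix}\right)$, $Q = \left(\begin{smallmatrix}1&0\\-\gamma\phi^{-1}&1\end{smallmatrix}\right)$, one indeed has $Q\left(\begin{smallmatrix}\phi&\delta\\\gamma&\epsilon\end{smallmatrix}\right)P = \left(\begin{smallmatrix}\phi&0\\0&\epsilon-\gamma\phi^{-1}\delta\end{smallmatrix}\right)$, and the $d^2=0$ relations $\phi\alpha+\delta\beta=0$ and $\mu\phi+\nu\gamma=0$ force $P^{-1}\left(\begin{smallmatrix}\alpha\\\beta\end{smallmatrix}\right)=\left(\begin{smallmatrix}0\\\beta\end{smallmatrix}\right)$ and $(\mu\ \nu)Q^{-1}=(0\ \nu)$, so $(\ldots,\mathrm{id}_C,P^{-1},Q,\mathrm{id}_F,\ldots)$ is the desired chain isomorphism. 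The paper explicitly omits the proof and cites Bar-Natan's \emph{Fast Khovanov homology computations}; your argument is precisely the standard one given there (a change of basis putting the differential in block-diagonal form, followed by stripping off the contractible summand $b_1\xrightarrow{\phi}b_2$ via the null-homotopy $\phi^{-1}$), so there is no substantive difference in approach.
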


Whenever an object in a complex $\Lambda \in \textit{Foams}_{/\ell}$ contains an oriented loop or a basic web with two vertices, remove it using lemma~\ref{lemma:delooping}. The resulting complex, call it $\Lambda'$, contains fewer possible objects (although it is bigger than $\Lambda$), hence it may be made of many isomorphisms; then use lemma~\ref{Gaussian elimination}, to cancel all the isomorphisms in $\Lambda'$.

We remark that one can use this method to show the homotopy invariance of the complex $[T]$ associated to a tangle diagram $T$ under the Reidemeister moves. For this, one has to compute and simplify the complexes corresponding to each side of a given Reidemeister move, to obtain the same result for both sides.
\pagebreak

\subsection{A few examples}
\subsection*{\textbf{Reidemeister I}}
 
Consider the diagrams $D$ and $D'$ given below:
$$D=\raisebox{-13pt}{\includegraphics[height=0.4in]{lkink.pdf}}\qquad
D'=\raisebox{-13pt}{\includegraphics[height=0.4in]{reid1-1.pdf}}$$
The complex associated to $D$ 
$$[D]: \qquad 0 \longrightarrow \underline{\left [\raisebox{-4pt}{\includegraphics[height=0.2in]{reid1-2.pdf}}\right ]\{-1\}}\stackrel{\raisebox{-8pt}{\includegraphics[height=0.3in]{reid1-d.pdf}}}{\longrightarrow}\left [ \raisebox{-4pt} {\includegraphics[height=0.2in]{reid1-3.pdf}}\right ]\{-2\} \longrightarrow 0$$
is isomorphic to the complex
$$ 0 \longrightarrow \underline{\left [\begin{array}{c} \raisebox{-4pt}{\includegraphics[height=0.2in]{reid1-1.pdf}} \{-2\} \\ \raisebox{-4pt}{\includegraphics[height=0.2in]{reid1-1.pdf}}\{0\} \end{array} \right] }\stackrel{\left ( \begin{array}{cc}\raisebox{-8pt}{\includegraphics[height=0.25in]{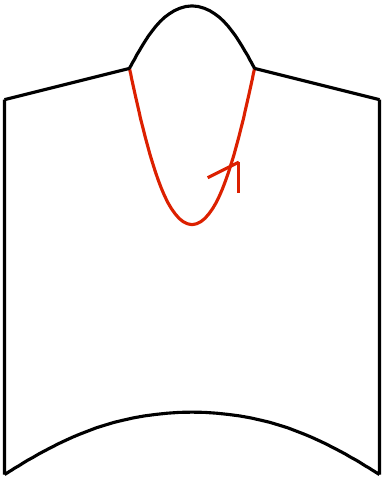}} & \raisebox{-8pt}{\includegraphics[height=0.25in]{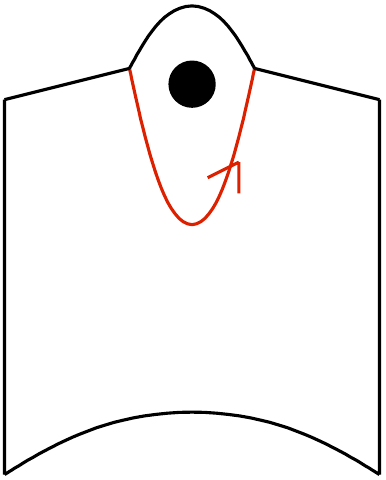}} \end{array} \right )}{\longrightarrow}\left [ \raisebox{-4pt} {\includegraphics[height=0.2in]{reid1-3.pdf}}\right ]\{-2\} \longrightarrow 0.$$
The later is the direct sum of the contractible complex (as its differential is an isomorphism)
$$ 0 \longrightarrow \underline{\left [ \raisebox{-4pt}{\includegraphics[height=0.2in]{reid1-1.pdf}}\right ] \{-2\}}\stackrel{\left ( \raisebox{-8pt}{\includegraphics[height=0.25in]{isom-8.pdf}}\right )}{\longrightarrow} \left [\raisebox{-4pt} {\includegraphics[height=0.2in]{reid1-3.pdf}}\right ]\{-2\} \longrightarrow 0$$
and 
$$ 0 \longrightarrow \underline{\left [\raisebox{-4pt}{\includegraphics[height=0.2in]{reid1-1.pdf}} \right ]}\longrightarrow 0.$$
Hence, complexes $[\raisebox{-4pt}{\includegraphics[height=0.2in]{lkink.pdf}}]$ and $[\raisebox{-4pt}{\includegraphics[height=0.2in]{reid1-1.pdf}}]$ are homotopy equivalent.

\subsection*{\textbf{Reidemeister II}}
Consider diagrams $D=\raisebox{-5pt}{\includegraphics[height=0.2in]{Dreid2b.pdf}}$ and 
$D'=\raisebox{-5pt}{\includegraphics[height=0.2in]{twoarcsop.pdf}}$. The complex $[D]$, corresponding to tangle diagram $D$, is the double complex given below, which is the tensor product of the formal complexes associated with the two crossings in $D$.

$$D = \raisebox{-5pt}{\includegraphics[height=0.2in]{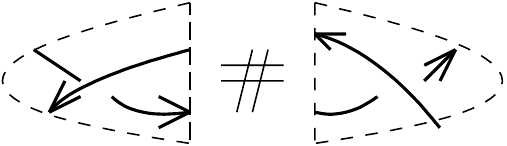}}, \qquad \raisebox{-95pt}{\includegraphics[height=2in]{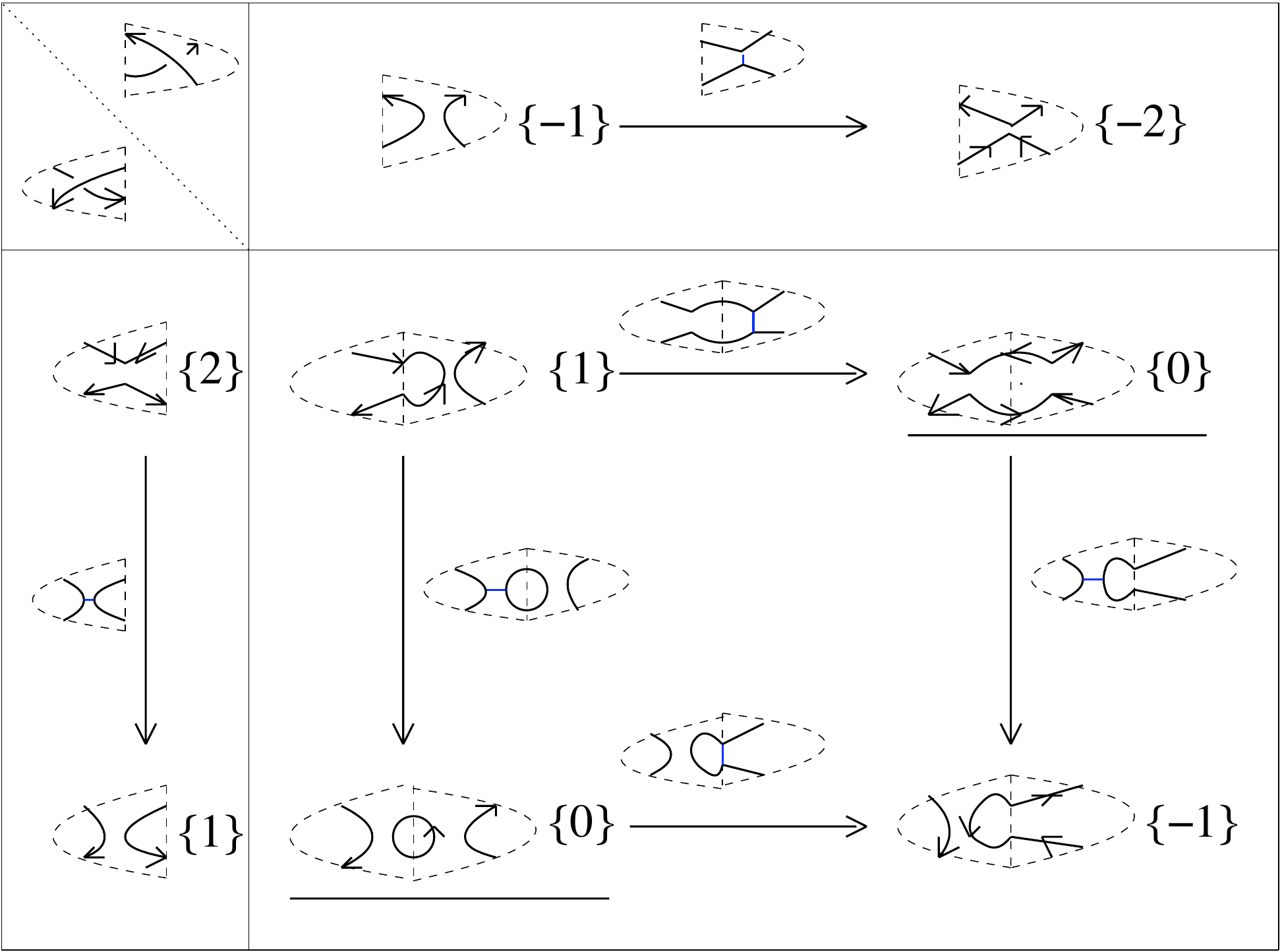}}$$

$$ \includegraphics[height=1in]{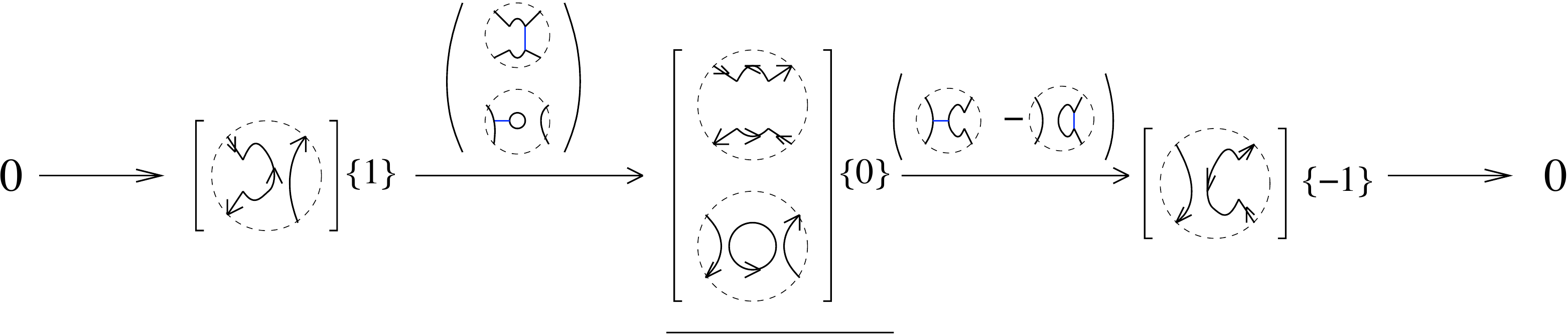}$$

The underlined objects above are at the cohomological degree $0$, and the morphism \raisebox{-5pt}{\includegraphics[height=0.2in]{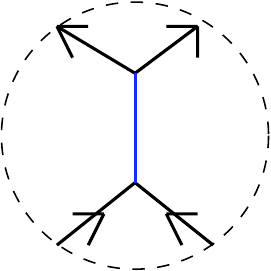}} is the `singular saddle' with domain \raisebox{-5pt}{\includegraphics[height=0.2in]{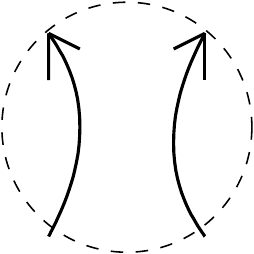}} and range \raisebox{-5pt}{\includegraphics[height=0.2in]{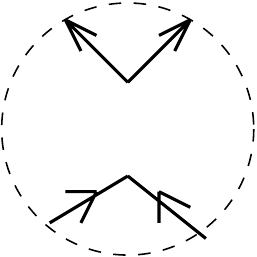}}, while the morphism \raisebox{-5pt}{\includegraphics[height=0.2in]{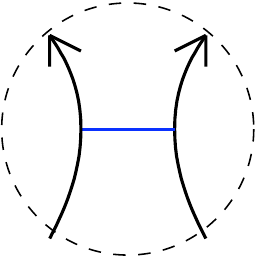}} is the `singular saddle' with domain \raisebox{-5pt}{\includegraphics[height=0.2in]{piecewiseor-diag.pdf}} and range \raisebox{-5pt}{\includegraphics[height=0.2in]{or-diag.pdf}}.
There is a loop in the previous complex, hence we can apply lemma~\ref{lemma:delooping}, and $[D]$ is isomorphic to the following complex:
$$\raisebox{-18pt}{ \includegraphics[height=1in]{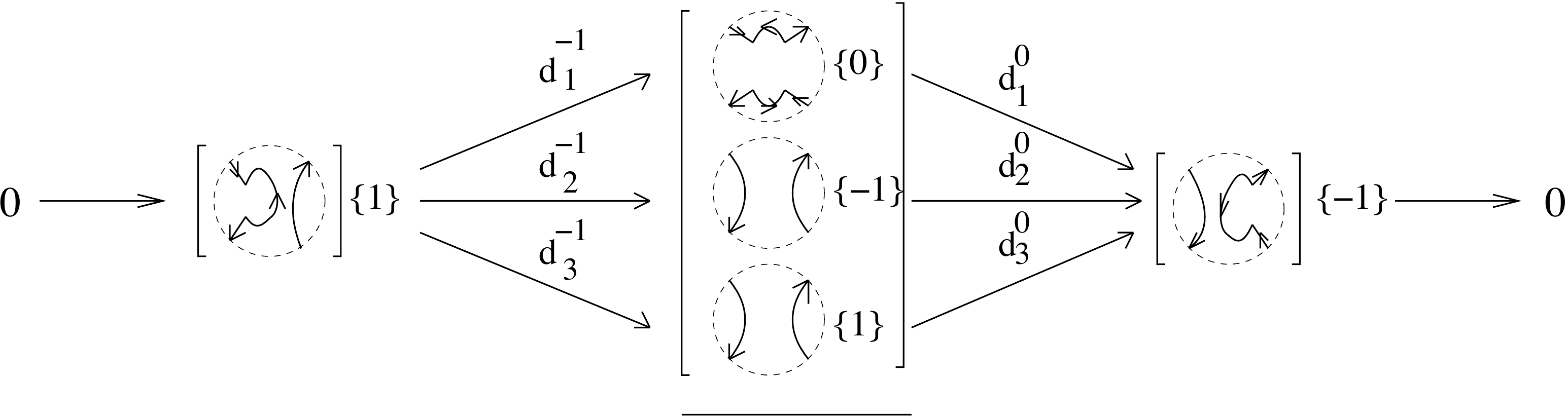}}$$
The later is the direct sum of 
$$ \raisebox{-13pt}{\includegraphics[height=.4in]{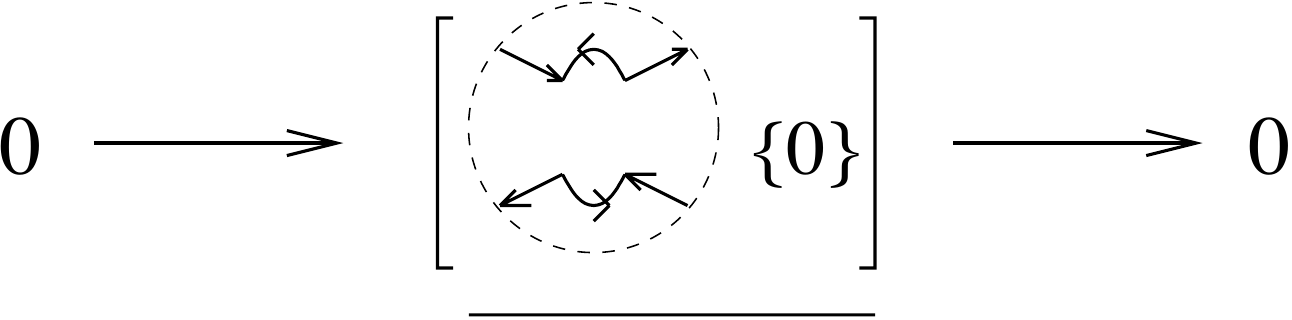}},$$
$$ \raisebox{-13pt}{\includegraphics[height=.5in]{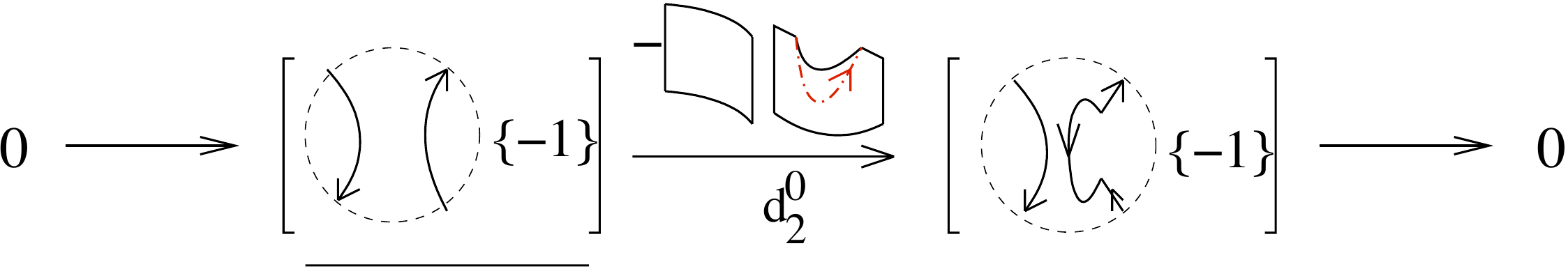}},$$
and
$$ \raisebox{-13pt}{\includegraphics[height=.5in]{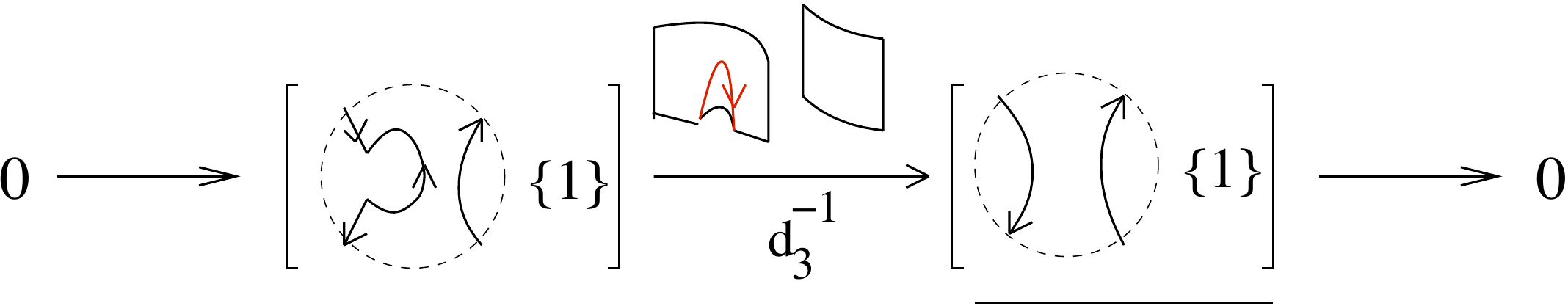}}.$$

The last two are contractible, as their differentials are isomorphisms, and the first one is isomorphic to $[D']$ (by corollary~\ref{removing singular points in pairs}). Removing contractible direct summands we obtain that $[D]$ and $[D']$ are homotopy equivalent.

The other Reidemeister 1 and 2 moves can be checked similarly.

\subsection*{\textbf{The figure eight knot}}

The next example is the figure eight knot. We regard its diagram as the connected sum of the two tangle diagrams $T_1 = \raisebox{-5pt}{\includegraphics[height=.25in]{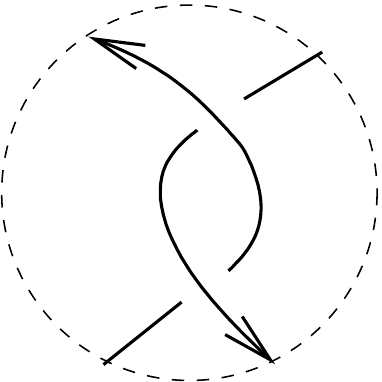}}$ and $T_2 = \raisebox{-5pt}{\includegraphics[height=.25in]{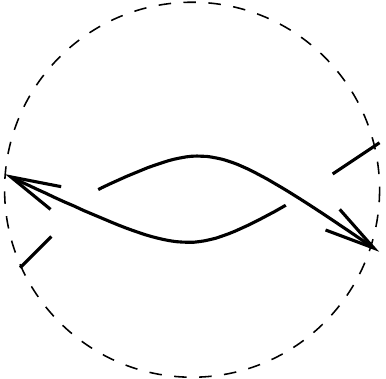}}$ (see figure~\ref{figure eight knot}).
\begin{figure}[ht]
\centerline{\includegraphics[height=1in]{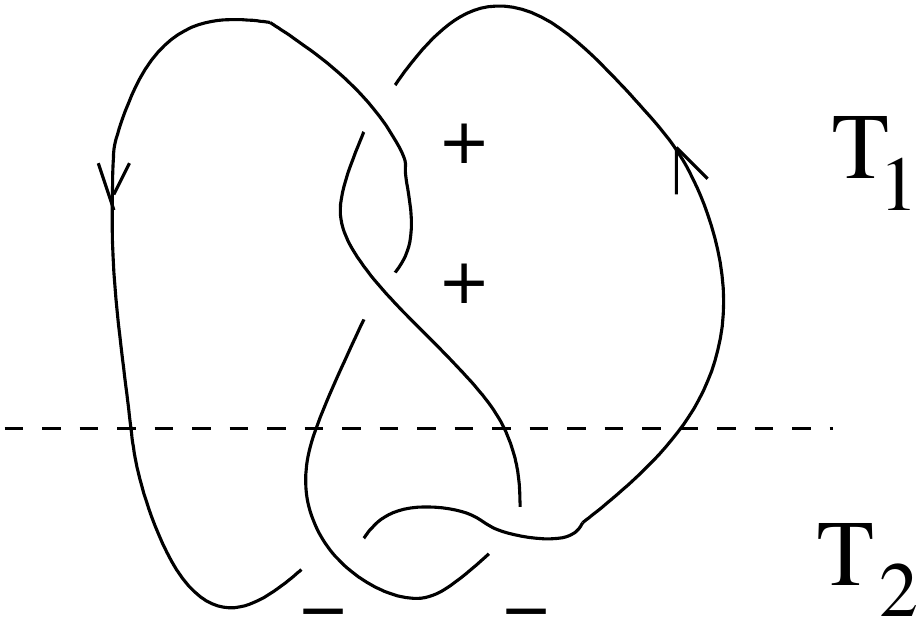}}
\caption{The figure eight knot cut in half}
\label{figure eight knot}
\end{figure}
$$[T_1]:  \left [\raisebox{-5pt}{ \includegraphics[height=.25in]{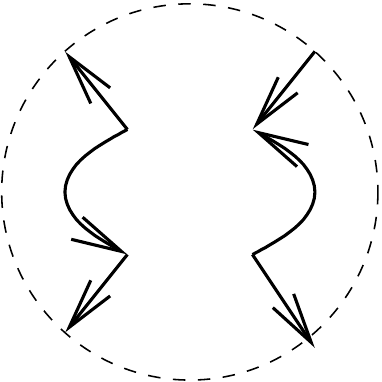}} \right] \{4\} \stackrel{\left (\begin{array}{c} \raisebox{-5pt}{\includegraphics[height=.25in]{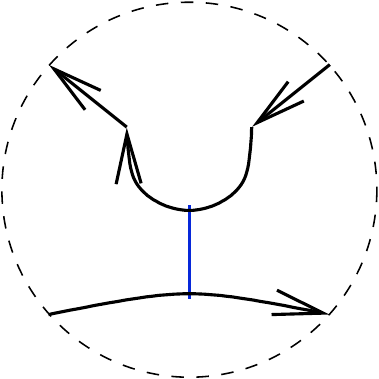}} \vspace{.05in} \\ \raisebox{-5pt}{\includegraphics[height=.25in]{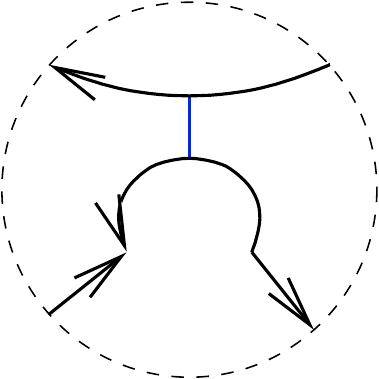}} \end{array} \right )} {\longrightarrow} \left [ \begin{array}{c} \raisebox{-5pt}{\includegraphics[height=.25in]{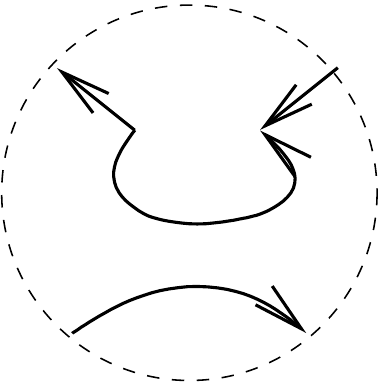}}  \vspace{.05in} \\ \raisebox{-5pt}{\includegraphics[height=.25in]{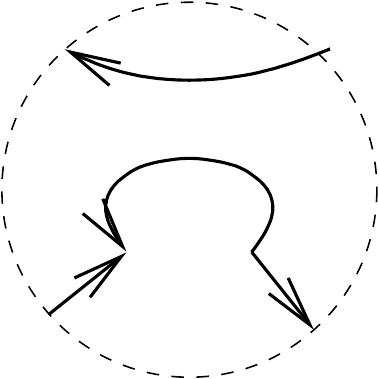}} \end{array} \right ] \{3\} \stackrel{\left ( \begin{array}{cc} \raisebox{-5pt}{\includegraphics[height=.25in]{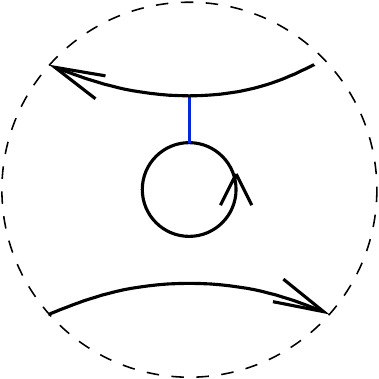}} & -\raisebox{-5pt}{\includegraphics[height=.25in]{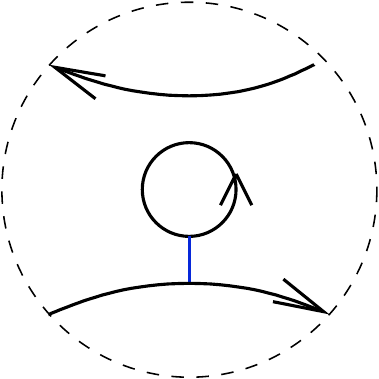}} \end{array}\right)}{\longrightarrow} \underline{\left [ \raisebox{-5pt}{\includegraphics[height=.25in]{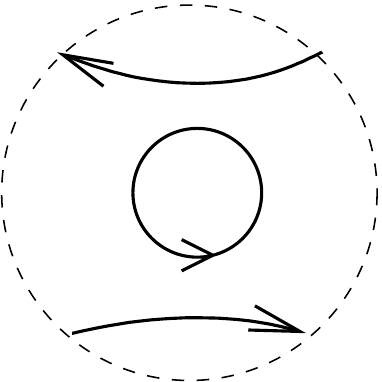}}\right ]\{2\}} 
$$

The object at height $0$ contains a loop. Delooping it, thus applying lemma~\ref{delooping} and composing the morphisms (of the last differential) with those of figure~\ref{delooping}, we get the next complex, which is isomorphic to $[T_1]$:

$$\left [\raisebox{-5pt}{ \includegraphics[height=.25in]{T1-2.pdf}} \right] \{4\} \stackrel{\left (\begin{array}{c} 
\raisebox{-5pt}{\includegraphics[height=.25in]{T1-d-2-up.pdf}}  \vspace{.05in}\\ \raisebox{-5pt}{\includegraphics[height=.25in]{T1-d-2-down.pdf}} \end{array} \right )} {\longrightarrow} \left [ \begin{array}{c} \raisebox{-5pt}{\includegraphics[height=.25in]{T1-1-up.pdf}}  \vspace{.05in}\\ \raisebox{-5pt}{\includegraphics[height=.25in]{T1-1-down.pdf}} \end{array} \right ] \{3\} \stackrel{\left ( \begin{array}{cc} \raisebox{-5pt}{\includegraphics[height=.25in]{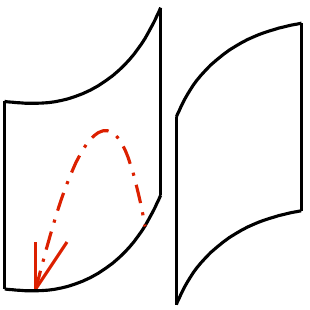}} & -\raisebox{-5pt}{\includegraphics[height=.25in]{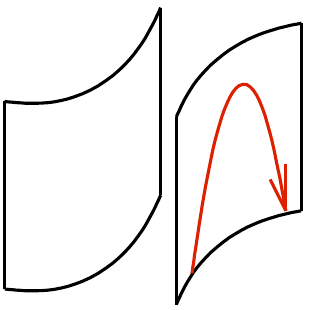}} \vspace{.05in} \\ \raisebox{-5pt}{\includegraphics[height=.25in]{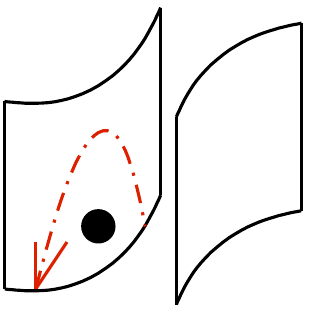}} & -\raisebox{-5pt}{\includegraphics[height=.25in]{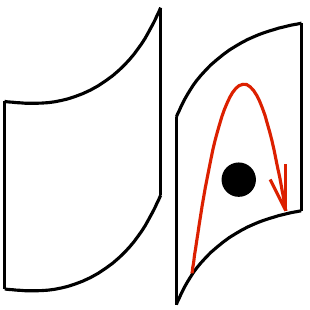}}\end{array}\right)}{\longrightarrow} \underline{\left [ \begin{array}{c}\raisebox{-5pt}{\includegraphics[height=.25in]{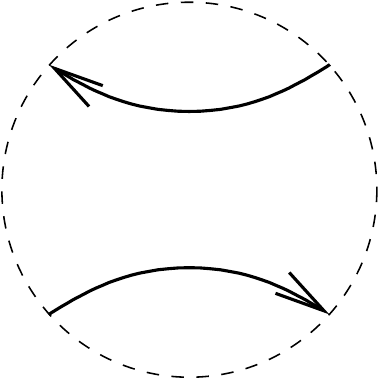}}\{3\}  \vspace{.05in}\\ \raisebox{-5pt}{\includegraphics[height=.25in]{left-right.pdf}}\{1\} \end{array} \right ] }
$$

For the simplicity of drawings, we will apply the isomorphisms of corollary~\ref{removing singular points in pairs} to remove pairs of singular points. (We remark that one may still keep working with the webs in the previous complex, as we did in the examples of Reidemeister 1 and 2 moves, and apply these isomorphisms only at the end of the computations, before applying the functor $\mathcal{F}$.) After this operation, the previous complex is isomorphic to the following one:

$$ \left [\raisebox{-5pt}{ \includegraphics[height=.25in]{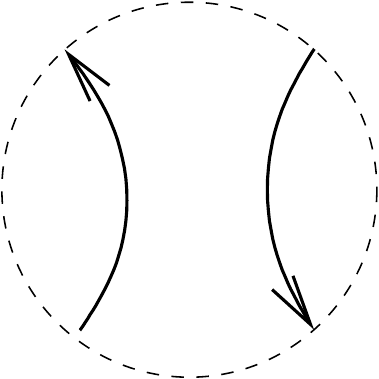}} \right] \{4\} \stackrel{\left (\begin{array}{c} -i \raisebox{-5pt}{\includegraphics[height=.25in]{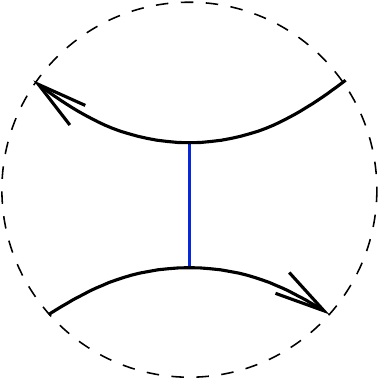}}  \vspace{.05in}\\ -i \raisebox{-5pt}{\includegraphics[height=.25in]{saddle-ud-lr.pdf}} \end{array} \right )} {\longrightarrow} \left [ \begin{array}{c} \raisebox{-5pt}{\includegraphics[height=.25in]{left-right.pdf}}  \vspace{.05in}\\ \raisebox{-5pt}{\includegraphics[height=.25in]{left-right.pdf}} \end{array} \right ] \{3\} \stackrel{\left ( \begin{array}{cc} \raisebox{-5pt}{\includegraphics[height=.25in]{left-right.pdf}} & -\raisebox{-5pt}{\includegraphics[height=.25in]{left-right.pdf}}  \vspace{.05in}\\ \raisebox{-5pt}{\includegraphics[height=.25in]{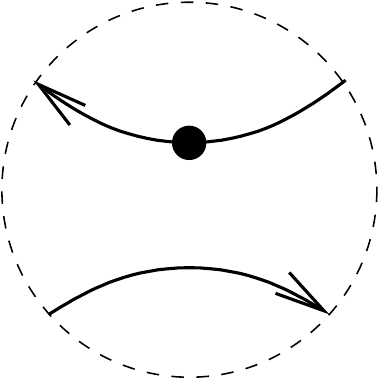}} & -\raisebox{-5pt}{\includegraphics[height=.25in]{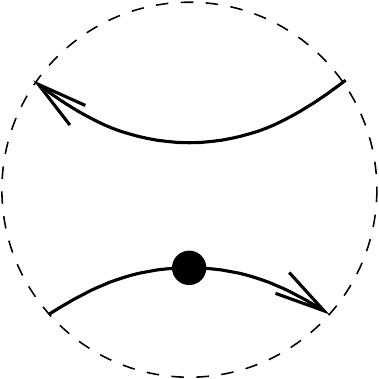}}\end{array}\right)}{\longrightarrow} \underline{\left [ \begin{array}{c}\raisebox{-5pt}{\includegraphics[height=.25in]{left-right.pdf}}\{3\}  \vspace{.05in}\\ \raisebox{-5pt}{\includegraphics[height=.25in]{left-right.pdf}}\{1\} \end{array} \right ] }
$$

When appearing as a cobordism, the symbol \raisebox{-5pt}{\includegraphics[height=.25in]{left-right.pdf}} denotes the identity automorphism of the resolution with the same symbol, that is, it is the union of two `curtains'. Similarly, \raisebox{-5pt}{\includegraphics[height=.25in]{left-right-dup.pdf}} and \raisebox{-5pt}{\includegraphics[height=.25in]{left-right-ddown.pdf}} denote the same cobordism with an extra dot on the up or down `curtain', respectively. Moreover, \raisebox{-5pt}{\includegraphics[height=.25in]{saddle-ud-lr.pdf}} denotes the saddle with domain \raisebox{-5pt}{\includegraphics[height=.25in]{up-down.pdf}} and range \raisebox{-5pt}{\includegraphics[height=.25in]{left-right.pdf}}.

The upper left entry in the second nontrivial differential of the previous complex is an isomorphism, and applying the first part of lemma~\ref{Gaussian elimination} we arrive at the complex below, which is isomorphic to the previous one, hence to $[T_1]$:
$$  \left [\raisebox{-5pt}{ \includegraphics[height=.25in]{up-down.pdf}} \right] \{4\} \stackrel{\left (\begin{array}{c} 0  \vspace{.05in}\\ -i \raisebox{-5pt}{\includegraphics[height=.25in]{saddle-ud-lr.pdf}} \end{array} \right )} {\longrightarrow} \left [ \begin{array}{c} \raisebox{-5pt}{\includegraphics[height=.25in]{left-right.pdf}}  \vspace{.05in}\\ \raisebox{-5pt}{\includegraphics[height=.25in]{left-right.pdf}} \end{array} \right ] \{3\} \stackrel{\left ( \begin{array}{cc} \raisebox{-5pt}{\includegraphics[height=.25in]{left-right.pdf}} & 0  \vspace{.05in}\\ 0 & -\raisebox{-5pt}{\includegraphics[height=.25in]{left-right-ddown.pdf}} + \raisebox{-5pt}{\includegraphics[height=.25in]{left-right-dup.pdf}}\end{array}\right)}{\longrightarrow} \underline{\left [ \begin{array}{c}\raisebox{-5pt}{\includegraphics[height=.25in]{left-right.pdf}}\{3\}  \vspace{.05in}\\ \raisebox{-5pt}{\includegraphics[height=.25in]{left-right.pdf}}\{1\} \end{array} \right ] }
$$
Removing the contractible summand
$$ 0 \longrightarrow \left [ \raisebox{-5pt}{\includegraphics[height=.25in]{left-right.pdf}}\right ]\{3\} \stackrel{ \left (\raisebox{-5pt}{\includegraphics[height=.25in]{left-right.pdf}} \right)} {\longrightarrow} \underline{\left [\raisebox{-5pt}{\includegraphics[height=.25in]{left-right.pdf}} \right ]\{3\}}\longrightarrow 0,$$
we obtain the complex $\mathcal{C}_1$, which is homotopy equivalent to $[T_1]$:
$$\mathcal{C}_1: \quad   \left [\raisebox{-5pt}{ \includegraphics[height=.25in]{up-down.pdf}} \right] \{4\} \stackrel{ \left (-i \,\raisebox{-5pt}{\includegraphics[height=.25in]{saddle-ud-lr.pdf}} \right)} {\longrightarrow} \left [ \raisebox{-5pt}{\includegraphics[height=.25in]{left-right.pdf}}\right ]\{3\} \stackrel{ \left ( -\,\raisebox{-5pt}{\includegraphics[height=.25in]{left-right-ddown.pdf}} + \,\raisebox{-5pt}{\includegraphics[height=.25in]{left-right-dup.pdf}}\right)} {\longrightarrow} \underline{\left[ \raisebox{-5pt}{\includegraphics[height=.25in]{left-right.pdf}}\right ]\{1\}}.$$
The complex $[T_2]$ associated to the other half of the knot, the tangle $T_2$, is computed and simplified similarly. It turns out that it is homotopy equivalent to the complex $\mathcal{C}_2$:
$$\mathcal{C}_2: \quad  \underline{\left [\raisebox{-5pt}{ \includegraphics[height=.25in]{up-down.pdf}} \right] \{-1\}} \stackrel{ \left (i \,\raisebox{-5pt}{\includegraphics[height=.25in]{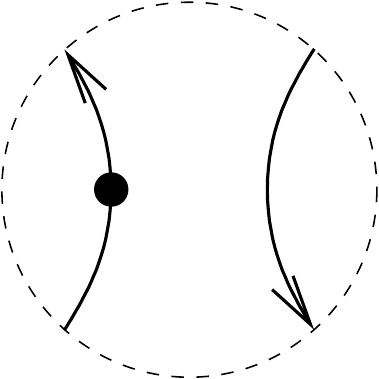}} -i \,\raisebox{-5pt}{\includegraphics[height=.25in]{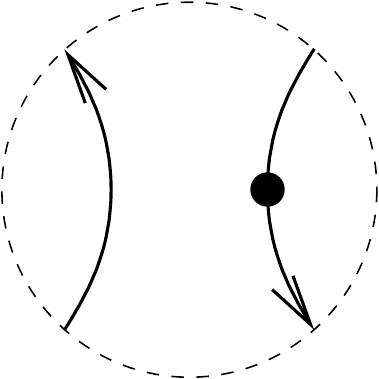}} \right)} {\longrightarrow} \left [ \raisebox{-5pt}{\includegraphics[height=.25in]{up-down.pdf}}\right ]\{-3\} \stackrel{ \left ( -\,\raisebox{-5pt}{\includegraphics[height=.25in]{saddle-ud-lr.pdf}} \right)} {\longrightarrow} \left[ \raisebox{-5pt}{\includegraphics[height=.25in]{left-right.pdf}}\right ]\{-4\}.$$
Next step is to take the `tensor product' of $\mathcal{C}_1$ with $\mathcal{C}_2$ using the same side-by-side composition one has to use to get from $T_1$ and $T_2$ the figure eight knot diagram. As a result, the double complex $\mathcal{C}$ below is obtained, in which we smoothed out the resolutions and cobordisms; we also canceled the four morphisms obtained on the upper right of the diagram, as they are differences of the same cobordism.
$$ \includegraphics[height=3.5in]{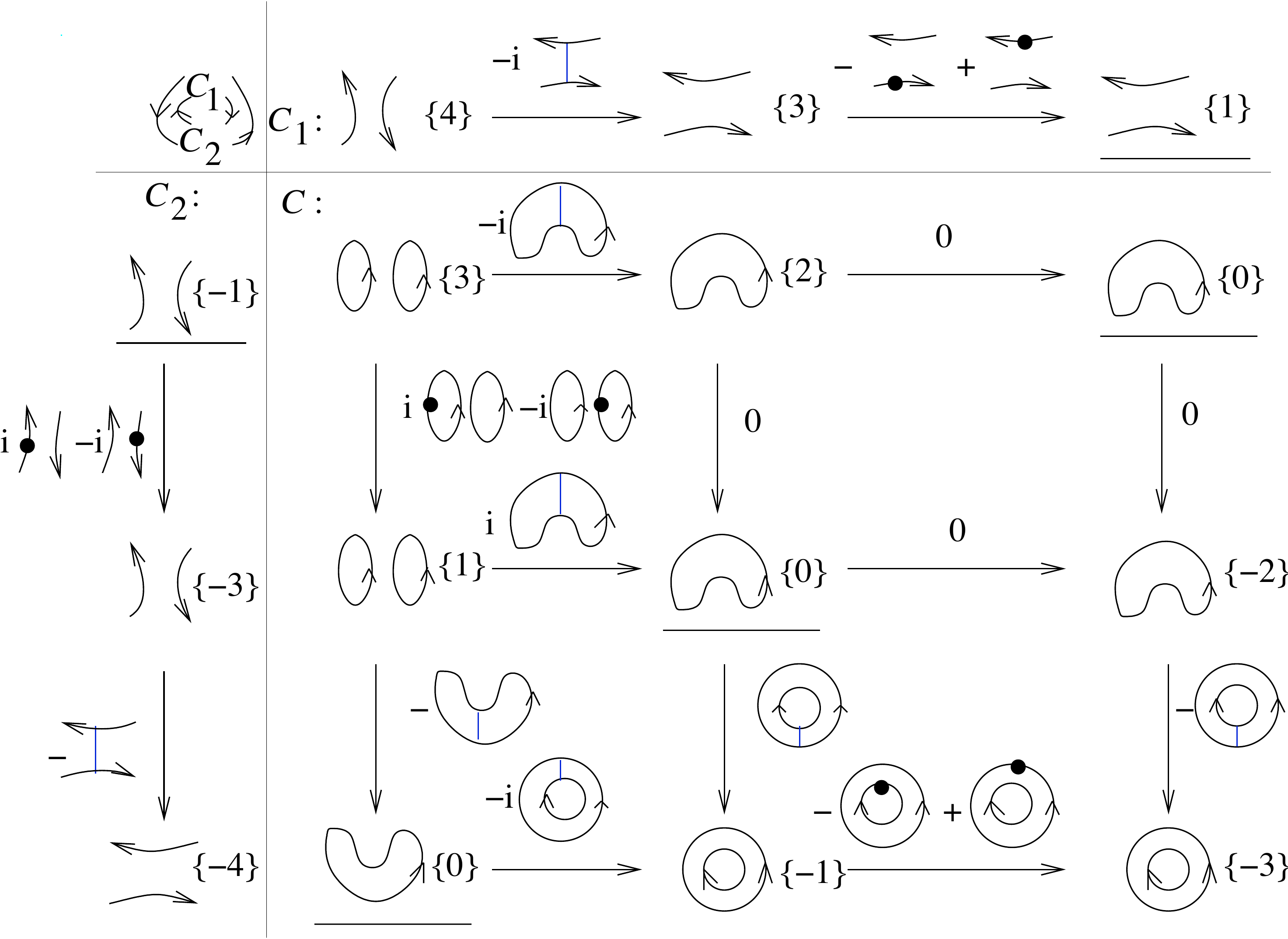}$$

The following step is to replace every loop with a pair of empty sets, degree-shifted, as in lemma~\ref{lemma:delooping}, and to replace the differentials with their compositions with the isomorphisms of figure~\ref{delooping}. As every object of $\mathcal{C}$ contains only loops, we arrive at the complex $\Lambda_1$ in which all the objects are degree-shifted empty sets and all morphisms are matrices of scalar multiples of the empty cobordism (recall that we are working modulo the local relations $\ell$ and all closed foams reduce to an element of the ground ring $\mathbb{Z}[i][a]$).

We  will also use the basis $(1,X)$ of the algebra $\mathcal{A}$ and wrote the multiplication $m$ and comultiplication $\Delta$ relative to this basis. The cobordism \raisebox{-5pt}{\includegraphics[height=.25in]{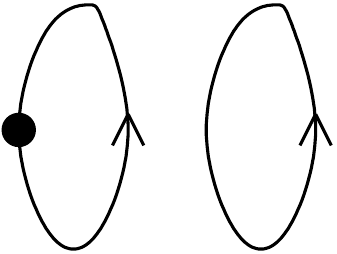}} is the multiplication by $X$ endomorphism of $\mathcal{A}$  on the first component of $\mathcal{A} \otimes \mathcal{A}$, while on the second one is the identity map. Likewise, \raisebox{-5pt}{\includegraphics[height=.25in]{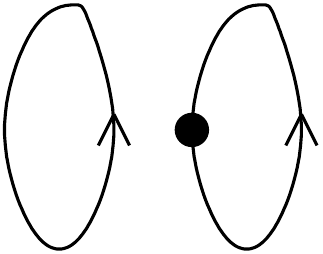}} is the identity on the first component and multiplication by $X$ endomorphism of $\mathcal{A}$ on the second component of the tensor product. Therefore, these cobordisms are defined by the following rules:
$$ \raisebox{-5pt}{\includegraphics[height=.25in]{two-circles-leftd.pdf}} = \left \{ \begin {array}{ccccc} 1 \otimes 1 & \rightarrow &X \otimes 1 \\
1 \otimes X &\rightarrow & X \otimes X \\
X \otimes 1 & \rightarrow & X ^2\otimes 1 &= &a 1 \otimes 1\\
X \otimes X & \rightarrow & X ^2\otimes X &= & a 1 \otimes X \end{array} \right. ,$$
$$\raisebox{-5pt}{\includegraphics[height=.25in]{two-circles-rightd}} = \left \{ \begin {array}{ccccc} 1 \otimes 1 &\rightarrow &1 \otimes X \\
1 \otimes X &\rightarrow & 1 \otimes X^2 & =& a 1\otimes 1 \\
X \otimes 1 &\rightarrow & X \otimes X \\
X \otimes X &\rightarrow& X \otimes X^2 &=& a X \otimes 1. \end{array} \right.
$$  
Using these, one can find that the matrix of the cobordism $i \raisebox{-5pt}{\includegraphics[height=.25in]{two-circles-leftd.pdf}} -i \raisebox{-5pt}{\includegraphics[height=.25in]{two-circles-rightd.pdf}}$ relative to the basis $(1\otimes 1, 1\otimes X, X \otimes 1, X \otimes X )$ of the tensor product $\mathcal{A} \otimes \mathcal{A}$ is:
$$ i  \raisebox{-5pt}{\includegraphics[height=.3in]{two-circles-leftd.pdf}} -i \raisebox{-5pt}{\includegraphics[height=.25in]{two-circles-rightd.pdf}} : \quad \left ( \begin{array}{cccc} 0 & ai & ai & 0 \\ -i & 0 & 0 & ai \\ i & 0 & 0 & -ai \\ 0 & i & -i &0 \end{array} \right ).$$
Likewise, the following result holds: 
$$  \raisebox{-5pt}{\includegraphics[height=.3in]{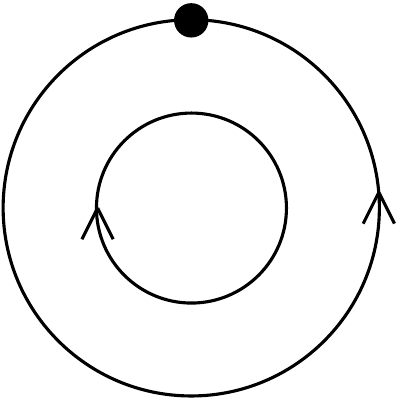}} - \raisebox{-5pt}{\includegraphics[height=.25in]{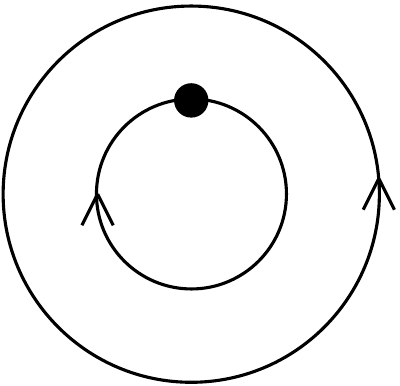}} : \quad \left ( \begin{array}{cccc} 0 & a1 & a1 & 0 \\ -1 & 0 & 0 & a1 \\ 1 & 0 & 0 & -a1 \\ 0 & 1 & -1 &0 \end{array} \right ).$$

Now we are ready to write the complex $\Lambda_1$.

$$\Lambda_1:\qquad \raisebox{-270pt}{\includegraphics[height=4in]{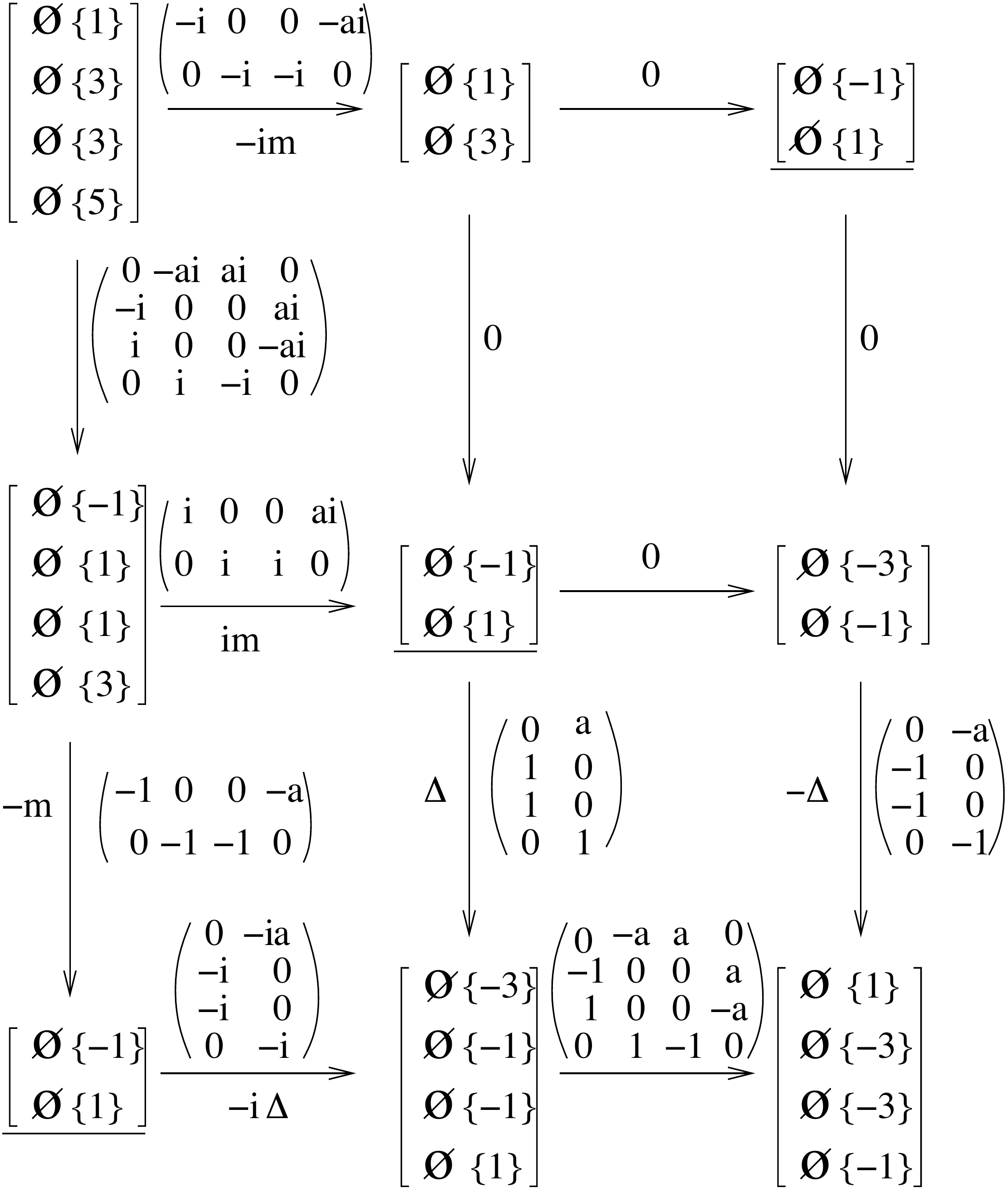}}$$

There are many isomorphisms in $\Lambda_1$, and we apply repeatedly lemma~\ref{Gaussian elimination} until no invertible entries remain in any of the matrices. Adding relation $a=0$ and working over $\mathbb{C}$, any non-zero number is invertible. Henceforth we obtain the double complex $\Lambda_2$, in which all matrices are $0$.
$$\Lambda_2:\qquad \raisebox{-120pt}{\includegraphics[height=1.8in]{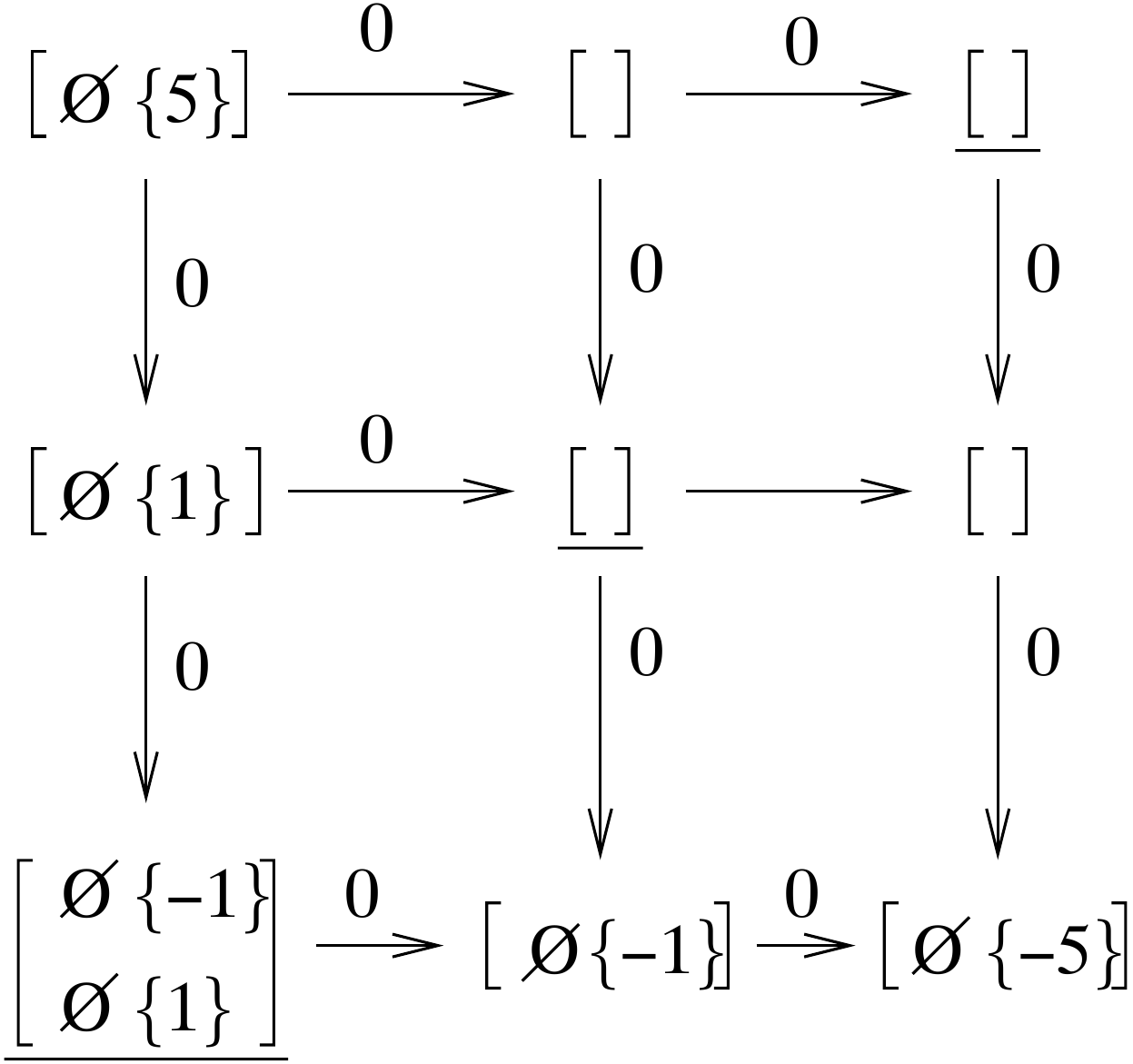}}$$

Taking the total complex of $\Lambda_2$, we arrive at the complex $\Lambda_3$ which is homotopy equivalent to $[K]$, where $K$ is the figure eight knot diagram we considered at the beginning of this example:
$$\Lambda_3: \raisebox{-10pt}{ \includegraphics[height=.4in]{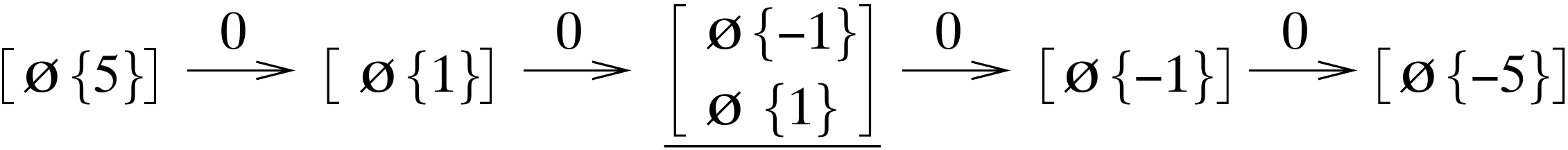}}.$$

We need now to apply the functor $\mathcal{F}_{\emptyset}$ to obtain an ordinary complex with objects graded vector spaces over $\mathbb{C}$ and take its (co)homology. Since $\mathcal{F}_{\emptyset}(\emptyset) = \mathbb{C} \{0\}$ we will have:
$$\mathcal{F}_{\emptyset}(\Lambda_3): \mathbb{C}\{5\} \stackrel{0}{\longrightarrow} \mathbb{C}\{1\} \stackrel{0}{\longrightarrow} \underline{ \mathbb{C}\{-1\} \oplus \mathbb{C}\{1\}} \stackrel{0}{\longrightarrow}\mathbb{C}\{-1\} \stackrel{0}{\longrightarrow} \mathbb{C}\{-5\}.$$

Computing the cohomology of the complex $\mathcal{F}_{\emptyset}(\Lambda_3)$ we obtain that the cohomology over $\mathbb{C}$ of the figure eight knot (for $a = 0$) is 6-dimensional, with generators in bidegrees $(-2,5), (-1,1), (0, -1), (0,1)$, $ (1, -1)$ and $(2,-5)$. That is, after adding the relation $a=0$, one has the following result for the figure eight knot:
$\mathcal{H}^{i,j}(K) \otimes_{\mathbb{Z}[i]} \mathbb{C}= \mathbb{C}$ for $(i,j) \in \{(-2,5), (-1,1), (0, -1), (0,1), (1, -1),(2,-5)\}$ and $0$ otherwise.


\begin{thebibliography}{999}
\bibitem{BN1} D. Bar-Natan {\em Khovanov's homology for tangles and cobordisms}, 
 Geom.Topology 9 (2005) 1443-1499 (electronic), arXiv:math.GT/0410495

\bibitem{BN2} D. Bar-Natan, {\em Fast Khovanov homology computations}, 
arXiv:math.GT/0606318 , 2006

\bibitem{CS} J.S. Carter, M. Saito, {\em Reidemeister moves for surface isotopies and their interpretations as moves to movies}, J.Knot Theory Ramifications 2 (1993) 251-284

\bibitem{J} M. Jacobsson, {\em An invariant of link cobordisms from Khovanov homology}, Algebr. and Geom. Topology 4 (2004) 1211-1251 (electronic), arXiv:math.GT/0206303

\bibitem{Jo} V. Jones, {\em Planar algebras}, arXiv:math. QA/9909027

\bibitem{Ka} L.H. Kauffman, {\em Knots and Physics}, World Scientific, 3rd Edition, 2001

\bibitem{Kad} L. Kadison, {\em New Examples of Frobenius extensions}, University Lecture Series 14, AMS 1999

\bibitem{Kh1} M. Khovanov, {\em A categorification of the Jones polynomial}, Duke Math.J., 101(3), 359-426, 1999, arXiv:math.QA/9908171 

\bibitem{Kh2} M. Khovanov, {\em Link homology and Frobenius extensions}, Fundamenta Mathematicae, 190 (2006), 179-190, arXiv:math.QA/0411447, 2006

\bibitem{Kh3} M. Khovanov, {\em $sl(3)$ link homology}, Algebr. and Geom. Topology 4 (2004) 1045-1081 (electronic), arXiv:math.QA/0304375

\bibitem{Kh4} M. Khovanov, { \em An invariant of tangle cobordisms}, Trans. Amer. Math. Soc., 358 (2006) 315-327, arXiv:math.QA/0207264, 2002

\bibitem{KhR1} M. Khovanov, L.Rozansky, {\em Matrix factorizations and link homology}, arXiv:math.QA/0401268, 2004

\bibitem{KhR2} M. Khovanov, L.Rozansky, {\em Matrix factorizations and link homology II}, arXiv:math.QA/0505056, 2005

\bibitem{L} E. S. Lee, {\em An endomorphism of the Khovanov invariant}, Adv.Math., 197 (2), 554-586, 2005.

\bibitem{MV} M. Mackaay, P. Vaz, {\em The universal $sl(3)$-link homology}, arXiv:math.GT/0603407, 2006

\bibitem{MW} S.Morrison, K.Walker, {\em Fixing the functoriality of Khovanov homology}, arXiv:math.GT/0701339, 2007.

\bibitem{Ras} J.A. Rasmussen, {\em Khovanov homology and the slice genus}, 
arXiv:math/GT/0402131, 2004

\bibitem{Ros} D. Roseman, {\em Reidemeister-type moves for surfaces in 
four-dimensional space} from: ``Knot theory (Warsaw, 1995)'', Banach Center Publ.42, Polish Acad.Sci., Warsaw (1998) 347-380

\end{thebibliography}
\end{document}